\newcommand{\dvol} {\,d\operatorname{vol}}
\newcommand\Symb{{\operatorname{Symb}}}
\newcommand\ADN{{Douglis-Nirenberg}}
\newtheorem{theorem}{Theorem}[section]
\newtheorem{lemma}[theorem]{Lemma}
\newtheorem{corollary}[theorem]{Corollary}
\newtheorem{proposition}[theorem]{Proposition}
\theoremstyle{definition}
\newtheorem{definition}[theorem]{Definition}
\newtheorem{example}[theorem]{Example}
\newtheorem{notation}[theorem]{Notation}
\theoremstyle{remark}
\newtheorem{remark}[theorem]{Remark}
\renewcommand\sout{\bgroup\markoverwith
{\textcolor{red} {\rule[0.7ex]{3pt}{1.4pt}}}\ULon}
\newcommand\seq{\, = \,}
\newcommand\define{\mathrel{\ := \ }}
\newcommand\ede{\define}
\newcommand\dist{\operatorname{dist}}
\newcommand\Hom{\operatorname{Hom}}
\newcommand\End{\operatorname{End}}
\newcommand\supp{\operatorname{supp}}
\newcommand{\CC} {\mathbb C}
\newcommand{\NN} {\mathbb N}
\newcommand{\RR} {\mathbb R}
\newcommand{\ZZ} {\mathbb Z}
\newcommand{\maB} {\mathcal B}
\newcommand{\maC} {\mathcal C}
\newcommand{\maD} {\mathcal D}
\newcommand{\maF} {\mathcal F}
\newcommand{\maH} {\mathcal H}
\newcommand{\maR} {\mathcal R}
\newcommand{\maS} {\mathcal S}
\newcommand{\mfkL} {\mathfrak L}
\newcommand{\mfkM} {\mathfrak M}
\newcommand\manif{{\mfkM}}
\newcommand\link{{\mfkL}}
\newcommand{\CI}{{\mathcal C}^{\infty}}
\newcommand{\CIc}{{\mathcal C}^{\infty}_{\text{c}}}
\newcommand\pa{\partial}
\newcommand\inv{\operatorname{inv}}
\newcommand\iPS[1]{{\Psi_{\operatorname{inv}}^{#1}}}
\newcommand\iPSsus[2]{{\psi_{\operatorname{sus}}^{#1}(#2)}}
\newcommand\ePS[1]{{\Psi_{\operatorname{ess}}^{#1}}}
\newcommand\ePSsus[2]{{\Psi_{\operatorname{sus}}^{#1}(#2)}}
\newcommand\In{{\maR_\infty}}
\newcommand\m[1]{{$#1$}}
\newcommand\dm[1]{{$#1$}}
\numberwithin{equation}{section}
\begin{document}

\title[Layer potentials and pseudodifferential operators on manifolds]{Layer potentials and 
essentially translation invariant pseudodifferential operators on manifolds with cylindrical ends}


\author[M. Kohr]{Mirela Kohr}
\address{Faculty of Mathematics and Computer Science,
Babe\c{s}-Bolyai University, 1 M. Kog\u{a}l\-niceanu Str., 400084
Cluj-Napoca, Romania} \email{mkohr@math.ubbcluj.ro}

\author[V. Nistor]{Victor Nistor}
\address{Universit\'{e} de Lorraine, CNRS, IECL, F-57000 Metz, France}
\email{victor.nistor@univ-lorraine.fr}

\author[W.L. Wendland]{Wolfgang L. Wendland}
\address{Institut f\"ur Angewandte Analysis und Numerische Simulation,
Universit\"at Stuttgart, Pfaffenwaldring 57, 70569 Stuttgart,
Germany}
\email{wendland@mathematik.uni-stuttgart.de}

\thanks{M.K. has been partially supported by the
  Babe\c{s}-Bolyai University Grant AGC 33656/21.07.2022. V.N. has been partially supported by
  ANR OpART.}

\date\today

\subjclass[2000]{Primary 35R01; Secondary 76M.}

\date{\today}

\keywords{
Stokes operator; Sobolev spaces; Pseudodifferential operators;
Manifolds with cylindrical ends.
}

\begin{abstract}
  Motivated by the study of layer potentials on manifolds with straight 
  conical or cylindrical ends, we introduce and study two classes (or calculi) 
  of pseudodifferential operators defined on manifolds with cylindrical ends: 
  the class of pseudodifferential operators that are ``translation invariant 
  at infinity'' and the class of ``essentially translation invariant operators.'' 
  These are ``minimal'' classes of pseudodifferential 
  operators containing the layer potential operators of interest. Both classes 
  are close to the $b$-calculus considered by Melrose and Schulze and to the 
  $c$-calculus considered by Melrose and Mazzeo-Melrose. Our calculi, however, 
  are different and, while some of their properties follow from those of the 
  $b$- or $c$-calculi, many of their properties do not. In particular, we prove 
  that the ``essentially translation invariant calculus'' is spectrally invariant, 
  a property not enjoyed by the ``translation invariant at infinity'' calculus or 
  the $b$-calculus. For our calculi, we provide easy, intuitive proofs of the usual 
  properties: stability for products and adjoints, mapping and boundedness properties 
  for operators acting between Sobolev spaces, regularity properties, existence of a 
  quantization map, topological properties of our algebras, and the Fredholm property. 
  Since our applications will be to the Stokes operator, we systematically work in the 
  setting of \ADN-elliptic operators. We also show that our calculi behave well
  with respect to restrictions to (suitable) submanifolds, which is crucial
  for our applications to layer potential operators.
\end{abstract}

\maketitle
\tableofcontents

\section{Introduction} \label{sec:1}

Boundary value problems play a very prominent role in the applications of
mathematics to other areas of science and engineering. The analysis of boundary value
problems is mainly based on \emph{two approaches.} The most commonly
used one relies on energy methods. The second one is based on layer potentials,
which are pseudodifferential operators if the boundary is nice enough.
While less elementary than the energy method approach, the layer potentials
approach to boundary value problems possesses several advantages.

This paper is the second paper in a series of papers in which \emph{we study the
method of layer potentials on manifolds with cylindrical ends.} To this end, we need to
develop two precise calculi of pseudodifferential operators that would contain
our differential operators and their associated layer potential operators, the 
``$\inv$-calculus'' and the ``essentially translation invariant at infinity calculus''. 
These two calculi were originally introduced in
\cite{Mitrea-Nistor}, with the same motivation of studying the layer potential
operators on manifolds with cylindrical ends. In this paper, we complete and
significantly extend the results of that paper, providing also an alternative
approach to several results in that papers. We thus provide a more systematical
account of the properties of these two calculi. We also more carefully indicate
the relations between our calculi and the closely related $b$- and $c$-calculi
\cite{MazzeoMelroseAsian, MelroseActa, MelroseAPS, SchulzeBook91, Schulze} and 
to the $c$-calculus of Melrose and Mazzeo-Melrose. We also work consistently in the
framework of \ADN-elliptic operators.

\subsection{The framework and some earlier results}
A manifold with \emph{straight cylindrical ends} is a Riemannian manifold $(\manif, g)$
isometrically diffeomorphic to one of the form
\begin{equation}\label{eq.def.can.form}
  \manif \seq \manif_0 \cup (\pa \manif_0 \times (-\infty, 0])\,,
\end{equation}
where $\manif_0$ is a smooth  manifold with boundary, the ``end'' $\pa \manif_0 \times (-\infty, 0]$
is endowed with a product metric, and we identify $\pa \manif_0$ with $\pa \manif_0 \times \{0\}$.
Most of the time $\manif_0$ will be compact, but, in order to be able to include $T^*\manif$
in our setting, we allow also the case $\manif_0$ non-compact.
See Definition \ref{def.ce} and Remark \ref{rem.def.ce} for details.
In our applications to layer potentials, $\manif$ will be, in fact, the \emph{boundary}
of a manifold with boundary and straight cylindrical ends $\Omega$, on which our boundary
value problem will be posed. Manifolds with straight cylindrical ends (for which our calculi were designed)
are important because they can be used to study manifolds
with \emph{straight conical} points via the Kondratiev transform $r = e^t$ ($r$ being the
distance to the conical points and $t$ the coordinate on the cylindrical end). Manifolds with
straight conical points are, of course, less general than manifolds with (plain) conical points,
but they appear often enough in applications to justify an explicit study, which yields, in
particular, stronger results than the ones available in the general case.

In view of our applications, we develop two pseudodifferential
calculi on $\manif$ that are specifically tailored to manifolds
with straight cylindrical ends. The two calculi are
\begin{equation*}
  \iPS{m}(\manif) \ \subset \ \ePS{m}(\manif)\,,
\end{equation*}
and are both closely related to the $b$-calculus of Melrose \cite{MelroseActa, MelroseAPS}
and Schulze \cite{SchulzeBook91, Schulze} and to the $c$-calculus of Melrose and
Mazzeo-Melrose \cite{MazzeoMelroseAsian}.
The first calculus is called the ``inv''-calculus and consists of pseudodifferential
operators that are translation invariant in a neighborhood of infinity on $\manif$. The
second calculus is called the ``essentially translation invariant''-calculus and
is a spectrally invariant enlargement of the ``inv''-calculus.

The motivation for considering these two calculi is the following. The ``inv''-calculus
is the smallest calculus that we could imagine and contains the operators we are interested
in on manifolds with straight cylindrical ends.
Saying that an operator belongs to this calculus is a strong statement, stronger than
saying that it belongs to the $b$-calculus (which is larger than the ``inv''-calculus).
In general, it is convenient in applications to use the smallest calculus that does the
job. This justifies considering the ``inv''-calculus.
The ``inv''-calculus is, however, not spectrally invariant, which then motivated us to consider
the slightly larger calculus  of ``essentially translation invariant'' operators,
which we prove to be spectrally invariant. Again, the resulting algebra is one of the smallest
one that is spectrally invariant and is adapted to the geometry, which again justifies its study.

As we have mentioned above, our calculi are closely related to the $b$ and $c$-calculi
\cite{MazzeoMelroseAsian, MelroseActa, MelroseAPS, SchulzeBook91, Schulze}, so we hope
that our results will help the reader better understand those calculi as well. See also
\cite{CiprianaThesis, GrieserBCalc, LauterSeiler, LeschBCalc, SchulzeWongBCalc}. Versions
of ``inv'' and ``essentially translation invariant'' calculi were introduced and briefly studied
in \cite{Mitrea-Nistor}. Here we provide a more general version of these calculi and a more
systematic account. For our applications, we were especially interested in the spectral
invariance property and the Fredholm property (Theorem \ref{thm.spectral.inv} and
\ref{thm.ADN.Fredholm}). There was a lot of work on spectral invariance properties
for pseudodifferential operators. A few more recent papers include \cite{HAbelsSpi, CoriascoToft,
DasgupaWongSpinv, ToftSpi, LMNsi, MazzeoMelroseAsian, SchroheFrechet}.
There was also a lot of work on Fredholm conditions for pseudodifferential
operators. A few more recent publications include \cite{CNQ, DLR, FanWongFredholm,
Kapanadze-Schulze, KottkeRochonFibered, MazzeoMelroseAsian, Melrose-Mendoza, SchroheSpInvFred,
SchroheFrechet, SchulzeBook91, Schulze}. See the aforementioned publications for further references.

\subsection{Main results.}
We first develop and study the general properties of two pseudodifferential calculi 
mentioned above on $\manif$, the ``inv'' and the ``essentially translation invariant''
calculi ($\iPS{m}(\manif)$ and  $\ePS{m}(\manif)$) using symbols in the classes 
$S_{1,0}^m$ \cite{HAbelsBook, Hormander3, RT-book2010, Taylor2, TrevesBookPsdo, Wong}.
We prove that these calculi are stable under products and under adjoints. We show that 
they induce bounded operators between suitable Sobolev spaces. We prove that they enjoy 
the usual symbolic properties, which leads to regularity results. We introduce and study 
natural topologies on these spaces of pseudodifferential operators. We provide a 
description of the distribution kernels of the classical, negative order pseudodifferential 
operators in our calculi, which is crucial for our applications to layer potentials. 
We also extend the concept of normal operator (which we
call ``limit operator'' for historical reasons) to our calculi and use it to study
the structure of our algebras. These results on the ``inv''-calculus can be found in Section
\ref{translation-inv-oper} and those about the ``essentially translation invariant''-calculus
can be found in Section \ref{ess-trans-inv}. In these two sections, we work mostly with
operators whose symbols belong to the H\"ormander classes $S_{1,0}^m$.


The results of these two sections are important for establishing our three main results
needed in applications: compatiblity with restrictions to submanifolds
(Theorem \ref{thm.kernel.restriction}), the spectral invariance (Theorem \ref{thm.spectral.inv})
and the Fredholm property (Theorem \ref{thm.ADN.Fredholm}). We prove all of the three
main results in the framework of \ADN-elliptic operators, which is
another novelty of our approach. This is an important feature since our main planned
application is to the Stokes operator. These three main results, however, are proved in
the setting of classical pseudodifferential operators. (The general case is significantly
different and including it would have made this paper significantly longer.) These
results are crucial in our applications to the study of layer potentials, in particular,
these results allow us to introduce the layer potential operators associated to an 
invertible classical pseudodifferential operator of low order.
The proof that our calculi behave well with respect to restrictions to (suitable)
submanifolds is the reason for our approach to the pseudodifferential calculi via
distribution kernels.

It can be proved (but it is non-trivial) that the $\inv$-calculus is a subset of the $b$-calculus
and that the essentially translation invariant calculus is a subset of the $c$-calculus. 
(Checking this is an excellent exercise for understanding the $b$-calculus and the differences 
between the $b$- and $c$-calculi). Some of the properties of our calculi would follow then from the
aforementioned inclusions. For our purposes, however, it was more convenient to include direct proofs. 
That makes our paper easier to follow by people with a limited knowledge of pseudodifferential operators
and, hopefully, will make pseudodifferential operators more accessible.
Our paper can also serve as an introduction to the $b$- and $c$-calculi. Most importantly, our
approach is useful for studying restrictions to submanifolds (Theorem \ref{thm.kernel.restriction}),
which is needed for the definition of the layer potential operators.

\subsection{Contents of the paper}
The paper is organized as follows. Section \ref{sec.background} contains definitions and
results on manifolds with \emph{straight} cylindrical ends and the corresponding Sobolev
spaces. The manifolds with straight cylindrical ends play a central role in our developments.
In Section \ref{sec.backgr.psdos}, we recall a few basic results on pseudodifferential
operators (the reader familiar with the topic can skip this section at a first
reading). In Section \ref{translation-inv-oper}, we recall and study the definition and the
properties of the calculus $\iPS{m}(\manif)$ of pseudodifferential operators that are invariant
in a neighborhood of infinity (the ``inv''-calculus), including a description of the
distribution kernel of the classical operators in this calculus. In Section \ref{ess-trans-inv},
we introduce and study the calculus $\ePS {\infty} (\manif)$ of essentially translation invariant
(in a neighborhood of infinity) pseudodifferential operators. This is a suitable enlargement
of $\iPS{m}(\manif)$ that is stable under inversion of elliptic operators of non-negative order.
This stability result is contained in Theorem \ref{thm.spectral.inv} in Section \ref{sec.SPInvFredholm},
where we also prove the restriction to submanifold property (Theorem \ref{thm.kernel.restriction})
and the Fredholm property for these operators (Theorem \ref{thm.ADN.Fredholm}). A novelty of our
approach is that we prove our three basic results (restriction to submanifolds,
the spectral invariance and the Fredholm property) in the framework
of \ADN-elliptic operators. The \ADN-ellipticity and related regularity results are discussed in
Section \ref{sec.six}.
We expect our methods to extend to other classes of non-compact manifolds.

We thank Cipriana Anghel, Nadine Gro\ss e, Massimo Lanza de Cristoforis, and Sergiu Moroianu
for useful discussions. We also thank Joerg Seiler and Elmar Schrohe for useful discussions and
for sending us their papers. The general introduction to pseudodifferential operators is based on
our lectures given at the ``Summer school in Geometric Analysis''  at \emph{Universit\'e Libre de
Bruxelles,} August-September 2023, school organized by Haydys Andriy and Joel Fine whom we
thank for their kind invitation.

\section{Background: manifolds with cylindrical ends and Sobolev spaces}
\label{sec.background}

We include in this background section several definitions and results
on manifolds with cylindrical ends and their Sobolev spaces. As explained in the
Introduction, we shall consider only the simplest class of manifolds
with cylindrical ends, the manifolds with ``straight cylindrical ends,''
because of its applications and because in this case we can obtain stronger results.
Let us add, nevertheless, that the main difference
between the various classes of manifolds with cylindrical ends is the choice of the
relevant algebra of pseudodifferential operators: choosing a smaller algebra leads to
stronger results on the layer potential operators, but if the algebra is too
small, it will not contain the relevant layer potential operators. In any case,
these algebras will be closely related to the $b$-calculus. There are many further
references on the $b$-calculus, see, for instance
\cite{GilLoya, GrieserBCalc, LauterSeiler, LeschBCalc, SchSch1}.
The general case of manifolds with cylindrical ends requires a different
pseudodifferential calculus and will be treated in a subsequent joint work with Cipriana
Anghel and Sergiu Moroianu, building also on the results on the $c$-calculus in
\cite{CiprianaThesis}.

\subsection{General notation: duals and connections}
We shall use the following conventions for the dual spaces that we use. First, $V'$
denotes the dual space of a real or complex topological vector space $V$ and, if
$T : V \to W$ is a linear map, then $T' : W' \to V'$ is its dual. Similarly,
if $V$ and $W$ are complex vector spaces endowed with inner products, then
$T^* : W \to V$ is the adjoint of $T : V \to W$. In case of real vector spaces
(still endowed with an inner product), instead of the adjoint we have the {\em transpose}
$T^\top : W \to V$. By $V^*$ we shall denote the complex conjugate of $V'$.
When there is no danger of confusion, we use the notation $1$ for the identity operator
of any given space.

Let $M$ be a smooth manifold.
The space of smooth sections of a smooth vector bundle $E \to M$ will be
denoted $\CI(M; E)$ and the space of smooth sections that in
addition have {\em compact} support will be denoted $\CIc(M; E)$.
From now on, $(\manif, g)$ will be a Riemannian manifold. We
can use the fixed metric on $\manif$ to trivialize the density bundle on $\manif$
and hence identify the dual of the space $\CIc (\manif)$ with
a space of distributions on $\manif$. As usual,
$T\manif \to \manif$ is the tangent bundle to $\manif$ and $T^*\manif \to \manif$ is the
cotangent bundle to $\manif$ (the dual of $T\manif$). On these bundles, as well as on their tensor
products, we shall consider the Levi-Civita connection $\nabla^{LC}$.
All vector bundles $E$ considered in this paper will be finite dimensional, smooth, Hermitian,
and will be endowed with metric compatible connections $\nabla^E$. (We shall write
simply $\nabla$ instead of $\nabla^E$ when there is no risk of confusion, that is most
of the time, since there will be typically only one connection on each vector bundle.)
A \emph{closed manifold} is a smooth, compact manifold without boundary.

\subsection{Manifolds with cylindrical ends}
Let $(\link, g_{\link})$ be a Riemannian manifold and $I \subset \RR$.
Unless stated otherwise, we shall endow $\link \times I$ with the product metric
\begin{equation}\label{eq.prod.metric}
  g_{\link \times I} \seq g_{\link} + (dt)^2\,,
\end{equation}
where $(dt)^2$ is the standard Euclidean metric on $I$.
Throughout this paper, $\manif$ will be a Riemannian manifold
(in fact, for the most part, a manifold with straight cylindrical ends).
The metric on $\manif$ will be denoted simply $g$ and $\dist_g$
will be the distance function on $\manif$ defined by $g$.
From now on, $\link$ will denote a closed manifold (i.e. smooth,
compact, without boundary) endowed with a metric.

\begin{definition} \label{def.ce}
  We say that $\manif$ has \emph{straight cylindrical ends} if it is geodesically
  complete and there exists a Riemannian
  manifold $\link \neq \emptyset$ and an isometry $\psi_{\manif} : \link
  \times (-\infty, 0) \to \manif$ whose complement is a  manifold
  with boundary $\manif_0$. (On $\link \times (-\infty, 0)$ we consider the product metric, as usual.)
  If $\manif_0$ is compact, we shall say that $\manif$ has compact ends.
\end{definition}

\begin{figure}[ht]
  \centering
  \includegraphics[width=0.5\textwidth]{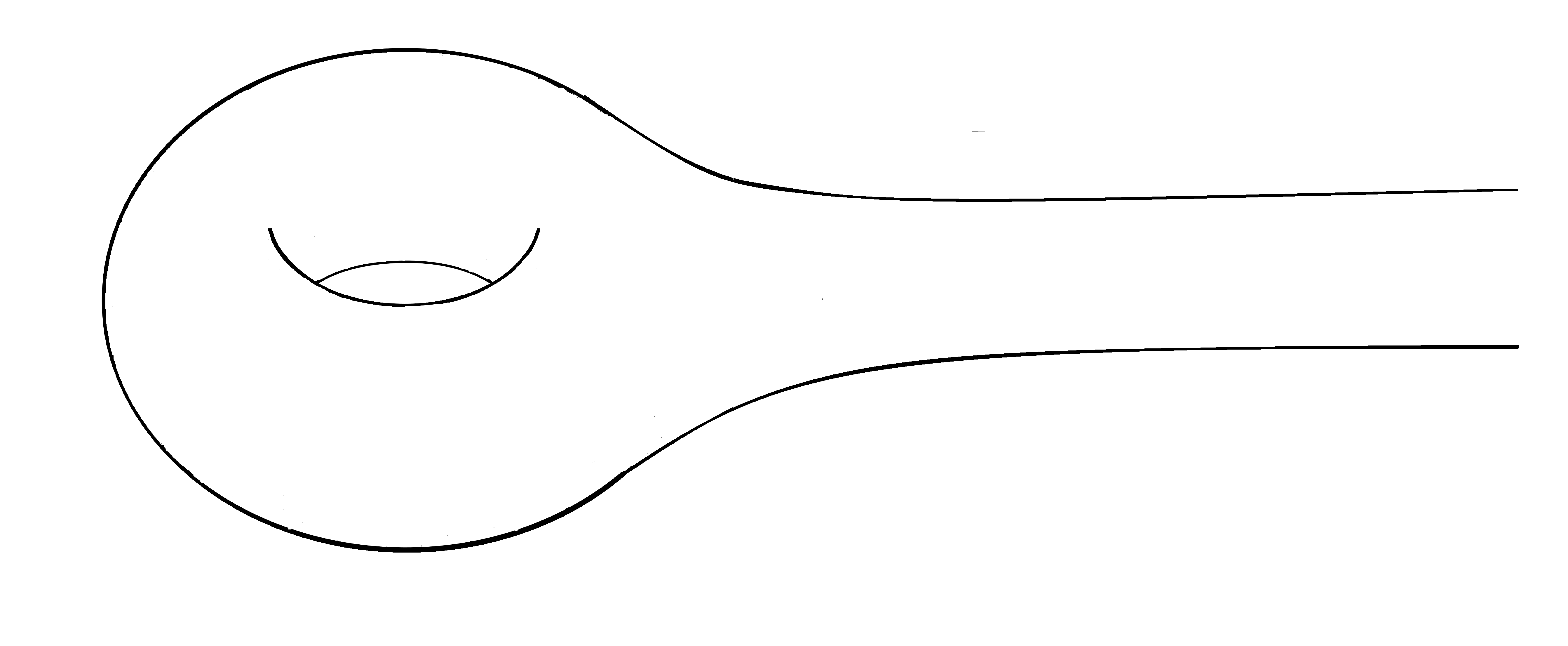}
  \caption{A picture of a manifold with straight cylindrical ends.}
\end{figure}

A manifold with cylindrical ends is thus a non-compact, Riemannian manifold without boundary.
If it has compact ends, it is, moreover, with bounded geometry. (This can be seen, for instance, from
\cite{AGN1, GrosseSchneider}.) We let $r_{\manif}$ denote its injectivity radius.

\begin{remark} \label{rem.def.ce}
  We use the notations of Definition \ref{def.ce}. By setting $\manif_0$ to be the
  complement of $\psi_{\manif}(\link \times (-\infty, 0))$, we see
  that an equivalent way of formulating Definition \ref{def.ce} is that the Riemannian
  manifold $\manif$ contains a smooth
  submanifold $\manif_0\subset \manif$ with boundary $\emptyset \neq \partial \manif_0$ for which there
  exists a diffeomorphism
  $$\phi : \partial \manif_0 \times (-\infty, 0] \to \manif \smallsetminus
  \stackrel{\circ}\manif_0$$
  such that $\phi(x,0) = x$ for $x \in \partial \manif_0$ and the induced
  metric on $\partial \manif_0 \times (-\infty, 0]$ is the product of the metric on $\partial \manif_0$
  and the standard metric on $(-\infty, 0]$. We shall use this notation (as in
  \cite{Mitrea-Nistor}) and assume from now on, for simplicity, that $\phi$ is the identity
  map, which recovers the Equation \eqref{eq.def.can.form} of the Introduction.
  Formula \eqref{eq.def.can.form} provides a {\it standard decomposition} of $\manif$.
\end{remark}

\begin{remark}
When studying $\manif$, it will be necessary to consider also $\pa \manif_0 \times \RR$, which
is also a manifold with straight cylindrical ends.
\end{remark}

From now on, $\manif$ will always be a manifold with straight cylindrical ends
($\manif = \manif_0\cup \big(\partial \manif_0\times (-\infty ,0]\big)$) \emph{or} a closed manifold.
(This is needed for symmetry, because the results that hold for manifolds with cylindrical ends
hold trivially also for compact manifolds. In fact, if in the definition of a manifold
with cylindrical ends one allows $\link = \pa \manif_0 = \emptyset$ and $\manif_0$
is compact, the resulting manifold $\manif$ is compact without boundary).

\subsection{Compatible vector bundles and differential operators}
We now introduce the vector bundles and the differential operators that we will use.

\begin{definition}\label{def.comp.vb}
  Let $\manif$ be a manifold with straight cylindrical ends (as in Remark
  \ref{rem.def.ce}, whose notation we continue to use) and let
  $E \to \manif$ be a Hermitian vector bundle.
  We shall say that $E$ is \emph{compatible with the straight cylindrical ends structure
  on $\manif$} (\emph{compatible}, for short)
  if there exists a constant $R_E < 0$ such that $E\vert_{\pa \manif_0 \times (-\infty, R_E]}$
  is isometric to the pull-back to $\pa \manif_0 \times (-\infty, R_E]$ of a
  Hermitian vector bundle $E_0 \to \pa \manif_0$ with connection $\nabla^{E_0}$
  and that $E$ has a metric preserving connection which, on $\pa \manif_0 \times (-\infty, R_E]$
  coincides with the pull-back connection of $\nabla^{E_0}$. (In the text, we
  shall often use the same notation for $E$ and $E_0$.)
\end{definition}

From now on, $E \to \manif$ will always denote a vector bundle that is compatible
with the straight cylindrical ends structure on $\manif$. The following remark
justifies our interest in compatible vector bundles.

\begin{remark}\label{rem.Bochner}
A compatible vector bundle $E \to \manif$ as in Definition \ref{def.comp.vb} above will thus
also have a product structure on $\pa \manif_0 \times (-\infty, R_E]$. Hence $\nabla^{E} = \nabla^{E_0}
+ dt \otimes \pa_t$ on $\pa \manif_0 \times (-\infty, R_E)$.
Similarly, the Bochner Laplacian of $E$ satisfies
\begin{equation*}
  \Delta_{E} \ede (\nabla^{E})^{*}\nabla^{E} \seq (\nabla^{E_0})^{*}\nabla^{E_0} - \pa_t^2\,,
\end{equation*}
where $\nabla^{E_0}$ is the connection on $E_0 \to \pa \manif_0$.
\end{remark}

We now introduce an important class of smooth sections.

\begin{definition}\label{def.CIinv}
  Let $E \to \manif$ be as in Remark \ref{rem.Bochner} and let $u \in \CI(\manif; E)$.
  We shall say that $u$ is \emph{translation invariant in a neighborhood of infinity}
  (or \emph{translation invariant at infinity}) if
  there exists $R_u \le R_E$ such that
  $u(x, t)$ is independent of $t$ for $t \le R_u$. We let $\maR_{\infty}u(x) := u(x, t)$
  for $-t$ large.
  We let $\CI_{\inv}(\manif; E)$ denote the set of
  sections of $E$ that are translation invariant in a neighborhood of infinity.
  We identify $\CI(\pa \manif_0; E) \simeq \CI(\pa \manif_0 \times \RR; E)^{\RR}$
  and call the induced map
  \begin{equation*} 
    \begin{gathered}
    \maR_{\infty} : \CI_{\inv}(\manif; E) \to \CI(\pa \manif_0 \times \RR; E)^{\RR}
    \simeq \CI(\pa \manif_0; E)\\
    u(x, t) \, =:\, \maR_{\infty} u(x) \in E \quad \mbox{for } t \le R_u\,,
    \end{gathered}
  \end{equation*}
  the \emph{limit at infinity} map.
  We let $\CI_{\inv}(\manif) := \CI_{\inv}(\manif; \CC)$.
\end{definition}

We now introduce similarly the differential operators we are interested in.

\begin{notation}\label{not.diff.ops}
  Let $F \to \manif$ be a second vector bundle on $\manif$ compatible with its straight
  cylindrical ends at infinity. We shall denote by $\operatorname{Diff}_{\inv}^m(\manif; E, F)$
  the set of differential operators $P$ acting
  on sections of $E$ such that there exist $a_j \in \CI_{\inv}(\manif;
  \Hom(T^{* \otimes j} \manif \otimes E, F))$, $j = 0, \ldots, m$, with the property that
  \begin{equation*}
    Pu  \seq \sum_{j=0}^m a_j (\nabla^{E})^j u\,.
  \end{equation*}
  In particular,
  \begin{equation*}
    \nabla^{LC} \in \operatorname{Diff}^1_{\inv}(\manif; T\manif, T^*\manif \otimes T\manif) \,.
  \end{equation*}
  We let
  \begin{equation*}
    \maR_{\infty} (P) \ede \sum_{j=0}^m \maR_{\infty} (a_j) (\nabla^{E})^j
  \end{equation*}
  be the associated operator on $\pa\manif_0 \times \RR$ obtained by ``freezing
  the coefficients'' of $P$ near infinity. It will be called the \emph{limit at infinity}
  operator associated to $P$.
\end{notation}

We shall need the following simple result.

\begin{lemma} \label{lemma.easy.inv}
  Let $E, F, G \to \manif$ be compatible vector bundles on our manifold with straight
  cylindrical ends $\manif$. We use the notation introduced in this subsection.
  \begin{enumerate}
    \item The limit at infinity map $\maR_{\infty} : \CI_{\inv}(\manif; E) \to
    \CI(\pa \manif_0 \times \RR; E)^{\RR}$ of Defintion \ref{def.CIinv} is surjective and
    \begin{equation*}
      \ker \maR_\infty \seq \CIc(\manif; E)\,.
    \end{equation*}
  \item $\CI_{\inv}(\manif; \Hom(E, F)) = \CI(\manif; \Hom(E, F))
  \cap \operatorname{Diff}_{\inv}^m(\manif; E, F)$.

  \item $\operatorname{Diff}_{\inv}^m(\manif; F, G) \operatorname{Diff}_{\inv}^{m'}(\manif; E, F)
  \subset \operatorname{Diff}_{\inv}^{m+m'}(\manif; E, G)$.

  \item $\operatorname{Diff}_{\inv}^m(\manif; E, F) \CI_{\inv}(\manif; E)
  \subset \CI_{\inv}(\manif; F)$.

  \item \label{item.easy.inv5}
  $\maR_{\infty} : \operatorname{Diff}_{\inv}^m(\manif; E, F) \to
  \operatorname{Diff}_{\inv}^m(\pa \manif_0 \times \RR; E, F)^{\RR}$ is well-defined,
  surjective, and multiplicative.
  \end{enumerate}
\end{lemma}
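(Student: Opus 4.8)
The strategy is to reduce each of the five assertions to a direct computation on the cylindrical end $\pa\manif_0\times(-\infty,R]$, where the compatible vector bundles and the connection $\nabla^E$ are, by Definition \ref{def.comp.vb} and Remark \ref{rem.Bochner}, literally pulled back from $\pa\manif_0$ and $\nabla^E=\nabla^{E_0}+dt\otimes\pa_t$. The global part $\manif_0$ contributes nothing to any of the invariance-at-infinity statements, so everything is local in the end coordinate $t$.

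First I would prove (1). Surjectivity of $\maR_\infty$ is immediate: given $v\in\CI(\pa\manif_0;E)\simeq\CI(\pa\manif_0\times\RR;E)^\RR$, extend it by a partition of unity to a smooth section of $E$ on $\manif$ that agrees with the pull-back of $v$ for $t\le R_E$; this section lies in $\CI_{\inv}(\manif;E)$ and maps to $v$. For the kernel: if $\maR_\infty u=0$ then $u(x,t)=0$ for all $t\le R_u$, so $\supp u\subset \manif\smallsetminus(\pa\manif_0\times(-\infty,R_u))$, which is closed in $\manif$ and, being contained in the "compact core" direction, is compact (here one uses that $\manif_0$ is where the noncompactness has been pushed out — more precisely, $\supp u$ is a closed subset of $\manif_0\cup(\pa\manif_0\times[R_u,0])$, and one argues compactness from the structure of the end; in the stated setting $\manif_0$ compact this is transparent, and the general case is handled the same way once one notes $u$ has support away from infinity). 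Conversely every compactly supported section is trivially translation invariant at infinity (equal to $0$ there) and in $\ker\maR_\infty$.

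Next, (2) is a matter of unwinding definitions: an element of $\CI(\manif;\Hom(E,F))$ acting as a zeroth-order differential operator is in $\operatorname{Diff}^m_{\inv}$ iff its single coefficient $a_0$ is translation invariant at infinity, which is exactly the condition that the section lie in $\CI_{\inv}(\manif;\Hom(E,F))$. Parts (3) and (4) follow from the Leibniz/Faà-di-Bruno expansion of $(\nabla^E)^j$ applied to a product, together with the fact that $\CI_{\inv}$ is a module over itself (products and covariant derivatives of translation-invariant-at-infinity sections are again translation invariant at infinity, since on the end $\nabla^E$ differentiates only in the $\pa\manif_0$ directions plus $\pa_t$, and $\pa_t$ annihilates $t$-independent sections): composing two such differential operators, or applying one to a translation-invariant section, produces coefficients and outputs that are again $t$-independent past the larger of the two thresholds $R$.

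Finally, (5): $\maR_\infty$ on $\operatorname{Diff}^m_{\inv}$ is well-defined because the coefficients $\maR_\infty(a_j)$ are well-defined by part (1) applied to each $a_j$, and $\maR_\infty(P)=\sum\maR_\infty(a_j)(\nabla^E)^j$ makes sense on $\pa\manif_0\times\RR$ since there $\nabla^E$ is the product connection. Surjectivity follows by lifting a given translation-invariant operator on $\pa\manif_0\times\RR$ coefficient-by-coefficient using surjectivity in (1). Multiplicativity, $\maR_\infty(PQ)=\maR_\infty(P)\maR_\infty(Q)$, is checked by expanding both sides on the end: since $P$ and $Q$ coincide with their limit operators there (for $t$ sufficiently negative) and the limit operators have $t$-independent coefficients and involve only the product connection, the composition formula is the same on $\manif$ near infinity as on $\pa\manif_0\times\RR$. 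The only mildly delicate point — and the one I would treat most carefully — is the compactness claim inside part (1) (equivalently, pinning down why $\ker\maR_\infty$ is exactly $\CIc$ and not something larger) in the case where $\manif_0$ is allowed to be noncompact; there one must be explicit that "translation invariant at infinity and zero limit" forces support in a region that is proper in the relevant sense, so that the identification with $\CIc$ is literally correct. Everything else is bookkeeping with the Leibniz rule and the product structure on the end.
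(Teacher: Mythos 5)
Your proposal is correct and follows essentially the same route as the paper, which simply observes that everything follows from the definitions and the product structure of $\nabla^E$ on the end (the paper's only added hint for (5) is to first treat $\maR_\infty$ on $\CI_{\inv}(\manif;\Hom(E,F))$ and note $\maR_\infty(\nabla^{LC})=\nabla^{LC}$, which is what your coefficient-by-coefficient argument does). You are also right to flag that the identification $\ker\maR_\infty=\CIc(\manif;E)$ uses compactness of $\manif_0$ (equivalently of $\manif_R$), which is the standing assumption in the paper's applications.
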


\begin{proof}
  Everything follows immediately from the definitions. (For (3) see also \cite{KohrNistor1}.) Let us just add that
  for \eqref{item.easy.inv5} we first prove that that $\maR_{\infty} : \CI_{\inv}(\manif; \Hom(E, F)) \to
  \CI_{\inv}(\pa \manif_0 \times \RR; \Hom(E, F))^{\RR}$ is well-defined,
  surjective, and multiplicative and that $\maR_{\infty}(\nabla^{LC}) = \nabla^{LC}.$
\end{proof}

We have the following simple consequence (recall that $\nabla$ is the generic
notation for our connections).

\begin{corollary}\label{cor.lemma.ue}
  Let $u \in \CI_{\inv}(\manif; E)$ (see Definition \ref{def.CIinv}), then $u$ is
  bounded and, for all $k \ge 1$,
  $\nabla^k u \in \CI_{\inv}(\manif; E \otimes T^{*\otimes k} \manif)$.
  Let $a \in \CI_{\inv}(\manif; \End(E))$ with $a \neq 0$ everywhere. Then $a$
  is bounded and $a^{-1} \in \CI_{\inv}(\manif; \End(E))$. In particular, $\maR_{\infty}(a)
  \neq 0$ everywhere.
\end{corollary}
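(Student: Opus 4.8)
The statement has three assertions: (i) every $u \in \CI_{\inv}(\manif; E)$ is bounded and its covariant derivatives $\nabla^k u$ lie in $\CI_{\inv}(\manif; E \otimes T^{*\otimes k}\manif)$; (ii) if $a \in \CI_{\inv}(\manif; \End(E))$ is nowhere zero, then $a$ is bounded and $a^{-1} \in \CI_{\inv}(\manif; \End(E))$; (iii) consequently $\maR_\infty(a)$ is nowhere zero. The plan is to reduce everything to the decomposition $\manif = \manif_0 \cup (\pa\manif_0 \times (-\infty, 0])$ and to exploit that on the cylindrical end (beyond the level $R_u$ or $R_a$) the relevant section is a pull-back from a \emph{closed} manifold, where boundedness and smooth invertibility are automatic by compactness.

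\textbf{Step 1: boundedness of $u$.} Choose $R_u \le R_E$ as in Definition \ref{def.CIinv}, so that $u(x,t) = \maR_\infty u(x)$ for $t \le R_u$. Split $\manif$ as the union of the compact-in-the-relevant-directions core $K := \manif \smallsetminus \big(\pa\manif_0 \times (-\infty, R_u)\big)$ together with the tail $\pa\manif_0 \times (-\infty, R_u]$. On the tail, $|u|$ equals $|\maR_\infty u|$, which is a continuous function on the closed manifold $\pa\manif_0$, hence bounded. On $K$: in our main case $\manif_0$ is compact, so $K$ is compact and $|u|$ is bounded there by continuity; in the auxiliary case $\manif_0$ noncompact (only $T^*\manif$), $K$ is itself a finite cylinder over something, but one still only needs that $u$ restricted to the complement of the far tail is continuous and the stated conclusions hold — here I would simply note that the corollary is applied in the situation where the core is compact, or invoke bounded geometry. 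Combining the two pieces gives $\|u\|_\infty < \infty$.

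\textbf{Step 2: the derivatives.} By Lemma \ref{lemma.easy.inv}(4), $\nabla^{LC} \in \operatorname{Diff}^1_{\inv}$ and more generally the iterated covariant derivative $\nabla^k : \CI_{\inv}(\manif; E) \to \CI_{\inv}(\manif; E \otimes T^{*\otimes k}\manif)$ maps $\inv$-sections to $\inv$-sections (apply items (3) and (4) of that lemma, together with the fact that $\nabla^{LC}$ has $\inv$-coefficients). Hence $\nabla^k u \in \CI_{\inv}(\manif; E \otimes T^{*\otimes k}\manif)$, and then boundedness of $\nabla^k u$ follows from Step 1 applied to this section. (Concretely, on the end $\nabla^k u$ is the pull-back of $(\nabla^{E_0})^k \maR_\infty u$ on $\pa\manif_0$, using Remark \ref{rem.Bochner} to see the connection is the product connection there; on the core one uses continuity.)

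\textbf{Step 3: invertibility of $a$.} Pick $R_a \le R_E$ with $a(x,t) = \maR_\infty a(x)$ for $t \le R_a$. Boundedness of $a$ is Step 1 applied with $E$ replaced by $\End(E)$. For the inverse: on the tail, $a$ is the pull-back of $\maR_\infty a \in \CI(\pa\manif_0; \End(E_0))$; since $\maR_\infty a$ is nowhere zero on the compact manifold $\pa\manif_0$, it is fiberwise invertible, and $x \mapsto (\maR_\infty a(x))^{-1}$ is smooth (Cramer's rule / smoothness of matrix inversion) and bounded by compactness. On the core $K$, $a$ is nowhere zero hence fiberwise invertible, and $x \mapsto a(x)^{-1}$ is smooth there. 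These two descriptions of $a^{-1}$ agree on the overlap $\pa\manif_0 \times \{R_a\}$ and the end-piece is manifestly $t$-independent for $t \le R_a$, so $a^{-1}$ is globally smooth and translation invariant at infinity with $R_{a^{-1}} = R_a$, i.e. $a^{-1} \in \CI_{\inv}(\manif; \End(E))$. Finally, since $\maR_\infty a$ admits the smooth inverse $(\maR_\infty a)^{-1} = \maR_\infty(a^{-1})$, it is nowhere zero, giving (iii); alternatively, (iii) is immediate because $\maR_\infty a(x)$ appears as the value $a(x,t)$ for $-t$ large, and $a$ is nowhere zero by hypothesis.

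\textbf{Main obstacle.} There is no deep obstacle; the only point requiring a little care is the bookkeeping of the two different ``cases'' built into the running conventions ($\manif_0$ compact versus the auxiliary noncompact case needed to include $T^*\manif$), ensuring that boundedness on the core is justified — via compactness in the standard case and via bounded geometry (cf. \cite{AGN1, GrosseSchneider}) in the auxiliary case — together with checking that the two formulas for $a^{-1}$ (on the core and on the end) glue to a smooth global section, which is where one uses that the connection and bundle are genuinely a product on $\pa\manif_0 \times (-\infty, R_E]$ per Definition \ref{def.comp.vb} and Remark \ref{rem.Bochner}.
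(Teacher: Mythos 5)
Your proposal is correct and follows essentially the same route as the paper, which simply asserts that everything follows from Definition \ref{def.CIinv} and Lemma \ref{lemma.easy.inv} (plus the observation that $\maR_\infty(a)$ is a restriction of $a$); you have merely written out the core-plus-tail decomposition and the gluing of $a^{-1}$ that the paper leaves implicit. Your side remark about the noncompact-$\manif_0$ case is a fair caveat, but the paper's standing convention (compact ends, cf.\ the remark following the corollary on compactifying $\overline{\manif}$) makes your main case the relevant one.
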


\begin{proof}
  This follows from Definition \ref{def.CIinv}, Lemma \ref{lemma.easy.inv}.
  We have that $\maR_{\infty}(a)$ is a restriction of $a$, so it doesn't vanish either.
\end{proof}

Although we shall not really use it, a useful picture
when $\manif$ has compact ends is that one can ``compactify''
$\manif$ to $\overline{\manif}$
by  including the ``boundary at infinity'' $\pa \manif_0 \times \{-\infty\}$.  All
sections that are translation invariant at infinity will extend to sections
on this compactification. In particular, the
boundedness of $u$ in Corollary \ref{cor.lemma.ue} follows from the fact that it
extends to the compactification $\overline{\manif}$.

\subsection{Sobolev spaces}\label{ssec.Sobolev}
In the following, $(\manif, g)$ will be a fixed manifold with straight cylindrical ends
and \emph{compact ends.}
Let us recall now the definition of the Sobolev spaces on $\manif$.
We agree that $\NN = \{1, 2, \ldots, \}$ and that $\ZZ_+ = \NN \cup \{0\}$.
For simplicity of the notation, we introduce vector bundles only later.

\subsubsection{Positive order covariant Sobolev spaces}
By $\dvol $ we denote the induced volume form on $\manif$ associated to the metric $g$ on $\manif$.
(Note that the metric $g$ is fixed everywhere in the paper.)
For $p \in [1, \infty)$, we define the norm
\begin{equation}\label{eq.def.normp}
  \|u\|_{L^p(\manif)} \ede \left(\int_{\manif} |u(x)|^p \dvol(x)\right)^{1/p}\,.
\end{equation}
As usual, we define the space:
\begin{equation}\label{eq.def.Lp}
  L^p(\manif) \ede \{ u : \manif \to \CC
  \mid \|u\|_{L^p(\manif)}< + \infty \}/\ker (\|\cdot \|_{L^p(\manif)})\,.
\end{equation}
The space $L^\infty(\manif)$ and its norm are defined with the usual modification,
using the essential supremum.
By $(f, g) = (f, g)_{L^2(\manif)}$ we shall denote the scalar (inner) product on $L^2(\manif)$.
It is linear in the first variable and conjugate linear in the second one.

There are many ways of introducing Sobolev spaces on a manifold. For open subsets of compact
manifolds, the Sobolev spaces can be defined using partitions of unity supported in
coordinate patches (thus reducing to the Euclidean case), and the resulting
norms are equivalent for different choices of the partition of unity. In the non-compact
case, one cannont use any partition of unity. One can still use connections, though
(see, for instance \cite{HebeyBook, KohrNistor1}).

\begin{definition}\label{def.Sobolev}
Let $k \in \ZZ_+ := \{0, 1, \ldots \}$. We let
\begin{equation*}
  W^{k, p}(\manif) \ede \{ u: \manif \to \CC \mid \nabla^j(u) \in
  L^p(\manif;T^{*\otimes j} \manif) \,,\, \mbox{ for all }\, 0 \le j \le k \}
\end{equation*}
be the {\em order $k$, $L^p$--type Sobolev space} on $\manif$ with the norm
\begin{equation*}
  \|u\|_{W^{k, p }(\manif)} \ede \ell^p\mbox{--norm of }
  \{ \|\nabla^j (u)\|_{L^p(\manif;T^{*\otimes j} \manif)}\,,\ 0 \le j \le k \} \,.
\end{equation*}
\end{definition}

We let $H^{k} := W^{k, 2}$, as usual.
Since $\manif$ is complete (even with bounded geometry) the Sobolev space $H^s(\manif)$
is the domain of $(1 + \Delta)^{s/2}$, for $s \ge 0$ (we use the convention
$\Delta := d^* d \ge 0$). See \cite{AGN1, GrosseSchneider, Strichartz}  for related results
(valid also in the more general setting of manifolds with bounded geometry
and, sometimes, valid even in the setting of complete manifolds).

\subsubsection{Negative and non-integer order covariant Sobolev spaces}
We proceed as usual and we define the negative order Sobolev spaces (just for $p = 2$) by
\begin{equation}  \label{eq.def.neg.Sob}
  H^{-s}(\manif) \ede H^{s}(\manif)^* \,,
\end{equation}
where $V^*$ is the complex conjugate dual of $V$, as before.  We also define the spaces
$H^{r}(\manif)$ for $r\notin \ZZ$ by complex interpolation
between consecutive integers (see, e.g., \cite{BrezisBook, LionsMagenes1, Taylor1, Wong}
for further details).

\subsubsection{Alternative definition}\label{sssec.alternative}
Sometimes, the following definition of Sobolev spaces will be useful.
All manifolds $\pa \manif_0 \times (-R-k, -R-k+2)$, $k \in \NN$, are isometric and
hence the spaces $$H^m (\pa \manif_0 \times (-R-k, -R-k+2))\,, \quad k \in \NN \,,$$ are also canonically
isometric (they can be defined using a finite partition of unity on $\pa \manif_0$.)
Let $$\manif_R \ede \manif_0 \cup (\pa \manif_0 \times (-R, 0)) \seq \manif \smallsetminus
(\pa \manif_0 \times (-\infty, -R]) \,.$$ Then an
equivalent norm on $H^m(\manif)$, $m \in \RR$, is
\begin{equation}\label{eq.equiv.norm1}
  |||u|||_{m}^2 \ede \|u \|_{H^m(\manif_R)}^2 \, + \,
  \sum_{k=1}^\infty \|u \|_{H^m(\pa \manif_0 \times (-R-k, -R-k+2) )}^2\,.
\end{equation}
There exists a smooth partition of unity $(\phi_k^2)$, $k \in \ZZ_+$, where $\phi_k$ is
smooth with support in $\pa \manif_0 \times (-R-k, -R-k+2)$ and is the translate of
$\phi_1$ by $k-1$, for $k \ge 1$, and where $\phi_0$ is with support in $\manif_R$. Then the
norm on $H^m(\manif)$, $m \in \RR$, is also equivalent to
\begin{equation}\label{eq.equiv.norm2}
  |||u|||_{m}^{\prime 2} \ede \| \phi_0^2 u \|_{H^m(\manif_R)}^2 \, + \,
  \sum_{k=1}^\infty \| \phi_k^2 u \|_{H^m( \pa \manif_0 \times (-R-k, -R-k+2) )}^2\,.
\end{equation}
These equivalences follows, for instance, from \cite{GrosseSchneider}.

If $\manif = \link \times \RR$, the same results hold true if
we consider the $\ZZ$-translation invariant partition of
unity $(\phi_k^2)$, $k \in \ZZ$ (extending from $k \in \NN$).

\section{Basics on pseudodifferential operators}
\label{sec.backgr.psdos}

We include in this section a rather detailed and comprehensive overview of the
standard theory of pseudodifferential operators on $\RR^n$, on general manifolds,
and on compact manifolds. This serves to fix the notation, to help the reader, but also
to provide a couple of new results that are needed in the sequel.
The material of this section is based on (and expands)
our talks at the ``Summer school in Geometric Analysis'' at the Free University of
Bruxelles, in 2023. In order to avoid complications, we shall assume that our
manifolds are Hausdorff and paracompact. The results of this section are,
for the most part, not new, but rather classical. A quick, but lucid introduction
to pseudodifferential operators can be found in \cite{Taylor2}. Two more recent
accounts that contain also more detail as well as some needed non-standard
results are \cite{HAbelsBook} and \cite{HintzLN}. We thank Rafe Mazzeo for brining
HintzLN' book \cite{HintzLN} to our attention. The classical reference monograph on
pseudodifferential operators containing also historical material and many
extensions is \cite{Hormander3}. Otherwise, there are many other very good
textbooks containing material on pseudodifferential operators, among which we
mention \cite{H-W, RT-book2010, TrevesBookPsdo, Wong}. We include some proofs
for the benefit of the reader and as complementary material (not because any
of these results in this section are really new).

\subsection{The ``main formula'' and pseudodifferential operators on $\RR^n$}

Let \m{\langle x, \xi \rangle := x_1 \xi_1 + x_2 \xi_2 + \ldots x_n \xi_n}
for $x, \xi \in \RR^n$, $\imath := \sqrt{-1}$, and let
\begin{equation*}
    {  \hat u(\xi)} \seq \maF u(\xi) \ede  \int _{\RR}
    e^{ {  -} \imath \langle {  y}, \xi \rangle } \,
    {  u(y)\, dy}
\end{equation*}
be the  \emph{Fourier transform} of \m{u.}
The following formula
\begin{equation*}
   a(x, D) u(x) \ede \frac{1}{(2\pi)^n} \int_{\RR^n}\, e^{\imath \langle x,
  \xi \rangle}\, a(x, \xi)
  {  \hat u(\xi)} \, d\xi\,,
\end{equation*}
will be referred to as ``the main formula'' (of pseudodifferential theory) and, as we
will see, it gives the ``local form'' of pseudodifferential operators on manifolds.
In this formula, \m{a : \RR^n \times \RR^n \to \CC} is a suitable ``symbol''
that will depend on the requirements of the problem. (The terms in quotation marks will
be defined below.) Let us see what the main formula becomes in a few particular cases of
symbols $a : \RR^{2n} \to \CC$.

\begin{example}
  Let $a(x, \xi) = a(x)$ be a smooth function independent of $\xi$. Then
  \begin{equation*}
    a(x, D) u(x)  \ede \frac{1}{(2\pi)^n} \int_{\RR^n}\,
    e^{\imath \langle x, \xi \rangle}\, a(x)
    \hat u(\xi) \, d\xi \seq a(x)  u(x)\,,
  \end{equation*}
  by the  \emph{Fourier inversion formula,}
  \begin{equation*}
    (\maF^{-1} w)(x) \ede  \frac{1}{(2\pi)^n} \int_{\RR}
    e^{\imath \langle x, \xi \rangle } w(\xi)d\xi\,.
  \end{equation*}
\end{example}

Recall that the \emph{convolution} of two function $f, g : \RR^n \to \CC$ is
(when defined)
\begin{equation}\label{eq.def.convolution}
  f * g(x) \seq \int_{\RR^n} f(y)g(x - y)dy\,.
\end{equation}

\begin{example}
  Let next $a(x, \xi) = a(\xi)$ be a function independent of $\xi$,
  and integrable in $\xi$. Then
  \begin{align*}
    a(D)  u(x) \seq  a(x, D) u(x) \ede \frac{1}{(2\pi)^n} \int_{\RR^n}\, e^{\imath \langle x,
    \xi \rangle}\, a(\xi)\, \hat u(\xi) \, d\xi \seq (\maF^{-1}a) * u(x)
  \end{align*}
  is the \emph{convolution} operator with \m{\maF^{-1}a}. This remains true if
  $a \in \maS'(\RR^n)$ (that is, if the function $a$ defines a tempered distribution
  on $\RR^n$, in that case ``oscillatory integrals'' may be useful; see
  \cite{Wl-Ro-La} for a pedagogical introduction and \cite{HAbelsBook} for a
  thorough account). In particular, if
  $a(x, \xi) := a_j(\xi) := \imath \xi_j,$ then
  \begin{align*}
    a_j(D)  u(x)  \ede \frac{1}{(2\pi)^n} \int_{\RR^n}\,
    e^{\imath \langle x, \xi \rangle}\,   \imath \xi_j\, \hat u(\xi) \, d\xi
    \seq \frac{\partial u(x)}{\partial x_j} \, =: \, \pa_j  u(x) \,,
  \end{align*}
  because the Fourier transform interchanges multiplication by \m{\imath \xi_j}
  with \m{\pa_j.} Morover, if we denote by $a$ also the multiplication operator
  with $a$, then $a(D) = \maF^{-1}a \maF$. Thus, if both $a$ and $b := a^{-1}$ are in
  $\maS'(\RR^n)$, then
  \begin{equation*}
    a(D) b(D) \seq (\maF^{-1}a \maF) (\maF^{-1}b \maF) \seq \maF^{-1}ab \maF \seq 1\,.
  \end{equation*}
  In particular, if $a(\xi) = 1 + |\xi|^2$, then $a(D) = 1 - \Delta$. Let $b(\xi) = (1 + |\xi|^2)^{-1}$,
  then $a(D)^{-1} = b(D)$, which is the convolution operator with a true function.
\end{example}

The next example combines the first two examples.

\begin{example}
  Combinging the first two examples and using for any multi-index
  $\alpha = (\alpha_1, \alpha_2, \ldots, \alpha_n)\in \ZZ_+^n$,
  \m{|\alpha| := \sum_{j=1}^n \alpha_j,} and
  $$\pa^\alpha u \ede \pa_1^{\alpha_1} \pa_2^{\alpha_2} \ldots \pa_n^{\alpha_n} u
  \seq {  \maF^{-1}} \big( {  (\imath \xi)^\alpha} \hat u\big )\,, \quad
  (\imath \xi)^\alpha \ede \imath^{|\alpha|} \xi_1^{\alpha_1}  \xi_2^{\alpha_2} \ldots
  \xi_n^{\alpha_n}\,, $$ we obtain
  for \m{a(x, \xi) := \sum_{|\alpha| \le m} \, a_{\alpha} (x) \,
  (\imath \xi)^{\alpha},}  (a \emph{polynomial in} $\xi$):
  \begin{align*}   a(x, D)u(x) &
    \ede \frac{1}{(2\pi)^n} \int_{\RR^n} e^{\imath \langle x, \xi \rangle}
    \underbrace{\sum_{ |\alpha| \le m} \, a_{\alpha} (x) \,
    {(\imath \xi)^{\alpha}}}_{a(x, \xi)}\, \hat u(\xi) d\xi \\
    &  \seq \sum_{{  {|\alpha| \le m}}} \, a_{\alpha} (x) \,
    {\pa^{\alpha}} u(x)\,,
  \end{align*}
a \emph{differential operator}.
\end{example}

Let us specify now our choice of symbols.

\begin{notation} \label{not.symbols}
  We shall denote an element of $\RR^{2n}$ by $(x, \xi)$, with $x, \xi \in \RR^{n}$.
  For $a : U \times \RR^{n} \to \CC$, $U \subset \RR^n$ open,
  $\alpha, \beta \in \ZZ_+^n$, and $s \in \RR$, we let
  \begin{enumerate}
    \item $$p_{\alpha, \beta}^{[q]}(a) \ede \sup_{(x, \xi)\in U \times \RR^{n}}
    \left | \frac{\pa_x^\alpha \pa_\xi^\beta a(x, \xi)}{(1+|\xi|)^{q - |\beta|}} \right |
    \quad \mbox{and}$$
    \item $p_{N}^{[q]}(a) := \max_{|\alpha| + |\beta| \le N} p_{\alpha, \beta}^{[q]}(a).$

    \item We then let
    \begin{equation*}
       S_{unif}^{q}(U \times \RR^{n}) \ede \{ a : U \times \RR^{n} \to \CC \mid\
      p_{N}^{[q]}(a) < \infty \mbox{ for all } N \in \ZZ_+ \}\,.
    \end{equation*}
  \end{enumerate}
\end{notation}

This class of symbols was introduced by Kohn and Nirenberg in \cite{KohnNirenberg}.
See \cite{Hormander3} and \cite{Taylor2} for more historical information and for
other classes of symbols.

\begin{remark}\label{rem.symbols}
We think of $U \times \RR^{n}$ as $T^*U$. Similar definitions can be given for any
vector bundle $V \to M$ over a manifold $M$, replacing $\pa_x^\alpha$ with a suitable
covariant derivative and replacing $\pa_\xi^\beta$ with derivatives with respect to
vertical vector fields that are translation invariant in the fibers to obtan the
spaces $S_{unif}^m(V)$. Notice that in this notation we consider $V$ as a vector bundle
over $M$, not simply as a manifold, and hence this space should better be denoted
$S_{unif}(M; V)$ to indicated the additionals structure on which it depends.
Notice also that we consider estimates that are \emph{uniform} in $x$, as in
\cite{HAbelsBook, HintzLN, Hormander3, Taylor2, Wl-Ro-La}. These definitions can be localized on
compact subsets of $U$ as in \cite{H-W}. Also, a more general class of symbols is
$S_{\rho, \delta}^m(\RR^{2n})$ considered by H\"ormander, but we do not need it in
our applications.
\end{remark}

Let $M$ be a smooth manifold. Recall that a linear map \m{T: \CIc (M)\to \CIc(M)'} is
called \emph{continuous} if \m{\langle T\phi , \psi \rangle} is continuous with respect to
\m{\phi \in \CIc(M),} for any \m{\psi \in \CIc (M).}
  It is not difficult to see that, if $a \in  S_{unif}^q(\RR^{2n})$, then
  $a(x, D) : \CIc(\RR^n) \to \CI(\RR^n)$ is well-defined and continuous.
(A stronger statement is included below.)
This proposition shows that \m{a(x,D)} satisfies the hypotheses of the
Schwartz' kernel theorem, which we recall next.
Let \m{\langle \ , \ \rangle : \CIc (M)' \times  \CIc(M) \to \CC} be the
\emph{paring} between distributions and test functions.

\begin{theorem}[Schwartz' kernel theorem]\label{thm.Schwartz.k}
  Let $M$ be a smooth manifold.
  Let \m{T : \CIc (M)\to \CIc(M)'} be linear and continuous.
  Then there exists a unique \m{k_T \in \CIc(M \times M)'}  such that
  \[ \langle T\phi, \psi\rangle  \seq \langle k_T, \psi \boxtimes \phi \rangle \,,\]
  where
  \m{\psi \boxtimes \phi \in \CIc(M^2)} is defined by
  \m{(\psi \boxtimes \phi )(x,y) = \psi (x)\phi (y)\,.}
  The distribution \m{k_T} is called the \emph{distribution kernel} of \m{T.}
\end{theorem}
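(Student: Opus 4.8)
The plan is to restate the assertion in terms of a bilinear form, reduce the existence part to a local model by a partition of unity, and treat the local model by a Fourier series computation on a torus. Write $B(\psi,\phi)\ede\langle T\phi,\psi\rangle$ for $\phi,\psi\in\CIc(M)$. By hypothesis $\phi\mapsto B(\psi,\phi)$ is continuous on $\CIc(M)$ for each fixed $\psi$, and $\psi\mapsto B(\psi,\phi)$ is continuous for each fixed $\phi$ because $T\phi$ is a distribution; so $B$ is separately continuous. We must produce a unique $k_T\in\CIc(M\times M)'$ with $\langle k_T,\psi\boxtimes\phi\rangle\seq B(\psi,\phi)$. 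Uniqueness is the easy half: the linear span of $\{\psi\boxtimes\phi\mid\psi,\phi\in\CIc(M)\}$ is dense in $\CIc(M\times M)$ (a standard fact, proved in a chart by mollification or by truncated Fourier series and globalized by a partition of unity), so any distribution vanishing on all such products vanishes.

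For existence, fix a locally finite cover of $M$ by coordinate charts $U_j$ with bounded image, a subordinate partition of unity $\chi_j\in\CIc(U_j)$, and put $L_j\ede\supp\chi_j$. For each pair $(j,k)$, restrict $B$ to $\CI_{L_k}(M)\times\CI_{L_j}(M)$, the Fr\'echet spaces of smooth functions supported in $L_k$, resp. $L_j$. On a product of Fr\'echet spaces separate continuity implies joint continuity (a standard consequence of the Banach--Steinhaus theorem), so there are constants $C,N$ with $|B(\psi,\phi)|\le C\|\psi\|_{C^N}\|\phi\|_{C^N}$ there. Using the two charts, identify a neighbourhood of $L_k\times L_j$ in $M\times M$ with an open subset of $\RR^n\times\RR^n$ inside which $L_k\times L_j\subset(0,2\pi)^n\times(0,2\pi)^n$, so that $\psi,\phi$ become functions on two copies $\TT^n_1,\TT^n_2$ of the torus; after inserting fixed cut-offs equal to $1$ near $L_k$ and $L_j$, extend $B$ to a jointly continuous bilinear form $\tilde B$ on $\CI(\TT^n_1)\times\CI(\TT^n_2)$. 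Setting $c_{\alpha\beta}\ede\tilde B(e^{\imath\langle\alpha,\cdot\rangle},e^{\imath\langle\beta,\cdot\rangle})$ for $\alpha,\beta\in\ZZ^n$, joint continuity gives the polynomial bound $|c_{\alpha\beta}|\le C'(1+|\alpha|)^N(1+|\beta|)^N$, so that
\[
  \langle\ell_{jk},\Phi\rangle\ede\sum_{\alpha,\beta\in\ZZ^n}c_{\alpha\beta}\,\widehat\Phi(\alpha,\beta)\,,\qquad\Phi\in\CI(\TT^n_1\times\TT^n_2)\,,
\]
converges absolutely and, being dominated by a $C^M$-seminorm of $\Phi$ for $M$ large, defines a distribution on $\TT^n_1\times\TT^n_2$. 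For $\Phi=\psi\boxtimes\phi$ with $\psi\in\CI_{L_k}(M)$ and $\phi\in\CI_{L_j}(M)$ one has $\widehat\Phi(\alpha,\beta)=\widehat\psi(\alpha)\widehat\phi(\beta)$; expanding $\psi,\phi$ in Fourier series and using the continuity of $\tilde B$ to pull the sums out yields $\langle\ell_{jk},\psi\boxtimes\phi\rangle=\tilde B(\psi,\phi)=\langle T\phi,\psi\rangle$. Transporting $(\chi_k\boxtimes\chi_j)\,\ell_{jk}$ back to $M\times M$ and extending it by zero produces $\kappa_{jk}\in\CIc(M\times M)'$ with support in $L_k\times L_j$. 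Since $\{U_j\}$ is locally finite, $k_T\ede\sum_{j,k}\kappa_{jk}$ is a locally finite sum and hence a well-defined element of $\CIc(M\times M)'$; and since for $\phi,\psi\in\CIc(M)$ the expansions $\phi=\sum_j\chi_j\phi$ and $\psi=\sum_k\chi_k\psi$ are finite, we get
\[
  \langle k_T,\psi\boxtimes\phi\rangle\seq\sum_{j,k}\langle\ell_{jk},(\chi_k\psi)\boxtimes(\chi_j\phi)\rangle\seq\sum_{j,k}\langle T(\chi_j\phi),\chi_k\psi\rangle\seq\langle T\phi,\psi\rangle\,.
\]

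The only non-formal inputs are the promotion of separate to joint continuity on Fr\'echet spaces and the legitimacy of interchanging $\tilde B$ with the Fourier series of its arguments; both are routine, the latter because the partial sums converge in $\CI(\TT^n)$. I expect the genuine obstacle to be bookkeeping in the gluing step: one must keep the factors $\chi_j$, $\chi_k$ on the correct sides and check that $\sum_{j,k}\kappa_{jk}$ is locally finite on $M\times M$, which it is. One could instead bypass the Fourier computation by invoking the Schwartz nuclear theorem: $\CIc(M)$ is nuclear, a separately continuous bilinear form on $\CIc(M)\times\CIc(M)$ is jointly continuous, jointly continuous bilinear forms on $E\times F$ are exactly the continuous linear forms on the completed projective tensor product $E\,\widehat\otimes_\pi\,F$, and $\CIc(M)\,\widehat\otimes_\pi\,\CIc(M)\cong\CIc(M\times M)$ topologically since $\CIc(M)$ is nuclear; we prefer the elementary argument above, in keeping with the spirit of this section.
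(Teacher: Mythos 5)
The paper does not prove this statement: it is quoted as the classical Schwartz kernel theorem (background material), with only the remark that the converse direction is the ``easy part.'' So there is nothing in the paper to compare against line by line; what matters is whether your argument stands on its own, and it does. Your proof is the standard Fourier-series proof (essentially H\"ormander, Theorem 5.2.1, adapted to a manifold): reduce to pairs of compact coordinate patches via a locally finite partition of unity, upgrade separate to joint continuity of the bilinear form $B(\psi,\phi)=\langle T\phi,\psi\rangle$ on the Fr\'echet spaces $\CI_{L_k}\times\CI_{L_j}$ by Banach--Steinhaus, transplant to a torus, read off the polynomially bounded coefficients $c_{\alpha\beta}=\tilde B(e_\alpha,e_\beta)$, and assemble the local kernels $\ell_{jk}$ into a locally finite sum. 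Each step is correctly justified: the restriction of the LF-topology of $\CIc(M)$ to $\CI_{L_j}(M)$ is the Fr\'echet topology, so the joint-continuity estimate $|B(\psi,\phi)|\le C\|\psi\|_{C^N}\|\phi\|_{C^N}$ is legitimate; the interchange of $\tilde B$ with the Fourier expansions is covered by convergence of the partial sums in $\CI(\TT^n)$; the cut-offs $\theta$ (equal to $1$ near $L_k$, $L_j$) make $\tilde B$ agree with $B$ on the relevant subspaces, so $\langle\kappa_{jk},\psi\boxtimes\phi\rangle=\langle T(\chi_j\phi),\chi_k\psi\rangle$ as claimed; and local finiteness of $\{U_k\times U_j\}$ in $M\times M$ makes $k_T=\sum\kappa_{jk}$ well defined. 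The uniqueness via density of the span of split tensors is also correct. Your closing observation that one could instead invoke nuclearity of $\CIc(M)$ and $\CIc(M)\,\widehat\otimes_\pi\,\CIc(M)\cong\CIc(M\times M)$ is a valid alternative; the elementary route you chose has the advantage of producing the finite-order local estimates on $k_T$ explicitly, which is in the spirit of how the paper later uses kernels.
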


The converse is also true, in the sense that every \m{K \in \CIc(M^2)'}
corresponds to a continuous linear map \m{T : \CIc(M) \to \CIc(M)'}, namely, \m{K = k_T.}
This is the ``easy part'' of the theorem but will not be necessary to us in what
follows. A natural question that arises then is to
find the kernel $k_{a(x, D)}$ of the operator $a(x, D)$.

\begin{remark}\label{rem.second.mf}
  If in the  \emph{main formula}
  \begin{equation*}
    a(x, D) u(x)  \ede \frac{1}{(2\pi)^n} \int_{\RR^n}\, e^{\imath \langle x,
    \xi \rangle}\, a(x, \xi)
    {\hat u(\xi)} \, d\xi
  \end{equation*}
  we substitute the Fourier transform $\hat u(\xi) \seq \maF u(\xi) =
  \int _{\RR} e^{-i \langle {  y}, \xi \rangle } \, u(y)\, dy,$
  we obtain the  \emph{second main formula} of pseudodifferential operator theory:
  \begin{equation*}
    a(x, D) u(x) \seq \frac{1}{(2\pi)^n} \int_{\RR^n}  \left ( \int_{\RR^n}\,
    e^{\imath \langle x - y, \xi \rangle}\,  a(x, \xi) \, {  u(y) \, dy} \right ) d\xi\,.
  \end{equation*}
\end{remark}

This second main formula allows us to determine the distribution kernel of $a(x,D)$.

\begin{remark} \label{rem.kernel.formula}
  Assume first that $a \in S_{unif}^{-n-\epsilon}(\RR^{2n})$, $\epsilon > 0$, and let
  $$K(x, y) := \frac{1}{(2\pi)^n} \int_{\RR^n} e^{\imath \langle x -y, \xi \rangle}
  a(x, \xi) d\xi,$$ which is a continuous function of $(x, y)$ by the continuity of
  integrals depending on parameters by bounding with the integrable function $(1+|\xi|)^{-n-\epsilon}$.
  The second main formula (see Remark \ref{rem.second.mf}) gives
  \begin{align*}
    \langle a(x, D) u, v \rangle & =  \int_{\RR^n} \left ( \frac{1}{(2\pi)^n} \int_{\RR^n} \int_{\RR^n}
    e^{\imath \langle x-y, \xi \rangle} a(x, \xi) \, u(y) dy {  d\xi} \right ) \,  v(x) dx\\
    &  = \int_{\RR^n} \int_{\RR^n} \, K(x, y)  v(x) u(y) \, dy dx\\
    &  = \langle K , v \boxtimes u \rangle \,,
  \end{align*}
  using an integration with respect to $\xi.$ Therefore
  \begin{equation}\label{eq.kaxd}
    k_{a(x, D)}(x, y) \seq K(x, y) \ede
    \frac{1}{(2\pi)^n} \int_{\RR^n} e^{\imath \langle x -y, \xi \rangle}
    a(x, \xi) d\xi\,.
  \end{equation}
  In general, this formula for $k_{a(x, D)}(x, y)$
  must be interpreted in distribution sense, as the Fourier transform of a temperate
  distribution (this can be done using oscillatory integrals, as in \cite{HAbelsBook, Wl-Ro-La},
  for instance).
\end{remark}

We obtain the following.

\begin{proposition} \label{prop.kernels}
  Let $m \in \RR$ and $a \in S_{unif}^m(\RR^{2n}).$
  \begin{enumerate}
    \item The distribution $k_{a(x, D)}$ is defined by a smooth function
    on $\{(x,y) \in \RR^{2n} \mid x \neq y\}$
    (that is, away from the diagonal).

    \item If $k_{a(x, D)} = 0$, then $a = 0$.

    \item Consequently, $k_{a(x, D)}$ determines $a$ and hence
    the map $a(x, D) : \CIc(\RR^n) \to \CI(\RR^n)$
    also determines $a$.
  \end{enumerate}
\end{proposition}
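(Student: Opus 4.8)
The plan is to prove the three parts in order, deriving (2) and (3) from (1) together with a Fourier-analytic uniqueness argument. The main point for (1) is that the off-diagonal smoothness of $k_{a(x,D)}$ does not require $a$ to be of negative order: one reduces to the case already handled in Remark \ref{rem.kernel.formula} by differentiating under the integral sign. Concretely, for a multi-index $\gamma$ one has $(x-y)^\gamma e^{\imath\langle x-y,\xi\rangle} = (-\imath\pa_\xi)^\gamma e^{\imath\langle x-y,\xi\rangle}$, so integrating by parts in $\xi$ in the oscillatory-integral expression \eqref{eq.kaxd} gives
\begin{equation*}
  (x-y)^\gamma k_{a(x,D)}(x,y) \seq \frac{1}{(2\pi)^n}\int_{\RR^n}
  e^{\imath\langle x-y,\xi\rangle}\, \imath^{|\gamma|}\pa_\xi^\gamma a(x,\xi)\, d\xi\,,
\end{equation*}
and $\pa_\xi^\gamma a \in S_{unif}^{m-|\gamma|}(\RR^{2n})$ by Notation \ref{not.symbols}. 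Similarly, applying $\pa_x^\alpha\pa_y^\beta$ brings down factors of $\xi$ of total degree $|\alpha|+|\beta|$ and produces a symbol in $S_{unif}^{m+|\alpha|+|\beta|-|\gamma|}$. Choosing $|\gamma|$ large enough (depending on $m, \alpha, \beta$) makes the resulting symbol of order $< -n$, hence the integral converges absolutely and, by continuity of integrals depending on parameters (the same argument as in Remark \ref{rem.kernel.formula}), defines a continuous function of $(x,y)$. Thus $(x-y)^\gamma \pa_x^\alpha\pa_y^\beta k_{a(x,D)}$ is continuous for all $\alpha,\beta$ once $|\gamma|$ is large; on the open set $\{x\neq y\}$ one may divide by the nonvanishing smooth factor $(x-y)^\gamma$, and since $\alpha,\beta$ were arbitrary this shows $k_{a(x,D)}$ is $\CI$ away from the diagonal.

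For (2), suppose $k_{a(x,D)} = 0$. Comparing with formula \eqref{eq.kaxd}, for each fixed $x$ the distribution $\xi \mapsto a(x,\xi)$ has vanishing inverse Fourier transform in the variable $z = x-y$; equivalently $a(x,\cdot)$ is the (distributional) Fourier transform of the zero distribution, hence $a(x,\cdot) = 0$ for every $x$, so $a = 0$. One should phrase this cleanly using the interpretation of \eqref{eq.kaxd} as an identity of tempered distributions in $z$ for each $x$, as indicated at the end of Remark \ref{rem.kernel.formula}: the map $a(x,\cdot)\mapsto \big(z\mapsto (2\pi)^{-n}\int e^{\imath\langle z,\xi\rangle}a(x,\xi)\,d\xi\big)$ is (up to reflection and normalization) the Fourier transform on $\maS'(\RR^n)$, which is a bijection. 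Part (3) is then immediate: by (2) the assignment $a \mapsto a(x,D)$ is injective, and since $a(x,D)$ is determined by its distribution kernel via Schwartz' kernel theorem (Theorem \ref{thm.Schwartz.k}), the kernel $k_{a(x,D)}$ determines $a$, and a fortiori the operator $a(x,D):\CIc(\RR^n)\to\CI(\RR^n)$ determines $a$.

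The step I expect to require the most care is the rigorous justification of the integration-by-parts manipulation in (1) and the distributional reading of \eqref{eq.kaxd} in (2) when $m \ge -n$, since then the defining integral does not converge absolutely and must be handled as an oscillatory integral. The clean way to do this is to first establish the identity for Schwartz symbols (or symbols of order $< -n-|\alpha|-|\beta|$), where everything is classical, and then pass to general $a \in S_{unif}^m$ by the standard regularization $a_\varepsilon(x,\xi) := \chi(\varepsilon\xi)a(x,\xi)$ with $\chi\in\CIc(\RR^n)$, $\chi\equiv 1$ near $0$: the $a_\varepsilon$ are bounded in $S_{unif}^{m'}$ for any $m' > m$ and converge to $a$ pointwise, which suffices to pass to the limit in the (now absolutely convergent, after enough $\xi$-integrations by parts) kernel formulas. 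This is routine oscillatory-integral bookkeeping of the kind already invoked in Remark \ref{rem.kernel.formula}, and I would simply cite \cite{HAbelsBook, Wl-Ro-La} for it rather than reproduce the details.
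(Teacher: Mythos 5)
Your proof is correct and follows essentially the same route as the paper: the paper handles (1) by citing the statement that $(x-y)^\alpha k_{a(x,D)}\in\maC^j(\RR^{2n})$ for $|\alpha|$ large (Proposition 2.1 in Taylor), whose proof is precisely the integration-by-parts-in-$\xi$ argument you write out, and it obtains (2) from Equation \eqref{eq.kaxd} by Fourier inversion, exactly as you do. Your added remarks on regularizing the symbol to justify the oscillatory-integral manipulations are the standard bookkeeping the paper implicitly delegates to the cited references.
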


\begin{proof}
  The stronger statement that $(x-y)^\alpha k_{a(x, D)}(x,y) \in \maC^j(\RR^{2n})$
  for $|\alpha|$ large is contained in the proof of Proposition 2.1 in \cite{Taylor2}
  (this result holds true also for suitable symbols in the H\"ormander class
  $S_{\rho, \delta}^m$). The second point is obtained from Equation \eqref{eq.kaxd}
  by the Fourier inversion formula.
\end{proof}

The following remark explains ``what is lost'' by looking only at the distribution
kernel on $\{(x,y)\mid x \neq y\}.$ It will not be needed for any proof.

\begin{remark}\label{rem.Peetre}
If $a, b \in S_{unif}^q(\RR^{2n})$ and $k_{a(x, D)} (x, y) = k_{b(x, D)} (x, y)$ for
$x \neq y$, then $a(x, D) - b(x, D)$ has distribution kernel supported on the diagonal
$(x, x) \subset \RR^{2n}$ and hence it is a differential operator, by Peetre's theorem.
In particular, the function $k_{a(x, D)}$ away from the diagonal determines any operator
of negative order.
\end{remark}

Let us now discuss the mapping properties of our operators. Recall that
\begin{equation*}
  H^{s}(\RR^n) \ede \Big  \{u \, \Big | \ \int_{\RR^n} (1 + |\xi|)^{2s}
  |\hat u(\xi)|^2\, d\xi < +\infty \, \Big  \}\,,\quad s \in \RR\,.
\end{equation*}
In particular, if $m \in \ZZ_+$,
\begin{equation*}
  H^{m}(\RR^n) \ede \big  \{u \, \big | \ \pa^\alpha u \in L^2(\RR^n)
  \ \mbox{ for all }\ |\alpha| \le {m}\, \big \} \,.
\end{equation*}
These definitions coincide, of course, with our previous definitions of
Subsection \ref{ssec.Sobolev}. We let
\begin{equation}\label{eq.def.PsiRn}
  \Psi_{unif}^m(\RR^n) \ede \{ a(x, D) \mid a \in   S_{unif}^{m}(\RR^{2n})\}
\end{equation}
and (for some fixed $p \in [1, \infty]$), we let
$$\maS(\RR^n) := \{ u : \RR^n \to \CC \mid x^\alpha \pa^\beta u \in L^p(\RR^n),\,
\forall\, \alpha ,\beta \in \ZZ_+^n\},$$
be the usual \emph{Schwartz space} on $\RR^n$ with the induced norms (the resulting
space does not depend on $p \in [1, \infty]$, but its semi-norms do).

\begin{theorem}\label{thm.mapping.properties}
  Let $s, m \in \RR$ and $\Psi_{unif}^m(\RR^n)$ be as in Equation \eqref{eq.def.PsiRn}.
  \begin{enumerate}
    \item The map $S_{unif}^m(\RR^{2n}) \times \maS(\RR^n) \ni (a, u) \to
    a(x, D)u \in \maS(\RR^n)$ is well defined and continuous.

    \item If \m{a \in S_{unif}^m(\RR^{2n})} and \m{b \in S_{unif}^{m'}(\RR^{2n}),} then
    \m{ab \in   S_{unif}^{m+m'}(\RR^{2n})} and
    $$b(x, D)a(x, D) - (ab)(x, D) = c(x, D) \in \Psi_{unif}^{m+m'-1}(\RR^n)\,.$$
    Moreover, for any $N \in \ZZ_+$, there is $C_N > 0$ such that (see \ref{not.symbols})
    $$p_N^{[m+m'-1]} (c) \le C_N
    \sum_{j=1}^n  \big[ p_N^{[m]}(a) p_N^{[m'-1]}(\pa_{\xi_j} b)
    + p_N^{[m-1]}(\pa_{\xi_j}a) p_N^{[m']}(b) \big] \,.$$
    In particular, if \m{P \in \Psi_{unif}^m(\RR^n)} and \m{Q \in \Psi_{unif}^{m'}(\RR^n)}, then
    \m{PQ \in \Psi_{unif}^{m+m'}(\RR^n)}.

    \item \m{P = a(x, D): H^s(\RR^n) \to H^{s-m}(\RR^n)} is bounded and its norm depends
    continuously on \m{a \in S_{unif}^m(\RR^{2n})}.

    \item Similarly, if
    $a(x, D) \in \Psi_{unif}^{m}(U)$, then $a(x, D)^* - \overline{a}(x, D) \in \Psi_{unif}^{m-1}(U)$.
    In particular, if $P \in \Psi_{unif}^m(\RR^n)$, then $P^* \in \Psi_{unif}^m(\RR^n)$.
  \end{enumerate}
\end{theorem}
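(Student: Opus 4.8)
The plan is to prove all four parts by reducing to the Euclidean oscillatory-integral machinery, since every claim is really a statement about the Kohn--Nirenberg symbol classes $S_{unif}^m(\RR^{2n})$ and the ``main formula'' quantization. For part (1), the key point is that $a(x,D)u = \maF^{-1}\big(a(x,\cdot)\hat u\big)$ after the substitution $\hat u(\xi) = \maF u(\xi)$, and that for $u \in \maS(\RR^n)$ the product $a(x,\xi)\hat u(\xi)$ decays faster than any polynomial in $\xi$ while remaining smooth in $x$; differentiating under the integral sign and commuting $x^\alpha$ with $e^{\imath\langle x,\xi\rangle}$ via integration by parts in $\xi$ shows $x^\alpha \pa_x^\beta a(x,D)u \in L^\infty$, and a standard refinement (dividing by $(1+|\xi|)$ before integrating) upgrades this to $L^p$. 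The continuity in the pair $(a,u)$ follows because each Schwartz semi-norm of $a(x,D)u$ is bounded by a finite product of a semi-norm $p_N^{[m]}(a)$ and a Schwartz semi-norm of $u$.

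For part (2), the symbol estimate $p_N^{[m+m']}(ab) < \infty$ is immediate from the Leibniz rule. The composition formula is the heart of the matter: writing $b(x,D)a(x,D)$ via the second main formula, one gets $\big(b(x,D)a(x,D)u\big)(x)$ as an iterated oscillatory integral, and applying Fubini (justified by oscillatory-integral regularization) and the Fourier inversion formula yields that $b(x,D)a(x,D) = c_0(x,D)$ where $c_0(x,\xi)$ has the asymptotic expansion $c_0 \sim \sum_\alpha \frac{1}{\alpha!}\pa_\xi^\alpha b(x,\xi)\, \pa_x^\alpha a(x,\xi)$ (with $D_x = -\imath\pa_x$ factors in the standard normalization). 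The leading term is $ab$, and the remainder $c = c_0 - ab$ lies in $S_{unif}^{m+m'-1}$. The explicit bound on $p_N^{[m+m'-1]}(c)$ in terms of $\pa_{\xi_j}$-derivatives of $a$ and $b$ comes from the integral form of the Taylor remainder in the stationary-phase/Kuranishi argument: one writes $c(x,\xi) = \sum_{j=1}^n \int_0^1 (\text{oscillatory integral involving }\pa_{\xi_j}b\text{ and }\pa_{x_j}a) \,dt$ and estimates each piece by the product of the indicated semi-norms, using that integration by parts in the phase converts decay in $\xi$ into the factors $(1+|\xi|)^{-|\beta|}$. I would cite \cite{HAbelsBook}, \cite{Taylor2}, or \cite{Hormander3} for the clean version of this computation rather than reproduce it.

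For part (3), the $L^2$-boundedness of $a(x,D) : H^s \to H^{s-m}$ reduces, by conjugating with the invertible operators $(1+|D|^2)^{\pm s/2}$ and $(1+|D|^2)^{(s-m)/2}$ (whose symbols $(1+|\xi|^2)^{\pm s/2} \in S_{unif}^{\pm s}$) and using part (2), to the case $s = m = 0$, i.e. the Calder\'on--Vaillancourt theorem: $a \in S_{unif}^0(\RR^{2n})$ implies $a(x,D) : L^2 \to L^2$ is bounded with norm controlled by finitely many semi-norms $p_N^{[0]}(a)$. That $N$ may be taken independent of $a$ gives the continuous dependence of the operator norm on $a$. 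For part (4), the adjoint $a(x,D)^*$ is again a pseudodifferential operator with symbol having the asymptotic expansion $\sum_\alpha \frac{1}{\alpha!}\pa_\xi^\alpha D_x^\alpha \overline{a(x,\xi)}$, whose leading term is $\overline a(x,\xi)$; hence $a(x,D)^* - \overline a(x,D) \in \Psi_{unif}^{m-1}(U)$, and since $\overline a \in S_{unif}^m$, we get $P^* \in \Psi_{unif}^m(\RR^n)$. This is formally the same oscillatory-integral manipulation as in part (2) applied to $\langle a(x,D)u, v\rangle = \langle u, a(x,D)^* v\rangle$.

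The main obstacle is the composition formula in part (2), specifically obtaining the \emph{explicit} semi-norm bound on $c$ rather than just membership $c \in S_{unif}^{m+m'-1}$. The membership alone is standard, but tracking constants through the Taylor-with-remainder argument and the repeated integrations by parts requires care to see that only first-order $\xi$-derivatives of the symbols (and the full ladder of $x$-derivatives) enter, and that the two symmetric terms $p_N^{[m]}(a)p_N^{[m'-1]}(\pa_{\xi_j}b)$ and $p_N^{[m-1]}(\pa_{\xi_j}a)p_N^{[m']}(b)$ arise from splitting the remainder so as to balance the derivative load between the two factors. Everything else is routine, modulo invoking Calder\'on--Vaillancourt as a black box for part (3).
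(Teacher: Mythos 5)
Your proposal is correct and follows the same standard route the paper relies on: the paper's own ``proof'' simply cites Abels \cite{HAbelsBook}, Hintz \cite{HintzLN}, and H\"ormander \cite{Hormander3} for exactly the oscillatory-integral, composition-expansion, and $L^2$-boundedness arguments you sketch (the only cosmetic difference being that the paper invokes H\"ormander's square-root trick where you invoke Calder\'on--Vaillancourt for the $L^2$ case, and both give the continuous dependence of the operator norm on the symbol semi-norms). No gap.
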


\begin{proof} Most of these results are standard. We include only some
  references to the less standard material. The point (1) is a particular case
  of Theorem 3.6. in \cite{HAbelsBook}. See also Proposition 4.4 in \cite{HintzLN}.
  The norm estimate in (2) follows from Proposition 3.5 and Theorems 3.15 and 3.16
  in \cite{HAbelsBook}. (Proposition 3.5 and Theorem 4.8 and 4.16 from \cite{HintzLN}
  are also relevant in this regard.) The continuity between the Sobolev spaces is
  of course very well-known, the slightly stronger statement provided here is a
  consequence of the proof of continuity on $L^2(\RR^n)$ using H\"ormander's
  trick \cite{Hormander3} in \cite{HAbelsBook, HintzLN}, using also point (2).
\end{proof}

Let $S_{unif}^m/S_{unif}^{m-1}(\RR^{2n}) := S_{unif}^m(\RR^{2n})/S_{unif}^{m-1}(\RR^{2n})$
and
\begin{equation}
  \sigma_m : \Psi_{unif}^m(\RR^n) \to S_{unif}^m/S_{unif}^{m-1}(\RR^{2n})\,,
  \quad \sigma_m(a(x, D)) \ede a +  S_{unif}^{m-1}(\RR^{2n})\,,
\end{equation}
which is well-defined by Proposition \ref{prop.kernels}. Theorem \ref{thm.mapping.properties}(2)
then gives right away that $\sigma_{m+m'}(QP) = \sigma_{m'}(Q)\sigma_{m}(P).$

We shall need also a ``local form'' of the last theorem. Let \m{U \subset \RR^n} be open and let
\begin{equation*}
  \begin{gathered}
    \Psi_{comp}^{m}(U)  \ede \{a(x, D) \mid\, \supp {  k_{a(x,D)}} \Subset U \times U \}
    \quad \mbox{and}\\[1mm]
    S_{comp}^{m}(T^*U)  \ede \{a \in   S_{unif}^{m}(\RR^{2n}) \mid \supp a \subset K \times \RR^n\,,\
    K \Subset U\}\,,
  \end{gathered}
\end{equation*}
where $A \Subset B$ means that the closure of $A$ is a compact subset of the interior of
$B$. Remark \ref{rem.kernel.formula} shows that \dm{a(x, D) \in \Psi_{comp}^m(U)
\Rightarrow a \in S_{comp}^{m}(T^*U) \subset   S_{unif}^m(\RR^{2n})}
and that every $P \in \Psi_{comp}^{m}(U)$ maps $\CIc(U)$ to itself. In particular,
we can compose these actions.

\begin{theorem} \label{thm.diffeo.inv1}
  For \m{U \subset \RR^n} an open subset, $\Psi_{comp}^{\infty}(U) :=
  \cup_{m \in \ZZ} \Psi_{comp}^{m}(U)$ is an algebra invariant under
  diffeomorphisms and under adjoints.
\end{theorem}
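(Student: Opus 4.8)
The plan is to reduce each of the three assertions to the corresponding fact for the larger class $\Psi_{unif}^m(\RR^n)$ (Theorem \ref{thm.mapping.properties}) and then to check by an elementary computation that the support condition on distribution kernels is preserved. Throughout I use the observation recorded just before the statement: an element of $\Psi_{comp}^m(U)$ has a symbol lying in $S_{unif}^m(\RR^{2n})$ (extend it by zero off a compact subset of $U$), and it maps $\CIc(U)$ to itself, so that compositions make sense. I will treat the algebra structure and the adjoints first, since they are quick, and then the diffeomorphism invariance, which is the one substantial point.

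For the algebra structure, closure under linear combinations is immediate (sums of kernels have union of supports). Given $P = a(x,D) \in \Psi_{comp}^m(U)$ and $Q = b(x,D) \in \Psi_{comp}^{m'}(U)$, Theorem \ref{thm.mapping.properties}(2) applied to the zero-extended symbols gives $PQ \in \Psi_{unif}^{m+m'}(\RR^n)$; and since $k_P, k_Q$ are compactly supported, the kernel of the composition satisfies the standard inclusion $\supp k_{PQ} \subseteq \pi_1(\supp k_P) \times \pi_2(\supp k_Q)$, where $\pi_1, \pi_2 : \RR^n \times \RR^n \to \RR^n$ are the coordinate projections; both factors are compact subsets of $U$, so $\supp k_{PQ} \Subset U \times U$ and $PQ \in \Psi_{comp}^{m+m'}(U)$. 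For the adjoint, Theorem \ref{thm.mapping.properties}(4) gives $P^* \in \Psi_{unif}^m(\RR^n)$, and since $k_{P^*}(x,y) = \overline{k_P(y,x)}$ the support of $k_{P^*}$ is the image of $\supp k_P$ under the flip $(x,y) \mapsto (y,x)$, again a compact subset of $U \times U$; hence $P^* \in \Psi_{comp}^m(U)$.

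For diffeomorphism invariance, let $\chi : U \to V$ be a diffeomorphism of open subsets of $\RR^n$, write $\phi := \chi^{-1}$, and for $P : \CIc(U) \to \CIc(U)$ set $(\chi_* P)v := \big(P(v \circ \chi)\big) \circ \phi$. The change-of-variables formula gives the kernel
\[
  k_{\chi_* P}(y,y') \seq k_P\big(\phi(y), \phi(y')\big)\, \big|\det D\phi(y')\big|\,,
\]
so if $P \in \Psi_{comp}^m(U)$ then $\supp k_{\chi_* P} \Subset V \times V$; it remains to show $\chi_* P \in \Psi_{unif}^m(V)$. I would localize near the diagonal: fix $\chi_0 \in \CIc(\RR^n)$ with $\chi_0 \equiv 1$ near the origin and support so small that, on the relevant compact set, the factorization $\phi(y) - \phi(y') = \Phi(y,y')(y-y')$, with $\Phi(y,y') := \int_0^1 D\phi\big(y' + t(y-y')\big)\,dt$ smooth and $\Phi(y,y) = D\phi(y)$ invertible, has $\Phi(y,y')$ invertible whenever $\chi_0(\phi(y)-\phi(y')) \neq 0$. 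Split $P = B + R$, where $B$ is the operator with amplitude $\chi_0(x-y)\,a(x,\xi)$ and $R$ is the operator with kernel $\big(1 - \chi_0(x-y)\big)k_P(x,y)$. By Proposition \ref{prop.kernels}(1) the kernel of $R$ is smooth, and it is compactly supported, so $R$ and (by the displayed formula) $\chi_* R$ are smoothing operators with compactly supported kernels; any such operator is $c(y,D)$ with $c(y,\cdot)$ the Fourier transform of the kernel in the second variable, so $c \in S_{unif}^{m'}(\RR^{2n})$ for every $m'$, and in particular $\chi_* R \in \Psi_{comp}^m(V)$. For the main term, substituting $\eta = \Phi(y,y')^\top \xi$ in the oscillatory integral for $k_{\chi_* B}$ exhibits $\chi_* B$ as the operator with amplitude
\[
  \tilde a(y,y',\eta) \seq \chi_0\big(\phi(y) - \phi(y')\big)\, a\big(\phi(y), (\Phi(y,y')^\top)^{-1}\eta\big)\, \big|\det \Phi(y,y')\big|^{-1}\, \big|\det D\phi(y')\big|\,,
\]
which by the choice of $\chi_0$ is a compactly supported amplitude in $S_{unif}^m$. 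Reducing $\tilde a$ to an honest left symbol by the standard asymptotic expansion $\tilde a(y,\eta) \sim \sum_\alpha \tfrac{1}{\alpha!}\pa_\eta^\alpha \pa_{y'}^\alpha \tilde a(y,y',\eta)\big|_{y'=y}$ (see \cite{HAbelsBook, HintzLN, Hormander3, Taylor2}) yields $\chi_* B \in \Psi_{unif}^m(V)$, hence $\chi_* P = \chi_* B + \chi_* R \in \Psi_{unif}^m(V)$, and together with the kernel-support computation above, $\chi_* P \in \Psi_{comp}^m(V)$.

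I expect the main obstacle to be exactly this last step: the change of variables in the oscillatory integral near the diagonal and the reduction of double symbols (amplitudes) to ordinary symbols. No new idea is needed here — this is the classical computation underlying the coordinate invariance of pseudodifferential operators, and we need it only for the uniform H\"ormander classes $S_{unif}^m$, for which it is available in the references cited for Theorem \ref{thm.mapping.properties}. The only genuine care required is in tracking that all supports stay compact in $U \times U$ (resp. $V \times V$), so as to land back in $\Psi_{comp}$ rather than merely in $\Psi_{unif}$.
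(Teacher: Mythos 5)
Your argument is correct. Note that the paper itself does not actually prove this theorem: the composition and adjoint claims are left as consequences of Theorem \ref{thm.mapping.properties} plus the kernel--support bookkeeping you spell out, and the diffeomorphism invariance is explicitly deferred to the classical literature --- Remark \ref{rem.precise.diffeo} cites \cite{HormanderB16, KohnNirenberg} for the key fact $\phi_{*}\circ a(x,D)\circ\phi_{*}^{-1}-(\phi_*a)(x,D)\in\Psi_{comp}^{m-1}(W)$ and mentions Taylor's alternative proof via Egorov's theorem. What you supply is precisely the standard direct proof of that classical fact: the inclusions $\supp k_{PQ}\subseteq \pi_1(\supp k_P)\times\pi_2(\supp k_Q)$ and $\supp k_{P^*}=$ (flip of $\supp k_P$) are the elementary bookkeeping that keeps everything in $\Psi_{comp}$, and the Kuranishi factorization together with the near-diagonal/off-diagonal splitting and the amplitude-to-left-symbol reduction is exactly the argument in the references cited around Theorem \ref{thm.mapping.properties}. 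Two cosmetic points: the asymptotic expansion should carry the factors $(-\imath)^{|\alpha|}$ (i.e. $D_{y'}^{\alpha}$ rather than $\pa_{y'}^{\alpha}$), and the factorization $\phi(y)-\phi(y')=\Phi(y,y')(y-y')$ via the integral of $D\phi$ along the segment requires $|y-y'|$ small enough, uniformly on the compact support of $k_{\chi_*P}$, that the segment stays in the domain --- both of which your choice of $\chi_0$ already accommodates. So the proof is sound; it is the same classical route the paper relies on, written out rather than cited.
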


The adjoints are taken with respect to the usual metric on $\RR^n$.
Let us make the diffeomorphism invariance more precise.

\begin{remark}\label{rem.precise.diffeo}
Let us consider in Theorem \ref{thm.diffeo.inv1} a diffeomorphism \m{\phi : U \to W} of
two open subsets of $\RR^n$. Then $\phi$ induces:
\begin{enumerate}
  \item a vector bundle isomorphism
  \m{\phi^* : T^*W \to T^*U,} where $$\phi^*(x, \xi) = (\phi^{-1}(x), (d\phi)^\top\xi)$$
  is the action  of $\phi$ between the cotangent bundles (with $L^\top$ being
  the \emph{transpose}, or dual, of $L$);
  \item a linear isomorphism \m{\phi_* : \CIc(U) \to \CIc(W),} where
  $\phi_*u := u \circ \phi^{-1}$; and
  \item a linear isomorphism \m{\phi_* : S_{comp}^m(T^*U) \to S_{comp}^m(T^*W),} where
  $\phi_*a = a \circ \phi^{*}$.
\end{enumerate}
Let \m{a(x, D) \in \Psi_{comp}^m(U)}, then \m{\phi_*a = a \circ \phi^{*} \in S_{comp}^{m}(T^*W)} and
a very important result \cite{HormanderB16, KohnNirenberg} is that
\begin{equation*}
  \phi_{*} \circ a(x, D) \circ \phi_{*}^{-1} - (\phi_*a)(x, D) \in \Psi_{comp}^{m-1}(W)\,.
\end{equation*}
Consequently, $\phi_{*} \circ a(x, D) \circ \phi_{*}^{-1} \in \Psi_{comp}^{m}(W)$ and
this gives the diffeomorphism invariance of the spaces $\Psi_{comp}^{m}(U)$ with $U \subset \RR^n$
open. (See \cite{Hormander3} for historical comments and \cite{Taylor2} for an inventive proof
of the diffeomorphism invariance using Egorov's theorem).
\end{remark}

Let \m{\phi : U \to W} be a diffeomorphism, as before. Then a consequence of the
above discussion is that
\begin{equation*}
  \xymatrix{
  S_{comp}^{m}(T^{*}U) \ni a & \phi_*(a) \in S_{comp}^{m}(T^{*}W)\\
  {  \Psi_{comp}^{m}(U)} \ni a(x, D) & \ \phi_{*}a(x, D)\phi_{*}^{-1} \in   \Psi_{comp}^{m}(W)
  \ar^{\phi_*}"1,1";"1,2"
  \ar^{}"1,1";"2,1"
  \ar^{}"1,2";"2,2"
  \ar^{}"2,1";"2,2"}
\end{equation*}
commutes up to  \emph{lower order symbols.}
This allows to obtain the local definition of the principal symbol on $\RR^n$.

\begin{theorem} \label{thm.diffeo.inv2}
  If $a \in  S_{unif}^m(\RR^{2n})$ is such that \m{a(x, D) \in \Psi_{comp}^m(U)}, then the
  \emph{principal symbol}
  $$\sigma_m(a(x,D)) \ede a +   S_{unif}^{m-1}(T^*U) \in   S_{unif}^{m}(T^*U)/  S_{unif}^{m-1}(T^*U)$$
  is well-defined and diffeomorphism invariant in the sense that
  $$ \sigma_m(\phi_{*}a(x, D)\phi_{*}^{-1}) \seq \phi_*(\sigma_m(a(x, D))) \ede
  \sigma_m(a(x, D)) \circ \phi^*\,.$$
\end{theorem}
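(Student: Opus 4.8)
The plan is to reduce the statement to the composition and diffeomorphism-invariance results already established (Theorem \ref{thm.diffeo.inv1} and Remark \ref{rem.precise.diffeo}) together with the well-definedness of the symbol map $\sigma_m$ coming from Proposition \ref{prop.kernels}. The first observation is that $\sigma_m(a(x,D))$ is well-defined: if $a(x,D) = b(x,D)$ with $a, b \in S_{unif}^m$ having support in $K \times \RR^n$ for $K \Subset U$, then $(a-b)(x,D) = 0$, so by Proposition \ref{prop.kernels}(2) we get $a = b$; in fact we only need $a - b \in S_{unif}^{m-1}$, which is even weaker. Hence the class $a + S_{unif}^{m-1}(T^*U)$ depends only on the operator $a(x,D) \in \Psi_{comp}^m(U)$, not on the representing symbol. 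So the map $\sigma_m : \Psi_{comp}^m(U) \to S_{unif}^m(T^*U)/S_{unif}^{m-1}(T^*U)$ is well-defined.

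Next I would establish diffeomorphism invariance. Let $\phi : U \to W$ be a diffeomorphism and $a(x,D) \in \Psi_{comp}^m(U)$. By Remark \ref{rem.precise.diffeo}, $\phi_* a = a \circ \phi^* \in S_{comp}^m(T^*W)$ and
\begin{equation*}
  \phi_* \circ a(x,D) \circ \phi_*^{-1} - (\phi_* a)(x, D) \in \Psi_{comp}^{m-1}(W)\,,
\end{equation*}
so in particular the left-hand operator lies in $\Psi_{comp}^m(W)$ and, by the already-established well-definedness of $\sigma_m$ over $W$, it has the same principal symbol as $(\phi_* a)(x,D)$. Therefore
\begin{equation*}
  \sigma_m\big(\phi_* \circ a(x,D) \circ \phi_*^{-1}\big) \seq \sigma_m\big((\phi_* a)(x,D)\big) \seq \phi_* a + S_{unif}^{m-1}(T^*W) \seq a \circ \phi^* + S_{unif}^{m-1}(T^*W)\,,
\end{equation*}
which is exactly $\phi_*(\sigma_m(a(x,D)))$ by the definition $\phi_*(\sigma_m(a(x,D))) = \sigma_m(a(x,D)) \circ \phi^* = (a + S_{unif}^{m-1}(T^*U)) \circ \phi^*$. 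One small point to check is that the pullback $\phi^*$ on cotangent bundles carries $S_{unif}^{m-1}(T^*U)$ isomorphically onto $S_{unif}^{m-1}(T^*W)$ so that the quotient map descends; this is part (3) of Remark \ref{rem.precise.diffeo} applied with order $m-1$, using that $d\phi$ and its inverse are bounded with bounded derivatives on the relevant compact sets (the supports are compactly contained in $U$ and $W$).

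The main obstacle, such as it is, is not in the present statement but is imported: it is the nontrivial fact quoted in Remark \ref{rem.precise.diffeo} that conjugating $a(x,D)$ by a diffeomorphism agrees with $(\phi_* a)(x,D)$ up to an operator of order $m-1$ — the classical Kohn--Nirenberg/H\"ormander change-of-variables result for pseudodifferential operators. Granting that, the proof here is essentially bookkeeping: checking that the support conditions place us inside $\Psi_{comp}$ so that Proposition \ref{prop.kernels} applies, and matching the two descriptions of $\phi_*$ acting on the symbol quotient. I would also remark that the identity $\sigma_{m+m'}(QP) = \sigma_{m'}(Q)\sigma_m(P)$ noted after Theorem \ref{thm.mapping.properties} combined with this diffeomorphism invariance is exactly what is needed to glue the local principal symbols into a globally defined principal symbol on a manifold, which is presumably the next step in the paper.
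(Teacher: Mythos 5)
Your proposal is correct and follows essentially the route the paper intends: the paper states this theorem without a separate proof, deriving it directly from the change-of-variables result quoted in Remark \ref{rem.precise.diffeo} (conjugation by $\phi_*$ agrees with $(\phi_*a)(x,D)$ modulo $\Psi_{comp}^{m-1}(W)$) together with the injectivity of $a \mapsto a(x,D)$ from Proposition \ref{prop.kernels}, exactly as you do. Your additional checks (that the support conditions keep everything in $\Psi_{comp}$, and that $\phi^*$ respects the order filtration on symbols) are the right bookkeeping and are consistent with the paper's setup.
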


One can then obtain the definition of the principal symbol on $\Psi_{unif}^m(\RR^n)$,
but we prefer to do it directly in the case of non-compact manifolds. For $\eta \in \RR^n$,
we let $e_\eta : \RR^n \to \CC$,
$e_\eta(x) := e^{\imath \langle x, \eta \rangle}$. The behavior of
the principal symbol can be obtained from the formula
\begin{equation}\label{eq.osc.testing}
  [a(x,D)e^{\eta}](x) \seq e^{\imath \langle x, \eta \rangle} a(x, \eta)\,,
\end{equation}
for suitable $a$, for instance, for $a \in S_{unif}^m(\RR^{2n})$ an order $m$ symbol
such that $a(x, D)$ with compactly supported distribution
kernel (this formula is stated in the proof of \cite{Hormander3} and in Equation (3.32) in
\cite{HAbelsBook}, but see also Equation (2.53) in \cite{HintzLN}).

\subsection{Pseudodifferential operators on manifolds}
The material of this section is completely standard, so we do not include proofs
or references, but, in addition to the textbooks mentioned above, see also
Seeley's paper \cite{SeeleyManifolds}.
Let \m{M} be a  \emph{smooth manifold,} assumed Hausdorff and paracompact,
for simplicity, as before. Let also \m{U \subset M } be open and let
\m{\phi : U \simeq W \subset \RR^n} be a
diffeomorphism. We use the notation of Remark \ref{rem.precise.diffeo}. We then define
\begin{multline}\label{eq.def.pseudos.onM}
  \Psi_{comp}^{m}(U) \ede \phi_*^{-1} \Psi_{comp}^{m}(W)\phi_* \ede
  \{ \phi_*^{-1} a(x, D) \phi_* \mid a \in S_{1, 0}^m(\RR^{2n})\\
  \mbox{ and } k_{a(x, D)} \mbox{ compactly supported in } W \times W \}\,.
\end{multline}
Theorem \ref{thm.diffeo.inv1} shows that the definition
of $\Psi_{comp}^{m}(U)$, Equation \eqref{eq.def.pseudos.onM},
does not depend on the choice of the diffeomorphism $\phi$.

\begin{definition}\label{def.psdo}
  A linear map \m{P : \CIc(M) \to \CI(M)} is a  \emph{pseudodifferential operator}
  of order \m{\le m} on \m{M} with symbol of class $S_{1,0}^m$ if, for any \m{U \simeq W \subset \RR^n} and
  \m{\eta \in \CIc(U),} we have \m{\eta P \eta \in \Psi_{comp}^{m}(U),} where
  $\Psi_{comp}^{m}(U)$ is as in Equation \eqref{eq.def.pseudos.onM}.
  We let \m{\Psi_{1,0}^m(M)} denote the set of all \emph{pseudodifferential operators}
  on \m{M} with symbols in the class $S_{1,0}^m$.
\end{definition}

Similarly, we define
\begin{multline}\label{eq.def.symb.gen}
  S_{1,0}^m(T^*M) \ede \{ b : T^*M \to \CC \mid
  \phi_*(b\vert_{U}) \ede (b\vert_{U}) \circ \phi^{*-1} \in S_{unif}^m(T^*W)\\
  \mbox{ for } \phi : U \simeq W \subset \RR^n
  \,,\ U \Subset M \,,\ W \Subset \RR^n \}
\end{multline}
and
\begin{equation}\label{eq.def.ps}
    \sigma_m : \Psi_{comp}^{m}(U) \to S_{unif}^{m}(T^*U)/S_{unif}^{m-1}(T^*U)
\end{equation}
by the formula
\begin{equation}\label{eq.def.ps2}
  \sigma_m(\phi_{*}^{-1} a(x, D)\phi_{*}) \ede  (\phi_*)^{-1}(\sigma_m(a(x, D))) \ede
  \sigma_m(a(x, D)) \circ \phi^{*-1}\,.
\end{equation}
Theorem \ref{thm.diffeo.inv2} shows that the definition
of $\Psi_{comp}^{m}(U)$, Equation \eqref{eq.def.ps},
does not depend on the choice of the diffeomorphism $\phi : U \to W$.
Moreover, for each $m \in \RR$, the map of Equation \eqref{eq.def.ps}
extends to a \emph{surjective map}
\begin{equation} \label{def.princ.symb}
  \sigma_m : \Psi_{1,0}^{m}(M) \to  S_{1,0}^{m}/ S_{1,0}^{m-1}(T^*M)
  \ede  S_{1,0}^{m}(T^*M)/ S_{1,0}^{m-1}(T^*M)\,.
\end{equation}

\begin{theorem}\label{thm.prop.psdos.gen}
Let $M$ be a smooth manifold, as before.
  \begin{enumerate}
    \item \m{\Psi_{1,0}^m(M) \subset \Psi_{1,0}^{m'}(M)} if \m{m < m'.}

    \item If \m{P \in \Psi_{1,0}^m(M)}, then \m{P : \CI_{  c}(M) \to \CI(M)}
    \emph{continuously} and hence $P$ has a distribution kernel \m{k_P}.

    \item \m{\Psi_{1,0}^m(M)} and \m{\sigma_m : \Psi_{1,0}^m(M) \to  S_{1,0}^{m}/ S_{1,0}^{m-1}(T^*M)}
    are  \emph{diffeomorphism invariant.}

    \item \m{\Psi_{1,0}^m(M)} contains all the  \emph{differential operators} of order \m{m}
    with smooth coefficients. In particular, \m{\CI(M) \subset \Psi_{1,0}^{0}(M).}

    \item \m{\Psi_{1,0}^m(M)} contains all operators with  \emph{smooth distribution kernel.}

    \item If \m{\phi} and \m{\psi} are smooth functions on \m{M}, then
    \begin{equation*}
      k_{\phi P \psi } \seq \phi k_{P} \psi\,,
    \end{equation*}
    Consequently, \m{k_P} is  \emph{smooth}  \emph{away from the diagonal}
    and, \emph{at the diagonal,} \m{k_P} has the  \emph{same behavior as the kernels of the
    operators in} \m{\Psi_{unif}(\RR^n)}.
  \end{enumerate}
\end{theorem}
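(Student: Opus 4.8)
The plan is to verify each of the six assertions about $\Psi_{1,0}^m(M)$ by reducing, via the local charts in Definition \ref{def.psdo}, to the corresponding facts already established on $\RR^n$ in Theorems \ref{thm.mapping.properties}, \ref{thm.diffeo.inv1}, \ref{thm.diffeo.inv2}, and Proposition \ref{prop.kernels}. For (1), I would note that $S_{unif}^m(\RR^{2n}) \subset S_{unif}^{m'}(\RR^{2n})$ whenever $m < m'$ (since $(1+|\xi|)^{m-|\beta|} \le (1+|\xi|)^{m'-|\beta|}$), so $\Psi_{comp}^m(W) \subset \Psi_{comp}^{m'}(W)$ in every chart, and the inclusion for $M$ follows by testing against cutoffs $\eta P \eta$. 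For (2), the continuity of $P : \CIc(M) \to \CI(M)$ is local: cover $\supp u$ by finitely many charts, write $u = \sum \eta_i u$ with $\eta_i \in \CIc(U_i)$, apply the $\RR^n$ continuity of $a(x,D)$ on $\maS$ (Theorem \ref{thm.mapping.properties}(1)) transported through $\phi_*$, and conclude by Schwartz' kernel theorem (Theorem \ref{thm.Schwartz.k}) that $k_P$ exists. For (3), diffeomorphism invariance of the class and of $\sigma_m$ is exactly what Theorems \ref{thm.diffeo.inv1} and \ref{thm.diffeo.inv2} provide locally, and the definition in Equation \eqref{eq.def.pseudos.onM} was already shown chart-independent; the compatibility $\sigma_m$ commutes with pullback is Equation \eqref{eq.def.ps2}.

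For (4), a differential operator of order $m$ with smooth coefficients has, in any chart, local symbol $\sum_{|\alpha|\le m} a_\alpha(x)(\imath\xi)^\alpha$, which lies in $S_{unif}^m$ on compact subsets by the Example preceding Notation \ref{not.symbols}; cutting off by $\eta$ keeps the coefficients compactly supported, so $\eta P \eta \in \Psi_{comp}^m(U)$. The special case $\CI(M) \subset \Psi_{1,0}^0(M)$ is the order-zero instance (multiplication operators). For (5), an operator with smooth distribution kernel $k \in \CI(M\times M)$ has, in any chart, local kernel a smooth compactly supported function, hence comes from a symbol in $S_{unif}^{-\infty} = \bigcap_m S_{unif}^m$ via the inverse Fourier transform in the second variable of Remark \ref{rem.kernel.formula}; again cutoffs preserve this. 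For (6), the identity $k_{\phi P \psi} = \phi\, k_P\, \psi$ is immediate from the defining pairing $\langle \phi P \psi\, u, v\rangle = \langle P(\psi u), \phi v\rangle = \langle k_P, (\phi v)\boxtimes(\psi u)\rangle = \langle \phi k_P \psi, v\boxtimes u\rangle$; taking $\phi,\psi$ with disjoint supports forces $k_P$ to be smooth off the diagonal by Proposition \ref{prop.kernels}(1) transported to charts, and taking $\phi = \psi$ supported in a single chart shows the on-diagonal behavior matches $\Psi_{unif}(\RR^n)$.

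I do not expect any genuine obstacle here, since the theorem is, as the text says, ``completely standard'': every point is a localization of an already-proved $\RR^n$ statement. The only mildly delicate point is bookkeeping in (2): one must check that the local pieces $\eta_i P \eta_j$ (including the off-diagonal $i \neq j$ terms, which have smooth kernels) assemble consistently and that the resulting map into $\CI(M)$ does not depend on the choice of partition of unity — this is where one uses that the $\Psi_{comp}^m(U)$ are intrinsically defined (chart-independent) and stable under the cutoffs, so the overlaps cause no ambiguity. A second small care point is the passage from the compactly-supported local model to the global operator in (4) and (5): one checks that the difference between $P$ and $\sum_{i,j}\eta_i P \eta_j$ (for a partition of unity $\sum \eta_i^2 = 1$, say, or $\sum\eta_i = 1$ locally) is again in the calculus, using that smoothing operators form a two-sided ideal. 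Because all of this is routine, I would keep the proof brief, citing Theorems \ref{thm.mapping.properties}--\ref{thm.diffeo.inv2} and Proposition \ref{prop.kernels} for the chart-level inputs and spelling out only the partition-of-unity assembly.
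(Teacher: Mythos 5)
The paper offers no proof of this theorem at all: it declares the material of Section \ref{sec.backgr.psdos} ``completely standard'' and refers to the textbooks cited there, so there is nothing to compare against except the intended chart-by-chart reduction to Theorems \ref{thm.mapping.properties}--\ref{thm.diffeo.inv2} and Proposition \ref{prop.kernels}, which is exactly what you carry out, correctly. The one point worth making explicit is in (6): smoothness of $k_P$ off the diagonal for a pair of points not lying in a common connected chart requires either reading Definition \ref{def.psdo} as allowing $W\subset\RR^n$ disconnected (so that a single $\eta$ covers both points) or adding pseudolocality of the off-diagonal pieces as a hypothesis --- your argument implicitly uses the former, and with that reading everything goes through.
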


If $P \in \Psi_{1,0}^m(M)$ is such that $P \CIc(M) \subset \CIc(M)$ and this action
extends by continuity to a linear map $P : \CI(M) \subset \CI(M)$, then we say
that $P$ is \emph{properly supported.}

\begin{theorem}\label{theorem.prod.gen}
  Let us assume that a smooth measure was chosen on $M$.
  \begin{enumerate}
    \item Let $P \in \Psi_{1,0}^m(M)$ and let $P^*$ be defined with respect to
    the inner product on $M$. Then $P^* \in \Psi_{1,0}^m(M)$. We have that $P$ is
    properly supported if, and only if, $P \CIc(M) \subset \CIc(M)$
    and $P^* \CIc(M) \subset \CIc(M)$. Thus $P$ is properly supported if,
    and only if, $P^*$ is properly supported.

    \item If $P \in \Psi_{1,0}^m(M)$ and $Q \in \Psi_{1,0}^{m'}(M)$
    and at least one is properly supported, then $PQ$ is defined,
    belongs to $\Psi_{1,0}^{m+m'}(M)$, and
    $$\sigma_{m+m'}(PQ) = \sigma_{m}(P)\sigma_{m'}(Q).$$
    If both $P$ and $Q$ are properly
    supported, then $PQ$ is also properly supported.
  \end{enumerate}
\end{theorem}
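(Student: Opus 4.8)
The plan is to reduce every assertion to its Euclidean counterpart in a coordinate chart, using the local characterization of $\Psi_{1,0}^m(M)$ from Definition~\ref{def.psdo}, the kernel calculus of Theorem~\ref{thm.prop.psdos.gen}, and the chart-level results of Theorems~\ref{thm.mapping.properties} and \ref{thm.diffeo.inv1}. For the adjoint, I would fix a chart $U \simeq W \subset \RR^n$ and $\eta \in \CIc(U)$. Since the $L^2$--adjoint of multiplication by $\eta$ is multiplication by $\overline\eta$, one has $(\eta P \eta)^* = \overline\eta\, P^*\, \overline\eta$. Now $\eta P \eta \in \Psi_{comp}^m(U)$ by Definition~\ref{def.psdo}, the algebra $\Psi_{comp}^\infty(U)$ is stable under adjoints by Theorem~\ref{thm.diffeo.inv1}, the adjoint preserves the order by Theorem~\ref{thm.mapping.properties}(4), and it preserves compactness of the support of the kernel because $k_{T^*}(x,y) = \overline{k_T(y,x)}$. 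Hence $\overline\eta\, P^*\, \overline\eta \in \Psi_{comp}^m(U)$; as $\overline\eta$ ranges over all of $\CIc(U)$ while $\eta$ does, and $U$ is arbitrary, Definition~\ref{def.psdo} gives $P^* \in \Psi_{1,0}^m(M)$.

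For the \emph{properly supported} equivalences, I would rephrase the two defining conditions in terms of $\supp k_P \subset M \times M$. Writing $\pi_1, \pi_2 : M \times M \to M$ for the projections, the classical fact (which uses that $k_P$ is smooth off the diagonal, Theorem~\ref{thm.prop.psdos.gen}(6); cf.\ \cite{Hormander3}) is that $P\,\CIc(M) \subset \CIc(M)$ is equivalent to $\pi_2|_{\supp k_P}$ being a proper map, while $P$ extending continuously to $\CI(M) \to \CI(M)$ is equivalent to $\pi_1|_{\supp k_P}$ being a proper map. Since $k_{P^*}(x,y) = \overline{k_P(y,x)}$ interchanges the roles of $\pi_1$ and $\pi_2$, the condition $P^*\,\CIc(M) \subset \CIc(M)$ is equivalent to $\pi_1|_{\supp k_P}$ being proper, i.e.\ to $P$ extending to $\CI(M) \to \CI(M)$. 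Combining, $P$ is properly supported if and only if both $\pi_1|_{\supp k_P}$ and $\pi_2|_{\supp k_P}$ are proper, if and only if $P\,\CIc(M) \subset \CIc(M)$ and $P^*\,\CIc(M) \subset \CIc(M)$; and this last condition is symmetric under $P \leftrightarrow P^*$ (as $(P^*)^* = P$), so $P$ is properly supported precisely when $P^*$ is.

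For the composition statement, using the adjoint statement just proved and the identity $PQ = (Q^*P^*)^*$, I may assume that the right factor $Q$ is properly supported. First I would peel off the smoothing parts: write $Q = Q_0 + R$ and $P = P_0 + S$, where $Q_0$ (properly supported, like $R$) and $P_0$ are pseudodifferential operators whose distribution kernels are supported in a fixed proper neighbourhood $\mathcal N$ of the diagonal, chosen thin enough that over any compact subset of any chart domain $\mathcal N$ stays inside that chart, and where $R, S$ have smooth kernels; this is done by multiplying $k_Q$ (resp.\ $k_P$) by a cutoff that is $1$ near the diagonal and supported in $\mathcal N$, the complementary piece being smooth by Theorem~\ref{thm.prop.psdos.gen}(6) and hence an operator in $\Psi_{1,0}^\bullet(M)$ by Theorem~\ref{thm.prop.psdos.gen}(5). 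Then $PQ = P_0Q_0 + P_0 R + S Q_0 + S R$; because $Q$, hence $R$, is properly supported, all four compositions are well defined, and the last three, each involving a smoothing operator together with a properly supported operator, have smooth kernels and thus lie in $\Psi_{1,0}^{m+m'}(M)$. It remains to treat $P_0Q_0$. Fixing a chart $U \simeq W$ and $\eta \in \CIc(U)$, the operator $Q_0\eta$ has kernel $k_{Q_0}(x,y)\eta(y)$, which by the choice of $\mathcal N$ and properness of $\pi_2|_{\supp k_{Q_0}}$ is compactly supported inside $U \times U$; comparing kernels with $\eta' Q_0 \eta$ for a suitable $\eta' \in \CIc(U)$ equal to $1$ on the relevant compact set, Definition~\ref{def.psdo} gives $Q_0\eta \in \Psi_{comp}^{m'}(U)$, and likewise $\eta P_0 \in \Psi_{comp}^m(U)$. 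Hence $\eta P_0 Q_0 \eta = (\eta P_0)(Q_0\eta) \in \Psi_{comp}^{m+m'}(U)$, since $\Psi_{comp}^\infty(U)$ is an algebra (Theorem~\ref{thm.diffeo.inv1}) and orders add under composition (Theorem~\ref{thm.mapping.properties}(2)). As $U$ and $\eta$ are arbitrary, $PQ \in \Psi_{1,0}^{m+m'}(M)$. The symbol identity $\sigma_{m+m'}(PQ) = \sigma_m(P)\sigma_{m'}(Q)$ is local and, on the chart level, is exactly the multiplicativity of $\sigma$ recorded right after Theorem~\ref{thm.mapping.properties} applied to $(\eta P_0)(Q_0\eta)$, the three smoothing terms contributing nothing. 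Finally, if both $P$ and $Q$ are properly supported, then $PQ\,\CIc(M) \subset P\,\CIc(M) \subset \CIc(M)$ and $(PQ)^*\,\CIc(M) = Q^*P^*\,\CIc(M) \subset Q^*\,\CIc(M) \subset \CIc(M)$, so $PQ$ is properly supported by the criterion established above.

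The step I expect to be the real obstacle is the composition: all the content lies in organizing the localization on a non-compact manifold, that is, in the reduction $Q = Q_0 + R$, $P = P_0 + S$ to kernels supported near the diagonal so that a single cutoff $\eta$ confines them to one chart, and in using properness of the kernel support of the properly supported factor both to make every intermediate composition meaningful and to see that the leftover terms are genuinely smoothing. Once this set-up is in place, the conclusion drops out of the Euclidean composition formula of Theorem~\ref{thm.mapping.properties}(2).
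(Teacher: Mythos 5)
The paper does not actually prove Theorem \ref{theorem.prod.gen}: it appears in the background section that the authors explicitly declare to be ``completely standard'' and leave without proofs, so your proposal can only be judged on its own terms. Part (1) of your argument (the identity $(\eta P\eta)^*=\overline{\eta}\,P^*\overline{\eta}$, stability of $\Psi_{comp}^m(U)$ under adjoints, and the reformulation of proper support via properness of the two projections restricted to $\supp k_P$, which the flip $k_{P^*}(x,y)=\overline{k_P(y,x)}$ interchanges) is the standard route and is sound; the only thing you gloss over is that Theorem \ref{thm.diffeo.inv1} takes adjoints with respect to Lebesgue measure on $W$, whereas $P^*$ is taken with respect to the chosen measure $\mu$ on $M$ --- locally $\mu=\rho\,dx$ with $\rho$ smooth and positive, so $P^{*_\mu}=\rho^{-1}P^{*_{dx}}\rho$ and the discrepancy is absorbed by multiplication operators. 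That is a one-line repair.

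In part (2) there is a genuine flaw in the one step you yourself single out as the crux. You ask for a neighborhood $\mathcal{N}$ of the diagonal ``chosen thin enough that over any compact subset of any chart domain $\mathcal{N}$ stays inside that chart.'' No such $\mathcal{N}$ exists: $\mathcal{N}$ is fixed once and for all, but Definition \ref{def.psdo} quantifies over \emph{all} charts, and for any point $x_0$ one can take a chart domain $U$ so small that the slice $\mathcal{N}\cap(\{x_0\}\times M)$ is not contained in $\{x_0\}\times U$. Consequently the assertion that $k_{Q_0}(x,y)\eta(y)$ is ``compactly supported inside $U\times U$'' fails for general $U$, and then there is no $\eta'\in\CIc(U)$ equal to $1$ on $\pi_1(\supp k_{Q_0\eta})$, so the comparison $Q_0\eta=\eta'Q_0\eta$ breaks down. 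The standard repair stays entirely within your framework: given $U$ and $\eta\in\CIc(U)$, pick $\eta'\in\CIc(U)$ with $\eta'=1$ near $\supp\eta$ and write $\eta PQ\eta=\eta P\eta' Q\eta+\eta P(1-\eta')Q\eta$. The second term is in $\Psi_{comp}^{-\infty}(U)$ because $(1-\eta')Q\eta$ has kernel supported away from the diagonal (hence smooth, by Theorem \ref{thm.prop.psdos.gen}(6)) and compactly supported (by properness of $\supp k_Q$), and left composition with $\eta P$ keeps the kernel smooth and supported in $\supp\eta\times\supp\eta$. For the first term, choose $\eta''\in\CIc(U)$ with $\eta''=1$ on $\supp\eta\cup\supp\eta'$ and use the identity $\eta P\eta'=\eta(\eta''P\eta'')\eta'$ to see that both $\eta P\eta''$ and $\eta'Q\eta$ lie in $\Psi_{comp}^{\bullet}(U)$; their composition is then handled by Theorems \ref{thm.diffeo.inv1} and \ref{thm.mapping.properties}(2) exactly as you intend. (Alternatively, one may verify the defining condition only for a locally finite atlas to which $\mathcal{N}$ is subordinate, but one must then supply the extra argument --- again resting on off-diagonal smoothness of $k_{PQ}$ --- that checking Definition \ref{def.psdo} on a covering atlas suffices.) With this substitution, the near-diagonal decomposition $Q=Q_0+R$, $P=P_0+S$ becomes unnecessary, and the remainder of your argument (reduction to a properly supported right factor via $PQ=(Q^*P^*)^*$, the symbol identity, and the proper support of $PQ$) goes through.
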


We also notice that, if $M = \RR^n$, then $\Psi_{unif}^m(\RR^n) \subset \Psi_{1,0}^m(\RR^n)$,
but we do not have equality (if $n > 0$). Similarly, $S_{unif}^m(\RR^n)
\subset S_{0, 1}^m(\RR^n)$, but we again fail to have equality (in general).

\subsection{Compact manifolds}
We can say more if our manifold $M$ is compact. Assume this is the case in this
subsection. To start with, all operators in $\Psi_{1,0}^m(M)$ are properly supported and
hence we can compose any two pseudodifferential operators on $M$. This simplifies
matters, but the main result we are looking for is the Fredholm property
(whose definition is reminded below). Recall that, for $M$ compact, the Sobolev
spaces $H^s(M)$ defined in Subsection \ref{ssec.Sobolev} are independent of
choices (they can be defined either using a finite partition of unity or a
connection).

\begin{theorem}\label{thm.mp.compact}
  Let us assume $M$ to be compact.
  \begin{enumerate}
    \item $\Psi_{1, 0}^\infty(M) :=
    \cup_{k \in \ZZ} \Psi_{1, 0}^k(M)$ is a filtered $*$-algebra.

    \item If \m{P \in \Psi_{1,0}^m(M)}, then \m{P : H^s(M) \to H^{s-m}(M)} is
    \emph{bounded} for all $s \in \RR$.
    If this map is invertible for some $s \in \RR$, then \m{P^{-1} \in \Psi_{1,0}^{-m}(M).}
    \item
    \m{H^s(M) \to H^{s'}(M)} is compact if \m{s > s'}, so
    any \m{P \in \Psi_{1,0}^q(M),} with \m{q < 0,} induces a  \emph{compact operator}
    \m{P : H^s(M) \to H^s(M).}
  \end{enumerate}
\end{theorem}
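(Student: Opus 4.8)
The plan is to take the three items in order, placing almost all of the work in the invertibility clause of (2); (1), the boundedness in (2), and (3) are assembled from results already in the excerpt together with the compactness of $M$. For (1): since $M$ is compact, the distribution kernel of any $P \in \Psi_{1,0}^m(M)$ is compactly supported in $M \times M$, so $P$ is properly supported and any two operators of $\Psi_{1,0}^\infty(M)$ compose; then the inclusions $\Psi_{1,0}^m(M)\subset\Psi_{1,0}^{m'}(M)$ ($m\le m'$), $\Psi_{1,0}^m(M)\Psi_{1,0}^{m'}(M)\subset\Psi_{1,0}^{m+m'}(M)$, and $P^*\in\Psi_{1,0}^m(M)$ come from Theorems \ref{thm.prop.psdos.gen}(1), \ref{theorem.prod.gen}(2) and \ref{theorem.prod.gen}(1) respectively, exhibiting $\Psi_{1,0}^\infty(M)=\cup_k\Psi_{1,0}^k(M)$ as a filtered $*$-algebra. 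For the boundedness in (2): I would fix a finite atlas $(U_i,\phi_i)$, a subordinate partition of unity $\sum_i\chi_i^2=1$ with $\chi_i\in\CIc(U_i)$, and $\psi_i\in\CIc(U_i)$ equal to $1$ near $\supp\chi_i$, and write $P=\sum_i\chi_i^2 P\psi_i+\sum_i\chi_i^2 P(1-\psi_i)$; by Theorem \ref{thm.prop.psdos.gen}(6) each $\chi_i^2 P(1-\psi_i)$ has a smooth, hence (on compact $M$) smoothing, kernel, while each $\chi_i^2 P\psi_i$ lies in $\Psi_{comp}^m(U_i)$ (Definition \ref{def.psdo} plus the algebra property), transfers under $\phi_i$ to $\Psi_{comp}^m(W_i)\subset\Psi_{unif}^m(\RR^n)$, and is therefore bounded between the corresponding local Sobolev spaces by Theorem \ref{thm.mapping.properties}(3); summing the finitely many pieces via the partition-of-unity description of $H^s(M)$ recalled just before the theorem gives $P:H^s(M)\to H^{s-m}(M)$.

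For the invertibility clause of (2) -- the heart of the matter -- suppose $P:H^s(M)\to H^{s-m}(M)$ is invertible. First I would note that this forces the a priori bound $\|u\|_{H^s}\le C\|Pu\|_{H^{s-m}}$, and test it on functions $\eta(x)e^{\mathrm{i}\langle x,\xi\rangle}$ localized in a coordinate patch with $|\xi|\to\infty$: by the oscillatory-testing identity \eqref{eq.osc.testing}, $\|Pu\|_{H^{s-m}}$ is governed to leading order by $|\sigma_m(P)(x_0,\xi)|$ while $\|u\|_{H^s}\asymp|\xi|^s$, so the estimate forces $|\sigma_m(P)(x,\xi)|\gtrsim(1+|\xi|)^m$ for $|\xi|$ large, i.e. $P$ is elliptic (this step is classical). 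Ellipticity then yields a parametrix $Q\in\Psi_{1,0}^{-m}(M)$ with $QP-I$ and $PQ-I$ smoothing, by the usual asymptotic inversion of the principal symbol and summation. Combining the parametrix with elliptic regularity makes $\ker P$ (smooth, hence the same subspace on every $H^t(M)$) and a complement for its range independent of the Sobolev scale, so $P:H^t(M)\to H^{t-m}(M)$ is invertible with bounded inverse for every $t\in\RR$. Finally, setting $R:=QP-I$ we get $P^{-1}=Q-RP^{-1}$; since $R$ is smoothing and $P^{-1}$ is bounded between all Sobolev spaces, $RP^{-1}$ is bounded between all Sobolev spaces, hence (Sobolev embedding together with the Schwartz kernel theorem) has a smooth distribution kernel, hence lies in $\Psi_{1,0}^{-m}(M)$ by Theorem \ref{thm.prop.psdos.gen}(5). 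Therefore $P^{-1}=Q-RP^{-1}\in\Psi_{1,0}^{-m}(M)$.

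For (3): the compactness of $H^s(M)\hookrightarrow H^{s'}(M)$ for $s>s'$ is Rellich's theorem -- localize with the same finite partition of unity to reduce to the classical compactness of $H^s_{\mathrm{comp}}(\RR^n)\hookrightarrow H^{s'}_{\mathrm{loc}}(\RR^n)$, then use that a finite sum of compact operators is compact; and for $P\in\Psi_{1,0}^q(M)$ with $q<0$, part (2) gives $P:H^s(M)\to H^{s-q}(M)$ bounded, so composing with the compact inclusion $H^{s-q}(M)\hookrightarrow H^s(M)$ (legitimate since $s-q>s$) shows $P:H^s(M)\to H^s(M)$ is compact. I expect the only genuinely non-formal ingredient to be the invertibility clause of (2): ruling out non-ellipticity from the a priori estimate, and then bootstrapping invertibility to all Sobolev scales so that the smoothing remainder composed with $P^{-1}$ is again smoothing; everything else is assembly of Theorems \ref{thm.prop.psdos.gen}, \ref{theorem.prod.gen}, \ref{thm.mapping.properties} and \ref{thm.diffeo.inv1} with the compactness of $M$.
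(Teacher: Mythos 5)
Your proposal is correct, but it is worth noting that the paper does not actually prove this theorem: parts (1) and the boundedness in (2) are treated as standard, part (2)'s invertibility clause is attributed to Beals \cite{BealsSpInv}, and part (3) is dismissed as Rellich's lemma. So you have supplied a proof where the paper supplies citations. Your argument for the spectral-invariance clause is the standard one and is sound: deduce ellipticity from the a priori estimate, build a parametrix $Q$ modulo smoothing, bootstrap invertibility to all Sobolev scales using that $\ker P$ and $\ker P^*$ consist of smooth functions, and conclude from $P^{-1}=Q-RP^{-1}$ that the correction term has smooth kernel. Two of your steps deserve a remark. First, the ``test on $\eta(x)e^{\imath\langle x,\xi\rangle}$'' deduction of ellipticity really only bounds a weighted average of $|\sigma_m(P)(\cdot,\xi)|$ over $\supp\eta$ from below; one must shrink $\supp\eta$ and use the symbol estimate $|\partial_x a|\lesssim(1+|\xi|)^m$ to pass to a pointwise bound. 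This is exactly the content of Lemmas \ref{lemma.SF2CE} and \ref{lemma.CE2IE}, which the paper proves later for the converse of Theorem \ref{thm.Fredholm.compact}; you could simply invoke that converse (invertible $\Rightarrow$ Fredholm $\Rightarrow$ elliptic) instead of redoing it. Second, obtaining $QP-I$ smoothing (rather than merely of order $-1$) requires asymptotic completeness of symbols on $M$, which the paper states only for $S_{unif}$ on $\RR^n$ and for $S_{\inv}$ (Remark \ref{rem.asympt.compl}); it is standard, but you should say you are using it, or alternatively iterate the identity $P^{-1}=Q-RP^{-1}$ with $R$ of finite negative order. Neither point is a gap, and the rest (finite atlas for boundedness, Rellich by localization, composition with a compact embedding for (3)) is exactly the expected assembly.
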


``Filtered'' in point (1) above means that the order of the product of two operators does
not exceed the sum of their orders. The point (2) goes back to Beals \cite{BealsSpInv}.
The point (3) is, of course, nothing but Rellich's lemma.
We notice that $\cup_{k \in \ZZ} \Psi_{1, 0}^k(M) =
\cup_{m \in \RR} \Psi_{1, 0}^m(M),$ but this is not true for the classical
pseudodifferential operators discussed below.
The Fredholm property is related to ellipticity, which we now discuss.

\begin{proposition} \label{prop.def.ell}
  Let \m{a \in  S_{1,0}^m(T^*M)}. The following are equivalent
  \begin{enumerate}
    \item There are \m{C, R > 0} such that \m{|a(\xi)| \ge C |\xi|^m} for
    \m{\xi \in T^*M\,,\ |\xi| \ge R.}

    \item There exist \m{b \in  S_{1,0}^{-m}(T^*M)} such that
    \m{ab - 1 \in  S_{1,0}^{-1}(T^*M).}
  \end{enumerate}
  If one of these properties is satisfied, then \m{a} is called
  \emph{elliptic} and this property is a property of
  its class in $S_{1,0}^m/ S_{1,0}^{m-1}(T^*M).$
\end{proposition}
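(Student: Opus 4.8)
The plan is to show (2) $\Rightarrow$ (1) by an elementary pointwise argument and (1) $\Rightarrow$ (2) by an explicit construction of the parametrix symbol, with the symbol-class estimates of Notation \ref{not.symbols} being the part requiring the most care. Throughout I will work in a local chart $\phi : U \simeq W \subset \RR^n$, reducing to symbols in $S_{unif}^m(\RR^{2n})$ via \eqref{eq.def.symb.gen}, and then observe that all the estimates are chart-independent so that the conclusion globalizes on $T^*M$; since $M$ is compact, finitely many charts suffice.

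For (2) $\Rightarrow$ (1): suppose $b \in S_{1,0}^{-m}(T^*M)$ with $r := ab - 1 \in S_{1,0}^{-1}(T^*M)$. The zeroth-order seminorm $p_0^{[-1]}(r) < \infty$ gives $|r(x,\xi)| \le C_0 (1+|\xi|)^{-1}$, so there is $R > 0$ with $|r(x,\xi)| \le 1/2$ for $|\xi| \ge R$; hence $|a(x,\xi) b(x,\xi)| = |1 + r(x,\xi)| \ge 1/2$ for such $\xi$. On the other hand $p_0^{[-m]}(b) < \infty$ gives $|b(x,\xi)| \le C_1 (1+|\xi|)^{-m}$, and combining, $|a(x,\xi)| \ge \tfrac{1}{2 C_1} (1+|\xi|)^{m} \ge \tfrac{1}{2 C_1} |\xi|^m$ for $|\xi| \ge R$, which is (1) with $C = 1/(2C_1)$.

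For (1) $\Rightarrow$ (2): fix a smooth cutoff $\chi : \RR^n \to [0,1]$ with $\chi(\xi) = 0$ for $|\xi| \le 2R$ and $\chi(\xi) = 1$ for $|\xi| \ge 3R$, pulled back to $T^*M$, and set $b(x,\xi) := \chi(\xi)/a(x,\xi)$, which is well-defined and smooth since $a$ does not vanish on $\supp\chi$ by (1). The main step is to verify $b \in S_{1,0}^{-m}(T^*M)$, i.e. $p_N^{[-m]}(b) < \infty$ for all $N$: differentiating $b = \chi a^{-1}$ by the product and quotient rules, each $\pa_x^\alpha \pa_\xi^\beta b$ is a finite sum of terms of the form (derivatives of $\chi$) $\times$ (product of derivatives $\pa_x^{\alpha_i}\pa_\xi^{\beta_i} a$) $\times a^{-k}$; the lower bound $|a| \gtrsim (1+|\xi|)^m$ on $\supp\chi$ together with the hypotheses $a \in S_{1,0}^m$ gives, by an induction on $|\alpha|+|\beta|$, the bound $|\pa_x^\alpha\pa_\xi^\beta b(x,\xi)| \le C_{\alpha\beta}(1+|\xi|)^{-m-|\beta|}$. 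Then $ab - 1 = (\chi - 1) \in S_{1,0}^{-\infty} \subset S_{1,0}^{-1}(T^*M)$ since $\chi - 1$ is compactly supported in $\xi$, which gives (2). Finally, the closing remark that ellipticity depends only on the class in $S_{1,0}^m/S_{1,0}^{m-1}$ follows because replacing $a$ by $a + a'$ with $a' \in S_{1,0}^{m-1}$ changes $|a(x,\xi)|$ by at most $C(1+|\xi|)^{m-1} = o(|\xi|^m)$, which is absorbed by enlarging $R$ in (1); equivalently, in (2) one modifies $b$ by composing with a Neumann-type correction.

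The main obstacle is the inductive symbol estimate for $b = \chi/a$ in the step (1) $\Rightarrow$ (2): one must carefully track how many factors of $a^{-1}$ appear after $k$ differentiations and check that the gain of $(1+|\xi|)^{-1}$ per $\xi$-derivative survives, i.e. that the ``quotient rule for symbol classes'' holds; this is routine but is the only place where a genuine computation is needed, and it is cleanest to phrase it as: if $a \in S_{1,0}^m(T^*M)$ is bounded below by $c(1+|\xi|)^m$ on an open conic-at-infinity set containing $\supp b$, then $1/a \in S_{1,0}^{-m}$ there.
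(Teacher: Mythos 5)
The paper states this proposition without proof, treating it as classical background; your argument is exactly the standard one (the pointwise lower bound extracted from $ab-1\in S_{1,0}^{-1}$ for (2)$\Rightarrow$(1), and the cutoff parametrix $b=\chi/a$ with the quotient-rule symbol estimate for (1)$\Rightarrow$(2)), and it is correct, with compactness of $M$ indeed being what lets the chartwise uniform estimates globalize. The only cosmetic point is that for $m<0$ the step $(1+|\xi|)^{m}\ge|\xi|^{m}$ should read $(1+|\xi|)^{m}\ge 2^{m}|\xi|^{m}$ for $|\xi|\ge 1$, which only changes the constant $C$.
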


(If $M$ is non-compact, the two properties of the above proposition are no longer equivalent,
for some trivial reasons, and we retain the second one as the definition of ellipticity. This
will not be a problem in our paper.) This leads to the following definition.

\begin{definition} \label{def.elliptic}
  \m{P \in \Psi_{1,0}^m(M)} is  \emph{elliptic} if
  \m{\sigma_m(P) \in  S_{1,0}^m/ S_{1,0}^{m-1}(T^*M)} is elliptic.
\end{definition}

Below, we shall also need the notion of ``injectively elliptic'' operators, which is
completely analogous (Definition \ref{def.inj.elliptic}).

\subsection{Classical symbols and vector bundles}
We now recall the definition of a smaller class of symbols that is better suited for
the study of the parametrices of elliptic differential operators, namely, the class of
``classical symbols.'' It is also better suited for the study of the method of layer potentials.
We continue to assume that $M$ is a smooth (Hausdorff, paracompact)
manifold, but we no longer assume $M$ to be compact.

\begin{definition}
  We shall say that \m{a \in S_{1,0}^{m}(T^*M)} is homogeneous of order \m{m,} if
  \dm{a_{k}(t\xi) = t^k a_k(\xi) \ \mbox{ for } \ \xi \in T^*M,\ t, |\xi| \ge 1.}
  We shall say that \m{a \in  S_{1,0}^m(\RR^n)} is \emph{classical} of order $m$ if
  there exist \m{a_{m-j} \in  S_{1,0}^{m-j}(M)} homogeneous of order \m{m-j,}
  $j \in \ZZ_+$ such that $a_{m}$ does not vanish identically and
  \dm{a - \sum_{j=0}^N a_{m-j} \in  S_{1,0}^{m-N-1}(M)\,.}
\end{definition}

In other words, we have the asymptotic expansion $a \sim \sum_{j=0}^\infty a_{m-j}.$
If we allow only \text{classical symbols,} then all the results of this
section so far remain true, except \ref{thm.prop.psdos.gen}(1).

Let $S_{cl}^{m}(T^*M)$ be the set of classical symbols of order $ \in \RR$.
Similarly, if $U \subset \RR^n$, we let $\Psi_{c, cl}^m(U) := \{a(x, D) \in \Psi_{comp}^m(U)
\mid a \in S_{cl}^m(T^*M)\}$ and, finally, we let $\Psi_{cl}^m(M)$ be defined using
the sets $\Psi_{c, cl}^m(U)$ with $U \Subset M$ open. Whenever using
classical pseudodifferential operators we shall include the index ``cl''
(or no index at all, beginning with the next section).
We let $S_{cl}^{m}/S_{cl}^{m-1}(T^*M) := S_{cl}^{m}(T^*M)/S_{cl}^{m-1}(T^*M)$. In addition to
the property proved so far, the classical pseudodifferential operators enjoy some
additional properties.

\begin{proposition}
  Let $m \in \RR$ and $S^*M$ be the set of vectors of length 1
  in $T^*M.$ Then restriction to $S^*M$ defines a bijection
  \begin{equation*}
    S_{cl}^m/S_{cl}^{m-1}(T^*M) \ede S_{cl}^m(T^*M)/S_{cl}^{m-1}(T^*M)
    \simeq \CI(S^*M)\,.
  \end{equation*}
  In particular, $\sigma_m : \Psi_{cl}^{m}(M) \to \CI(S^*M)$
  has kernel \m{\Psi_{cl}^{m-1}(M)}.
\end{proposition}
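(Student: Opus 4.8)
The plan is to reduce the statement to the corresponding local fact on $\RR^n$ and then reassemble it. First I would establish the key local assertion: for $U \subset \RR^n$ open and $m \in \RR$, restriction to the sphere bundle gives a linear isomorphism $S_{cl}^m(T^*U)/S_{cl}^{m-1}(T^*U) \simeq \CI(U; \CI(S^{n-1}))$. Given a classical symbol $a \sim \sum_{j\ge 0} a_{m-j}$, the class $a + S_{cl}^{m-1}(T^*U)$ is determined precisely by the leading homogeneous term $a_m$, since any two choices of leading term differ by an element of $S_{cl}^{m-1}$; conversely $a_m$ is recovered from $a + S_{cl}^{m-1}$ by the limit $a_m(x,\xi) = \lim_{t\to\infty} t^{-m} a(x, t\xi)$ for $|\xi| \ge 1$, which is independent of the representative. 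A function homogeneous of degree $m$ on $\{|\xi|\ge 1\}$ is the same datum as its restriction to $\{|\xi| = 1\}$, i.e.\ an element of $\CI(U \times S^{n-1})$, with the extension $a_m(x,\xi) = |\xi|^m a_m(x, \xi/|\xi|)$ recovering homogeneity. This gives the bijection locally, and one checks it is compatible with the smoothness/symbol estimates: a smooth function on $U \times S^{n-1}$, extended homogeneously and multiplied by a cutoff that vanishes near $\xi = 0$, lies in $S_{1,0}^m(T^*U)$.

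Next I would globalize. The homogeneous leading term $a_m$ of a classical symbol on $T^*M$ transforms under a diffeomorphism $\phi : U \to W$ exactly by pullback along $\phi^*$ (as in Remark \ref{rem.precise.diffeo}), since the change-of-variables formula for $\phi_* \circ a(x,D) \circ \phi_*^{-1}$ changes $a$ only by lower-order symbols, hence does not affect the class modulo $S_{cl}^{m-1}$. Therefore the locally defined leading-term maps patch to a well-defined global map $S_{cl}^m/S_{cl}^{m-1}(T^*M) \to \CI(S^*M)$, where $S^*M$ is the unit cosphere bundle for the fixed metric $g$ (the choice of metric only affects the identification $S^*M \simeq \{|\xi|=1\}$ fiberwise by a smooth positive rescaling, which is harmless since we are looking at homogeneous functions). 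Injectivity and surjectivity are then local statements, already handled; surjectivity uses a locally finite cover by coordinate charts, a subordinate partition of unity, and the fact that sums of classical symbols of the same order are classical.

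Finally, for the last sentence: combining the just-established bijection $S_{cl}^m/S_{cl}^{m-1}(T^*M) \simeq \CI(S^*M)$ with the surjective principal symbol map $\sigma_m : \Psi_{cl}^m(M) \to S_{cl}^m/S_{cl}^{m-1}(T^*M)$ (the classical analogue of \eqref{def.princ.symb}), we obtain a surjection $\Psi_{cl}^m(M) \to \CI(S^*M)$. Its kernel is the set of $P \in \Psi_{cl}^m(M)$ with $\sigma_m(P) = 0$ in $S_{cl}^m/S_{cl}^{m-1}(T^*M)$, i.e.\ $P \in \Psi_{cl}^{m-1}(M)$; here one uses that an operator whose full local symbol lies in $S_{unif}^{m-1}$ is of order $m-1$ and that this property is local and diffeomorphism invariant.

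The main obstacle, I expect, is the bookkeeping in the globalization step: verifying that the fiberwise-homogeneous leading symbols genuinely glue (rather than merely agreeing modulo lower order on overlaps after the diffeomorphism correction), and that the resulting global object is smooth on the total space $S^*M$. The $\RR^n$ facts and the transformation law are routine; making the patching argument clean — in particular tracking that the ``ambiguity'' at each stage is exactly $S_{cl}^{m-1}$ and nothing larger — is where care is needed.
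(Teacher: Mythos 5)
Your argument is correct, and it is the standard one; the paper in fact states this proposition without proof, treating it as classical background, so there is nothing to compare it against except the textbook argument you reconstruct. One remark that simplifies your ``main obstacle'': for the displayed bijection the objects are already \emph{global} functions $a$ on $T^*M$, and the leading homogeneous term is recovered intrinsically by the fiberwise scaling limit $a_m(\xi) = \lim_{t\to\infty} t^{-m} a(t\xi)$ (for $|\xi|\ge 1$), which is chart-independent because chart transitions act linearly on the fibers of $T^*M$; hence the patching and the lower-order diffeomorphism correction from Remark \ref{rem.precise.diffeo} are not needed at the symbol level --- they enter only in the ``in particular'' statement about $\sigma_m$ on $\Psi_{cl}^m(M)$, exactly as you use them there. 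Two small points worth making explicit: (i) injectivity requires observing that if the leading term of a classical symbol vanishes, the remaining expansion $\sum_{j\ge 1} a_{m-j}$ exhibits $a$ as classical of order $m-1$ (not merely in $S_{1,0}^{m-1}$), which is what the kernel computation needs; (ii) the paper's definition of ``classical'' requires $a_m$ not to vanish identically, so for the quotient and the bijection with all of $\CI(S^*M)$ (including $0$) to make sense one must tacitly allow zero leading terms --- your proof implicitly does this, and it is the intended reading.
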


For a general statement of the Fredholm result, we now explain how we can include
\emph{vector bundles.}

\begin{remark}
   Let \m{{  E} \to M} be a complex vector bundle.
  Without loss of generality, we can assume that a metric has been chosen on $E$.
  \begin{enumerate}
    \item Let \m{E \subset \CC^N} be a smooth embedding and let
    \m{e \in \CI(M; M_N(\CC))} be the orthogonal projection \m{\CC^N \to E.}

    \item Then \m{\Psi_{1,0}^m(M; E) \ede e M_N(\Psi_{1,0}^m(M))e} acts on \m{H^s(M; E) \simeq e H^s(M).}

    \item The same constructions applies to classical pseudodifferential operators, but
    recall that $\Psi_{cl}^{\infty}(M; E) := \cup_{m \in \ZZ} \Psi_{cl}^{m}(M; E)$
    and that this space no longer contains $\Psi_{cl}^s(M; E)$ for $s \notin \ZZ$.

    \item The principal symbols then become
    \begin{equation*}
      \begin{gathered}
        \sigma_m: \Psi_{1, 0}^m(M; E) \to S_{1, 0}^m/ S_{1, 0}^{m-1}(T^*M; \End(E))
        \quad \mbox{and}\\
        \sigma_m: \Psi_{cl}^m(M; E) \to S_{cl}^m/ S_{cl}^{m-1}(T^*M; \End(E))
        \simeq \CI(S^*M; \End(E))\,.
      \end{gathered}
    \end{equation*}
  \end{enumerate}
\end{remark}

The distribution kernels of operators acting on vector bundles change in an
evident way in the following way:

\begin{remark}\label{rem.boxtimes}
  Let $V_i \to M_i$ be vector bundles, $i = 1, 2$.
  Then $V_1 \boxtimes V_2 \to M_1 \times M_2$ will denote their external tensor product.
  Thus $(V_1 \boxtimes V_2)_{(x_1, x_2)} = (V_1)_{x_1} \otimes (V_2)_{x_2}$. We are
  interested in the external tensor product because
  the distribution kernels of the operators in $\iPS{m}(\manif; E; F)$ will be distributions
  with values in $F \boxtimes E'$. (Thus, if $E = F = \CC$, then $E \boxtimes E' = \CC$, which
  simplifies the notation and recovers the old definition of the distribution kernels.)
\end{remark}

\subsection{Fredholm operators and ellipticity}

Let us recall next a few more needed general definions and results.

\begin{definition}
  Let \m{X, Y} be normed spaces. The space of continuous linear maps
  \m{T : X \to Y} will be denoted by \m{\maB(X; Y)}. If \m{X}
  and \m{Y} are complete (i.e. Banach), then \m{T \in \maB(X; Y)}
  is \emph{Fredholm} if
  \begin{center}
    \m{\dim\ker(P)\,, \ \dim(Y/PX) < \infty.}
  \end{center}
\end{definition}

The following theorem \cite{H-W, Wl-Ro-La} will be important for us.

\begin{theorem}[Atkinson]
  \label{thm.Atkinson}
  Let \m{X} and \m{Y} be
  Hilbert spaces; \m{T \in \maB(X; Y)} is  \emph{Fredholm} if,
  and only if, there exists \m{Q \in \maB(Y; X)} such that
  both \m{TQ-1_Y} and \m{QT - 1_X} are compact.
\end{theorem}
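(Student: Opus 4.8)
The plan is to prove the two implications of Atkinson's theorem separately. The forward direction is where essentially all the work lies; the converse will follow quickly from the Riesz--Schauder theory of compact operators, which I will quote.

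\emph{Forward direction (Fredholm $\Rightarrow$ parametrix).} Suppose $T$ is Fredholm. The first and only genuinely nontrivial point is that the range $TX$ is closed. Since $\ker T$ is finite-dimensional, hence closed, and $X$ is a Hilbert space, I would write $X = \ker T \oplus X_1$ with $X_1 := (\ker T)^\perp$, so that $T|_{X_1} : X_1 \to Y$ is an injective bounded map with range $TX$. Because $\dim(Y/TX) < \infty$, I would choose a finite-dimensional algebraic complement $F \subseteq Y$ of $TX$ and consider the bounded linear map $S : X_1 \oplus F \to Y$, $S(x_1, f) := T x_1 + f$. It is bijective and goes between Banach spaces, so by the open mapping theorem it is a topological isomorphism; hence $TX = S(X_1 \oplus \{0\})$ is closed, $Y = TX \oplus F$ is a topological direct sum, and the projection $P : Y \to TX$ along $F$ is bounded. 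Next, $T|_{X_1} : X_1 \to TX$ is a bounded bijection between Banach spaces, hence a topological isomorphism; I would let $Q_0 : TX \to X_1 \subseteq X$ be its bounded inverse and set $Q := Q_0 P \in \maB(Y; X)$. A direct check then gives $QT = \Pi_{X_1}$, the projection of $X$ onto $X_1$ along $\ker T$, so that $1_X - QT = \Pi_{\ker T}$ has finite rank; and $TQ = P$, so that $1_Y - TQ = \Pi_F$ also has finite rank. Finite-rank operators are compact, which completes this direction.

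\emph{Converse (parametrix $\Rightarrow$ Fredholm).} Suppose there is $Q \in \maB(Y;X)$ with $QT = 1_X + K_1$ and $TQ = 1_Y + K_2$, where $K_1, K_2$ are compact. From $QT = 1_X + K_1$ one gets $\ker T \subseteq \ker(1_X + K_1)$, and by Riesz--Schauder theory $\ker(1_X + K_1)$ is finite-dimensional, so $\dim \ker T < \infty$. From $TQ = 1_Y + K_2$ one gets $TX \supseteq (1_Y + K_2)Y$, and again by Riesz--Schauder $(1_Y + K_2)Y$ has finite codimension in $Y$; since $Y/TX$ is a quotient of the finite-dimensional space $Y/(1_Y + K_2)Y$, we get $\dim(Y/TX) < \infty$. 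Therefore $T$ is Fredholm.

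\emph{Main obstacle.} The one step that is not pure linear algebra plus the open mapping theorem is the closedness of $TX$ in the forward direction, which I would handle exactly as above via the auxiliary isomorphism $S$; note that a finite-codimensional subspace of a Banach space need \emph{not} be closed in general, so one really must use that $TX$ is the range of a bounded operator with finite-dimensional cokernel. The Riesz--Schauder facts invoked in the converse---that $1 + (\text{compact})$ has finite-dimensional kernel and finite-codimensional range---I would simply cite.
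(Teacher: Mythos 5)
Your argument is correct and complete: the forward direction correctly isolates the one nontrivial point (closedness of $TX$), handles it via the auxiliary isomorphism $S : X_1 \oplus F \to Y$ and the open mapping theorem, and produces an explicit parametrix $Q = Q_0 P$ with finite-rank (hence compact) errors; the converse is the standard Riesz--Schauder deduction. The paper itself gives no proof of Atkinson's theorem --- it is stated with citations to \cite{H-W, Wl-Ro-La} --- so there is nothing to compare against; your proof is the standard textbook argument and matches what those references contain.
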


An important property of compact manifolds and of their elliptic
operators is the Fredholm property. We include a proof for
convenience.

\begin{theorem} \label{thm.Fredholm.compact}
  Let $E, F \to M$ be vector bundles over a complete Riemannian manifold
  $M$ and let $P \in \Psi_{1,0}^{m}(M; E, F)$, $m \in \RR.$
  \begin{enumerate}
    \item Assume that $M$ is compact and that
    $P$ is \emph{elliptic.} Then, for any \m{s \in \RR,}
    $P : H^s(M; E) \to H^{s-m}(M; F)$ is \em Fredholm.

    \item Conversely, if $P : H^s(M; E) \to H^{s-m}(M; F)$ is Fredholm
    for some $s \in \RR$, then $P$ is elliptic.
  \end{enumerate}
\end{theorem}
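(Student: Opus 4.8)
The plan is to prove each direction using the standard parametrix construction, which reduces everything to the compactness of operators of negative order on a compact manifold (Theorem \ref{thm.mp.compact}(3)) together with Atkinson's theorem (Theorem \ref{thm.Atkinson}). I would first treat the scalar case implicitly folded into the vector bundle case by the embedding trick of the preceding Remark: write $E \subset \CC^N$, $F \subset \CC^{N'}$ with orthogonal projections $e, f$, so that $P = fP'e$ for some matrix of scalar pseudodifferential operators, and all symbol-level statements reduce to the corresponding statements for $\Psi_{1,0}^m(M)$ together with the bundle maps $e,f$. The ellipticity of $P$ means precisely that $\sigma_m(P) \in \CI(S^*M; \Hom(E,F))$ is invertible (as a bundle map over $S^*M$), so there is $q \in S_{cl}^{-m}(T^*M; \Hom(F,E))$ with $q\,\sigma_m(P) - \mathrm{id}_E$ and $\sigma_m(P)\,q - \mathrm{id}_F$ of order $-1$.

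For part (1): build a parametrix by the usual Neumann-series / asymptotic-summation argument. Pick $Q_0 \in \Psi_{1,0}^{-m}(M;F,E)$ with $\sigma_{-m}(Q_0) = q$; by Theorem \ref{theorem.prod.gen}(2) (all operators on a compact $M$ are properly supported) and the multiplicativity $\sigma(PQ_0) = \sigma(P)\sigma(Q_0)$, we get $PQ_0 - \mathrm{id}_F =: R_1 \in \Psi_{1,0}^{-1}(M;F)$ and $Q_0P - \mathrm{id}_E =: R_1' \in \Psi_{1,0}^{-1}(M;E)$. Then iterate: with $Q_N := Q_0(\mathrm{id} - R_1 + R_1^2 - \cdots + (-R_1)^N)$ one arranges $PQ_N - \mathrm{id}_F \in \Psi_{1,0}^{-N-1}(M;F)$, and by the asymptotic completeness of the H\"ormander calculus there is a single $Q \in \Psi_{1,0}^{-m}(M;F,E)$ with $PQ - \mathrm{id}_F$ and $QP - \mathrm{id}_E$ in $\Psi_{1,0}^{-\infty}(M)$; in particular of negative order. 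By Theorem \ref{thm.mp.compact}(3) (Rellich), these error operators induce compact operators between the relevant Sobolev spaces, and by boundedness (Theorem \ref{thm.mp.compact}(2)) $Q : H^{s-m}(M;F) \to H^s(M;E)$ is bounded. Atkinson's theorem then gives that $P : H^s(M;E) \to H^{s-m}(M;F)$ is Fredholm.

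For part (2): suppose $P : H^s(M;E) \to H^{s-m}(M;F)$ is Fredholm for some $s$; I want to conclude $\sigma_m(P)$ is invertible on $S^*M$. By Atkinson there is a bounded $Q : H^{s-m}(M;F) \to H^s(M;E)$ with $QP - \mathrm{id}_E$ and $PQ - \mathrm{id}_F$ compact. The classical microlocal argument is to test against highly oscillatory data: fix $(x_0, \xi_0) \in S^*M$; if $\sigma_m(P)(x_0,\xi_0)$ were not injective as a linear map $E_{x_0} \to F_{x_0}$, pick $0 \neq v \in E_{x_0}$ in its kernel, a local coordinate patch, and a sequence $u_\lambda := e_\eta \chi v$ (cutoff $\chi$, frequency $\eta = \lambda\xi_0 \to \infty$) normalized in $H^{s-m}$; using Equation \eqref{eq.osc.testing} one shows $\|Pu_\lambda\|_{H^{s-2m}} = o(1)\cdot\|u_\lambda\|$ relative to the natural scaling while the compact operator contributions tend to zero, contradicting the a priori estimate $\|u\|_{H^s} \le C(\|Pu\|_{H^{s-m}} + \|u\|_{H^{s-1}})$ that Fredholmness forces — more precisely one runs this on a Weyl sequence to contradict that $\ker$ is finite-dimensional, or dually for surjectivity of $\sigma_m(P)$ one argues with $P^*$ (which is again pseudodifferential of order $m$ by Theorem \ref{theorem.prod.gen}(1), with $\sigma_m(P^*) = \sigma_m(P)^*$). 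Either way one gets that $\sigma_m(P)(x,\xi)$ is a bijection $E_x \to F_x$ for all $(x,\xi) \in S^*M$, which by the Proposition characterizing $S_{cl}^m/S_{cl}^{m-1}$ via restriction to $S^*M$ (and the equivalent description of ellipticity in Proposition \ref{prop.def.ell}) is exactly ellipticity of $P$.

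The main obstacle is the converse direction (2): the parametrix argument for (1) is entirely routine once one invokes asymptotic completeness and Rellich, but extracting ellipticity from the Fredholm hypothesis requires the oscillatory-testing estimate \eqref{eq.osc.testing} together with a careful bookkeeping of how the compact remainder and the lower-order term $\|u\|_{H^{s-1}}$ decay against the chosen high-frequency family — and one must handle both injectivity and surjectivity of the symbol, the latter most cleanly via the adjoint $P^*$. Everything else (bundle reduction, properly-supportedness on compact $M$, boundedness on Sobolev spaces) is available from the cited theorems.
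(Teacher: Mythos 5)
Your proposal is correct and takes essentially the same route as the paper: part (1) is the standard parametrix--Rellich--Atkinson argument (the paper stops at an order $-1$ remainder, which already suffices for compactness, instead of summing the Neumann series to a $\Psi^{-\infty}$ error), and part (2) reproduces the paper's scheme --- Fredholmness yields a coercive a priori estimate, high-frequency oscillatory testing then yields injective ellipticity, and applying the same to $P^*$ gives ellipticity --- which the paper delegates to Lemmas \ref{lemma.SF2CE} and \ref{lemma.CE2IE}. The only blemishes are notational slips in your sketch of the oscillatory argument (the test family should be normalized in $H^s$, not $H^{s-m}$, and $Pu_\lambda$ should be measured in $H^{s-m}$, not $H^{s-2m}$), which do not affect the structure of the proof.
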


Note that for the converse implication we no longer assume \m{M}
to be compact.

\begin{proof}
  There is \m{b \in  S_{1,0}^{-m}(M; \End(E))}
  such that \m{\sigma_m(P)b - 1 \in  S_{1,0}^{-1}(M; \End(E))}
  because \m{P} is elliptic.
  The surjectivity of \m{\sigma_{-m}} and the multiplicativity
  of the principal symbol give that there exists \m{Q \in \Psi_{1,0}^{-m}(M; E)}
  such that \m{PQ -1 , QP - 1 \in \Psi_{1,0}^{-1}(M; E)\,.}
  Consequently, \m{PQ -1} and \m{QP - 1} are compact operators.
  Atkinson's theorem \ref{thm.Atkinson} then gives that \m{P} is Fredholm.

  The proof of the converse statement follows from the following sequence of
  lemmas of independent interest from \cite{GKN_in_progress}.
\end{proof}

For the following lemmas (obtained in discussions with Nadine Gro\ss e
\cite{GKN_in_progress}), we keep the assumptions of Theorem \ref{thm.Fredholm.compact},
unless explicitly stated otherwise (some of these assumptions are reminded
occasionally). We follow Section 19.5 in \cite{Hormander3}.

\begin{lemma}\label{lemma.SF2CE}
  Let $M$ be a complete Riemannian manifold, $m, s \in \RR$, and let
  $P : H^s(M; E) \to H^{s-m}(M; F)$ be a continuous linear map that has
  finite dimensional kernel and closed range. Then, for every $t \le s$
  there exists $C_t >0$ such that, for every $u \in H^{s}(M; E)$, we
  have the \emph{$L^2$-coercive regularity estimate}
  \begin{equation}\label{eq.coerc.reg}
    \|u\|_{H^{s}(M; E)} \le C_t \big ( \|Pu\|_{H^{s-m}(M; F)} +
    \|u\|_{H^{t}(M; E)} \big)\,.
  \end{equation}
\end{lemma}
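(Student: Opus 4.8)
The plan is to prove the estimate \eqref{eq.coerc.reg} by a standard functional-analytic argument, exploiting the fact that a continuous linear map with finite-dimensional kernel and closed range becomes bounded below once we quotient out the kernel. First I would fix $t \le s$ and set $X := H^s(M; E)$, $Y := H^{s-m}(M; F)$, and $Z := H^t(M; E)$, so that the inclusion $X \hookrightarrow Z$ is continuous. I would introduce the auxiliary operator
\begin{equation*}
  T : X \to Y \oplus Z\,, \qquad T u \ede (Pu, u)\,,
\end{equation*}
which is obviously continuous. The estimate we want is precisely the statement that $T$ is bounded below, i.e.\ that there is $C_t > 0$ with $\|u\|_X \le C_t \|Tu\|_{Y\oplus Z}$ for all $u \in X$.

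The key point is that $\ker T = \ker P$ is finite-dimensional (by hypothesis) and that the range of $T$ is closed. For the latter, I would argue as follows: if $Tu_n = (Pu_n, u_n) \to (f, z)$ in $Y \oplus Z$, then $u_n \to z$ in $Z$ and $Pu_n \to f$ in $Y$; since $P$ has closed range, $f = Pv$ for some $v \in X$, and a short argument using that $u_n \to z$ in $Z$ (combined with $P(u_n - v') \to 0$ for an appropriate representative) identifies the limit, showing $(f,z) = Tv$ lies in the range. (The cleanest way is to note that $T$ is injective on a complement of $\ker P$ in $X$, and to transfer closedness of the range of $P$ to $T$ on that complement.) Once $T$ has finite-dimensional kernel and closed range, the open mapping theorem applied to the bijection $X/\ker T \to \operatorname{ran} T$ gives the lower bound $\|u + \ker T\|_{X/\ker T} \le C \|Tu\|_{Y \oplus Z}$.

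It then remains to upgrade this quotient estimate to an honest estimate on $\|u\|_X$. This is where the finite-dimensionality of $N := \ker P$ is used: on the finite-dimensional space $N$, all norms are equivalent, and in particular the $Z$-norm and the $X$-norm are equivalent on $N$. Writing $u = u_0 + n$ with $u_0$ in a fixed closed complement of $N$ and $n \in N$, I would bound $\|u_0\|_X \le C\|Tu\|$ from the quotient estimate (choosing representatives appropriately) and $\|n\|_X \le C\|n\|_Z \le C(\|u\|_Z + \|u_0\|_Z) \le C(\|u\|_Z + \|u_0\|_X)$, then combine. Since $\|u\|_Z = \|u\|_{H^t}$ and the $Y$-component of $Tu$ is $\|Pu\|_{H^{s-m}}$, assembling these inequalities yields \eqref{eq.coerc.reg} with a constant $C_t$ depending only on $t$, $s$, $P$, and the geometry.

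I expect the main obstacle to be the verification that $\operatorname{ran} T$ is closed, i.e.\ transferring the closed-range hypothesis on $P$ to the product map $T$; the subtlety is that $T$ sees both $Pu$ and $u$ itself in a weaker norm, so one must be careful that a sequence $u_n$ with $Pu_n \to f$ and $u_n \to z$ in $H^t$ actually has $z \in H^s$ with $Pz = f$. The standard fix is to decompose $u_n = v_n + w_n$ with $v_n$ in a complement of $\ker P$ and $w_n \in \ker P$; since $\ker P$ is finite-dimensional, $(w_n)$ has a convergent subsequence in $H^s$, and $P|_{\text{complement}}$ is an isomorphism onto its (closed) range, so $v_n$ converges in $H^s$, hence so does $u_n$, giving $z \in H^s$. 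Everything else is routine functional analysis (open mapping theorem, equivalence of norms on finite-dimensional spaces) and the continuity of the Sobolev inclusion $H^s \hookrightarrow H^t$ for $t \le s$ on a complete manifold, which is immediate.
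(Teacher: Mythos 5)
Your argument is correct, and it is the standard functional-analytic proof; the paper itself gives no proof of this lemma (it defers to \cite{GKN_in_progress} and to Section 19.5 of \cite{Hormander3}, where precisely this kind of argument appears), so there is nothing structurally different to compare against. Two small remarks. First, your claim that $\ker T = \ker P$ is off: since the second component of $Tu = (Pu,u)$ is the \emph{injective} inclusion $H^s(M;E)\hookrightarrow H^t(M;E)$, one has $\ker T = \{0\}$. This only simplifies matters: once $\operatorname{ran} T$ is shown to be closed, $T$ is a continuous bijection onto a Banach space and the open mapping theorem gives the lower bound $\|u\|_{H^s}\le C_t\|Tu\|$ directly, which \emph{is} the estimate \eqref{eq.coerc.reg}; no quotient estimate or subsequent ``upgrade'' is needed. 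Second, in your closed-range argument you do not need to extract a convergent subsequence of $(w_n)\subset\ker P$: writing $u_n=v_n+w_n$ with $v_n$ in a closed complement $X_0$ of $\ker P$, the convergence $Pv_n=Pu_n\to f$ together with the bounded invertibility of $P|_{X_0}$ onto the closed range forces $v_n\to v$ in $H^s$, hence $w_n=u_n-v_n$ converges in $H^t$, hence (equivalence of norms on the finite-dimensional $\ker P$, which is closed in $H^t$) in $H^s$ as well. Alternatively, the whole lemma follows even more directly from the decomposition $u=u_0+n$, the bound $\|u_0\|_{H^s}\le C\|Pu\|_{H^{s-m}}$ from the open mapping theorem, and $\|n\|_{H^s}\le C\|n\|_{H^t}\le C(\|u\|_{H^t}+\|u_0\|_{H^t})$, without introducing $T$ at all. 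These are streamlinings, not corrections; the proof as proposed goes through.
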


For operators acting on vector bundles, the injectivity of the principal symbol does not
imply its invertibility. We thus introduce the following definition.

\begin{definition} \label{def.inj.elliptic}
  A symbol $a \in S_{1, 0}^m(T^*M; \Hom(E; F))$ is
  \emph{injectively elliptic} if there exists $b \in S_{1, 0}^{-m}(T^*M; \Hom(E; F))$ such that
  $ba - 1 \in S_{1, 0}^{-1}(T^*M; \End(E))$ (i.e. $a$ is left invertible modulo
  lower order symbols). If $P \in \Psi_{1,0}^m(M; E, F)$, we say that $P$ is
  \emph{injectively elliptic} if $\sigma_m(P)$ is injectively elliptic.
\end{definition}

As usual, we obtain the following alternative description of injectively elliptic
symbols.

\begin{lemma}\label{lemma.inj.elliptic}
  We have that $a \in S_{1,0}^m(T^*M; \Hom(E; F))$
  is injectively elliptic if, and only if, for every $x \in M$ and any compact
  neighborhood $V$ of $x$ in $M$, there exist $R_V, C_V > 0$ such that, for all
  $w \in E\vert_{V}$ with $\|w\|\ge R_V$, we have $$\|a(\xi) w \| \ge C_V \|w\|\,.$$
\end{lemma}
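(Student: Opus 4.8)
The plan is to mimic the standard characterization of ellipticity (Proposition \ref{prop.def.ell}), adapting it to the non-compact and vector-bundle setting by localizing on compact neighborhoods. I would prove the two implications separately, reducing everything to a fiberwise estimate.

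\emph{From injective ellipticity to the lower bound.} Suppose $b \in S_{1,0}^{-m}(T^*M; \Hom(F; E))$ satisfies $ba - 1 \in S_{1,0}^{-1}(T^*M; \End(E))$. Fix $x$ and a compact neighborhood $V$. On $V$, the symbol $c := ba - 1$ lies in $S_{1,0}^{-1}$, so $p_0^{[-1]}(c\vert_V) < \infty$ (this uses that $V$ is compact, hence the symbol estimates on the corresponding coordinate patch are uniform there); thus $\|c(\xi)\| \le C_V' (1 + |\xi|)^{-1} \le 1/2$ once $|\xi| \ge R_V'$ for a suitable $R_V' > 0$. On that region, $b(\xi) a(\xi) = 1 + c(\xi)$ is invertible on $E\vert_V$ with $\|(b(\xi)a(\xi))^{-1}\| \le 2$, so for $w \in E\vert_V$ and $|\xi|\ge R_V'$ we get $\|w\| \le 2\|b(\xi)a(\xi)w\| \le 2\|b(\xi)\|\,\|a(\xi)w\|$. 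Since $b \in S_{1,0}^{-m}$, on $V$ we have $\|b(\xi)\| \le C_V'' (1+|\xi|)^{-m}$, which is bounded above on $\{|\xi| \ge R_V'\}$ when $m \ge 0$ and grows only polynomially otherwise; in either case, after possibly shrinking to a smaller piece of the cosphere-type region or rescaling, one extracts a constant $C_V > 0$ with $\|a(\xi)w\| \ge C_V\|w\|$ for $|\xi| \ge R_V$. (The cleanest route is to work with $\xi$ ranging over $E\vert_V$ viewed through a local trivialization and to absorb the $(1+|\xi|)$ factors into the constants over the compact base $V$.)

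\emph{From the lower bound to injective ellipticity.} Conversely, assume the fiberwise estimate $\|a(\xi)w\| \ge C_V\|w\|$ holds on each compact $V$ for $|\xi|\ge R_V$. Then $a(\xi)^* a(\xi) \in \End(E\vert_V)$ is invertible with $\|(a(\xi)^*a(\xi))^{-1}\| \le C_V^{-2}$ there, and one sets $b(\xi) := (a(\xi)^*a(\xi))^{-1} a(\xi)^*$ for $|\xi|$ large, cut off smoothly to zero for $|\xi|$ small (using a partition of unity subordinate to a cover of $M$ by such compact neighborhoods, together with the $x$-derivative estimates to check $b$ is a symbol). By construction $b(\xi)a(\xi) = 1$ for $|\xi| \ge R_V$, so $ba - 1$ is supported in a region where $|\xi|$ is bounded on each compact set and hence lies in $S_{1,0}^{-N}$ for every $N$, in particular in $S_{1,0}^{-1}(T^*M; \End(E))$. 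Checking $b \in S_{1,0}^{-m}(T^*M; \Hom(F; E))$ amounts to differentiating the explicit formula $b = (a^*a)^{-1}a^*$: each $\pa_x$ or $\pa_\xi$ derivative produces products of derivatives of $a$, $a^*$ and of $(a^*a)^{-1}$ (whose derivatives are controlled via $\pa((a^*a)^{-1}) = -(a^*a)^{-1}\pa(a^*a)(a^*a)^{-1}$), and one counts powers of $(1+|\xi|)$ to land in $S_{1,0}^{-m}$.

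\emph{Main obstacle.} I expect the delicate point to be the \emph{uniformity in $x$} packaged into the symbol classes: the hypothesis and conclusion are phrased locally (on compact $V$), but $S_{1,0}^m(T^*M)$ is defined through local coordinate patches $U \Subset M$, so one must be careful that "for every $x$ and every compact neighborhood $V$" interacts correctly with a locally finite atlas — in particular, in the converse direction, the cutoff defining $b$ for small $|\xi|$ and the patching of the local formulas $(a^*a)^{-1}a^*$ must be done with a partition of unity so that the resulting $b$ is globally a symbol of the right order, while the smallness threshold $R_V$ is allowed to depend on $V$. The vector-bundle bookkeeping ($\Hom$ vs.\ $\End$, and the use of $a^*$) is routine but must be kept consistent throughout.
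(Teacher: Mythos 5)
The paper itself states this lemma without proof (it is introduced only with ``As usual, we obtain\dots''), so there is no argument of the authors to compare against; your route --- estimating $b(\xi)a(\xi)=1+c(\xi)$ from below for $|\xi|$ large in one direction, and taking $b:=(a^*a)^{-1}a^*$ with a cutoff near $\xi=0$ in the other --- is the standard and surely intended one. There is, however, a genuine gap, and it sits exactly where you wave your hands: the weight $(1+|\xi|)^m$. Your first direction actually yields $\|w\|\le 2\|b(\xi)\|\,\|a(\xi)w\|\le C(1+|\xi|)^{-m}\|a(\xi)w\|$, i.e.\ $\|a(\xi)w\|\ge C_V(1+|\xi|)^{m}\|w\|$; the passage from this to the unweighted bound $\|a(\xi)w\|\ge C_V\|w\|$ by ``shrinking to a smaller piece of the cosphere-type region or rescaling'' is not an argument, and for $m<0$ the unweighted bound is simply false for a typical injectively elliptic symbol (already $a(\xi)=(1+|\xi|^2)^{m/2}$ with $E=F=\CC$ shows this). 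In the converse direction the same weight is indispensable: to conclude $b=(a^*a)^{-1}a^*\in S_{1,0}^{-m}$ you need $\|(a^*a)^{-1}(\xi)\|\lesssim(1+|\xi|)^{-2m}$ locally uniformly, whereas the unweighted hypothesis only gives $\|(a^*a)^{-1}\|\le C_V^{-2}$, which for $m>0$ produces bounds of order $+m$ for $b$, not $-m$ (compare $a\equiv 1$ regarded as an element of $S_{1,0}^{1}$: it satisfies your fiberwise bound but is not injectively elliptic of order $1$). So, as written, your proposal establishes the stated equivalence only when $m=0$.

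The underlying issue is that the lemma as printed is garbled: the quantifier must run over $\xi\in T^*M\vert_V$ with $\|\xi\|\ge R_V$ (the restriction $\|w\|\ge R_V$ is vacuous by homogeneity in $w$), and, exactly as in the scalar case of Proposition \ref{prop.def.ell}, the right-hand side should carry the factor $\|\xi\|^m$. If you prove the weighted statement $\|a(\xi)w\|\ge C_V\|\xi\|^{m}\|w\|$ for $\|\xi\|\ge R_V$ over $V$, both halves of your argument go through precisely as you set them up: the forward direction is finished once you stop discarding the factor, and in the converse direction the weighted lower bound gives $(a^*a)^{-1}\in S_{1,0}^{-2m}$ over each relatively compact chart via the identity $\pa\big((a^*a)^{-1}\big)=-(a^*a)^{-1}\pa(a^*a)(a^*a)^{-1}$ that you quote, hence $b\in S_{1,0}^{-m}(T^*M;\Hom(F;E))$ and $ba-1$ is, over each compact set, supported in a bounded region of $\xi$, so it lies in $S_{1,0}^{-1}$. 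Your remarks on patching with a locally finite atlas and a subordinate partition of unity are fine, since the symbol estimates in the paper are tested on charts $U\Subset M$; note also that you correctly use $b\in S_{1,0}^{-m}(T^*M;\Hom(F;E))$, silently fixing the $\Hom(E;F)$ misprint in Definition \ref{def.inj.elliptic}.
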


It is easy to see then that $a \in S_{1, 0}^m{T^*M; \Hom(E; F)}$ is injectively elliptic
if, and only if, $a^*a$ is elliptic (in $S_{1, 0}^{2m}(T^*M; \End(E))$). (This is
used, implicitly, in the proof of Theorem 19.5.1 in \cite{Hormander3}.) Also, it
follows from definitions (since $E$ and $F$ are finite dimensional vector
bundles), that $a \in S_{1,0}^m(T^*M; \Hom(E, F))$ is elliptic if, and only if,
both $a$ and $a^*$ are injectively elliptic.

\begin{lemma}\label{lemma.CE2IE}
  Let $M$ be a complete Riemannian manifold and $m, s, t \in \RR$, $t < s$.
  Let $U \subset M$ be open and let $P \in \Psi_{1,0}^{m}(M; E, F)$ satisfy an
  \emph{$L^2$-coercive regularity  estimate} in $U$, that is, there exists $C_t > 0$
  such that, for all $u \in \CIc(U; E)$, we have the estimate of Equation \eqref{eq.coerc.reg}.
  Then $P$ is injectively elliptic in $U$.
\end{lemma}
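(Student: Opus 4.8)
The plan is to argue by contradiction, testing the coercive estimate \eqref{eq.coerc.reg} on sharply localized oscillatory wave packets, in the spirit of the proof of Theorem~19.5.1 in \cite{Hormander3}. By Lemma~\ref{lemma.inj.elliptic} it suffices to prove, for each $x_0 \in U$, that there are a neighborhood $V \Subset U$ of $x_0$ and constants $C, R > 0$ with $\|a(y, \xi) w\| \ge C |\xi|^m \|w\|$ whenever $y \in V$, $\xi \in T_y^* M$ with $|\xi| \ge R$, and $w \in E_y$; here $a$ is a full symbol of $P$ read in a chart around $x_0$ trivializing $E$ and $F$, and by Definition~\ref{def.inj.elliptic} and Lemma~\ref{lemma.inj.elliptic} the conclusion is insensitive to changing $a$ by an element of $S_{1,0}^{m-1}$. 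Suppose this failed at some $x_0$. Then there would be $y_k \to x_0$, covectors $\xi_k$ over $y_k$ with $|\xi_k| \to \infty$, and unit vectors $w_k \in E_{y_k}$ with $\|a(y_k, \xi_k) w_k\| = o(|\xi_k|^m)$.

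I would fix $\phi \in \CIc(\RR^n)$ with $\phi(0) \neq 0$ and small support, set $\phi_k(x) := \phi\big( |\xi_k|^{1/2}(x - y_k) \big)$, and test \eqref{eq.coerc.reg} on $u_k := e^{\imath \langle \,\cdot\,,\, \xi_k \rangle}\, \phi_k\, w_k \in \CIc(U; E)$ (legitimate for $k$ large, since $\supp \phi_k$ shrinks to $x_0$). The scale $|\xi_k|^{-1/2}$ is critical: the spatial spread becomes negligible against the scale on which $a(\,\cdot\,, \xi_k)$ varies, while the frequency spread $|\xi_k|^{1/2}$ stays $\ll |\xi_k|$. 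Three ingredients enter. First, a direct Fourier computation gives $\|u_k\|_{H^r(M;E)} = c_r\, |\xi_k|^{r - n/4}\big( 1 + o(1) \big)$ with $c_r > 0$ for every $r \in \RR$; in particular $\|u_k\|_{H^t} = o(\|u_k\|_{H^s})$ since $t < s$. Second, localizing $P$ near $x_0$ via cutoffs (the off-diagonal and smoothing remainders, applied to the rapidly oscillating $u_k$, contribute $O(|\xi_k|^{-\infty})$ in $H^{s-m}$, by integration by parts against the oscillation) and conjugating by $e^{\imath \langle \,\cdot\,,\, \xi_k \rangle}$ as in \eqref{eq.osc.testing} yields $P u_k = e^{\imath \langle \,\cdot\,,\, \xi_k \rangle} \big( a(\,\cdot\,, \xi_k)\, \phi_k\, w_k + \rho_k \big) + O(|\xi_k|^{-\infty})$, where $\rho_k$ comes from a first-order Taylor expansion of $\xi \mapsto a(x, \xi + \xi_k)$ at $\xi = 0$ and involves only symbols of order $m - 1$ near frequency $\xi_k$; using also Theorem~\ref{thm.mapping.properties}(3) one gets $\|\rho_k\|_{L^2} = O(|\xi_k|^{m - 1/2})\, \|\phi_k\|_{L^2}$. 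Third, since $\pa_x a \in S_{1,0}^m$, on $\supp \phi_k$ one has $\|a(x, \xi_k) - a(y_k, \xi_k)\| = O(|\xi_k|^{m - 1/2})$, whence $\sup_{\supp \phi_k} \|a(x, \xi_k)\, w_k\| = o(|\xi_k|^m)$ and $\|a(\,\cdot\,, \xi_k)\, \phi_k\, w_k\|_{L^2} = o(|\xi_k|^m)\, \|\phi_k\|_{L^2}$.

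Now $\phi_k w_k$, $a(\,\cdot\,, \xi_k)\phi_k w_k$, and $\rho_k$ all have Fourier transform concentrated (up to $O(|\xi_k|^{-\infty})$) in $\{\, |\zeta| \lesssim |\xi_k|^{1/2}\, \}$, so by the two-sided Peetre inequality $(1 + |\xi_k + \zeta|^2)^{s-m} \le C\, (1 + |\xi_k|^2)^{s-m}\, (1 + |\zeta|^2)^{|s-m|}$ the $H^{s-m}$-norm of $e^{\imath \langle \,\cdot\,,\, \xi_k \rangle}$ times any of these is $\big( 1 + o(1) \big)\, |\xi_k|^{s-m}$ times its $L^2$-norm. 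Combining, $\|P u_k\|_{H^{s-m}} = o(|\xi_k|^{s - n/4}) = o(\|u_k\|_{H^s})$, and together with $\|u_k\|_{H^t} = o(\|u_k\|_{H^s})$ the estimate \eqref{eq.coerc.reg} becomes $\|u_k\|_{H^s} \le C_t\, o(\|u_k\|_{H^s})$, impossible for $k$ large since $\|u_k\|_{H^s} > 0$. This contradiction yields the required pointwise lower bound at every $x_0 \in U$, so $P$ is injectively elliptic in $U$.

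I expect the main obstacle to be the sharp bookkeeping in the last paragraph: forcing $\|P u_k\|_{H^{s-m}}$ to beat $\|u_k\|_{H^s}$ when $m$ and $s$ are unrelated reals. This is exactly what forces the critical scale $|\xi_k|^{-1/2}$ and the use of the two-sided (rather than the crude) Peetre inequality, and what makes it necessary to check that, although $\pa_x$ does not lower the order of an $S_{1,0}$ symbol, the shrinking support of $\phi_k$ nonetheless localizes the symbol at $x_0$. The reduction of the global $P \in \Psi_{1,0}^m(M; E, F)$ to its local symbol, and the estimate $\|u_k\|_{H^r} \asymp |\xi_k|^{r - n/4}$, are routine; everything can be organized as in Section~19.5 of \cite{Hormander3}.
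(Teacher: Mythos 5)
Your argument is correct and is exactly the route the paper itself points to: the paper gives no proof of this lemma, deferring to \cite{GKN_in_progress} and stating only that it follows Section~19.5 of \cite{Hormander3}, and your wave-packet test functions $e^{\imath\langle\cdot,\xi_k\rangle}\phi(|\xi_k|^{1/2}(\cdot-y_k))w_k$ are the standard implementation of that argument, with the $|\xi_k|^{-1/2}$ spatial scale being the right adaptation needed because $x$-derivatives of an $S_{1,0}$ symbol do not gain decay. The bookkeeping you flag (two-sided Peetre inequality, $O(|\xi_k|^{-\infty})$ off-diagonal remainders, reduction to the pointwise symbol bound via Lemma~\ref{lemma.inj.elliptic}) all checks out.
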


We can now complete the proof of Theorem \ref{thm.Fredholm.compact}. Recall that we
have proved the direct part and it remains to prove the converse, that is point (2)
of that Theorem.

\begin{proof}[End of the proof of Theorem \ref{thm.Fredholm.compact}]
  We use twice the last two lemmas, first for $P$, and then for $P^* \in \Psi^m(M; F, E)$.
  Since $P : H^s(M; E) \to H^{s-m}(M; F)$ is Fredholm, it has finite dimensional kernel and
  closed range. Lemma \ref{lemma.SF2CE} implies that $P$ satisfies the $L^2$-coercive
  regularity estimate of Equation \eqref{eq.coerc.reg} with $t = s-1$. In turn, Lemma
  \ref{lemma.CE2IE} then gives that $P$ is injectively elliptic. The same argument
  applied to the Fredholm operator $P^*$ gives that $P^*$ is also injectively
  elliptic. Consequently, $P$ is elliptic.
\end{proof}

The same ideas as the ones used in proof of Lemma \ref{lemma.CE2IE} in \cite{GKN_in_progress}
give the following result. (It is also a direct consequence of the analogous results in 
the case of compact manifolds \cite{Hormander3, Taylor2}.)

\begin{proposition}\label{prop.compact0}
  Let $M$ be a complete Riemannian manifold, $m, s \in \RR$, and $P \in \Psi_{cl}^m(M; E, F)$
  be such that $P : H^s(M; E) \to H^{s-m}(M; F)$ is compact. Then $\sigma_m(P) = 0$.
\end{proposition}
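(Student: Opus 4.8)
The plan is to reduce the statement to a purely local question near the diagonal, where it becomes the classical fact that a compact pseudodifferential operator on a compact manifold has vanishing principal symbol. Since $\sigma_m(P)$ is a section of $\End$-valued symbols on $T^*M$, and vanishing is a pointwise (hence local) condition, it suffices to show $\sigma_m(P)$ vanishes on $T^*U$ for each coordinate chart $U \simeq W \subset \RR^n$ with $U \Subset M$. First I would fix such a chart and a cutoff $\eta \in \CIc(U)$, say with $\eta \equiv 1$ on a smaller open set $U' \Subset U$. The operator $\eta P \eta$ belongs to $\Psi_{c,cl}^m(U)$, and since multiplication by a fixed smooth compactly supported function is bounded on every Sobolev space, $\eta P \eta : H^s(M;E) \to H^{s-m}(M;F)$ is again compact. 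Transporting through the diffeomorphism $\phi$ (which induces isomorphisms of the relevant Sobolev spaces up to equivalent norms), we get a compact operator $a(x,D) \in \Psi_{c,cl}^m(W)$ acting $H^s(W;\phi_*E) \to H^{s-m}(W;\phi_*F)$, and the principal symbol of $P$ on $T^*U'$ is, by the definition of $\sigma_m$ on manifolds, carried to the symbol class of $a$.

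The heart of the argument is then to show: if $a(x,D) \in \Psi_{c,cl}^m(W)$ is compact between the appropriate Sobolev spaces, then $\sigma_m(a(x,D)) = 0$, i.e. $a \in S_{cl}^{m-1}$. For this I would use oscillatory testing, exactly as in Equation \eqref{eq.osc.testing}. Since $a$ has distribution kernel compactly supported in $W \times W$, formula $[a(x,D)e^{\eta}](x) = e^{\imath\langle x,\eta\rangle} a(x,\eta)$ applies after localizing $e^\eta$ by a cutoff $\chi$ supported in $W$ with $\chi \equiv 1$ near $\supp_x k_{a(x,D)}$; the error $a(x,D)(\chi e_\eta) - a(x,D)e_\eta$ is a smoothing term contributing lower-order symbol. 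Plugging in $u_\eta := \chi e_\eta / \|\chi e_\eta\|_{H^s}$, the sequence $u_\eta$ is bounded in $H^s$ and converges weakly to $0$ as $|\eta| \to \infty$ (high-frequency oscillation), so compactness of $a(x,D)$ forces $\|a(x,D)u_\eta\|_{H^{s-m}} \to 0$. On the other hand, a direct estimate shows $\|a(x,D)u_\eta\|_{H^{s-m}}$ is bounded below by a constant times $|\eta|^{-m}\,\sup_x |a(x,\eta)|$ times $|\eta|^{m}$ up to the normalization, i.e. the leading behavior of $|a(x,\eta)|$ as $|\eta|\to\infty$; using the homogeneous expansion $a \sim \sum a_{m-j}$, this lower bound is (a constant times) $\|a_m(\cdot,\eta/|\eta|)\|$ which would have to vanish. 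Hence $a_m \equiv 0$, which is exactly $\sigma_m(a(x,D)) = 0$.

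Assembling these pieces, $\sigma_m(P)$ vanishes on $T^*U'$ for the point at the center of an arbitrary chart, hence on all of $T^*M$, which is the claim. The main obstacle I expect is bookkeeping the Sobolev normalizations and the homogeneous-expansion lower bound carefully enough: one must check that the normalized oscillatory tests $u_\eta$ have $H^s$-norm comparable to $|\eta|^s$ (up to the $L^2$-norm of the fixed cutoff $\chi$) and that $\|a(x,D)(\chi e_\eta)\|_{H^{s-m}}$ is comparable, to leading order as $|\eta| \to \infty$, to $|\eta|^{s-m}$ times the sup of $|a_m|$ on the cosphere — with the cross terms and the lower-order symbols $a_{m-j}$, $j\ge 1$, genuinely contributing lower powers of $|\eta|$ and thus not interfering. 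Alternatively, as the parenthetical remark in the excerpt suggests, one can bypass the oscillatory-testing computation entirely: restrict to a chart as above, invoke the known compact-manifold result (Proposition \ref{prop.compact0} holds on a torus, say, after further localization so that $\eta P \eta$ can be viewed as an operator on $\TT^n$), and deduce vanishing of the symbol there; I would mention this as the quick route and use the oscillatory-testing argument as the self-contained one, consistent with the paper's stated preference for direct proofs.
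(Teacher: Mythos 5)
Your proposal is correct and is essentially the argument the paper has in mind: the paper gives no self-contained proof of this proposition, only pointing to the ideas behind Lemma \ref{lemma.CE2IE} and to the analogous compact-manifold results, and your localization-plus-oscillatory-testing argument is precisely the standard proof underlying those references (indeed, your ``quick route'' via reduction to a closed manifold is literally the paper's parenthetical remark). The only points requiring care are the ones you already flag --- the two-sided comparisons $\|\chi e_\eta\|_{H^s}\asymp(1+|\eta|)^{s}$ and $\|e_\eta\, a(\cdot,\eta)\|_{H^{s-m}}\asymp(1+|\eta|)^{s-m}\|a(\cdot,\eta)\|_{L^2}$ as $|\eta|\to\infty$, the fact that with $\chi\equiv 1$ near the $y$-support of the kernel the cutoff error vanishes identically, and testing against constant vectors to handle the bundle case --- none of which is a gap.
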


\section{Translation invariance in a neighborhood of infinity}
\label{translation-inv-oper}

Pseudodifferential operators have long played an important role in the study of layer potential
operators \cite{H-W, Taylor2}.
In this paper, we consider mostly two classes of pseudodifferential operators associated to
our manifolds with straight cylindrical ends. These two classes differ only in the regularizing
operators in a sense that will be made clear below. In this section, we introduce
the smallest of these two classes of operators and study their properties.
The operators in this class are ``$b$-pseudodifferential''
operators (in the sense of Melrose \cite{MelroseActa, MelroseAPS} and Schulze
\cite{Schulze, SchulzeBook91}; see the nice paper
by Lauter and Seiler \cite{LauterSeiler} for a comparison of the constructions of
Melrose and Schulze). A few of the results in the following two subsections are consequences
of the properties of the $b$-calculus and many are similar to some analogous results
that are valid for the $b$-calculus. We nevertheless include the proofs, for the
convenience of the reader and because our approach is different from the one used for
the $b$-calculus. We let $n$ denote the dimension of our manifold with cylindrical ends $\manif$.

\subsection{Definition of the $\inv$-calculus}
\label{ssec.transl.inv}
Let $\manif = \manif_0\cup \left(\partial \manif_0\times (-\infty,0]\right)$ be our fixed manifold
with straight cylindrical ends, as in Remark \ref{rem.def.ce}. Recall that we assume that
$\manif$ has compact ends, that is, that $\manif_0$ is compact. This assumption will remain
in place until the end of the paper.

For $R, s\geq 0$, we let
\begin{equation}\label{eq.def.Phi}
  \Phi _s :\partial \manif_0 \times (-\infty , -R]\to \partial \manif_0\times (-\infty ,-R-s] \,,
  \quad \Phi _s(x,t)=(x,t-s) \,,
\end{equation}
denote the bijective isometry given by translation with $-s$ in the $t$-direction.
If $s<0$, then $\Phi _s$ is defined as the inverse of $\Phi _{-s}$.
Let also $\Phi_s(f) := f\circ \Phi _s$, whenever the compositions are defined.
If $\varepsilon >0$, the set
\begin{equation}\label{eq.def.eps_neigh}
  \{(x,y)\in \manif \times \manif \mid \dist_g(x,y) < \varepsilon \}
\end{equation}
will be called \emph{the $\varepsilon$--neighborhood of the diagonal}
$\delta_\manif := \{(x,x) \mid x \in \manif \}$ of $\manif$.

\begin{definition} \label{def.translation-invariant}
  A linear and continuous operator $P : \CIc(\manif)\to \CI(\manif)'$ is called
  {\it translation invariant in a neighborhood of infinity} if:
  \begin{enumerate}
    \item there exists $\varepsilon_P > 0$ such that its Schwartz
    (or distribution) kernel is supported in the
    $\varepsilon_P$--neighborhood of the diagonal
    (Equation \eqref{eq.def.eps_neigh}).

    \item \label{item.transl-inv} there exists $R_P >\varepsilon_P$ such that, for all
    $u \in \CIc \big(\partial \manif_0\times (-\infty ,-R_P)\big)$ and all $s>0$,
    we have $P \Phi _s(u) = \Phi _s P(u).$
  \end{enumerate}
  We let $\iPS m(\manif)$ denote the {\it space of all pseudodifferential operators $P \in \Psi_{1,0}^m(\manif)$
  of order $\le m \in \RR \cup \{-\infty\}$ that are translation invariant in a neighborhood of infinity}.
  We let $\iPS{\infty}(\manif) := \cup_{m \in \ZZ} \iPS{m}(\manif)$.
\end{definition}

\begin{remark}
Let us notice that, if $u$ has support in $\partial \manif_0\times (-\infty ,-R_P)$, then
$\Phi_s(u)$ has support in $\partial \manif_0\times (-\infty ,-R_P + \varepsilon_P)$, so $P\Phi_s (u)$
and $\Phi_s(Pu)$ are both defined since $R_P > \varepsilon_P$. Consequently, the
condition $P \Phi _s(u) = \Phi _s P(u)$ makes sense.
\end{remark}

The calculus $\iPS{m}(\manif)$ will be called the \emph{$\inv$-calculus.}
Our $\inv$-calculus is smaller than the ``$b$-calculus'' in the following sense.

\begin{remark} \label{rem.inv.subset.b}
  Let us compactify $\manif$ to $\overline{\manif}$ using the function
  $r := e^{t}$ (the Kondratiev transform), so that $\manif = \{r > 0\}$ and
  $\pa \overline{\manif} = \{r = 0\}$. Then it is not difficult to show that
  $$\iPS{m}(\manif) \subset \Psi_b^{m}(\overline{\manif})\,,$$
  where $\Psi_b^{m}(\overline{\manif})$ is the ``$b$-calculus'' of Melrose
  \cite{MelroseActa, MelroseAPS} and Schulze \cite{Schulze, SchulzeBook91}.
  (Schulze calls it the ``cone-calculus,'' see the nice paper by Lauter and
  Seiler \cite{LauterSeiler} comparing the two approaches for further references.
  See also \cite{CSS07, GrieserBCalc, SchulzeWongBCalc, Seiler1999}.) Some of the properties of
  $\iPS{m}(\manif)$ proved in this paper would follow from this inclusion.
  The space $\Psi_b^{m}(\manif)$ is, nevertheless, not too far from our space
  $\iPS{m}(\overline{\manif})$,
  since the former consists of operators whose kernels are ``exponentially close'' to
  being translation invariant, whereas ours are exactly translation invariant (in a neighborhood
  of infinity). Our proofs can thus be regarded as a different approach to the $b$-calculus.
\end{remark}

We have already encountered operators in the $\inv$-calculus $\iPS{m}(\manif)$,
as explained in the next remark.

\begin{remark}\label{rem.def.CIinv}
  The endomorphisms and, more generally, the differential operators in
  $\iPS{m}(\manif)$ have already been used.
  Indeed, it follows from definitions that
  \begin{equation*}
    \begin{gathered}
    \CI_{\inv}(\manif) \seq \CI(\manif) \cap \iPS{m}(\manif)
    \quad \mbox{ and}\\
    \operatorname{Diff}_{\inv}^m(\manif) \seq \{P
    \mbox{ a differential operator on } \manif \} \cap \iPS{m}(\manif)\,.
    \end{gathered}
  \end{equation*}
\end{remark}

\subsection{The first properties of the $\inv$-calculus}
In this subsection, we recall the definition of the $\inv$-calculus $\iPS{m}(\manif)$
and prove its first properties.
These properties are direct analogues of the corresponding properties for the
classical pseudodifferential operators on compact manifolds (or of the $b$-calculus).
However, none of the results of this subsection seems to follow from the
inclusion of our $\inv$-calculus in the $b$-calculus. We begin with a characterization
of the distribution kernels of the negative order operators.

We obtain more examples of pseudodifferential operators that are translation invariant
in a neighborhood of infinity by describing their distribution kernels.
Recall that $k_P \in \CIc(\manif \times \manif)'$ denotes the distribution kernel
of a continuous, linear operator $P : \CIc(\manif) \to \CIc(\manif)'$, Theorem
\ref{thm.Schwartz.k}. Also, recall that
$\manif$ has dimension $n$, that $r_{\manif}$ denotes its injectivity radius, and
that $\NN = \{1, 2, \ldots \}$.
We let $\exp: T\manif \to \manif$ be the exponential map (which is defined everywhere since
$\manif$ is geodesically complete).

\begin{proposition}\label{prop.kernel.descr}
  Let $\manif$ be a manifold with straight cylindrical ends, as before,
  and $m < 0$, $m \notin \ZZ$. A continuous, linear operator $P : \CIc(\manif) \to \CIc(\manif)'$ is
  \emph{a classical pseudodifferential} operator in
  $\iPS{m}(\manif)$ if, and only if, it satisfies the following conditions:
  \begin{enumerate}
    \item \label{item.cond.one}
    Its distribution kernel $k_P$ is a function with support in some
    $\epsilon$-neighborhood of the diagonal (Equation \ref{eq.def.eps_neigh})
    and is smooth away from the diagonal.

    \item \label{item.cond.two} There exists $R_P > 0$ such that,
    if $x, y \in \pa \manif_0$, $t, s \le -R_P$, $(x, t) \neq (y, s)$, and $\lambda \ge 0$, then
    \begin{equation*} 
      k_P(x, t-\lambda, y, s - \lambda) \seq k_P (x, t, y, s) \,.
    \end{equation*}

    \item \label{item.cond.three}
    Let $S\manif \subset T\manif$ be the subset of vectors of length one
    and $p : S\manif \to \manif$ be the canonical projection.
    Then there exist functions $a_j \in \CI(S\manif)$ and
    $C_j \ge 0$, $j \in \ZZ_+$, such that, for all $\xi \in S\manif$, all $t \in (0, r_\manif /2]$, and
    all $N\in \NN$, we have
    \begin{equation*}
      |k_P(p(\xi), \exp(t \xi)) - \sum_{j=0}^N a_j(\xi) t^{-n -m + j}| \le
      C_N t^{-n-m + N + 1} \,.
    \end{equation*}
  \end{enumerate}
  If that is the case, then $a_j \in \CI_{inv}(S\manif)$, more precisely,
  $a_j(\eta, t) = a_j(\eta, t-s)$ for
  all $s \ge 0$ and $t < R_P$ (independent of $j$!), where $(\eta, t)$ are the canonical
  coordinates on the cylindrical end of $S\manif$.

  If $-m \in \NN$, the result is true if, for each $j \ge m+n$, we replace the term
  $a_j(\xi)t^{-n -m + j}$ with $a_j(\xi)t^{-n -m + j} + p_j(t\xi) \ln t$, where
  $p_j$ is a polynomial in $\xi$ of degree $-n-m+j$ with smooth coefficients.
\end{proposition}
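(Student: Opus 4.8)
The plan is to reduce this ``global'' characterization of kernels of classical operators in $\iPS{m}(\manif)$ to the classical local description of Schwartz kernels of classical pseudodifferential operators on $\RR^n$, and then to keep track of how the translation invariance at infinity interacts with that local description. The three listed conditions split naturally: \eqref{item.cond.one} encodes that $P \in \Psi_{cl}^m(\manif)$ has a kernel supported near the diagonal, \eqref{item.cond.two} encodes translation invariance at infinity (Definition \ref{def.translation-invariant}(\ref{item.transl-inv})), and \eqref{item.cond.three} is the precise asymptotic behavior of the kernel at the diagonal that is characteristic of classical symbols of order $m < 0$. So the architecture of the proof is: (i) show each of the three conditions is \emph{necessary}; (ii) show, conversely, that the three conditions force $P$ to be a classical pseudodifferential operator of order $\le m$ in the $\inv$-calculus; (iii) establish the supplementary statements about $a_j \in \CI_{\inv}(S\manif)$ and about the $-m \in \NN$ case.

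\textbf{Necessity.}
Suppose $P \in \iPS{m}(\manif)$ is classical with $m < 0$, $m \notin \ZZ$. Condition \eqref{item.cond.one} is immediate from Definition \ref{def.translation-invariant}(1) (support near the diagonal) together with Theorem \ref{thm.prop.psdos.gen}(6) (smoothness of $k_P$ away from the diagonal); note that because $m < 0$ the kernel is in fact a continuous \emph{function} there, by the local analysis recalled in Remark \ref{rem.kernel.formula} and Proposition \ref{prop.kernels}. Condition \eqref{item.cond.two} follows by testing the identity $P \Phi_s(u) = \Phi_s P(u)$ of Definition \ref{def.translation-invariant}(\ref{item.transl-inv}) against $u \in \CIc(\pa \manif_0 \times (-\infty, -R_P))$ and reading off the kernel: the Schwartz kernel theorem (Theorem \ref{thm.Schwartz.k}) converts equivariance of the operator under $\Phi_s$ into the stated invariance $k_P(x,t-\lambda,y,s-\lambda) = k_P(x,t,y,s)$ (taking $\lambda = s$ in the notation of the statement). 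Condition \eqref{item.cond.three} is the local diagonal asymptotics of a classical kernel: working in geodesic normal coordinates around $p(\xi)$ and using Remark \ref{rem.kernel.formula}, one has $k_P(p(\xi), \exp(t\xi)) = (2\pi)^{-n}\int_{\RR^n} e^{\imath t \langle \xi, \eta\rangle} a(p(\xi), \eta)\, d\eta$ up to a smooth remainder, and substituting the classical expansion $a \sim \sum_j a_{m-j}$ and using homogeneity (rescaling $\eta \mapsto \eta/t$ in the homogeneous pieces, being careful about the region $|\eta| \le 1$ where homogeneity fails, which only contributes smooth terms) produces exactly the asymptotic $\sum_j a_j(\xi) t^{-n-m+j}$ with $a_j$ smooth on $S\manif$; the inverse Fourier transform of a homogeneous-of-degree-$(m-j)$ function is homogeneous of degree $-n-m+j$, which is the exponent of $t$. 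The truncation error bound with constant $C_N$ is the standard estimate for the remainder symbol in $S_{1,0}^{m-N-1}$.

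\textbf{Sufficiency and the supplement.}
For the converse, assume \eqref{item.cond.one}--\eqref{item.cond.three}. Cover $\manif_0$ and a compact piece of the end by finitely many geodesic coordinate charts, and use \eqref{item.cond.one} to localize: $\eta P \eta$ for $\eta \in \CIc$ supported in a chart has kernel supported near the diagonal in that chart, with the diagonal asymptotics prescribed by \eqref{item.cond.three}. The key local fact — that a kernel which is smooth off the diagonal, supported near it, and has a classical (homogeneous) expansion at the diagonal of the form $\sum a_j t^{-n-m+j}$ is the kernel of a classical $\Psi$DO of order $m$ — is essentially Proposition \ref{prop.kernels} read backwards (Fourier-transform the diagonal asymptotics to reconstruct a classical symbol $a$), and this is where I expect the \emph{main obstacle}: one must carefully synthesize a single symbol whose asymptotic expansion matches the given $a_j$'s via Borel summation, check that the resulting $a(x,D)$ has the correct kernel up to a smoothing operator, and verify the smoothing remainder is itself translation invariant at infinity (which it is, by \eqref{item.cond.two}). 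Because $\manif$ has \emph{compact} ends, only finitely many charts are needed in the compact region, and on the cylindrical end condition \eqref{item.cond.two} shows the kernel is literally a translate of its values near $t = -R_P$, so the chart data there repeat periodically and the equivariance of Definition \ref{def.translation-invariant}(\ref{item.transl-inv}) follows. For the supplement, \eqref{item.cond.two} applied to the asymptotic expansion — which is \emph{uniquely determined} by $k_P$ near the diagonal — forces each coefficient $a_j$ to inherit the same translation invariance: $a_j(\eta, t) = a_j(\eta, t-s)$ for $t < R_P$, i.e. $a_j \in \CI_{\inv}(S\manif)$. Finally, the $-m \in \NN$ case is the standard appearance of logarithmic terms: when $-n-m+j = 0$, i.e. the exponent of $t$ vanishes or becomes a non-negative integer, the inverse Fourier transform of a homogeneous symbol of that critical degree acquires a $\ln t$ factor (equivalently, the homogeneous expansion of the symbol must be supplemented by log-polyhomogeneous terms), and the polynomial $p_j(t\xi)\ln t$ with $\deg p_j = -n-m+j$ is exactly what this contributes; the argument is otherwise identical.
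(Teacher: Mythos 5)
Your proposal is correct and follows essentially the same route as the paper: conditions \eqref{item.cond.one} and \eqref{item.cond.two} are matched to the two clauses of Definition \ref{def.translation-invariant}, condition \eqref{item.cond.three} is identified with the classical kernel characterization of negative-order classical pseudodifferential operators, and the supplement is deduced from the uniqueness of the expansion coefficients together with translation invariance. The only difference is one of emphasis: the paper outsources the local equivalence (your ``main obstacle'') entirely to the literature (Proposition 2.8 of \cite{Taylor2}), whereas you sketch its proof via Borel summation, which is exactly how that cited result is established.
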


\begin{proof}[Proof of Proposition \ref{prop.kernel.descr}]
  This follows almost directly from the definitions and classical results, see,
  for instance, Proposition 2.8 of \cite{Taylor2}.
  The support condition, Condition \eqref{item.cond.one} is part of the definition of
  $\iPS{m}(\manif)$.
  Condition \eqref{item.cond.two} is equivalent to the invariance
  of the operators in a neighborhood of infinity. Finally, the last condition,
  Condition \eqref{item.cond.three}, states that $P$ is an order $m < 0$ classical pseudodifferential
  operator, see \cite{Hormander3, Taylor2, Wl-Ro-La}. This proves the desired
  equivalence. The last statement (including $a_j \in \CI_{inv}(S\manif)$) follows from the uniqueness of
  the coefficients $a_j$ and the translation invariance at infinity of the operator $P$ (with $R_P$
  the same in our statement and in the definition of translation invariance at infinity for $P$).
\end{proof}

Recall the definition of the principal symbol, Equation \eqref{def.princ.symb}.
We notice that the principal symbol of a pseudodifferential operator that is translation
invariant in a neighborhood of infinity is also translation invariant at infinity. We
spell this and some related needed facts in the following remark.

\begin{remark}\label{rem.def.inv.symb}
The manifold
$T^*\manif$ also has cylindrical ends (but does not have compact ends).
The translation invariance at infinity of a pseudodifferential operator implies the
same property for its principal symbol (by the behavior of the principal symbol with
respect to diffeomorphisms). Let
\begin{equation*}
    S_{inv}^m(T^*\manif) \ede S_{1, 0}^m(T^*\manif) \cap \CI_{inv}(T^*\manif\,.)
\end{equation*}
We obtain that $\sigma_m$ defines a linear map
\begin{equation*} 
  \sigma_m : \iPS{m}(\manif) \to S_{inv}^m(T^*\manif)/S_{inv}^{m-1}(T^*\manif)\,.
\end{equation*}
\end{remark}

We mention the following property that is not satisfied by the $b$- or $c$-calculi.


\begin{remark}\label{rem.ell.imp.ue}
Corollary \ref{cor.lemma.ue} implies that every elliptic operator $P \in \iPS{m}(\manif)$
is also \emph{uniformly elliptic,} in fact, $\sigma_m(P)^{-1}$ is a principal symbol of order $-m$
by Corollary \ref{cor.lemma.ue}. Again, this is one of the reasons which shows that the
$\inv$-calculus is simpler than the $b$-calculus and why we relegate the use of the $b$-calculus
and of other related calculi, such as the $c$-calculus, to another paper.
\end{remark}

The following remark introduces the quantization map.

\begin{remark}\label{rem.def.quant.map}
Let $S_{\inv}^m(T^*\manif)$ be as in Remark \ref{rem.def.inv.symb}.
That is, it is the set of (scalar) symbols of order $\le m$ that
are translation invariant in a neighborhood of infinity in the usual sense, that is that
$a \in S_{\inv}^m(T^*\manif)$ if, and only if, there exists $R_a > 0$ such that
$a(x, t, v) = a(x, t-s, v)$ for all $s > 0$ and $(x, t, v) \in T^*\manif$ with
$(x, t) \in \manif$ cylindrical coordinates with $x \in \pa \manif_0$ and $t < -R_a$
and $v \in T_{(x, t)}^*\manif$. Let
$\chi : \manif \times \manif \to [0, 1]$ be a smooth cut-off function that is supported in an
$\varepsilon$-neighborhood of the diagonal, is equal to 1 in a $\varepsilon'$-neighborhood
of the diagonal, and is translation invariant in a neighborhood of infinity, in the sense that it
satisfies the condition of Proposition \ref{prop.kernel.descr}\eqref{item.cond.two}.
We shall also need the usual quantization map
\begin{equation*}
  \begin{gathered}
    q : S^m(T^*\manif) \to \Psi_{1,0}^{m}(\manif) \,,\\
    \big[q(a) u\big](x) \ede (2\pi)^{-n} \int_{T_x^*\manif} a(w)
    \left (\int _{T_x\manif} e^{-\imath \langle w, z \rangle}
    \chi(x, \exp(z)) u(\exp(z)) dz\right ) dw\,.
  \end{gathered}
\end{equation*}
(To see that this recovers the usual formula, one should notice that the value at $x$
is computed in coordinates centered at $x$, so it is natural to use the substitution $z = y -x$,
which leads us to the usual formula, except maybe for the cut-off factor $\chi$.) The volume forms
(measures) on $T_x^*\manif$ and $T_x\manif$ are those induced by the metric.
Then $q (S_{inv}^m(T^*\manif)) \subset \Psi_{inv}^{m}(\manif)$
\end{remark}


We have the following notion of asymptotic completeness.

\begin{remark} \label{rem.asympt.compl}
  Let $m \in \RR$ and $a_j \in S_{\inv}^{m-j}(T^*\manif)$, $j \in \ZZ_+$ be such that one can
  choose \emph{the same $R_{a_j}$ for all $j$} (i.e. there exists $R > 0$
  such that $a_j(x, t - s, v) = a_j(x, t, v)$ for all $j \in \ZZ_+$, $t < -R$, and $s > 0$).
  Then there exists $a \in S_{\inv}^{m}(T^*\manif)$ such that
  \begin{equation*}
    a - \sum_{j=0}^N a_j \, \in \, S_{\inv}^{m-N-1}(T^*\manif)\,, \quad \forall\, N \in \NN\,.
  \end{equation*}
  This follows from the asymptotic completeness of the usual symbols.
  (Here we have not included the most general result, but rather the form of the
  result that we will need.) See Proposition 3.14 in \cite{HintzLN} for a useful general
  extension of this result.
\end{remark}

We have the following result extending one of the usual properties of pseudodifferential operators.

\begin{proposition} \label{prop.onto.q}
  The quantization map $q : S_{\inv}^m(T^*\manif) \to \iPS{m}(\manif)$, $m \in \ZZ$,
  of Remark \ref{rem.def.quant.map} satisfies
  \begin{equation*}
    \sigma_m(q(a)) \seq a + S_{\inv}^{m-1}(T^*\manif)\,.
  \end{equation*}
  Consequently, the induced map
  $$q : S_{\inv}^m(T^*\manif)/S_{\inv}^{-\infty}(T^*\manif)
  \to \iPS{m}(\manif)/\iPS{-\infty}(\manif)$$ is a linear bijection. The maps
  $q$ for all $m$ are compatible and they combine to a map $q : S_{\inv}^\infty(T^*\manif)
  \to \iPS{\infty}(\manif)$.
\end{proposition}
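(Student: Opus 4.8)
The plan is to reduce everything to the corresponding statements for the H\"ormander calculus on $\RR^n$ and on compact manifolds, exploiting the description of the $\inv$-calculus via distribution kernels together with the translation-invariance at infinity. First I would show $q(a) \in \iPS{m}(\manif)$ for $a \in S_{\inv}^m(T^*\manif)$: the cut-off $\chi$ guarantees that the distribution kernel $k_{q(a)}$ is supported in an $\varepsilon$-neighborhood of the diagonal, so $q(a)$ satisfies condition (1) of Definition \ref{def.translation-invariant}, and away from the diagonal $q(a)$ is locally (in a chart of the type used to define $\Psi_{comp}^m$) a standard quantization of an $S_{1,0}^m$ symbol, so $q(a) \in \Psi_{1,0}^m(\manif)$ by Definition \ref{def.psdo}. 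For the translation-invariance condition (2), one uses that $a$ and $\chi$ are both translation invariant for $t < -R$ (with a common $R$, since $R_a$ and the $R$ for $\chi$ can be taken equal after enlarging both): for $u$ supported in $\pa\manif_0 \times (-\infty, -R_P)$, the defining integral for $q(a)$ is computed in geodesic coordinates centered at points $x$ deep in the cylindrical end, and the metric, the exponential map, $a$, and $\chi$ are all $\Phi_s$-invariant there, so $q(a)\Phi_s(u) = \Phi_s q(a)(u)$. This is exactly the statement $q(S_{\inv}^m(T^*\manif)) \subset \iPS{m}(\manif)$ already asserted at the end of Remark \ref{rem.def.quant.map}, so I would cite that.

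Next I would compute the principal symbol. On each coordinate chart $U \simeq W \subset \RR^n$ with $W$ relatively compact, the operator $\eta\, q(a)\, \eta$ for $\eta \in \CIc(U)$ is, up to an operator with smooth kernel (coming from the region where $\chi$ is not identically $1$ and from the coordinate change between $y - x$ and $z = \exp_x^{-1}(y)$, whose Jacobian differs from $1$ by terms vanishing to high order at the diagonal), equal to $b(x, D)$ for a symbol $b \in S_{comp}^m(T^*W)$ with $b = \phi_*(a\vert_U)$ modulo $S_{comp}^{m-1}$. By Theorem \ref{thm.diffeo.inv2} and the definition of $\sigma_m$ on manifolds, Equations \eqref{eq.def.ps}--\eqref{def.princ.symb}, this gives $\sigma_m(q(a)) = a + S_{1,0}^{m-1}(T^*\manif)$; since $q(a)$ is $\inv$ and its symbol is $\inv$ by Remark \ref{rem.def.inv.symb}, we may read this in the quotient $S_{\inv}^m/S_{\inv}^{m-1}(T^*\manif)$, which is the asserted formula. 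The standard point here — the coordinate change $z \mapsto y - x$ only affects lower-order terms — is exactly the content of the parenthetical remark in Remark \ref{rem.def.quant.map} and of Remark \ref{rem.precise.diffeo}, so I would treat it briefly.

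The bijectivity of the induced map is then a short diagram chase. Surjectivity: given $P \in \iPS{m}(\manif)$, apply $\sigma_m$ to get $a_0 := \sigma_m(P) \in S_{\inv}^m/S_{\inv}^{m-1}$, lift it (possible by Remark \ref{rem.def.inv.symb}, where $\sigma_m$ is noted to land in this quotient and one checks surjectivity of the lift just as in the compact case), form $q(a_0)$, so that $P - q(a_0) \in \iPS{m-1}(\manif)$ by the symbol formula; iterate to produce $a_j \in S_{\inv}^{m-j}$ with a common $R$ (the common $R$ is available because $P$ has a single $R_P$), invoke asymptotic completeness for $\inv$-symbols, Remark \ref{rem.asympt.compl}, to get $a \in S_{\inv}^m$ with $a \sim \sum a_j$, and conclude $P - q(a) \in \iPS{-\infty}(\manif)$. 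Injectivity modulo smoothing: if $q(a) \in \iPS{-\infty}(\manif) \subset \Psi_{1,0}^{-\infty}(\manif)$, then $\sigma_{m-j}(q(a)) = 0$ for all $j$, so $a \in S_{\inv}^{m-j}$ for all $j$, i.e. $a \in S_{\inv}^{-\infty}$. Compatibility of the $q$'s for different $m$ is immediate from the single formula defining $q$, so they glue to $q : S_{\inv}^\infty(T^*\manif) \to \iPS{\infty}(\manif)$. The main obstacle, to the extent there is one, is the bookkeeping of the common $R$ through the asymptotic summation and making sure the coordinate/exponential-map discrepancies genuinely land in lower order uniformly in $x$ — but both are controlled exactly as in the references already cited (\cite{Hormander3, Taylor2, HintzLN}) and pose no new difficulty beyond what is handled for $\Psi_{unif}$.
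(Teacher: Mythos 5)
Your proposal is correct and follows essentially the same route as the paper's (very terse) proof: the translation invariance of the exponential map, the metric, $a$, and $\chi$ gives $q(a)\in\iPS{m}(\manif)$; the symbol formula is the standard local computation reduced to $\Psi_{comp}^m$ via charts; and the bijectivity modulo smoothing is the usual iterative argument combined with the asymptotic completeness of Remark \ref{rem.asympt.compl}, with the common $R$ controlled by $R_P$ and the cut-off. You have merely filled in the details the paper delegates to the standard theory.
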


\begin{proof}
  The translation invariance of the exponential map shows that
  $q(a) \in \iPS{m}(\manif)$ whenever $a \in S_{\inv}^m(T^*\manif)$. The rest is
  as in the usual case of pseudodifferential operators (it is a consequence of
  the corresponding properties for pseudodifferential operators and of Remark
  \ref{rem.asympt.compl}).
\end{proof}

\begin{remark} \label{rem.top.op}
  Proposition \ref{prop.onto.q} gives, in particular, that the map
  \begin{equation*}
    \overline{q} : S_{\inv}^m(T^*\manif) \oplus \iPS{-\infty}(\manif)
    \ni (a, R) \to q(a) + R \in \iPS{m}(\manif)
  \end{equation*}
  is onto, which allows us to endow the later space with the quotient topology (of
  an inductive limit of Fr\'echet spaces). he kernel of this map is 
  $$\ker \overline{q} \seq \{(a, - q(a))\mid a \in S_{\inv}^{-\infty}(T^*\manif)\}\,.$$
\end{remark}

We summarize now some of the easy properties of the $\inv$-calculus
$\iPS{m}$ (these properties can be found in many of the works
on the $b$-calculus, once one takes into account the simpler definition of ellipticity
for the operators in the $\inv$-calculus, see
Theorem \ref{thm.diffeo.inv2} and Proposition \ref{prop.def.ell},
they are also implicit in \cite{Mitrea-Nistor}). Operators that are (almost)
translation invariant were also studied recently in \cite{HintzTrInv}.
We begin with the properties that are direct analogues of the corresponding
properties for the usual pseudodifferential calculus on compact manifolds.

\begin{theorem} \label{thm.prop.inv-calc}
  Let $\manif = \manif_0 \cup \big ( \pa \manif_0 \times (-\infty, 0] \big )$ be a manifold with
  straight cylindrical ends.
  \begin{enumerate}
    \item $\iPS{m}(\manif)$ is stable under adjoints and, for $m , q \in \RR$,
    \begin{equation*}
      \iPS{q}(\manif)\iPS{m}(\manif)
      \subset \iPS{q + m}(\manif)\,.
    \end{equation*}
    \item The principal symbol $\sigma_m : \iPS{m}(\manif) \to S_{inv}^m(T^*\manif)/S_{inv}^{m-1}(T^*\manif)$
    of Remark \ref{rem.def.inv.symb} is onto and its kernel is $\iPS{m-1}(\manif)$.
    It is $*$-multiplicative, in the sense that
    \begin{equation*}
      \sigma_{q + m}(QP) \seq \sigma_{q}(Q) \sigma_{m}(P)\ \mbox{ and }\
      \sigma_m(P^*) \seq \sigma_m(P)^*\,.
    \end{equation*}
  \end{enumerate}
\end{theorem}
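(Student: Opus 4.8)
The plan is to obtain both parts from the already-established properties of the full calculus $\Psi_{1,0}^\bullet(\manif)$ (Theorems~\ref{theorem.prod.gen} and~\ref{thm.mapping.properties}) by keeping track of supports near infinity, together with the explicit quantization map of Remark~\ref{rem.def.quant.map}. The running observation is that every $P\in\iPS{q}(\manif)$ is properly supported, since its distribution kernel lies in the $\varepsilon_P$-neighborhood of the diagonal; hence compositions of operators in the $\inv$-calculus are automatically defined.

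\emph{Stability under products.} Given $P\in\iPS{q}(\manif)$ and $Q\in\iPS{m}(\manif)$, Theorem~\ref{theorem.prod.gen} already gives $PQ\in\Psi_{1,0}^{q+m}(\manif)$ (with the expected principal symbol), so it only remains to verify the two conditions of Definition~\ref{def.translation-invariant} for $PQ$. For the support condition one takes $\varepsilon_{PQ}:=\varepsilon_P+\varepsilon_Q$: the support of $k_{PQ}$ is contained in the set of pairs $(x,z)$ for which there is $y$ with $(x,y)\in\supp k_P$ and $(y,z)\in\supp k_Q$, which by the triangle inequality lies in the $(\varepsilon_P+\varepsilon_Q)$-neighborhood of the diagonal. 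For the invariance condition one chains the invariance of $Q$ and then of $P$: if $u$ is supported sufficiently far toward infinity and $s>0$, then $\Phi_s(u)$ is again supported far out, so $Q\Phi_s(u)=\Phi_s Q(u)$; since $Q$ enlarges supports by at most $\varepsilon_Q$, the section $Qu$ is still supported far enough out for the invariance of $P$ to apply, whence $P\Phi_s(Qu)=\Phi_s P(Qu)$, and composing these two identities gives $PQ\Phi_s(u)=\Phi_s PQ(u)$. (Quantitatively $R_{PQ}:=\max\{R_Q,\,R_P+\varepsilon_Q\}$ works, using the remark following Definition~\ref{def.translation-invariant}.) Thus $PQ\in\iPS{q+m}(\manif)$. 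Also, since $\Psi_{1,0}^{m-1}(\manif)\subset\Psi_{1,0}^m(\manif)$ (Theorem~\ref{thm.prop.psdos.gen}(1)) and the invariance conditions coincide, $\iPS{m-1}(\manif)\subset\iPS{m}(\manif)$, which we use below.

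\emph{Stability under adjoints.} Theorem~\ref{theorem.prod.gen} gives $P^*\in\Psi_{1,0}^q(\manif)$, and its kernel is $k_{P^*}(x,y)=\overline{k_P(y,x)}$, hence supported in the $\varepsilon_P$-neighborhood of the diagonal. The invariance condition~\eqref{item.transl-inv} is stable under $P\mapsto P^*$ because $\Phi_s$ is an isometry of the cylindrical end, so its $L^2$-adjoint is $\Phi_s^{*}=\Phi_{-s}$; transposing the operator identity $\Phi_\sigma P=P\Phi_\sigma$ (valid for every $\sigma\in\RR$ after enlarging $R_P$ a little, by composing with $\Phi_{-\sigma}$) yields $\Phi_{-\sigma}P^*=P^*\Phi_{-\sigma}$, i.e.\ $P^*\Phi_s=\Phi_s P^*$ for all $s>0$. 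Equivalently one may argue at the level of kernels: the translation invariance of the kernel near infinity (Condition~\eqref{item.cond.two} of Proposition~\ref{prop.kernel.descr}, in the form valid for general $P$) is symmetric under interchanging its two arguments, so it passes to $k_{P^*}$. Hence $P^*\in\iPS{q}(\manif)$.

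\emph{The principal symbol.} By Remark~\ref{rem.def.inv.symb} the restriction of $\sigma_m$ to $\iPS{m}(\manif)$ takes values in the subgroup $S_{inv}^m/S_{inv}^{m-1}(T^*\manif)$ of $S_{1,0}^m/S_{1,0}^{m-1}(T^*\manif)$, the inclusion of these quotients being injective since $S_{inv}^{m-1}=S_{1,0}^{m-1}(T^*\manif)\cap\CI_{inv}(T^*\manif)$; so $\sigma_m$ is a well-defined map into the smaller quotient. Surjectivity is then exactly Proposition~\ref{prop.onto.q}: for $a\in S_{inv}^m(T^*\manif)$ the quantization $q(a)$ lies in $\iPS{m}(\manif)$ (Remark~\ref{rem.def.quant.map}) and $\sigma_m(q(a))=a+S_{inv}^{m-1}(T^*\manif)$. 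For the kernel, $\iPS{m-1}(\manif)\subset\ker\sigma_m$ is immediate since $\sigma_m$ already annihilates $\Psi_{1,0}^{m-1}(\manif)$; conversely, if $P\in\iPS{m}(\manif)$ and $\sigma_m(P)=0$, then the standard fact $\ker(\sigma_m\colon\Psi_{1,0}^m(\manif)\to S_{1,0}^m/S_{1,0}^{m-1})=\Psi_{1,0}^{m-1}(\manif)$ (a local computation reducing, via Proposition~\ref{prop.kernels}, to $\RR^n$) gives $P\in\Psi_{1,0}^{m-1}(\manif)$, and since $P$ remains translation invariant at infinity, $P\in\iPS{m-1}(\manif)$. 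Finally the $*$-multiplicativity formulas $\sigma_{q+m}(QP)=\sigma_q(Q)\sigma_m(P)$ and $\sigma_m(P^*)=\sigma_m(P)^*$ hold already for the full calculus by Theorems~\ref{theorem.prod.gen} and~\ref{thm.mapping.properties}, and since all terms lie in the images of the injections $S_{inv}^{k}/S_{inv}^{k-1}\hookrightarrow S_{1,0}^{k}/S_{1,0}^{k-1}$, they descend to the $S_{inv}$-quotients. The only step requiring genuine care is the support bookkeeping that keeps the composition (and, more mildly, the adjoint) inside the $\inv$-calculus; the symbolic part is a transcription of the standard calculus, available precisely because the exponential map and the cut-off $\chi$ in Remark~\ref{rem.def.quant.map} are translation invariant near infinity, so that $q$ sends $S_{inv}^m(T^*\manif)$ into $\iPS{m}(\manif)$.
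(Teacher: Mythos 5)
Your proposal is correct and follows essentially the same route as the paper's proof: part (1) is the standard calculus (Theorem \ref{theorem.prod.gen}) plus the same support and invariance bookkeeping (with $\varepsilon_{PQ}=\varepsilon_P+\varepsilon_Q$ and chaining the invariance of the two factors), and part (2) uses Proposition \ref{prop.onto.q} for surjectivity together with $\Psi_{1,0}^{m-1}(\manif)\cap\iPS{m}(\manif)=\iPS{m-1}(\manif)$ for the kernel. You merely fill in details the paper dismisses as ``similar,'' most usefully the adjoint case, where your kernel-level argument (symmetry of the translation-invariance condition on $k_P$ under interchanging its arguments) is the cleanest of the two variants you offer.
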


\begin{proof}
  To prove (1), let $P \in \iPS{m}(\manif)$ and $Q \in \iPS{q}(\manif)$.
  Both operators are properly supported, by assumption, so the composition $QP$ is defined
  and $QP \in \Psi^{m+q}(\manif)$. Moreover, the support of the distribution kernel of
  $QP$ is contained in the $(\varepsilon_P + \varepsilon_Q)$--neighbhorhood of the diagonal.
  The invariance in a neighborhood of infinity follows from $QP\Phi_s(u)= Q\Phi_s Pu = \Phi_s QPu$
  whenever $u \in \CIc(\manif)$ has support in $\pa \manif_0 \times (-\infty, -R)$ for
  $R$ large enough (say $R > R_P + R_Q$). Thus $QP \in \iPS{q + m}(\manif)$. The relation
  $\iPS{m}(\manif)^* = \iPS{m}(\manif)$ is similar.

  The surjectivity in (2) is a consequence of Proposition \ref{prop.onto.q}.
  The multiplicativity follows from the usual multiplicativity of the principal symbols.
  The kernel of the usual principal symbol is $\Psi^{m-1}(\manif)$. The result then follows
  from $\Psi^{m-1}(\manif) \cap \iPS{m}(\manif) = \iPS{m-1}(\manif)$.
\end{proof}

\subsection{Limit operators}
An important concept associated to the operators in the $b$-calculus is
the ``normal operator'' (in Melrose's terminology) or ``conormal principal symbol''
(in Schulze's terminology, who uses the term ``cone-calculus'' instead of
the $b$-calculus). This concept is especially easy to define for our translation invariant
at infinity operators. We shall call these operators ``limit at infinity operators,''
(or ``limit operators,'' for short) since this is the terminology that is used for their
generalizations by many other authors. A few results of
this section follow from the properties of the $b$-calculus, but we include their
simple proof for completeness (and since the proofs are different than the ones for
the $b$-calculus). Recall that we assume that $\manif$ has compact ends (that is,
that $\pa \manif_0$ is compact). It is, of course, a manifold with straight cylindrical ends.

To simplify the notation, we shall set as in Melrose's work
\begin{equation}\label{eq.def.suspended}
  \iPSsus{m} {\link} \ede \iPS{m} (\link \times \RR)^{\RR}\,.
\end{equation}
We begin with some simple observations on the supports of our operators
that will help us to define the normal
(or limit) operator associated to an operator that is
translation invariant in a neighborhood of infinity.

\begin{remark}\label{rem.inv=pr}
  First, it follows from the definition of
  translation invariant pseudodifferential
  operators in a neighborhood of infinity that every $P\in \iPS {m}(\manif)$ is properly
  supported, that is, $P(\CIc(\manif)) \subset \CIc(\manif)$. We use this
  observation for $\manif = \link \times \RR$ to conclude that
  $\iPSsus{m} {\link} := \iPS{m} (\link \times \RR)^{\RR}$
  consists of the set of translation invariant, properly supported, pseudodifferential operators
  on $\link \times \RR$ (invariant with respect to the translations by $t \in \RR$),
  that is
  \begin{equation*}\label{eq.inv=pr}
    \iPSsus{m} {\link} \seq \{ P \in \Psi^{m}(\link \times \RR)^{\RR}
    \mid \ P \mbox{ properly supported}\}.
  \end{equation*}
\end{remark}

This type of operators appear in the following result that will provide the definition
of limit operators. Let $\Phi_s : \pa \manif_0 \times \RR \to \pa \manif_0  \times \RR$ be the
translation by $-s$, $\Phi_s(f)(x) = f(x -s)$, that is, we allow $R = -\infty$ in
\eqref{eq.def.Phi}.

\begin{lemma}\label{lemma.def.indicial}
  Let $P\in \iPS {m}(\manif)$ and $u \in \CIc(\pa \manif_0 \times \RR)$.
  Then, there exists $R_{P, u} > 0$ such that
  $P\Phi_s(u)$ is defined and independent of $s > R_{P, u}$. Let
  \begin{equation*} 
    \widetilde{P}(u) \ede \Phi _{-s}P\Phi_s(u)\,,
  \end{equation*}
  for any $s > R_{P, u}$.
  \begin{enumerate}
    \item If $a \in \CI_{\inv}(\manif) \subset \iPS{m}(\manif)$,
    then $\tilde{a} = \maR_{\infty}(a)$ (Definition \ref{def.CIinv}).
    \item In general, $\In (P) := \widetilde{P} \in \iPSsus{m} {\pa \manif_0}
    := \iPS{m} (\pa \manif_0 \times \RR)^{\RR}$.
  \end{enumerate}
\end{lemma}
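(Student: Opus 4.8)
The plan is to first establish that the operator $P\Phi_s(u)$ stabilizes for large $s$, then identify the stabilized value as a well-defined, translation-invariant, properly supported pseudodifferential operator on $\pa\manif_0\times\RR$. For the stabilization: fix $u\in\CIc(\pa\manif_0\times\RR)$, say $\supp u\subset \pa\manif_0\times(-a,a)$. For $s$ large enough (larger than $a+R_P$), the shifted section $\Phi_s(u)$ is supported in $\pa\manif_0\times(-\infty,-R_P)$, so by Definition~\ref{def.translation-invariant}\eqref{item.transl-inv} we have $P\Phi_{s'}(\Phi_s(u))=\Phi_{s'}P\Phi_s(u)$ for all $s'>0$; since $\Phi_{s'}\Phi_s=\Phi_{s+s'}$, this says $P\Phi_{s+s'}(u)=\Phi_{s'}P\Phi_s(u)$, i.e. $\Phi_{-(s+s')}P\Phi_{s+s'}(u)=\Phi_{-s}P\Phi_s(u)$. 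Hence $\widetilde P(u):=\Phi_{-s}P\Phi_s(u)$ is independent of $s$ once $s>R_{P,u}:=a+R_P$. Linearity in $u$ is immediate, so $\widetilde P:\CIc(\pa\manif_0\times\RR)\to\CI(\pa\manif_0\times\RR)'$ is a well-defined linear operator, and it is manifestly $\RR$-translation invariant: $\widetilde P\,\Phi_\sigma(u)=\Phi_\sigma\widetilde P(u)$ for every $\sigma\in\RR$, directly from the $s$-independence.

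For part (1): if $a\in\CI_{\inv}(\manif)$, then viewing $a$ as a multiplication operator, $a\Phi_s(u)=(a\circ\Phi_s^{-1}\cdot\text{nothing})\ldots$—more precisely $[a\Phi_s(u)](x,t)=a(x,t)\,u(x,t+s)$, and for $s$ large the points $(x,t)$ in $\supp\Phi_s(u)$ lie in the region where $a(x,t)=\maR_\infty(a)(x)$ is $t$-independent (take $t<R_u$ with $R_u$ chosen so $\Phi_s(u)$ is supported there). Then $\Phi_{-s}a\Phi_s(u)=\maR_\infty(a)\cdot u$, so $\widetilde a=\maR_\infty(a)$ as an operator on $\pa\manif_0\times\RR$; this is exactly the identification already built into Definition~\ref{def.CIinv} and Notation~\ref{not.diff.ops}.

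For part (2): we must show $\widetilde P\in\iPSsus{m}{\pa\manif_0}$. By Remark~\ref{rem.inv=pr}, it suffices to show $\widetilde P\in\Psi^m(\pa\manif_0\times\RR)$, that it is $\RR$-translation invariant (already done), and that it is properly supported. That $\widetilde P$ is a pseudodifferential operator of order $m$ is best seen through the distribution kernel: the kernel of $P$ is translation invariant at infinity by Definition~\ref{def.translation-invariant} (condition~\eqref{item.cond.two} in the guise of Proposition~\ref{prop.kernel.descr} for the classical case, but it is built into the definition in general), supported within an $\varepsilon_P$-neighborhood of the diagonal; the kernel of $\Phi_{-s}P\Phi_s$ is the $s$-translate of $k_P$ in both variables, and as $s\to\infty$ this translate, restricted to any fixed compact set in $(\pa\manif_0\times\RR)^2$, equals the ``frozen'' translation-invariant kernel $k_P(x,t-\lambda,y,s'-\lambda)$ for $\lambda$ large. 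So $k_{\widetilde P}$ is exactly this limiting kernel: it is $\RR$-translation invariant, supported in an $\varepsilon_P$-neighborhood of the diagonal of $\pa\manif_0\times\RR$, smooth away from the diagonal, and has the same conormal behavior at the diagonal as $k_P$. Since $P$ is a genuine pseudodifferential operator of order $m$ near infinity on $\manif$ (checked in local coordinates $\phi:U\to W$ with $U$ a patch in $\pa\manif_0\times(-\infty,-R_P)$), and the frozen kernel is obtained by translating these local symbol representations, $\widetilde P$ is represented in local coordinates on $\pa\manif_0\times\RR$ by symbols in $S^m_{1,0}$; hence $\widetilde P\in\Psi^m_{1,0}(\pa\manif_0\times\RR)^\RR$. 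The support being in an $\varepsilon_P$-neighborhood of the diagonal, together with $\pa\manif_0$ compact, forces $\widetilde P$ properly supported (the diagonal neighborhood has proper projections onto each factor). Therefore $\widetilde P=\In(P)\in\iPSsus{m}{\pa\manif_0}$.

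The main obstacle is the careful bookkeeping in part (2) showing that the kernel of the stabilized operator genuinely defines a pseudodifferential operator in the H\"ormander class on $\pa\manif_0\times\RR$ rather than just some operator with a conormal kernel. The cleanest route is to work in a coordinate chart on $\pa\manif_0\times(-\infty,-R_P)$, write $P$ there via a symbol $a(x,t,\xi,\tau)$, use translation invariance to conclude $a(x,t,\xi,\tau)=a(x,t-\lambda,\xi,\tau)$ for $t$ below some threshold hence $a$ is $t$-independent there, and observe this $t$-independent symbol $a(x,\xi,\tau)$ is exactly the local symbol of $\widetilde P$—this simultaneously gives membership in $\Psi^m_{1,0}$, translation invariance, and (combined with the support condition) proper support. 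The point (1) is then just the special case where $a$ has no $\xi,\tau$ dependence.
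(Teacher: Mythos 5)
Your proof is correct and follows the same route as the paper's: stabilization of $\Phi_{-s}P\Phi_s(u)$ via condition (2) of Definition \ref{def.translation-invariant} once $\Phi_s(u)$ is supported in $\pa\manif_0\times(-\infty,-R_P)$, translation invariance of the limit from the $s$-independence, and part (1) read off directly from Definition \ref{def.CIinv}. The paper's proof is terser---it asserts without further detail that the resulting operator lies in $\iPSsus{m}{\pa\manif_0}$---whereas you additionally supply the local-coordinate/kernel verification that $\widetilde P$ is a properly supported, translation-invariant operator in $\Psi^m_{1,0}(\pa\manif_0\times\RR)$, which fills in a step the paper leaves implicit.
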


\begin{proof}
  For $s$ large, $\Phi_s(u)$ has support in $\pa \manif_0 \times (-\infty,0)
  \subset \manif$, and hence $P \Phi_s(u)$ is defined.
  The independence of $s$ follows from the definition
  of $\iPS {m}(\manif)$ (see Definition \ref{def.translation-invariant}).
  Indeed, we have independence of $s$
  as soon as $s$ is large enough for $\Phi_s(u)$ to have support in
  $\pa \manif_0 \times (-\infty, -R_P)$. The relation $\tilde{a} = \maR_{\infty}(a)$
  follows from the definition of $\CI_{\inv}(\manif)$ (Definition \ref{def.CIinv}).
  The resulting operator $\In(P)$ is clearly invariant by translations.
  This completes the proof.
\end{proof}

In analogy with Proposition \ref{prop.kernel.descr}, we have the following proposition,
whose proof is completely similar. We do not include its proof since we will not use
this proposition, for the same reason, we state it explicitly only in the case $E = \CC$.
Recall that our manifold $\manif$ has dimension $n$. The following results,
while not used in this paper, is meant to be used for $\link = \pa \manif_0$, which thus
has dimension $n-1$. (The asymptotic of the distribution kernel is, however, on
$\link \times \RR$, thus on a manifold of dimension $n$.)

\begin{proposition}\label{prop.kernel.descr2}
  Let $\link$ be a closed manifold of dimension $n-1$ and $m < 0$, $m \notin \ZZ$.
  A continuous, linear map $T : \CIc(\link \times \RR) \to \CIc(\link \times \RR)'$ is
  a \emph{classical pseudodifferential} operator in
  $\iPSsus{m} {\link} := \iPS{m}(\link \times \RR)^\RR$ if, and only if,
  it satisfies the following conditions.
  \begin{enumerate}
    \item
    Its distribution kernel $k_T$ is a function with support in some
    $\epsilon$-neighborhood of the diagonal (Equation \ref{eq.def.eps_neigh})
    and is smooth away from the diagonal.

    \item
    For all $x, y \in \link$, $t, s, \lambda \in \RR$, $(x, t) \neq (y, s)$, we have
    \begin{equation*} 
      k_T(x, t-\lambda, y, s - \lambda) \seq k_T(x, t, y, s) \,.
    \end{equation*}

    \item
    Let $S (\link \times \RR) \subset T (\link \times \RR)$ be the subset of vectors of length one, as usual,
    and $p : S(\link \times \RR) \to \link \times \RR$ be the canonical projection.
    Then there exist functions $a_j \in \CI\big(S(\link \times \RR)\big)^\RR$ and
    $C_j \ge 0$, $j \in \ZZ_+$, such that,
    for all $N \in \NN$, $\xi \in S (\link \times \RR)$, and $t \in (0, r_\link /2]$
    \begin{equation*}
      |k_T(p(\xi), \exp(t \xi)) - \sum_{j=0}^N a_j(\xi) t^{-n -m + j}| \le
      C_N t^{-n-m + N + 1} \,.
    \end{equation*}
  \end{enumerate}
\end{proposition}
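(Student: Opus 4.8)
The plan is to deduce this from Proposition~\ref{prop.kernel.descr}, applied to the manifold with straight cylindrical ends $\manif := \link \times \RR$, and then to upgrade its conclusions about invariance \emph{near infinity} to invariance \emph{everywhere}, using the extra requirement that the operator be fixed by \emph{all} the translations $\Phi_\lambda$, $\lambda \in \RR$ (this is the meaning of the superscript $\RR$ in $\iPSsus{m}{\link} = \iPS{m}(\link\times\RR)^\RR$). First I would record the set-up: $\link\times\RR$ is a manifold with straight cylindrical ends in the sense of Definition~\ref{def.ce} (take $\psi_\manif$ to be the inclusion of $\link\times(-\infty,0)$), it has dimension $n$ since $\dim\link = n-1$, its injectivity radius is the number $r_\link$ appearing in the statement, and the compactness of its ends never enters the proof of Proposition~\ref{prop.kernel.descr}, which is purely a statement about distribution kernels — so that proposition is available here without change.

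For the forward implication, suppose $T \in \iPSsus{m}{\link}$ is classical. Since $T \in \iPS{m}(\link\times\RR)$, Proposition~\ref{prop.kernel.descr} immediately yields condition~(1) (the kernel $k_T$ is a function supported in an $\epsilon$-neighborhood of the diagonal and smooth off the diagonal), the invariance of $k_T$ for $t,s$ near $-\infty$, and the asymptotic expansion with coefficients $a_j \in \CI_{inv}(S(\link\times\RR))$. I would then use that $T$ commutes with \emph{every} $\Phi_\lambda$: since an operator is determined by its kernel and each $\Phi_\lambda$ is an isometry, this is equivalent to $k_T(\Phi_\lambda p,\Phi_\lambda q) = k_T(p,q)$ for all $\lambda \in \RR$, which is precisely condition~(2). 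Substituting this global invariance into the expansion and using the equivariance of $\exp$ under the isometry $\Phi_\lambda$, one finds $k_T\big(p(d\Phi_\lambda\xi),\exp(t\,d\Phi_\lambda\xi)\big) = k_T(p(\xi),\exp(t\xi))$, so the uniqueness of the coefficients $a_j$ (already invoked in Proposition~\ref{prop.kernel.descr}) forces $a_j(d\Phi_\lambda\xi)=a_j(\xi)$, i.e.\ $a_j \in \CI(S(\link\times\RR))^\RR$. That is condition~(3).

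For the converse, suppose $T$ satisfies conditions~(1)--(3). Condition~(2) is stronger than \eqref{item.cond.two} of Proposition~\ref{prop.kernel.descr} and condition~(3) here implies \eqref{item.cond.three} there, so that proposition gives $T \in \iPS{m}(\link\times\RR)$ and that $T$ is classical of order $m$; one also checks that $T$ is properly supported, its kernel being supported in an $\epsilon$-neighborhood of the diagonal of the complete manifold $\link\times\RR$ (closed $\epsilon$-balls around compacta are compact there). Finally, condition~(2) says exactly that $k_T$ is invariant under the diagonal $\RR$-action, equivalently that $T$ commutes with every $\Phi_\lambda$; hence $T \in \iPS{m}(\link\times\RR)^\RR = \iPSsus{m}{\link}$, as desired.

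The only thing genuinely new over Proposition~\ref{prop.kernel.descr} is this passage from ``near infinity'' to ``everywhere'' in conditions~(2) and~(3); that is where I expect the (small amount of) real work to lie, and it is carried out as above by combining the uniqueness of the classical symbol expansion with the isometry-equivariance of the exponential map. Everything else is Proposition~\ref{prop.kernel.descr} read off for the manifold $\link\times\RR$.
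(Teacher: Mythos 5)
Your proposal is correct and matches the paper's intent: the paper omits the proof of Proposition~\ref{prop.kernel.descr2} entirely, stating only that it is ``completely similar'' to that of Proposition~\ref{prop.kernel.descr}, and your reduction --- apply Proposition~\ref{prop.kernel.descr} to $\manif=\link\times\RR$, then upgrade invariance near infinity to invariance under all of $\Phi_\lambda$, $\lambda\in\RR$, via the kernel characterization and the uniqueness of the coefficients $a_j$ --- is exactly that adaptation, correctly identifying where the only new content lies. The one cosmetic point is that taking $\manif_0=\link\times[0,\infty)$ violates the standing compact-ends assumption; the decomposition $\manif_0=\link\times[-1,1]$ with $\pa\manif_0=\link\sqcup\link$ avoids even having to argue that compactness of the ends is not used.
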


We also have the following property of the distribution kernel of $\tilde P = \In(P)$,
which is a defining property for $m < 0$ (by Peetre's theorem).

\begin{proposition}\label{prop.desc.kInP}
  Let $P \in \iPS{m}(\manif)$. Then
  \begin{equation*}
    k_{\widetilde P}(x, t, y, s)  \seq k_{P}(x, t - \lambda, y, s - \lambda)
  \end{equation*}
  for $\lambda$ large enough and $(x,t) \neq (y, s) \in \pa \manif_0 \times (-\infty, 0)$.
\end{proposition}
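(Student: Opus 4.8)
The plan is to compute the distribution kernel $k_{\widetilde P}$ of $\widetilde P = \In(P)$ straight from its definition in Lemma~\ref{lemma.def.indicial} together with the Schwartz kernel theorem (Theorem~\ref{thm.Schwartz.k}), and then to extract the asserted pointwise identity by testing against bump functions supported away from the diagonal, where all the kernels involved are genuine smooth functions by Theorem~\ref{thm.prop.psdos.gen}(6).

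First I would fix $u, v \in \CIc(\pa\manif_0 \times \RR)$ and choose $\lambda > 0$ large (depending only on $R_P$ and on the supports of $u$ and $v$) so that $\Phi_\lambda(u)$ and $\Phi_\lambda(v)$ are supported in $\pa\manif_0 \times (-\infty, -R_P) \subset \manif$; then $\widetilde P(u) = \Phi_{-\lambda} P \Phi_\lambda(u)$ by Lemma~\ref{lemma.def.indicial}. Since $P \in \iPS{m}(\manif)$ is properly supported (Remark~\ref{rem.inv=pr}), the function $P\Phi_\lambda(u)$ lies in $\CIc(\manif)$, and since the translations $\Phi_{\pm\lambda}$ are isometries preserving the product volume form on the cylinder, $\Phi_{-\lambda}$ is the transpose of $\Phi_\lambda$ with respect to the bilinear pairing between distributions and test functions. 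Applying Theorem~\ref{thm.Schwartz.k} once on $\pa\manif_0\times\RR$ and once on $\manif$ then yields
\begin{equation*}
  \langle k_{\widetilde P},\, v \boxtimes u \rangle \seq \langle \widetilde P u,\, v \rangle
  \seq \langle P \Phi_\lambda(u),\, \Phi_\lambda(v) \rangle
  \seq \langle k_P,\, \Phi_\lambda(v) \boxtimes \Phi_\lambda(u) \rangle\,.
\end{equation*}

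Next I would observe that $\Phi_\lambda(v) \boxtimes \Phi_\lambda(u)$ is the pullback of $v \boxtimes u$ by the diagonal translation (toward the cylindrical end, with the conventions of Equation~\eqref{eq.def.Phi}) on $(\pa\manif_0 \times \RR)^2$; equivalently, the right-hand side above is the pairing of $v \boxtimes u$ with the corresponding translate of the distribution $k_P$. Because this diagonal translation preserves the product volume form, unwinding the last pairing and changing variables turns the chain of equalities into
\begin{equation*}
  \langle k_{\widetilde P},\, v \boxtimes u \rangle \seq \int\!\!\int k_P(x, t - \lambda, y, s - \lambda)\, v(x,t)\, u(y,s) \dvol \dvol\,,
\end{equation*}
valid for all such $u, v$ and all sufficiently large $\lambda$. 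Finally I would restrict to $u$ supported in a small open set $V$ and $v$ in a small open set $W$ with $V, W \subset \pa\manif_0 \times (-\infty,0)$ and $V \cap W = \emptyset$: then $v \boxtimes u$ is supported off the diagonal of $\pa\manif_0\times\RR$, where $k_{\widetilde P}$ is smooth, while $\Phi_\lambda(v) \boxtimes \Phi_\lambda(u)$ is supported off the diagonal of $\manif$, where $k_P$ is smooth (Theorem~\ref{thm.prop.psdos.gen}(6)); hence both sides of the last display are integrals of continuous functions against $v \boxtimes u$, and letting $V, W$ shrink to points gives $k_{\widetilde P}(x,t,y,s) = k_P(x, t-\lambda, y, s-\lambda)$ on $V \times W$. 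Since such products cover $\{(x,t),(y,s) \in \pa\manif_0 \times (-\infty,0): (x,t)\neq(y,s)\}$, this proves the proposition; the independence of $\lambda$ for $\lambda$ large is automatic, since the left-hand side does not involve $\lambda$.

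There is no deep obstacle here: once the identity $\langle \widetilde P u, v\rangle = \langle P\Phi_\lambda u, \Phi_\lambda v\rangle$ is in hand, everything is change of variables in an absolutely convergent integral. The only points that need care are (a) pinning down the direction and the minimal admissible size of the translation $\Phi_\lambda$ — it must carry the supports of $u$ and $v$ into $\pa\manif_0 \times (-\infty, -R_P)$, i.e. roughly $\lambda \gtrsim R_P$, which is exactly the regime covered by clause~\eqref{item.transl-inv} of Definition~\ref{def.translation-invariant}; (b) checking that $\Phi_\lambda$, hence the diagonal translation, preserves the relevant volume forms, so that no Jacobian factor appears; and (c) performing the whole argument off the diagonal, so that every kernel in sight is a bona fide smooth function — this is what keeps the argument valid for arbitrary $m \in \RR$ rather than only for $m < 0$ (for $m<0$ the kernel is a function everywhere, but neither the statement nor the proof needs that).
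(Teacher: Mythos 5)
Your argument is correct and is essentially the paper's own proof, which is stated in one line as a consequence of the translation invariance at infinity together with the smoothness of pseudodifferential kernels off the diagonal; you have simply unwound that one line into the explicit duality computation $\langle \widetilde P u, v\rangle = \langle k_P, \Phi_\lambda v \boxtimes \Phi_\lambda u\rangle$ followed by localization away from the diagonal. No gaps.
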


\begin{proof}
  This follows from the translation invariance at infinity, knowing that
  the distribution kernel of a pseudodifferential operator is a smooth
  function outside the diagonal.
\end{proof}

We now introduce an important concept without analog in the compact case.

\begin{definition}  \label{def.indicial-oper}
  Let $P\in \iPS {m}(\manif)$ and
  $$\widetilde{P} \in \iPSsus{m} {\pa \manif_0} \ede
  \Psi_{inv}^m(\pa \manif_0 \times \RR)^{\RR}$$
  be as in Lemma \ref{lemma.def.indicial}.
  Then $\widetilde{P}$ is called the \emph{limit operator} associated to $P$.
  The resulting map
    \begin{equation*} 
      \In : \iPS {m}(\manif)\ni P\longmapsto
      \widetilde{P}\in \iPSsus{m} {\pa \manif_0}
    \end{equation*}
  will be called the \emph{limit operator map.}
\end{definition}

For the proof and for further use, we shall need the following cut-off function $\eta$
that will be fixed throughout the paper:

\begin{notation}\label{not.rem.eta}
  We let $\eta : \pa \manif_0 \times \RR \to [0, 1]$ be a fixed smooth ``cut-off'' function
  \begin{equation*}  
    \eta(x, t) \seq
    \begin{cases}
    \ 1 \ \mbox{ for  }\ t \le -2\\
    \ 0 \ \mbox{ for }\ t \ge -1\,.
    \end{cases}
  \end{equation*}
  We also regard $\eta$ as a function
  on $\manif = \manif_0 \cup (\pa \manif_0 \times (-\infty, 0])$ by setting it to be $=0$ on $\manif_0$.
\end{notation}

The cut-off function $\eta$ of \ref{not.rem.eta} will be used to show that the limit-operator
map $\In$ is surjective by constructing a cross-section for it.

\begin{definition}\label{def.s0}
  For $T \in \iPSsus{m} {\pa \manif_0} := \iPS {m} ( \pa \manif_0 \times \RR )^{\RR}$, we let
  \begin{equation*} 
    s_0(T)  \ede \eta T\eta : \CIc(\manif) \to \CIc(\manif)\,.
  \end{equation*}
\end{definition}

It is known that the map $s_0$ is well defined from the properties of the $b$-calculus.
We nevertheless prove directly some of its needed properties, for completeness and for
the benefit of the reader.

\begin{lemma} \label{lemma.onto}
  Let $T \in \iPSsus{m} {\pa \manif_0}$
  and $s_0(T) := \eta T\eta$ (Definition \ref{def.s0}).
  \begin{enumerate}
    \item $s_0(T)$ is well defined.
    \item $s_0(T) \in \iPS{m}(\manif)$.
    \item We have $\In \circ s_0 = id$, where $\In$ is as in Definition \ref{def.indicial-oper}.
  \end{enumerate}
\end{lemma}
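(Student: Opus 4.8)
The plan is to verify the three claims in order, each reducing to properties already established for the $\inv$-calculus and to the basic behaviour of the cut-off $\eta$ from Notation \ref{not.rem.eta}. For part (1), I would first note that $\eta$ is supported in $\pa\manif_0 \times (-\infty, -1] \subset \manif$ and is translation invariant in a neighborhood of infinity (it equals $1$ for $t \le -2$). Since $T \in \iPSsus{m}{\pa\manif_0}$ is, by Remark \ref{rem.inv=pr}, a properly supported pseudodifferential operator on $\pa\manif_0 \times \RR$, the composition $\eta T \eta$ makes sense on $\CIc(\pa\manif_0 \times \RR)$; the point is to see that it descends to an operator on $\CIc(\manif)$. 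This holds because multiplication by the outer $\eta$ kills everything outside $\pa\manif_0 \times (-\infty,-1]$, so for $u \in \CIc(\manif)$ we may first restrict $\eta u$ to the cylindrical end, apply $T$ there, and then multiply by $\eta$ again; the result is supported in $\pa\manif_0 \times (-\infty,-1]$ and is independent of how $\manif$ is glued to the end. Properness of $T$ and compactness of $\pa\manif_0$ guarantee that $s_0(T)u \in \CIc(\manif)$.

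For part (2), I would check the two defining conditions of Definition \ref{def.translation-invariant}. First, $s_0(T) \in \Psi_{1,0}^m(\manif)$ because locally, away from the gluing region, it agrees with $\eta T \eta$ on $\pa\manif_0 \times \RR$, and conjugating a pseudodifferential operator by smooth functions (here $\eta$) preserves the class $\Psi_{1,0}^m$ by Theorem \ref{thm.prop.psdos.gen}(6); near the gluing region $s_0(T)$ is $0$. For the support condition: the distribution kernel of $s_0(T)$ is $\eta(x)\, k_T(x,y)\, \eta(y)$ by Theorem \ref{thm.prop.psdos.gen}(6), and $k_T$ is supported in some $\varepsilon$-neighborhood of the diagonal (this is part of being a properly supported element of $\iPSsus{m}{\pa\manif_0}$, or one shrinks $\varepsilon$ as needed), hence so is the kernel of $s_0(T)$. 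For translation invariance at infinity: since $T$ commutes with all $\RR$-translations and $\eta(x,t) = 1$ for $t \le -2$, for $u \in \CIc(\pa\manif_0 \times (-\infty, -R))$ with $R$ large (say $R > 3 + \varepsilon$) and $s > 0$, both $u$ and $\Phi_s(u)$ are supported where $\eta \equiv 1$, and their images under $T$ stay in that region by the kernel support bound, so $s_0(T)\Phi_s(u) = \eta T \eta \Phi_s(u) = T \Phi_s(u) = \Phi_s T u = \Phi_s \eta T \eta u = \Phi_s s_0(T) u$. Thus $s_0(T) \in \iPS{m}(\manif)$.

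For part (3), I would compute $\In(s_0(T))$ directly from Lemma \ref{lemma.def.indicial}: for $v \in \CIc(\pa\manif_0 \times \RR)$ and $s$ large, $\Phi_s(v)$ is supported deep in the cylindrical end where $\eta \equiv 1$, so $s_0(T)\Phi_s(v) = \eta T \eta \Phi_s(v) = T\Phi_s(v)$, and then $\Phi_{-s} s_0(T)\Phi_s(v) = \Phi_{-s} T \Phi_s(v) = T v$ since $T$ is translation invariant. Hence $\In(s_0(T)) = T$, i.e. $\In \circ s_0 = \mathrm{id}$. The only mild subtlety — and the place where I would be most careful — is part (1): making precise the claim that $\eta T \eta$ is genuinely well defined as a map $\CIc(\manif) \to \CIc(\manif)$ rather than just on the cylinder, i.e. checking compatibility of the "restrict to the end, apply $T$, multiply by $\eta$, transport back" procedure. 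This is where one uses that $\eta$ vanishes near $\manif_0$ and that $T$ is properly supported with kernel near the diagonal, so no information escapes the collar on which $\manif$ and $\pa\manif_0 \times \RR$ are identified.
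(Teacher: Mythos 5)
Your proposal is correct and follows essentially the same route as the paper's proof: identify functions supported in the cylindrical end with functions on $\pa\manif_0\times\RR$ to make $\eta T\eta$ well defined, use the near-diagonal kernel support of $T$ to see that $\eta$ acts as the identity on $T\Phi_s(u)$ for $u$ supported deep in the end, and then run the translation-invariance computation for (2) and (3). The one place you flag as delicate (well-definedness in part (1)) is handled in the paper exactly as you describe, by noting that $\eta u$ and $\eta T(\eta u)$ are both supported in the collar where $\manif$ and $\pa\manif_0\times\RR$ are identified.
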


\begin{proof}
  (1)\ Let $u \in \CIc(\manif)$.
  Then $\eta u$ has support in $\pa \manif_0 \times (-\infty, 0)$,
  and hence it can be identified with a function in $\CIc(\pa \manif_0 \times \RR)$, so
  $T (\eta u) \in \CIc(\pa \manif_0 \times \RR)$ is defined. Then $\eta T (\eta u)$
  in turn has support in $\pa \manif_0 \times (-\infty, 0) \subset \manif$, and hence
  can be identified with a function in $\CIc(\manif)$.
  Thus $s_0(T)u := \eta T(\eta u) \in \CIc(\manif)$ is well defined and we have (1).

  (2) Clearly $s_0(T)$ is an order $m$ pseudodifferential operator,
  by the general properties of pseudodifferential operators.
  Let $u \in \CIc(\pa \manif_0 \times (-\infty, -R))$ for some large $R>0$.
  Then $\eta \Phi_s (u) = \Phi_s(u)$, for $s \ge 0$. The same is true for $u$
  replaced with $T \Phi_s(u)$, $s \ge 0$, since $T$ is invariant and properly supported (so it
  increases the support of $\Phi_s(u)$ by at most $\varepsilon_T$). We thus have
  \begin{equation*}
    s_0(T) \Phi_s (u) = \eta T(\eta \Phi_s(u)) = T(\Phi_s(u))
    = \Phi_s T (u) = \Phi_s \eta (T(\eta u) ) = \Phi_s s_0(T)u\,.
  \end{equation*}
  Hence $s_0(T) \in \iPS{m}(\manif)$, as claimed. To prove (3) the same reasoning gives
  \begin{equation*}
    \Phi_{-s} s_0(T) \Phi_s (u) = \Phi_{-s} \eta T(\eta \Phi_s(u)) = \Phi_{-s} T(\Phi_s(u))
    = T (u)
  \end{equation*}
  for $s$ large enough. Thus $\In (s_0(T)) = T$. 
\end{proof}

We shall need to take a closer look at the principal symbols of operators in
$\iPSsus{m} {\pa \manif_0}$. To this end, we
specialize Theorem \ref{thm.diffeo.inv2} and Proposition \ref{prop.def.ell} to these operators.

\begin{remark}\label{rem.boundary.restriction}
  Let us replace $\manif$ with $\pa \manif_0 \times \RR$ in Remark
  \ref{rem.def.inv.symb} and restrict to translation invariant operators.
  Recall that
    $\iPSsus{m} {\pa \manif_0} \ede \iPS{m} (\pa \manif_0 \times \RR)^{\RR}$.
  Then the principal symbol yields a map
  \begin{equation*} 
    \sigma_m : \iPSsus{m} {\pa \manif_0}
    \to S_{\inv}^m(T^*(\pa \manif_0 \times \RR))^{\RR}/ S_{\inv}^{m-1}(T^*(\pa \manif_0 \times \RR))^{\RR}
    \,.
  \end{equation*}
  Next, we can identify $$S^m(T^*(\pa \manif_0 \times \RR))^{\RR}
  \simeq S^m(T^*(\pa \manif_0 \times \RR)/\RR)$$ and, furthermore, we
  can also identify
  \begin{equation*}
    T^*(\pa \manif_0 \times \RR)/\RR \, \simeq \, T^*(\pa \manif_0 \times \RR)\vert_{t = 0}
    \, \simeq \,  (T^*\manif_0) \vert_{\pa \manif_0} \, \simeq \,  \pa T^*\manif_0 \,,
  \end{equation*}
  where $T^*\manif_0\vert_{\pa \manif_0} := p^{-1}(\pa \manif_0)$ is the restriction of the bundle
  $p : T^*\manif_0 \to \manif_0$ to the boundary of $\manif_0$. (All these identifications
  are obtained using that the standard decomposition of $T^*\manif$ as
  a manifold with straight cylindrical ends is
  $T^*\manif \seq T^*\manif_0 \cup \big[ \pa T^*\manif_0 \times (-\infty, 0] \big ]\,.$)
  Let $\maR_{\infty}$ be as in Definition \ref{def.CIinv}.
  By taking into account all these identifications, we obtain the map
  \begin{equation*} 
    \sigma_m^\RR \seq \maR_{\infty} \circ \sigma_m : \iPSsus{m} {\pa \manif_0}
    \to
    S_{inv}^m(\pa T^*\manif_0)/S_{inv}^{m-1}(\pa T^*\manif_0)
    \,.
  \end{equation*}
  Let $S^*\manif_0$ be the set of unit vectors in $T^*\manif_0$.
  Then, in the case of classical symbols, the last quotient identifies, as usual
  with $\CI(\pa S^*\manif_0)$.
  \end{remark}


We obtain the following proposition.

\begin{proposition}\label{prop.onto.inv.symb}
  We use the notation of Remark \ref{rem.boundary.restriction}.
  \begin{enumerate}
    \item For any $T \in \iPS{m}(\manif)$, we have
    $$\sigma_m^\RR(\In(T)) \seq \maR_{\infty}(\sigma_m(T))\,.$$
    \item  The principal symbol
    \begin{equation*}
      \sigma_m^\RR : \iPSsus{m} {\pa \manif_0} \
      \longrightarrow\ S_{inv}^m(\pa T^*\manif_0)/S_{inv}^{m-1}(\pa T^*\manif_0)
    \end{equation*}
    is surjective and $*$-multiplicative with kernel $\iPSsus{m-1}{\pa \manif_0}$.
  \end{enumerate}
\end{proposition}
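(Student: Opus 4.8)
The plan is to derive both assertions by transporting the corresponding facts we already have for $\iPS{m}$ on a general manifold with straight cylindrical ends (namely Theorem \ref{thm.prop.inv-calc} and Proposition \ref{prop.onto.q}) through the chain of identifications set up in Remark \ref{rem.boundary.restriction}, using the cross-section $s_0$ of Lemma \ref{lemma.onto} to get surjectivity and the limit operator map $\In$ to control kernels. For part (1), I would first recall that $\In$ is compatible with the principal symbol because $\In(T) = \Phi_{-s} T \Phi_s$ for $s$ large (Lemma \ref{lemma.def.indicial}) and the principal symbol is diffeomorphism invariant (Theorem \ref{thm.diffeo.inv2}); since the translations $\Phi_s$ act on $T^*\manif$ exactly by the corresponding translations, $\sigma_m(\In(T))$ is the ``restriction to the end'' of $\sigma_m(T)$. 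Under the identification $T^*(\pa\manif_0\times\RR)/\RR \simeq \pa T^*\manif_0$ this is precisely $\maR_\infty(\sigma_m(T))$, which is the claimed formula; here one just has to check that the identifications in Remark \ref{rem.boundary.restriction} intertwine $\maR_\infty$ on symbols with the passage $\sigma_m \mapsto \sigma_m^\RR$, which is immediate from the definitions.

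For part (2), surjectivity of $\sigma_m^\RR$ follows by composing two surjections: apply $\sigma_m^\RR = \maR_\infty \circ \sigma_m$ where $\sigma_m : \iPSsus{m}{\pa\manif_0} \to S_{inv}^m/S_{inv}^{m-1}$ is onto (this is the restriction to translation invariant operators of the surjectivity in Theorem \ref{thm.prop.inv-calc}(2), applied on $\pa\manif_0\times\RR$, which is itself a manifold with straight cylindrical ends) and $\maR_\infty$ on $\RR$-invariant symbols is a bijection by Lemma \ref{lemma.easy.inv}(1) together with the identification $\CI_{inv}(T^*(\pa\manif_0\times\RR))^\RR \simeq \CI(\pa T^*\manif_0)$. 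Alternatively, and perhaps more cleanly, one can build a section explicitly: given a target symbol, lift it to an $\RR$-invariant symbol on $\pa\manif_0\times\RR$, quantize it with $q$ (Remark \ref{rem.def.quant.map}, Proposition \ref{prop.onto.q}) to land in $\iPSsus{m}{\pa\manif_0}$, and observe its $\sigma_m^\RR$ is the original symbol. The $*$-multiplicativity $\sigma_{q+m}^\RR(QP) = \sigma_q^\RR(Q)\sigma_m^\RR(P)$ and $\sigma_m^\RR(P^*) = \sigma_m^\RR(P)^*$ then follow from the corresponding identities for $\sigma_m$ in Theorem \ref{thm.prop.inv-calc}(2) together with the fact that $\maR_\infty$ is multiplicative and compatible with adjoints (Lemma \ref{lemma.easy.inv}(5), applied after noting adjoints of $\RR$-invariant operators are $\RR$-invariant).

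The one point requiring genuine care --- the main obstacle --- is identifying the kernel of $\sigma_m^\RR$ as $\iPSsus{m-1}{\pa\manif_0}$ rather than something larger. By Theorem \ref{thm.prop.inv-calc}(2) the kernel of the plain $\sigma_m$ on $\iPSsus{m}{\pa\manif_0}$ is $\iPSsus{m-1}{\pa\manif_0}$, so it suffices to show that $\maR_\infty$ is \emph{injective} on the relevant symbol quotient $S_{inv}^m(T^*(\pa\manif_0\times\RR))^\RR / S_{inv}^{m-1}(\cdots)^\RR$; but for an $\RR$-invariant symbol, vanishing of $\maR_\infty$ at the (single) ``slice'' $t=0$ forces vanishing everywhere by translation invariance, so $\maR_\infty$ is in fact injective (indeed bijective) at the level of $\RR$-invariant symbols and hence on the quotient. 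Thus $P \in \ker\sigma_m^\RR$ iff $\sigma_m(P) \in S_{inv}^{m-1}$ iff $P \in \iPSsus{m-1}{\pa\manif_0}$, and we are done. I would close by remarking that everything restricts to the classical case, where the target quotient is $\CI(\pa S^*\manif_0)$ as noted at the end of Remark \ref{rem.boundary.restriction}.
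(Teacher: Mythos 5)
Your proposal is correct and follows essentially the same route as the paper: part (1) via the fact that $\In(T)$ agrees with $T$ (up to translation) near infinity together with the identifications of Remark \ref{rem.boundary.restriction}, and part (2) via the translation-invariant quantization map of Remark \ref{rem.def.quant.map} and Proposition \ref{prop.onto.q} for surjectivity, with multiplicativity and the kernel coming from Theorem \ref{thm.prop.inv-calc}(2) applied on $\pa\manif_0\times\RR$. Your explicit observation that $\maR_\infty$ is injective on $\RR$-invariant symbols (so that $\ker\sigma_m^\RR$ is not larger than $\iPSsus{m-1}{\pa\manif_0}$) is a detail the paper leaves implicit, and it is a correct and worthwhile addition rather than a different argument.
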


\begin{proof}
  By definition, we have $\In (T) = T$ on $\pa \manif_0 \times (-\infty, -R)$,
  for some large $R$. Hence
  $$\sigma_m^\RR (\In(T)) \seq \sigma_m^\RR (T) \seq \maR_{\infty} \circ \sigma_m(T)\,,$$
  see Remark \ref{rem.boundary.restriction}, thus (1).
  The surjectivity in (2) follows from the fact that the quantization
  map of Remark \ref{rem.def.quant.map} is translation invariant and
  from Proposition \ref{prop.onto.q}, all applied to $\manif_0 \times \RR$
  instead of $\manif$.
  The other properties of $\sigma_m^{\RR}$ follow from
  Theorem \ref{thm.prop.inv-calc}(2), again applied to $\manif_0 \times \RR$
  instead of $\manif$.
\end{proof}

\subsection{Further properties of the $\inv$-calculus}
\label{ssec.further.prop}
We now include some properties of our $\inv$-calculus that are no longer direct
analogues of the classical calculus on compact manifolds, but they still have
counterparts in the $b$-calculus. The mapping properties of this
subsection are a consequence of the corresponding properties for the $b$-calculus,
but the proof presented here are new. Recall the limit operator map
$\In : \iPS {m}(\manif) \to \iPSsus{m} {\partial \manif_0}
:= \iPS{\infty }(\partial \manif_0\times {\RR})^{\RR}$ of
Definition \ref{def.indicial-oper}.

\begin{definition}\label{def.c.supp}
We let $\Psi_{comp}^m(\manif)$ denote the space of pseudodifferential operators on $\manif$
with compactly supported distribution kernel.
\end{definition}

\begin{theorem} \label{thm.prop.inv-calc.in}
  Let $\manif = \manif_0 \cup \big ( \pa \manif_0 \times (-\infty, 0] \big )$ be a
  manifold with straight cylindrical ends.
  \begin{enumerate}
    \item $\In : \iPS{m}(\manif) \to \iPSsus{m} {\pa \manif_0}$
    is surjective and its kernel consists of $\Psi_{comp}^m(\manif)$, the set of operators
    $T \in \iPS{m}(\manif)$ whose distribution (or Schwartz) kernel is compactly supported.
    \item We have a direct sum decomposition
    \begin{equation*}
      \begin{gathered}
      \iPS{m}(\manif) \seq s_0(\iPSsus{m} {\pa \manif_0}) + \Psi_{comp}^m(\manif)\\
      P \to (s_0(\In(P)), P - s_0(\In(P)))\,.
      \end{gathered}
    \end{equation*}
  \end{enumerate}
\end{theorem}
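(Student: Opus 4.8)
The plan is to deduce both parts from the properties of the cross-section $s_0$ established in Lemma~\ref{lemma.onto} together with an explicit description of $\ker \In$. Surjectivity of $\In$ in part~(1) is immediate: Lemma~\ref{lemma.onto}(3) gives $\In \circ s_0 = \operatorname{id}$, so $s_0$ is a right inverse for $\In$. Everything then reduces to proving $\ker \In = \Psi_{comp}^m(\manif)$, after which part~(2) follows formally.

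For the inclusion $\Psi_{comp}^m(\manif) \subseteq \ker \In$, suppose $P \in \iPS{m}(\manif)$ has compactly supported distribution kernel; choose $R$ so large that $\supp k_P \subseteq \overline{\manif_R} \times \overline{\manif_R}$, which is possible because $\manif_0$ is compact (so the relatively compact sets $\manif_R$ exhaust $\manif$). For $u \in \CIc(\pa \manif_0 \times \RR)$ and $s$ large, $\Phi_s(u)$ is supported in $\pa \manif_0 \times (-\infty, -R)$, which is disjoint from $\overline{\manif_R}$, so $P\Phi_s(u) = 0$ and hence $\In(P)(u) = \Phi_{-s} P \Phi_s(u) = 0$ by Lemma~\ref{lemma.def.indicial}. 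The reverse inclusion $\ker \In \subseteq \Psi_{comp}^m(\manif)$ is the heart of the matter. Assume $\widetilde{P} = \In(P) = 0$. For $u \in \CIc\big(\pa \manif_0 \times (-\infty, -R_P)\big)$ and $s > 0$, the translation invariance at infinity, Definition~\ref{def.translation-invariant}\eqref{item.transl-inv}, gives $P\Phi_s(u) = \Phi_s P(u)$, hence $\Phi_{-s} P \Phi_s(u) = P(u)$; for $s$ large the left-hand side is $\widetilde{P}(u) = 0$ by Lemma~\ref{lemma.def.indicial}, so $P(u) = 0$. Thus $P$ annihilates every test function supported in $\pa \manif_0 \times (-\infty, -R_P)$, so by the Schwartz kernel theorem (Theorem~\ref{thm.Schwartz.k}) the distribution $k_P$ vanishes on $\manif \times \big(\pa \manif_0 \times (-\infty, -R_P)\big)$. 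Combined with the support condition $\supp k_P \subseteq \{\dist_g < \varepsilon_P\}$ from the definition of $\iPS{m}(\manif)$, this forces $\supp k_P \subseteq \overline{\manif_{R_P + \varepsilon_P}} \times \overline{\manif_{R_P}}$, which is compact since $\manif_0$ is; hence $P \in \Psi_{comp}^m(\manif)$. (One may alternatively read the vanishing of $k_P$ off the far end directly from Proposition~\ref{prop.desc.kInP}.) The one delicate point is precisely this implication: translation invariance by itself only makes $k_P$ translation invariant near infinity, and it is the compactness of $\manif_0$ --- the ``compact ends'' hypothesis --- together with $\widetilde{P} = 0$ that collapses $\supp k_P$ to a compact set.

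Part~(2) is then a formality. Given $P \in \iPS{m}(\manif)$, set $Q := s_0(\In(P))$; by Lemma~\ref{lemma.onto}(2), $Q \in \iPS{m}(\manif)$, and by Lemma~\ref{lemma.onto}(3), $\In(Q) = \In(P)$, so $\In(P - Q) = 0$ and therefore $P - Q \in \Psi_{comp}^m(\manif)$ by part~(1). This gives $P = Q + (P - Q)$, i.e.\ the map $P \mapsto \big(s_0(\In(P)),\, P - s_0(\In(P))\big)$ as asserted. For directness of the sum, if $P = s_0(T) + K$ with $T \in \iPSsus{m}{\pa \manif_0}$ and $K \in \Psi_{comp}^m(\manif) = \ker \In$, then applying the (linear) map $\In$ yields $T = \In(s_0(T)) = \In(P)$, so $T$, and hence $K$, is uniquely determined; equivalently $s_0\big(\iPSsus{m}{\pa \manif_0}\big) \cap \Psi_{comp}^m(\manif) = \{0\}$.
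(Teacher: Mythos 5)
Your proof is correct and follows the same overall strategy as the paper's: surjectivity from $\In \circ s_0 = \operatorname{id}$ (Lemma~\ref{lemma.onto}(3)), the kernel computation via $Pu = \Phi_{-s}P\Phi_s(u) = \widetilde{P}(u) = 0$ for $u$ supported far out on the end, and part~(2) as a formal consequence. The one place you genuinely diverge is in upgrading ``$k_P$ vanishes on $\manif \times \big(\pa\manif_0\times(-\infty,-R_P)\big)$'' to compactness of $\supp k_P$: the paper runs the same annihilation argument a second time for $P^*$ (implicitly using $\In(P^*)=\In(P)^*=0$) to constrain the first variable, whereas you instead invoke the $\varepsilon_P$-neighborhood-of-the-diagonal support condition from Definition~\ref{def.translation-invariant}(1) to force $\supp k_P \subseteq \overline{\manif_{R_P+\varepsilon_P}} \times \overline{\manif_{R_P}}$. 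Both are valid; your route avoids having to justify that $\In$ commutes with adjoints at this stage, while the paper's route does not use the near-diagonal support hypothesis. You also explicitly verify directness of the sum in~(2), which the paper asserts but leaves implicit.
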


\begin{proof} The surjectivity in (1) follows from Lemma \ref{lemma.onto}(3).
  The form of its kernel follows from the definition of operators that are translation
  invariant in a neighborhood of infinity. Indeed, let $P \in \iPS{m}(\manif)$ with
  $\In(P) = 0$. Let $R_P > 0$ be as in Definition \ref{def.translation-invariant}, that is
  $P \Phi_s(u) = \Phi_s P(u)$ for all $u \in \CIc \big(\partial \manif_0\times (-\infty ,-R_P)\big)$
  and all $s>0$. By the definition of $\In(P)v = \Phi_{-s} P \Phi_s(v)$ for $s$ large,
  we obtain that $P u = 0$ if $u$ has support in
  $\partial \manif_0\times (-\infty ,-R_P) =: \manif_{R_P}^c := \manif \smallsetminus \manif_{R_P}$.
  This shows that the distribution kernel of $P$ is contained in $\manif \times \manif_{R_P}$.
  The same argument applied to $P^* \in \iPS{m}(\manif)$ shows that the distribution kernel
  of $P$ is contained in $\manif_{R_{P^*}} \times \manif$. Therefore, the distribution kernel
  of $P$ is contained in $\manif_{R_{P^*}} \times \manif_{R_{P}}$, which is a compact set.

  (2) follows from (1) and the relation $\In \circ s_0 = id$ of Lemma \ref{lemma.onto}. Indeed,
  \begin{equation*}
    \In (P - s_0(\In(P))) \seq \In (P) - \In(s_0(\In(P))) \seq \In (P) - \In (P) \seq 0\,.
  \end{equation*}
  Therefore $P - s_0(\In(P)) \in \Psi_{comp}^m(\manif)$ by (1).
\end{proof}

We now discuss a property of the $\inv$-calculus that has no analogue in the classical case of
pseudodifferential operators on a compact manifold or for the $b$-calculus. We shall denote by $t$
the cylindrical variable in $(x, t) \in \pa \manif_0 \times (-\infty, 0) \subset \manif$
and with the same letter the corresponding multiplication operator.

\begin{proposition} \label{prop.comm.rho}
  Let $t \in \RR$ be the second coordinate function on $\pa \manif_0 \times \RR$ and
  let $\rho \in \CI(\manif)$ be such that
  $\rho (x, t) = t$ for all $(x, t) \in \pa \manif_0 \times (-\infty, -1]$.
  \begin{enumerate}
    \item Let $T \in \iPSsus{m} {\pa \manif_0}$. Then
    $[t, T] \in \iPSsus{m} {\pa \manif_0}$.

    \item If $T \in \iPSsus{m} {\pa \manif_0}$,
    then $[\rho, s_0(T)] = s_0([t, T])$.

    \item Let $P \in \iPS{m}(\manif)$, we have $[\rho, P] \in \iPS{m}(\manif)$ and
    $\In ([\rho, P]) = [t, \In(P)]$.
  \end{enumerate}
\end{proposition}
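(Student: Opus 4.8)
The plan is to prove the three parts in order, since (2) uses (1) and (3) uses both (1) and (2). Throughout, the key underlying facts are that $[A,B]$ drops the order by one only when $A$ is a \emph{zeroth order} multiplication operator --- but here $t$ and $\rho$ are unbounded --- so one should not expect order to drop; rather the point is that the commutators \emph{stay inside the calculus} of the same order $m$. The reason this works is the classical fact that if $P \in \Psi_{1,0}^m$ on a manifold and $f$ is smooth with all covariant derivatives bounded (or, locally, any smooth $f$), then $[f, P] \in \Psi_{1,0}^m$, because in local coordinates $[f, a(x,D)]$ has symbol $f(x)a(x,\xi) - $ (the symbol of $a(x,D)f$), and the latter is $f(x)a(x,\xi)$ plus lower order terms coming from the asymptotic expansion; the leading terms cancel and what remains is an order-$m$ (in fact order $m-1$ plus a term coming from $\nabla f \cdot \nabla_\xi a$, also order $m-1$ if $f$ is globally controlled). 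On $\link \times \RR$ with $f = t$, the coordinate $t$ is smooth and \emph{translation covariant} in the sense $\Phi_s^{-1}\, t\, \Phi_s = t + s \cdot \mathrm{id}$, so commuting with $t$ preserves translation invariance. That is the conceptual core.

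For part (1): let $T \in \iPSsus{m}{\pa\manif_0} = \iPS{m}(\pa\manif_0 \times \RR)^{\RR}$. First I would observe that $[t,T]$ is again a pseudodifferential operator of order $m$ on $\pa\manif_0\times\RR$: on the cylinder, the issue of $t$ being unbounded is handled by working with the \emph{properly supported} structure (Remark \ref{rem.inv=pr}) --- since $T$ has distribution kernel supported within $\varepsilon_T$ of the diagonal, $[t,T]$ has kernel $(t_x - t_y)k_T(x,t_x,y,t_y)$, and $|t_x - t_y|$ is \emph{bounded by $\varepsilon_T$} on the support of $k_T$. Thus $[t,T]$ has a compactly-supported-in-each-fiber kernel with the right diagonal behavior, hence lies in $\Psi_{1,0}^m$. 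It is properly supported with the same $\varepsilon_T$. Translation invariance is immediate: $\Phi_s^{-1}[t,T]\Phi_s = \Phi_s^{-1}t\Phi_s \cdot \Phi_s^{-1}T\Phi_s - \Phi_s^{-1}T\Phi_s \cdot \Phi_s^{-1}t\Phi_s = (t+s)T - T(t+s) = [t,T]$ using $\Phi_s^{-1}T\Phi_s = T$ and that the scalar $s$ commutes with $T$. Hence $[t,T] \in \iPSsus{m}{\pa\manif_0}$.

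For part (2): recall $s_0(T) = \eta T \eta$ where $\eta$ is the cutoff of Notation \ref{not.rem.eta}, equal to $1$ for $t \le -2$ and $0$ for $t \ge -1$, and $\rho$ agrees with $t$ for $t \le -1$. The calculation is to expand $[\rho, s_0(T)] = \rho \eta T \eta - \eta T \eta \rho$. On the support of $\eta$ we have $t \le -1$, so $\eta \rho = \eta t$ as operators (meaning $\eta \rho u = \eta t u$ when the relevant function is supported where $\eta \ne 0$; more carefully $\rho \eta = t \eta$ and $\eta \rho = \eta t$ exactly, as multiplication operators, because $\rho = t$ wherever $\eta \ne 0$). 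Therefore $[\rho, \eta T \eta] = t\eta T \eta - \eta T \eta t = \eta(t T - T t)\eta = \eta [t,T] \eta = s_0([t,T])$, using that $\eta$ commutes with the scalar-valued multiplication $t$ and that we may freely move $\eta$ past itself. This is the cleanest step; the only care needed is to phrase "$\rho$ equals $t$ on the support of $\eta$" as an identity of multiplication operators composed appropriately, which is legitimate since $\eta$ is supported in $\{t \le -1\}$.

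For part (3): given $P \in \iPS{m}(\manif)$, I would use the direct-sum decomposition of Theorem \ref{thm.prop.inv-calc.in}(2): write $P = s_0(\In(P)) + Q$ with $Q \in \Psi_{comp}^m(\manif)$. Then $[\rho, P] = [\rho, s_0(\In(P))] + [\rho, Q]$. By part (2), $[\rho, s_0(\In(P))] = s_0([t, \In(P)])$, which lies in $\iPS{m}(\manif)$ by Lemma \ref{lemma.onto}(2) and part (1). For the remainder, $Q$ has \emph{compactly supported} kernel, and $\rho$ is smooth, so $[\rho, Q]$ again has compactly supported kernel (contained in $\supp k_Q$) and is an order-$m$ pseudodifferential operator on $\manif$ by the standard local theory (near the diagonal, commuting with a smooth function preserves the class; away from the diagonal everything is smooth); hence $[\rho, Q] \in \Psi_{comp}^m(\manif) \subset \iPS{m}(\manif)$. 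Therefore $[\rho, P] \in \iPS{m}(\manif)$. Finally, to compute $\In([\rho,P])$: $\In$ is linear (it is $P \mapsto \Phi_{-s}P\Phi_s$ for large $s$), $\In(Q) = 0$ and hence $\In([\rho,Q]) = 0$ since $[\rho,Q]$ is also compactly supported (kernel of $\In$ by Theorem \ref{thm.prop.inv-calc.in}(1)), and $\In(s_0([t,\In(P)])) = [t,\In(P)]$ by Lemma \ref{lemma.onto}(3). So $\In([\rho,P]) = \In(s_0([t,\In(P)])) = [t, \In(P)]$, as claimed.

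The main obstacle I anticipate is purely a matter of bookkeeping rather than depth: making rigorous the informal manipulations "$\rho = t$ on $\supp\eta$" and "$[t,T]$ is a bona fide order-$m$ operator despite $t$ being unbounded" --- both are handled by the proper-support/near-diagonal-kernel structure, but one must be careful that all compositions ($t$ with $T$, $\eta$ with $T$, $\rho$ with $Q$) are defined on $\CIc$ and that the support bookkeeping (everything staying within $\varepsilon$ of the diagonal, and $|t_x - t_y| \le \varepsilon$ there) is invoked correctly. A secondary minor point is confirming that $[\rho, Q]$ for $Q$ merely compactly supported (not necessarily translation invariant near infinity in a nontrivial way, though it is, trivially) lies in $\Psi_{comp}^m$: this is standard since commuting with a smooth function is a local operation preserving $\Psi_{1,0}^m$ and the support of the kernel does not grow.
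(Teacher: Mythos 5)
Your proposal is correct and follows essentially the same route as the paper's proof: translation invariance of $[t,T]$ via $\Phi_{-s}t\Phi_s = t-s$ with the constant $s$ dropping out of the commutator, the identity $\rho\eta = t\eta$ for part (2), and the decomposition $P = s_0(\In(P)) + P_c$ with $[\rho,P_c]\in\Psi_{comp}^m(\manif)$ for part (3). The only (harmless) difference is that the paper records the sharper fact that $[t,T]$ actually has order $m-1$, whereas you only claim order $m$, which suffices for the statement.
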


\begin{proof}
  (1) is well known from the $b$-calculus. We include the argument in \cite{Mitrea-Nistor}.
  First, the operator $[t, T]$ is pseudodifferential of order $m-1$. It is also
  properly supported. It is translation invariant since
  \begin{equation*}
    \Phi_{-s} [t, T] \Phi_s \seq [\Phi_{-s} t \Phi_s, \Phi_{-s} T \Phi_s] \seq [t - s, T] \seq [t, T]\,,
  \end{equation*}
  because $s$ is a constant. This completes the proof of (1).

  The operators $t$, $\rho$, and $\eta$ are multiplication operators, and hence
  they commute. Moreover, the assumption gives that $\rho \eta = t \eta$.
  The relation (2) is thus immediate from
  \begin{equation*}
    [\rho, s_0(T)] \ede \rho \eta T \eta - \eta T \eta \rho \seq \eta [t, T] \eta \,=:\, s_0([t, T])\,.
  \end{equation*}

  To prove (3), let us write first $P = s_0(\In(P)) + P_c$, where $P_c \in \Psi_{comp}^m(\manif)$.
  Then
  \begin{equation*}
    [\rho, P] \seq [\rho, s_0(\In(P))] + [\rho, P_c] \seq s_0([t, \In(P)]) + [\rho, P_c] \in
    \iPS{m}(\manif) \,,
  \end{equation*}
  by (1) and (2) already proved and by Theorem \ref{thm.prop.inv-calc.in}(2),
  using also the fact that $[\rho, P_c] \in \Psi_{comp}^m(\manif)$.
  Finally, let $u \in \CIc(\pa \manif_0 \times \RR)$ and $s \in \RR$ be large enough. Then
  \begin{equation*}
    \In ([\rho, P]) u \seq \Phi_{-s}([\rho, P])\Phi_{s}u \seq [t - s, \In(P)]u \seq [t, \In(P)]u\,.
  \end{equation*}
  This completes the proof.
\end{proof}

We shall need also the following boundedness result. In a certain sense, it
should have been part of Theorem \ref{thm.prop.inv-calc}, but it is easier to prove now.
Also, we formulate it as a lemma, not as theorem,
since stronger results will be proved later on. Given normed spaces $X$ and $Y$, we let
$\maB(X; Y)$ denote the space of bounded, linear operators $T : X \to Y$. We let
$\maB(X) := \maB(X; X)$, as usual.

\begin{lemma}\label{lemma.bounded}
  If $P \in \iPS{m}(\manif)$ and $s \in \RR$, then the map $P : \CIc(\manif) \to \CIc(\manif)$
  extends by continuity to a map $H^{s}(\manif) \to H^{s-m}(\manif).$ Moreover, the map
  $$\iPS{m}(\manif) \to \maB(H^{s}(\manif), H^{s-m}(\manif))$$ is continuous
  for the natural topology on $\iPS{m}(\manif)$ (see Remark \ref{rem.top.op}).
\end{lemma}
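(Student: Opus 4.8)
The plan is to use the decomposition from Theorem \ref{thm.prop.inv-calc.in}(2), writing $P = s_0(\In(P)) + P_c$ with $P_c \in \Psi_{comp}^m(\manif)$, and to treat the two summands separately. First I would handle $P_c$: since its distribution kernel is compactly supported, it is supported inside $\manif_{R} \times \manif_{R}$ for some $R$, so $P_c$ factors through multiplication by a compactly supported cutoff, followed by a properly supported operator in $\Psi_{1,0}^m(\manif_R)$ on the relatively compact piece $\manif_R$ (which can be covered by finitely many coordinate charts), followed again by a cutoff. The $H^s \to H^{s-m}$ boundedness then reduces to the Euclidean statement Theorem \ref{thm.mapping.properties}(3), and one gets continuity in the symbol seminorms by the same token; by Remark \ref{rem.top.op} the quotient topology on $\iPS{-\infty}(\manif)$ (or on the compactly supported part) is controlled by those seminorms plus the kernel data, so the map $P_c \mapsto P_c \in \maB(H^s, H^{s-m})$ is continuous.

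Second, for $T' := s_0(\In(P)) = \eta\, \In(P)\, \eta$, the essential case is $\manif = \pa\manif_0 \times \RR$ with a translation-invariant, properly supported operator $T \in \iPSsus{m}{\pa\manif_0}$. Here I would use the alternative norm on $H^s(\manif)$ from \eqref{eq.equiv.norm1}–\eqref{eq.equiv.norm2}, i.e. the $\ell^2$-sum of the $H^s$-norms on the translated slabs $\pa\manif_0 \times (-R-k, -R-k+2)$. Because $T$ is translation invariant and has a fixed finite $\varepsilon_T$-neighborhood-of-the-diagonal kernel support, $T$ maps data supported in slab $k$ into data supported in slabs $k-1, k, k+1$ (say), and the ``slab-to-slab'' operator norms are all equal to a single constant $C_T$ by translation invariance. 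A Schur-test / finite-band-matrix argument on the sequence spaces then gives boundedness $H^s \to H^{s-m}$ with norm $\lesssim C_T$; and $C_T$ is itself bounded by the symbol seminorms of (a representative of) $\sigma_m(T)$ together with the kernel data of a residual term, via the quantization map $q$ of Remark \ref{rem.def.quant.map} and Proposition \ref{prop.onto.q}. This yields continuity of $T \mapsto s_0(T)$ into $\maB(H^s, H^{s-m})$ for the topology of Remark \ref{rem.top.op}. Conjugating back by the fixed cutoff $\eta$ (a bounded multiplier on every $H^s(\manif)$, since $\eta \in \CI_{\inv}(\manif)$ is bounded with all derivatives bounded, cf. Corollary \ref{cor.lemma.ue}) preserves these bounds, so $s_0(\In(P))$ is handled.

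The main obstacle I anticipate is the bookkeeping needed to make the ``single constant $C_T$, controlled continuously by the topology on $\iPS{m}$'' claim precise: one must check that the finite-band-operator-norm estimate on $\ell^2$-valued sequences depends only on finitely many slab norms of $T$, and then trace those back through $s_0$, $q$, and the direct-sum decomposition to the generating seminorms of $S_{\inv}^m(T^*\manif)$ and the kernel seminorms of $\iPS{-\infty}(\manif)$ appearing in Remark \ref{rem.top.op}. A clean way to organize this is: (i) prove the estimate first on $\CIc$ using \eqref{eq.equiv.norm1}, (ii) extend by density using completeness of $H^s, H^{s-m}$, (iii) observe that every step in (i) used only norm estimates that are, by Theorem \ref{thm.mapping.properties}(2)–(3), polynomial in finitely many symbol seminorms, hence continuous for the inductive-limit-of-Fréchet topology. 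Everything else is routine.
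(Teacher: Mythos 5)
Your proposal is correct and follows essentially the same route as the paper: the same decomposition $P = P_c + s_0(\In(P))$ from Theorem \ref{thm.prop.inv-calc.in}(2), reduction of $P_c$ to the compact/Euclidean case, and for the invariant part a reduction to $\pa\manif_0\times\RR$ followed by a slab decomposition exploiting translation invariance and finite kernel support. The paper phrases the final summation step via the $\ZZ$-invariant partition of unity $(\phi_k^2)$ and the Cotlar--Knapp--Stein almost-orthogonality lemma (Lemma 18.6.5 in \cite{Hormander3}) rather than your Schur/finite-band argument, but for a finite-band matrix of uniformly bounded blocks these are interchangeable.
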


\begin{proof}
  Let us write $P = P_c + s_0(P_i) \in \iPS{m}(\manif)$,
  where $P_c$ has compactly supported distribution kernel and $P_i := \In(P)$,
  as in Theorem \ref{thm.prop.inv-calc.in}(2).
  Then $P_c : H^{s}(\manif) \to H^{s-m}(\manif)$ is bounded
  by the mapping properties of pseudodifferential operators on compact manifolds.
  Let us prove next that $s_0(P_i) : H^{s}(\manif) \to H^{s-m}(\manif)$
  is bounded. Since $s_0 (P_i) = \eta P_i \eta$ and $\eta$ is bounded on all
  $H^{s}$ spaces, it is enough to prove that $P_i : H^{s}(\pa \manif_0 \times \RR)
  \to H^{s-m}(\pa \manif_0 \times \RR)$ is bounded.
  Let us consider a $\ZZ$-translation invariant partition of unity $(\phi_k)$, $k \in \ZZ$, as in
  Section \ref{sssec.alternative}. Then $\phi_k P \phi_j: H^{s}(\pa \manif_0 \times \RR)
  \to H^{s-m}(\pa \manif_0 \times \RR)$ is bounded,
  by the case of pseudodifferential on compact manifolds. Moreover, $\phi_k P \phi_j = 0$
  for $|j - k|$ large and the operators $\phi_k^2 P \phi_j^2$ and $\phi_{k+\ell}^2P \phi_{j+\ell}^2$
  are unitary equivalent for $\ell \in \NN$ and $k$ (equivalently $j$) large enough.
  It follows then by standard arguments (a lemma of Cotlar, Knapp, Stein,
  Calderon, Vaillancourt, Bony, and Lerner, Lemma 18.6.5 in \cite{Hormander3})
  that $P = \sum_{k, j \in \ZZ_+} \phi_k^2 P \phi_j^2$
  is bounded. The continuity for our algebras follows also from the compact case by the
  same arguments.
\end{proof}

We shall need to following ``order reduction operators'' (a stronger version
will be obtained later).

\begin{corollary}\label{cor.order.red}
  Let $s \in \RR$. There exists an elliptic operator $\Lambda_s \in \iPS{s}(\manif)$
  such that the induced map $\Lambda_s : H^s(\manif) \to L^2(\manif)$ is an isomorphism.
\end{corollary}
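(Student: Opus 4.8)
The plan is to start from the genuine order-reduction operator $G \ede (1+\Delta_{\manif})^{s/2}$ and deform it, without losing invertibility, into an operator that is translation invariant in a neighbourhood of infinity. As is standard, $G$ is a classical, elliptic pseudodifferential operator of order $s$ on the complete (bounded geometry) manifold $\manif$, with principal symbol $|\xi|^{s}$, and $G \colon H^{s}(\manif) \to L^{2}(\manif)$ is an isomorphism for every $s \in \RR$: for $s \ge 0$ this is the identification of $H^{s}(\manif)$ with the domain of $(1+\Delta)^{s/2}$ equipped with the equivalent graph norm, recalled after Definition \ref{def.Sobolev}, and for $s<0$ it follows by duality from $H^{-s}(\manif) = H^{s}(\manif)^{*}$. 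When $s$ is a nonnegative even integer there is nothing to do: $G$ is then a differential operator whose coefficients are translation invariant at infinity (the metric of $\manif$ is a product on the end, cf. Remark \ref{rem.Bochner}), so $G \in \operatorname{Diff}^{s}_{\inv}(\manif) \subset \iPS{s}(\manif)$ and $\Lambda_{s} := G$ works. For general $s$ the two facts I shall use are: (i) since the metric of $\manif$ is a product on the cylindrical end, $\Delta_{\manif}$ coincides there with the product Laplacian of $\pa\manif_{0}\times\RR$, hence $G$ and the \emph{model operator} $G_{\infty} \ede (1+\Delta_{\pa\manif_{0}\times\RR})^{s/2}$ have the same local symbol at every point of the end, so $k_{G_{\infty}} - k_{G}$ is \emph{smooth across the diagonal} over the end; and (ii) by finite propagation speed (two operators agreeing on a region have resolvents agreeing there up to an error localized near the boundary of that region), $k_{G_{\infty}} - k_{G}$ together with all its derivatives tends to $0$ uniformly on bounded neighbourhoods of the diagonal as one moves to infinity along the end.

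Next I fix cut-offs: a smooth $\chi_{N} \colon \manif \times \manif \to [0,1]$ equal to $1$ on $\{\dist_{g}(x,y) < N\}$, supported in $\{\dist_{g}(x,y) < N+1\}$, and translation invariant in a neighbourhood of infinity in the sense of Proposition \ref{prop.kernel.descr}\eqref{item.cond.two}; and a smooth $\theta_{R} \colon \manif \times \manif \to [0,1]$ equal to $1$ when both arguments lie in $\pa\manif_{0}\times(-\infty,-R-1)$, equal to $0$ unless both arguments lie in $\pa\manif_{0}\times(-\infty,-R)$, and translation invariant for $t \le -R-1$. I then let $\Lambda_{s}$ be the operator with Schwartz kernel
\[
  k_{\Lambda_{s}}(x,y) \ede \chi_{N}(x,y)\Big( \theta_{R}(x,y)\, k_{G_{\infty}}(x,y) + \big(1-\theta_{R}(x,y)\big)\, k_{G}(x,y) \Big) \,.
\]
Since $\chi_{N}\equiv 1$ near the diagonal, the kernel of $\Lambda_{s}-G$ is $\chi_{N}\theta_{R}(k_{G_{\infty}}-k_{G}) - (1-\chi_{N})k_{G}$; by (i) the first summand is smooth and the second is supported off the diagonal, so $\Lambda_{s}-G$ has a smooth kernel. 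Hence $\Lambda_{s}$ is a classical pseudodifferential operator of order $s$ with the same elliptic principal symbol $|\xi|^{s}$ as $G$. Its kernel is supported in the $(N+1)$-neighbourhood of the diagonal, and where $\theta_{R}\equiv 1$ it equals $\chi_{N}\,k_{G_{\infty}}$, which is translation invariant because both factors are. Therefore $\Lambda_{s}$ is elliptic and translation invariant in a neighbourhood of infinity, i.e. $\Lambda_{s}\in\iPS{s}(\manif)$, for every choice of $N,R$.

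Finally I choose $N$ and $R$ so that $\Lambda_{s}$ is invertible. The operator with kernel $(1-\chi_{N})k_{G}$ is supported in $\{\dist_{g}(x,y)\ge N\}$, where $k_{G}$ and all its derivatives decay rapidly, and $\manif$ has at most linear volume growth; so a routine Schur-type estimate shows this operator tends to $0$ in $\maB\big(H^{s}(\manif),L^{2}(\manif)\big)$ as $N\to\infty$. Having fixed such an $N$, the operator with kernel $\chi_{N}\theta_{R}(k_{G_{\infty}}-k_{G})$ has a smooth kernel supported in the set of bounded diameter where $\dist_{g}(x,y) < N+1$ and both points lie in $\pa\manif_{0}\times(-\infty,-R)$, on which, by (ii), $k_{G_{\infty}}-k_{G}$ and its derivatives tend to $0$ as $R\to\infty$; hence this operator too tends to $0$ in $\maB\big(H^{s}(\manif),L^{2}(\manif)\big)$ as $R\to\infty$. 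Choosing $N$ and then $R$ large enough we get $\big\|\Lambda_{s}-G\big\|_{\maB(H^{s}(\manif),\,L^{2}(\manif))} < \big\|G^{-1}\big\|_{\maB(L^{2}(\manif),\,H^{s}(\manif))}^{-1}$, so $\Lambda_{s}=G\big(1+G^{-1}(\Lambda_{s}-G)\big)$ with $\big\|G^{-1}(\Lambda_{s}-G)\big\|_{\maB(H^{s}(\manif))}<1$, and a Neumann series shows $\Lambda_{s}\colon H^{s}(\manif)\to L^{2}(\manif)$ is an isomorphism. The main obstacle is establishing the structural facts (i) and (ii) — both consequences of $\Delta_{\manif}=\Delta_{\pa\manif_{0}\times\RR}$ over the end together with finite propagation speed — and carrying out the (routine) Schur-type estimates using the bounded geometry of $\manif$; the boundedness input $\iPS{s}(\manif)\to\maB(H^{s},L^{2})$ is Lemma \ref{lemma.bounded}.
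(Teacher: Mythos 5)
Your proof is correct in outline, but it takes a genuinely different route from the paper's. The paper never touches the functional calculus: it sets $\Lambda_s := q(a_{s,t})$ with $a_{s,t}(\xi) = (t^2+\|\xi\|^2)^{s/2}$, where $q$ is the quantization map of Remark \ref{rem.def.quant.map} (which lands in $\iPS{s}(\manif)$ automatically because the symbol and the exponential map are translation invariant at infinity), and then gets invertibility by showing $q(a_{s,t})q(a_{-s,t}) - 1 \to 0$ as the parameter $t \to \infty$, using only the quantitative product estimate of Theorem \ref{thm.mapping.properties}(2) and Lemma \ref{lemma.bounded}. You instead start from the spectrally defined isomorphism $G = (1+\Delta)^{s/2}$ and perform a kernel surgery near infinity, replacing $k_G$ by the model kernel $k_{G_\infty}$ and controlling the perturbation by a Neumann series. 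Your route buys the isomorphism property ``for free'' at the outset, but it pays for it with two substantial external inputs that the paper's soft symbolic argument deliberately avoids: (a) that $(1+\Delta)^{s/2}$ on a bounded-geometry manifold is a classical pseudodifferential operator of order $s$ whose full symbol over the end is locally determined (a Seeley-type theorem, nowhere established in the paper), and (b) the Cheeger--Gromov--Taylor finite-propagation-speed estimates behind your claims (i) and (ii). Both are true and standard, so I do not regard this as a gap, but you should cite them rather than call them consequences of ``finite propagation speed'' alone; also note that your Schur-type estimate directly controls only the $L^2 \to L^2$ norm, and passing to the $H^s \to L^2$ norm for $s \neq 0$ needs an extra word (kernel derivative estimates or duality), since invoking order reductions there would be circular. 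Both proofs are valid; the paper's is shorter and entirely self-contained given the tools already developed, while yours produces a more explicit operator tied to the Laplacian.
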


\begin{proof}
  Let $a_{s,t}(\xi) := (t^2 + \|\xi\|^2)^{s/2}$, where $t > 0$ is a parameter and
  $\xi \in T^*M$.
  We have that $a_{s,t} \in S^{s}(T^*\manif)$ and $a_{s,t}a_{-s,t} = 1$. Then
  \begin{align*}
    q(a_{s,t})q(a_{-s,t}) - 1 \to 0 \ \mbox{ for } t \to \infty
  \end{align*}
  in the natural topology on $\iPS{0}(\manif)$ given by Remark \ref{rem.top.op}
  by the standard formula for the product of pseudodifferential operators in local
  coordinates, see Theorem \ref{thm.mapping.properties}(2).
  See \cite{SchroheFrechet} for some similar results. Hence $q(a_{s,t})q(a_{-s,t})$ will
  be invertible on each $H^m(\manif)$, as long as $t$ is large enough. We use this for $m = 0$ and $m = s$.
  Let $\Lambda_s := q(a_{s,t})$. Then $\Lambda_s : H^s(\manif) \to L^2(\manif)$ is continuous,
  by Lemma \ref{lemma.bounded}. The same argument gives that $Q := q(a_{-s,t})$
  maps $L^2(\manif)$ to $H^s(\manif)$ continuously. Since $\Lambda_s Q$ is invertible
  on $L^2(\manif)$ (for $t$ large), it follows that $\Lambda_s$ is surjective. Similarly (for $t$ large),
  we may assume that $Q\Lambda_s$ is invertible on $H^s(\manif)$. If follows that $\Lambda_s$ is
  also injective. Thus we can choose $\Lambda_s := q(a_{s,t})$, for $t$ large.
\end{proof}

We shall use the function $\rho$ of Proposition \ref{prop.comm.rho},
thus $$\rho \in \CI(\manif)\,,\ \rho < 0 \mbox{ everywhere, and }\ \rho(x, t)
\seq t\ \mbox{ for }\ t \le -1\,.$$

\begin{corollary}\label{cor.weighted.con}
  Let $\rho$ be as above,
  and $m, s \in \RR$. Let $k \in \ZZ_+$ and $\rho^k H^s(\manif)$
  be the space of those functions $u$ such that $\rho^{-k}u \in H^s(\manif)$.
  On $\rho^k H^s(\manif)$ we consider the induced topology. Then every $P \in \iPS{m}(\manif)$
  defines a continuous map
  \begin{equation*}
    P : \rho^k H^s(\manif) \to \rho^k H^{s-m}(\manif)\,.
  \end{equation*}
\end{corollary}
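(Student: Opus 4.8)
The plan is to conjugate $P$ by the weight $\rho^k$ and reduce to the already established unweighted boundedness, Lemma \ref{lemma.bounded}. By definition the norm on $\rho^k H^s(\manif)$ is $\|u\|_{\rho^k H^s} := \|\rho^{-k}u\|_{H^s(\manif)}$, so multiplication by $\rho^k$ is an isometric isomorphism $H^s(\manif) \to \rho^k H^s(\manif)$, and the claim is equivalent to the statement that $\rho^{-k}P\rho^k$ extends to a bounded operator $H^s(\manif) \to H^{s-m}(\manif)$ for every $m,s \in \RR$. Since $\CIc(\manif) = \rho^k\,\CIc(\manif)$ is dense in $\rho^k H^s(\manif)$ and $P u = \rho^k\big((\rho^{-k}P\rho^k)(\rho^{-k}u)\big)$ for $u \in \CIc(\manif)$, the asserted continuous map $P : \rho^k H^s(\manif) \to \rho^k H^{s-m}(\manif)$ is then obtained by conjugating back and extending by continuity.

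I would prove the boundedness of $\rho^{-k}P\rho^k$ by induction on $k$, with $m,s$ and $P \in \iPS{m}(\manif)$ universally quantified; the base case $k=0$ is precisely Lemma \ref{lemma.bounded}. For the step, put $Q := [P,\rho] = P\rho - \rho P$, which lies in $\iPS{m}(\manif)$ --- crucially with the \emph{same} order $m$, since $\rho$ is unbounded --- by Proposition \ref{prop.comm.rho}(3). The telescoping identity $[P,\rho^k] = \sum_{i=0}^{k-1}\rho^{k-1-i}[P,\rho]\rho^i$ gives
\begin{equation*}
  \rho^{-k}P\rho^k \seq P + \rho^{-k}[P,\rho^k] \seq P + \sum_{i=0}^{k-1}\rho^{-1}\big(\rho^{-i}Q\rho^i\big)\,.
\end{equation*}
Every exponent $i$ occurring here satisfies $i \le k-1 < k$, so the induction hypothesis applied to $Q$ makes each $\rho^{-i}Q\rho^i$ bounded $H^s(\manif) \to H^{s-m}(\manif)$; together with Lemma \ref{lemma.bounded} applied to $P$ itself, this handles all terms on the right once we know that multiplication by $\rho^{-1}$ is bounded on $H^r(\manif)$ for every $r \in \RR$.

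That last fact is the one genuinely analytic ingredient, and it is where the induction ``absorbs'' the weight one power at a time: $\rho^{-1}$ is smooth and bounded together with all its covariant derivatives (on the cylindrical end $\rho^{-1} = t^{-1}$ with $|t| \ge 1$; on the compact complement of $\pa\manif_0 \times (-\infty,-1)$ it is smooth and bounded away from $0$), hence $\rho^{-1} \in \CIb$, and since $\manif$ has bounded geometry (compact ends) such functions multiply all covariant Sobolev spaces boundedly --- for $r \ge 0$ by the Leibniz rule and the covariant definition of the norms, and for general $r$ by duality and interpolation (cf. \cite{AGN1, GrosseSchneider}). This completes the induction and hence the proof. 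The main obstacle is thus not the formal commutator bookkeeping but making sure the leftover $\rho^{-1}$ factor is a bounded multiplier on Sobolev spaces of \emph{all} (including negative and non-integer) orders; everything else is routine.
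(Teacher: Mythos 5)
Your proposal is correct and follows essentially the same route as the paper's own proof: induction on $k$, reduction to the unweighted case via Lemma \ref{lemma.bounded}, the commutator $[P,\rho]\in\iPS{m}(\manif)$ from Proposition \ref{prop.comm.rho}, and the key analytic fact that $\rho^{-1}$ is bounded together with all covariant derivatives and hence a bounded multiplier on every $H^r(\manif)$. The only cosmetic difference is that you telescope $[P,\rho^k]$ in one step whereas the paper peels off one power of $\rho$ per inductive step.
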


\begin{proof}
  The result is known for $k = 0$, by Lemma \ref{lemma.bounded}. Let us
  prove it by induction on $k \in \ZZ_+$. Let us assume the result is true for some
  $k \in \ZZ_+$ and prove it for $k + 1$. We know that $[\rho, P] \in
  \iPS{m-1}(\manif)$. Then
  \begin{align*}
    \| \rho^{-k} P(\rho^k u) \|_{H^{s-m}}
    %
    & \leq \| \rho^{-k+1} P (\rho^{k-1} u) \|_{H^{s-m}}
    + \| \rho^{-1} \rho^{-k+1}[P, \rho] (\rho^{k-1} u) \|_{H^{s-m}}\\
    & \leq C \| u \|_{H^{s}}
    + C \| \rho^{-k+1}[P, \rho] (\rho^{k-1} u) \|_{H^{s-m}}\\
    & \seq C \| u \|_{H^{s-m}} \,,
  \end{align*}
  where $C> 0$ is a generic constant (different at each occurrence)
  and in the last two steps we have used the induction properties
  and the fact that $\rho^{-1} \in W^{\infty, \infty}(\manif)$.
\end{proof}

We stress that the weight used in the last lemma is not the one typically used
in the $b$-calculus (where the weights $e^{\lambda \rho}$ are used
instead).
We now spell out some simple consequences of the results in this subsection, for later use.
First, by replacing $\manif$ with $\link \times \RR$ in Lemma \ref{lemma.bounded}, we obtain

\begin{corollary} \label{cor.cont1}
  Let $P \in \iPSsus{m}{\link} := \iPS{m}(\link \times \RR)^{\RR}$.
  Then $$P: H^s(\link \times \RR) \to H^{s-m}(\link \times \RR)$$
  is well-defined and continuous for all $s \in \RR$.
  Moreover, the map $\iPSsus{m}{\link} \to \maB(H^s(\link \times \RR); H^{s-m}(\link \times \RR))$
  is continuous.
\end{corollary}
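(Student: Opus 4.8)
The plan is to deduce this directly from Lemma \ref{lemma.bounded} by substituting $\link \times \RR$ for $\manif$. First I would record that $\link \times \RR$ is itself a manifold with straight cylindrical ends and compact ends: it has two cylindrical ends (say $\link \times (-\infty, -1]$ and $\link \times [1, \infty)$, with $\manif_0 = \link \times [-1,1]$ compact), and all the constructions of Section \ref{sec.background} and, in particular, the proof of Lemma \ref{lemma.bounded}, go through verbatim in this two-ended situation once one replaces the $\NN$-indexed translation-invariant partition of unity by the $\ZZ$-indexed one $(\phi_k^2)_{k \in \ZZ}$ of Section \ref{sssec.alternative} (this was already anticipated there). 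Since $\iPSsus{m}{\link} = \iPS{m}(\link \times \RR)^{\RR}$ is a subspace of $\iPS{m}(\link \times \RR)$, applying Lemma \ref{lemma.bounded} to $\link \times \RR$ and restricting yields both the boundedness of $P : H^s(\link \times \RR) \to H^{s-m}(\link \times \RR)$ for every $s \in \RR$ and the continuity of $\iPSsus{m}{\link} \to \maB(H^s(\link \times \RR); H^{s-m}(\link \times \RR))$.

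For completeness I would also spell out the short self-contained argument, which is exactly the translation invariant part of the proof of Lemma \ref{lemma.bounded}. Given $P \in \iPSsus{m}{\link}$, write $P = \sum_{j, k \in \ZZ} \phi_k^2 P \phi_j^2$. Each summand $\phi_k^2 P \phi_j^2$ has compactly supported distribution kernel, hence is a (compactly supported) pseudodifferential operator of order $m$, and is therefore bounded $H^s(\link \times \RR) \to H^{s-m}(\link \times \RR)$ by the mapping properties of pseudodifferential operators on compact manifolds, with operator norm controlled by finitely many of the symbol seminorms of $P$. Proper support of $P$ forces $\phi_k P \phi_j = 0$ once $|j - k|$ exceeds some $N$ depending only on $\varepsilon_P$, and the translation invariance of $P$ makes $\phi_{k+\ell}^2 P \phi_{j+\ell}^2$ unitarily equivalent to $\phi_k^2 P \phi_j^2$ via $\Phi_\ell$; together these give a bound $\|\phi_k^2 P \phi_j^2\|_{H^s \to H^{s-m}} \le C$ that is uniform in $j, k$ and depends continuously on $P$ through symbol seminorms. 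The lemma of Cotlar, Knapp, Stein, Calder\'on, Vaillancourt, Bony, and Lerner (Lemma 18.6.5 in \cite{Hormander3}) then gives that $P = \sum_{j, k} \phi_k^2 P \phi_j^2$ is bounded with norm $\le C' N C$, and the dependence of the right-hand side on $C$ yields the asserted continuity of the map into $\maB(H^s; H^{s-m})$.

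I do not expect a genuine obstacle here. The only point needing a word is that $\link \times \RR$ has two cylindrical ends rather than one, which is exactly the situation handled by the $\ZZ$-indexed translation-invariant partition of unity already introduced in Section \ref{sssec.alternative}; everything else in the proof of Lemma \ref{lemma.bounded} is insensitive to the number of ends. Finally, the ``well-defined'' part of the statement is automatic: $\CIc(\link \times \RR)$ is dense in each $H^s(\link \times \RR)$, so the extension of $P$ to $H^s(\link \times \RR)$ exists and is unique.
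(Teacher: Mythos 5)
Your proposal is correct and is essentially the paper's own argument: the corollary is obtained there precisely by replacing $\manif$ with $\link \times \RR$ in Lemma \ref{lemma.bounded}, whose proof already invokes the $\ZZ$-indexed translation-invariant partition of unity from Section \ref{sssec.alternative} and the Cotlar--Stein type lemma for the invariant part. Your additional unpacking of that argument (uniform bounds via unitary equivalence under $\Phi_\ell$ and vanishing of $\phi_k P \phi_j$ for $|j-k|$ large) is accurate and consistent with the paper.
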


By introducing an extra factor, we see that the action on the first
component yields a continuous action on the product:

\begin{corollary} \label{cor.contProd}
  Let $M$ be a Riemannian manifold.
  Then $\iPS{0}(\manif)$ acts continuously on the product
  $L^2(\manif \times M)$.
\end{corollary}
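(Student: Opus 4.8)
The plan is to realize $L^{2}(\manif \times M)$ as a Hilbert tensor product of $L^{2}(\manif)$ and $L^{2}(M)$ and to let $P \in \iPS{0}(\manif)$ act in the $\manif$-variable, i.e.\ as $P \otimes 1$, thereby reducing everything to the boundedness of $P$ on $L^{2}(\manif)$ already proved in Lemma \ref{lemma.bounded} (applied with $m = 0$ and $s = 0$, so that $H^{0}(\manif) = L^{2}(\manif)$). First I would introduce the dense linear subspace $\maE \subset L^{2}(\manif \times M)$ spanned by the functions $(x, z) \mapsto u(x) v(z)$ with $u \in \CIc(\manif)$ and $v \in \CIc(M)$; this is dense because $\CIc(\manif) \otimes \CIc(M)$ is dense in $\CIc(\manif \times M)$ and the latter is dense in $L^{2}(\manif \times M)$. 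On $\maE$ I define $P \otimes 1$ by $(P\otimes 1)(u \otimes v) := (Pu) \otimes v$, extended linearly; equivalently, identifying $f \in \maE$ with the $L^{2}(\manif)$-valued function $z \mapsto f(\cdot, z)$, one has $\big((P \otimes 1)f\big)(\cdot, z) = P\big(f(\cdot, z)\big)$.

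The key step is the norm estimate on $\maE$. For $f \in \maE$, $P\big(f(\cdot,z)\big)$ is a finite sum $\sum_i (Pu_i)\,v_i(z)$ with $Pu_i \in \CIc(\manif)$, so $z \mapsto \|P(f(\cdot,z))\|_{L^{2}(\manif)}$ is manifestly measurable, and Fubini's theorem together with Lemma \ref{lemma.bounded} gives
\begin{equation*}
  \|(P \otimes 1)f\|_{L^{2}(\manif \times M)}^{2}
  \seq \int_{M} \big\| P\big(f(\cdot, z)\big)\big\|_{L^{2}(\manif)}^{2} \dvol(z)
  \le \|P\|_{\maB(L^{2}(\manif))}^{2} \int_{M} \big\|f(\cdot, z)\big\|_{L^{2}(\manif)}^{2} \dvol(z)\,,
\end{equation*}
and the last integral is $\|f\|_{L^{2}(\manif \times M)}^{2}$. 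Since $\maE$ is dense, $P \otimes 1$ extends uniquely to a bounded operator on $L^{2}(\manif \times M)$ with operator norm at most $\|P\|_{\maB(L^{2}(\manif))}$; by construction this extension is the action of $P$ on the first component, which is what ``$\iPS{0}(\manif)$ acts on $L^{2}(\manif \times M)$'' means.

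Finally, for the continuity in $P$ I would observe that the assignment $P \mapsto P \otimes 1$ factors as
$\iPS{0}(\manif) \longrightarrow \maB(L^{2}(\manif)) \longrightarrow \maB(L^{2}(\manif \times M))$,
where the first map is continuous by Lemma \ref{lemma.bounded} and the second, $A \mapsto A \otimes 1$, is linear and norm-nonincreasing by the estimate above, hence continuous; composing yields the claim. I do not anticipate a genuine obstacle: the only point needing a little care is the reduction to the dense subspace $\maE$, which makes the slicewise application of $P$ and the measurability statements unambiguous, and this is entirely routine.
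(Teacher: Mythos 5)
Your proposal is correct and follows exactly the paper's route: the paper also identifies $L^{2}(\manif \times M)$ with the Hilbert tensor product $L^{2}(\manif)\,\overline{\otimes}\,L^{2}(M)$ and lets $P$ act as $P \overline{\otimes} 1$, invoking Lemma \ref{lemma.bounded} for the boundedness of $P$ on $L^{2}(\manif)$. You merely spell out the standard density and Fubini details that the paper leaves implicit.
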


\begin{proof}
  We know that $P$ acts continuously on $L^2(\manif)$.
  The corollary then follows from the fact that $L^2(\manif \times M) \simeq
  L^2(\manif) \overline{\otimes} L^2(M)$, where $\overline{\otimes}$
  is the Hilbert space tensor product and the action of $P$ on the
  tensor product is $P \overline{\otimes} 1$ with respect to this identification.
\end{proof}

We also have the following characterization of Sobolev spaces.

\begin{corollary} \label{cor.char.Sobolev}
  Let $u \in H^{t}(\manif)$, for some $t \in \RR$.
  We have $u \in H^s(\manif)$ if, and only if, $Pu \in L^2(\manif)$ for
  all $P \in \iPS{s}(\manif)$.
\end{corollary}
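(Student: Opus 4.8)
The plan is to prove both implications separately, using the order-reduction operators of Corollary \ref{cor.order.red} together with the mapping and algebra properties of the $\inv$-calculus.

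First I would handle the easy direction. Suppose $u \in H^s(\manif)$ and let $P \in \iPS{s}(\manif)$. By Lemma \ref{lemma.bounded} (applied with the exponent $s$ there playing the role of our $s$ and $m = s$), the operator $P$ extends to a continuous map $H^s(\manif) \to H^{s-s}(\manif) = L^2(\manif)$, so $Pu \in L^2(\manif)$. This uses nothing beyond the boundedness already established.

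For the converse, assume $u \in H^t(\manif)$ and $Pu \in L^2(\manif)$ for every $P \in \iPS{s}(\manif)$. The idea is to test against a cleverly chosen $P$: take $P := \Lambda_s$, the elliptic order-reduction operator of Corollary \ref{cor.order.red}, which is elliptic in $\iPS{s}(\manif)$ and induces an isomorphism $H^s(\manif) \to L^2(\manif)$. By ellipticity, $\Lambda_s$ is invertible modulo $\iPS{-\infty}(\manif)$: using the surjectivity and multiplicativity of the principal symbol (Theorem \ref{thm.prop.inv-calc}(2)) together with the spectral/ellipticity machinery — concretely, by Remark \ref{rem.ell.imp.ue} the symbol inverse $\sigma_s(\Lambda_s)^{-1}$ lies in $S_{\inv}^{-s}(T^*\manif)/S_{\inv}^{-s-1}(T^*\manif)$, so one can iterate the usual parametrix construction inside $\iPS{}(\manif)$ (using asymptotic completeness, Remark \ref{rem.asympt.compl}) to produce $Q \in \iPS{-s}(\manif)$ with $Q\Lambda_s = 1 + R$, $R \in \iPS{-\infty}(\manif)$. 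Then from $\Lambda_s u \in L^2(\manif)$ and the hypothesis applied also to $R \in \iPS{s}(\manif)$ (note $\iPS{-\infty}(\manif) \subset \iPS{s}(\manif)$, so $Ru \in L^2$), and from $Q : L^2(\manif) \to H^s(\manif)$ continuous (Lemma \ref{lemma.bounded}), we get
\begin{equation*}
  u \seq Q\Lambda_s u - R u \, \in \, H^s(\manif)\,.
\end{equation*}
Here the hypothesis $u \in H^t(\manif)$ guarantees $u$ is a genuine distribution on which all these operators act unambiguously.

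The main obstacle is making the parametrix step rigorous \emph{within} the $\inv$-calculus rather than merely in $\Psi_{1,0}^{\cdot}(\manif)$: one must check that the iterated Neumann-type series defining $Q$ stays translation invariant at infinity, which is where Remark \ref{rem.ell.imp.ue} (uniform ellipticity, hence a symbol inverse of the right order in $S_{\inv}^{-s}$) and the asymptotic-completeness statement for $S_{\inv}^\bullet$ (Remark \ref{rem.asympt.compl}, which requires a \emph{common} $R$ for all the correction terms — automatic here since they all arise from the single operator $\Lambda_s$) are exactly what is needed. A minor alternative, avoiding even this, is to invoke directly that $\Lambda_s : H^s(\manif) \to L^2(\manif)$ is an isomorphism: then $\Lambda_s u \in L^2$ forces $u = \Lambda_s^{-1}(\Lambda_s u) \in H^s(\manif)$ provided one first knows $\Lambda_s u$ computed distributionally coincides with the Hilbert-space value, which follows because $u \in H^t$ and $\Lambda_s$ is properly supported and continuous on the relevant scale. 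I would present the parametrix argument as the main line and mention the isomorphism shortcut as a remark.
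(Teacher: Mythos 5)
Your two directions are set up correctly and the overall strategy — test against the order-reduction operator $\Lambda_s$ of Corollary \ref{cor.order.red} and then invert — is essentially the paper's. The paper is slightly more economical: it takes $P=\Lambda_s$ and $Q$ exactly as in the proof of Corollary \ref{cor.order.red}, where $QP-1$ already has small norm (no parametrix needed), notes that $QP$ is therefore an isomorphism of $H^{s}(\manif)$, and concludes from $QPu=Q(Pu)\in H^s(\manif)$ that $u=(QP)^{-1}QPu\in H^s(\manif)$. Your main line instead builds a genuine parametrix $Q\Lambda_s=1+R$ with $R\in\iPS{-\infty}(\manif)$ via the symbol calculus and Remark \ref{rem.asympt.compl}; that is a legitimate alternative and your attention to keeping the Neumann iteration inside the $\inv$-calculus is well placed.

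There is, however, one step that does not work as written. From the hypothesis applied to $R\in\iPS{-\infty}(\manif)\subset\iPS{s}(\manif)$ you conclude only $Ru\in L^2(\manif)$, and then assert $u=Q\Lambda_s u-Ru\in H^s(\manif)$. When $s>0$ this is insufficient: $L^2(\manif)\not\subset H^s(\manif)$, so knowing $Q\Lambda_s u\in H^s(\manif)$ and $Ru\in L^2(\manif)$ only places $u$ in $L^2(\manif)$. The repair is immediate and is precisely where the a priori assumption $u\in H^{t}(\manif)$ enters: since $R\in\iPS{-\infty}(\manif)\subset\iPS{t-s}(\manif)$, Lemma \ref{lemma.bounded} gives that $R:H^{t}(\manif)\to H^{s}(\manif)$ is bounded, hence $Ru\in H^s(\manif)$ directly, and the identity $u=Q\Lambda_s u-Ru$ then yields $u\in H^s(\manif)$. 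With that correction your argument is complete; your closing remark about the isomorphism shortcut (and the need to identify the distributional and Hilbert-space actions of $\Lambda_s$ on $H^t(\manif)$) matches the implicit content of the paper's proof.
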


\begin{proof} If $u \in H^s(\manif)$, then $Pu \in L^2(\manif)$ for
  all $P \in \iPS{s}(\manif)$ by Lemma \ref{lemma.bounded}. Let $P$
  be as in Corollary \ref{cor.order.red} and $Q$ as in its proof.
  By the same proof, we may assume $QP$ to be an isomorphism of $H^{s}(\manif)$.
  Let us assume $P u \in L^2(\manif)$. Then $QP u \in H^s(\manif)$
  and hence $u \in  H^s(\manif)$, because we have chosen $P$ and $Q$
  such that $QP$ is an isomorphism of $H^{s}(\manif)$ and $u = (QP)^{-1}QPu$.
\end{proof}

An important further property of the $\inv$-calculus is the characterization of the
Fredholm operators in $\iPS{m}(\manif)$. Of course, this can be obtained from the
corresponding result in the (bigger) $b$-calculus. In this paper, however, we shall
obtain this result in the (also bigger) class of
``essentially translation invariant in a neighborhood of infinity operators,'' which we
discuss in the next section (see Theorem \ref{thm.ADN.Fredholm}).

\section{Essentially translation invariant operators}
\label{ess-trans-inv}

The algebra $\iPS {\infty }(\manif)$ is \emph{not stable under inversion} in the sense that
if $T\in \iPS {\infty }(\manif)$ is an invertible operator on the space $L^2(\manif)$, then its
inverse does not necessarily belong to $\iPS {\infty }(\manif)$. As in Mitrea and Nistor
\cite[Section 2.1]{Mitrea-Nistor}, we now introduce a suitable enlargement
$\ePS {\infty} (\manif)$ of $\iPS {\infty} (\manif)$ that will be stable under
the inversion of elliptic operators of non-negative order. The operators in $\ePS {\infty} (\manif)$
are called \emph{essentially translation invariant in a neighborhood of infinity}
(or \emph{essentially translation invariant at infinity} or, simply,
\emph{essentially translation invariant}; they were called  \emph{almost invariant} in
\cite{Mitrea-Nistor}).

Recall that $\manif = \manif_0 \cup (\pa \manif_0 \times (-\infty, 0])$ is a manifold
with \emph{straight, compact cylindrical ends.}

\subsection{Definition of essentially translation invariant operators}
Let us first look at the distribution kernels of the operators in $\iPS{-\infty}(\manif)$.
Recall the discussion in Remark \ref{rem.boxtimes}.
Recall that a continuous linear map $P : \CIc(\manif) \to \CIc(\manif)'$
is in $\Psi^{-\infty}(\manif)$ if, and only if, its distribution kernel $k_P$ is smooth.
In particular,

\begin{remark}\label{rem.comp.kernel}
  Recall that $\Psi_{comp}^m(\manif)$ denotes the space of pseudodifferential operators on
  $\manif$ with compactly supported distribution kernel.
  We then have $P \in \Psi_{comp}^{-\infty}(\manif)$ if, and only if, $k_P \in \CIc(\manif \times \manif)$.
\end{remark}

Recall that $\link$ denotes a closed Riemannian manifold.
Also, similarly to Proposition \ref{prop.kernel.descr}, we have the following proposition.

\begin{proposition}\label{prop.kernel.descr3}
  Let $\link$ be a closed Riemannian manifold, as usual.
  A continuous, linear operator $T : \CIc(\link \times \RR) \to \CIc(\link \times \RR)'$ is in
  $\Psi^{-\infty}(\link \times \RR)^\RR$ if, and only if, there exists
  $C_T \in \CI(\link \times \link \times \RR)$
  such that
  \begin{equation*}
    k_T(x, t, y, s) \seq C_T(x, y, t-s) \in \CC \,.
  \end{equation*}
  The kernel $C_T$ is called the \emph{convolution kernel} of $T$.
  We have $T \in \iPS{-\infty}(\link \times \RR)^\RR$ if, and only if,
  $C_T \in \CIc(\link \times \link \times \RR)$.
\end{proposition}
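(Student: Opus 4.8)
The plan is to reduce everything to two facts recalled just above the proposition: an operator lies in $\Psi^{-\infty}(N)$ exactly when its Schwartz kernel is a smooth function, and (by the Schwartz kernel theorem, Theorem~\ref{thm.Schwartz.k}, and its converse) smooth functions on $N\times N$ are exactly the kernels of continuous linear maps $\CIc(N)\to\CI(N)\subset\CIc(N)'$. The only genuine point is to translate ``full translation invariance'' into a statement about the kernel. Write $\Phi_\lambda$ for the isometry $(x,t)\mapsto(x,t-\lambda)$ of $\link\times\RR$ ($\lambda\in\RR$), so that $\Phi_\lambda(u)=u\circ\Phi_\lambda$ and $\Phi_\lambda$ preserves $\dvol$. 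A single change of variables in $(Tu)(x,t)=\int_{\link\times\RR}k_T(x,t,y,s)\,u(y,s)\dvol(y)\,ds$ shows that $T\Phi_\lambda=\Phi_\lambda T$ for every $\lambda$ is equivalent to
\[
  k_T(x,t+\lambda,y,s+\lambda)\seq k_T(x,t,y,s)\qquad\text{for all }\lambda\in\RR,
\]
i.e.\ (taking $\lambda=-s$) to $k_T(x,t,y,s)=C_T(x,y,t-s)$ with $C_T(x,y,\tau):=k_T(x,\tau,y,0)$.

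The first equivalence then follows immediately. If $T\in\Psi^{-\infty}(\link\times\RR)^\RR$, then $k_T$ is smooth and $\RR$-invariant, hence of the above form, and $C_T$ — being $k_T$ precomposed with the smooth map $(x,y,\tau)\mapsto(x,\tau,y,0)$ — lies in $\CI(\link\times\link\times\RR)$. Conversely, if $C_T\in\CI(\link\times\link\times\RR)$, then $K(x,t,y,s):=C_T(x,y,t-s)$ is smooth on $(\link\times\RR)^2$, hence is the Schwartz kernel of a continuous operator $T:\CIc(\link\times\RR)\to\CIc(\link\times\RR)'$; since $k_T=K$ is smooth, $T\in\Psi^{-\infty}(\link\times\RR)$, and by the displayed identity $T$ commutes with every $\Phi_\lambda$, i.e.\ $T\in\Psi^{-\infty}(\link\times\RR)^\RR$.

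For the last assertion, recall from Definition~\ref{def.translation-invariant} that, for $T\in\Psi^{-\infty}(\link\times\RR)^\RR$, lying in $\iPS{-\infty}(\link\times\RR)^\RR$ adds exactly the requirement that $k_T$ be supported in some $\varepsilon$-neighborhood of the diagonal. On the product metric one has $\dist_g\big((x,t),(y,s)\big)^2=\dist_{\link}(x,y)^2+(t-s)^2$ (with $\dist_{\link}$ the distance on $\link$), so the condition reads $C_T(x,y,\tau)=0$ whenever $\dist_{\link}(x,y)^2+\tau^2\ge\varepsilon^2$. In particular $C_T$ vanishes for $|\tau|\ge\varepsilon$, and since $\link$ is closed, $\supp C_T$ is then a closed subset of the compact set $\link\times\link\times[-\varepsilon,\varepsilon]$, so $C_T\in\CIc(\link\times\link\times\RR)$. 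Conversely, if $C_T\in\CIc(\link\times\link\times\RR)$, choose $N$ with $\supp C_T\subset\link\times\link\times[-N,N]$ and put $\varepsilon^2:=N^2+\operatorname{diam}(\link)^2+1$; then $|\tau|\le N$ and $\dist_{\link}(x,y)^2+\tau^2\ge\varepsilon^2$ cannot hold simultaneously, so $C_T$ vanishes on $\{\dist_{\link}(x,y)^2+\tau^2\ge\varepsilon^2\}$, i.e.\ $k_T$ is supported in the $\varepsilon$-neighborhood of the diagonal. The argument is routine; the only spots requiring (mild) care are the sign bookkeeping in the first paragraph and the use of compactness of $\link$ in the last one — there is no real obstacle.
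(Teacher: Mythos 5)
Your proof is correct. The paper states Proposition \ref{prop.kernel.descr3} without giving a proof (it is offered as an analogue of Proposition \ref{prop.kernel.descr}), and your argument supplies exactly the intended one: the smooth-kernel characterization of $\Psi^{-\infty}$, the change-of-variables computation identifying $\RR$-equivariance of $T$ with $k_T(x,t,y,s)=C_T(x,y,t-s)$, and the compactness of $\link$ (with the product metric) to convert the support-near-the-diagonal condition of Definition \ref{def.translation-invariant} into compact support of $C_T$ in the $\tau$-variable. The sign bookkeeping and the observation that global $\RR$-invariance makes condition (2) of that definition automatic are both handled correctly.
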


The main point of the last proposition is that the \emph{convolution} kernel $C_T$
of $T$ is compactly supported, unlike its \emph{distribution} kernel $k_T$.
Explicitly, we have
\begin{equation}\label{eq.conv.op}
  Tu(x, t) \seq \int_{\link \times \RR} C_T(x, y, t-s) u(y, s) \dvol_{\link}(y) ds\,.
\end{equation}
One of the main points of the upcoming definition of $\ePS{m}(\manif)$ is that
this formula (Equation \ref{eq.conv.op})
makes sense also for \emph{Schwartz functions,} whose standard definition
we recall next using a function $\rho$ as in Proposition \ref{prop.comm.rho}:

\begin{definition}\label{def.Schwartz}
  Let $0 > \rho \in \CI(\manif)$, $\rho(x, t) = t$ for $t \le -1$.

  \begin{enumerate}
    \item We let $\maS(\manif \times \manif)$ be the space of functions
    $u : \manif \times \manif \to \CC$ such that
    \begin{equation*}
      \int_{\manif^2} |\rho(x)^i \rho(y)^j \nabla_x^k  \nabla_y^l u(x, y)|^2
      \dvol_{\manif^2}(x, y) < \infty\,.
    \end{equation*}
    \item Similarly, let $\maS(\link \times \link \times \RR)$ be the space of functions
    $u : \link \times \link \times \RR \to \CC$ such that
    \begin{equation*}
      \int_{\link^2 \times \RR} |t^i \pa_t^j\nabla_x^k \nabla_y^l  u(x, y, t)|^2
      \dvol_{\link^2 \times \RR}(x, y, t) < \infty\,.
    \end{equation*}
  \end{enumerate}
  In these two formulas, $i, j, k, l \in \ZZ_+$ are arbitrary.
\end{definition}

\begin{remark}\label{rem.SchwartzSobolev}
  Of course, to obtain the Schwartz spaces of Definition \ref{def.Schwartz}, we could have
  used the $L^\infty$-norm (or any other $L^p$-norm, $1 \le p \le \infty$) instead of
  the $L^2$-norm appearing in the definition. In particular, we have
  \begin{equation*}
    \begin{gathered}
      \maS(\manif \times \manif) \seq \bigcap_{k \in \ZZ} \, \rho(x)^{-k} \rho(y)^{-k}
      W^{\infty, p} (\manif \times \manif ) \ \mbox{ and}\\
      \maS(\link \times \link \times \RR) \seq \bigcap_{k \in \ZZ} \rho(t)^{-k}\,
      W^{\infty, p} (\link \times \link \times \RR)\,.
    \end{gathered}
  \end{equation*}
\end{remark}

\begin{definition} \label{def.ess.zero}
  Let $\link$ be a closed Riemannian manifold.
  \begin{enumerate}
    \item We let
    $$\ePSsus{-\infty} {\link} \subset \Psi^{-\infty}(\link \times \RR)^{\RR}$$ be the set of
    linear operators $T \in \Psi^{-\infty}(\link \times \RR)^{\RR}$
    whose convolution kernel satisfies $C_T \in \maS(\link \times \link \times \RR)$
    (see Proposition \ref{prop.kernel.descr3} and Equation \ref{eq.conv.op}).

    \item Then we let
    $\ePSsus{m} {\link} \ede \iPSsus{m} {\link} + \ePSsus{-\infty} {\link}
    \subset \Psi^m(\link \times \RR)^{\RR}$ (see \ref{eq.def.suspended}).
  \end{enumerate}
\end{definition}

\begin{remark}\label{rem.incl.one}
  Definition \ref{def.ess.zero} gives right away that
  \begin{equation*}
    \iPSsus{-\infty} {\link} \ede \iPS{-\infty}(\link \times \RR)^\RR
    \subset \ePSsus{-\infty} {\link}\,.
  \end{equation*}
  Indeed, the first space consists of operators with \emph{compactly} supported convolution
  kernels, whereas the second one consists of operators with \emph{Schwartz} convolution kernels.
\end{remark}

Recall the cut-off function $\eta : \manif \to [0, 1]$ of \ref{not.rem.eta}
(defined by $\eta$ smooth with support in $\pa \manif_0 \times (-\infty, -1)$
and $\eta = 1$ on $\pa \manif_0 \times (-\infty, -2]$). Also, recall
that, for any $T \in \iPSsus{m} {\pa \manif_0}
:= \iPS{m}(\pa \manif_0 \times \RR)^{\RR}$, we have
$s_0(T) := \eta T \eta \in \iPS{m}(\manif)$,
see Lemma \ref{lemma.onto} and its proof. Recall the following definition from \cite{Mitrea-Nistor}.

\begin{definition} \label{def.ess.one}
  Let $\manif$ be a manifold with straight
  cylindrical ends $\manif = \manif_0 \cup (\pa \manif_0 \times (-\infty, 0])$.
  \begin{enumerate}
    \item We let $\Psi_\maS^{-\infty}(\manif)$ be the set of pseudodifferential
    operators in $\Psi^{-\infty}(\manif)$ with distribution kernel in
    $\maS(\manif \times \manif)$.

    \item Let $\ePSsus{-\infty} {\pa \manif_0}$ be as in
    Definition \ref{def.ess.zero}.
    We let $$\ePS{-\infty}(\manif) \ede s_0(\ePSsus{-\infty} {\pa \manif_0})
    + \Psi_\maS^{-\infty}(\manif)\,.$$

    \item Finally, we define the space of essentially translation invariant operators
    in a neighborhood of infinity of order $m$ as
    $$\ePS{m}(\manif)\ede \iPS{m}(\manif) + \ePS{-\infty}(\manif)\,.$$
  \end{enumerate}
\end{definition}

In particular, $\ePSsus{-\infty} {\link} =: \ePS{-\infty} (\link \times \RR)^{\RR}$.

\begin{remark}\label{rem.incl.tow}
  Recall that  $\Psi_{comp}^m(\manif)$ denotes the space of pseudodifferential operators on $\manif$
  with compactly supported distribution kernel. The last two definitions provide the following simple
  inclusions. First,
  \begin{equation*}
    \Psi_{comp}^{-\infty}(\manif) \subset \Psi_\maS^{-\infty}(\manif)\,,
  \end{equation*}
  because the space of Schwartz sections contains the space of compactly supported
  sections. Let $s_0$ be as in the last definition (it was first introduced in
  Definition \ref{def.s0}). Using also the inclusion of Remark \ref{rem.incl.one},
  we obtain
  \begin{multline*}
    \iPS{-\infty}(\manif) \seq s_{0}(\iPSsus{-\infty} {\pa \manif_0})
    + \Psi_{comp}^{-\infty}(\manif)\\
    \subset s_{0}(\ePSsus{-\infty} {\pa \manif_0})
    + \Psi_\maS^{-\infty}(\manif) \,=: \, \ePS{-\infty}(\manif)\,.
  \end{multline*}
\end{remark}

\subsection{Properties of essentially translation invariant operators}

We now proceed to prove some of the basic properties of $\ePS{m}(\manif)$
(many of which were implicit in \cite{Mitrea-Nistor}; their proof in that paper
were based on suitable norms on $\iPS{-\infty}(\manif)$). Here we propose a
more direct approach, which is, however, more computational. The main techniques
used in the proof are summarized in the following remark. Alternatively, one
can prove some of these results using the $c$-calculus of Melrose, but we leave
that for another paper.

\begin{remark} \label{rem.proof}
  Most of the proofs of the properties of $\ePS{m}(\manif)$
  are \emph{standard} in the sense that they use one of the following:
  \begin{enumerate}[(i)]
    \item the definitions,
    \item Fubini's Theorem (including derivation under the integral sign),
    \item the fact that the function $\rho^{-1}$ of Definition \ref{def.Schwartz}
    is square integrable on $\manif$, and
    \item the fact that one can use the $L^\infty$-norm instead of the $L^2$-norm
    when defining our spaces (Definition \ref{def.Schwartz} again).
  \end{enumerate}
  These standard arguments will be omited. In addition to the standard
  arguments, we use some arguments (also rather common) on the kernels of
  operators. Namely, let $P$ be a properly supported pseudodifferential
  operator and $Q$ be a regularizing operator, both on $\manif$. Then
  \begin{equation*}
    k_{PQ}(x, y) \seq P_x k_{Q}(x, y)\,,
  \end{equation*}
  where $P_x$ is ``$P$ acting on the $x$-variable,'' more precisely,
  $P_x u(x, y)$ is $(Pu_y)(x)$, with $u_y(x) = u(x, y)$. Also,
  $k_{Q^*}(x, y) = k_{Q}(y, x)^*$. In particular, if $P$ is a
  multiplication operator by the function $P(x)$, then $k_{PQ}(x, y)
  = P(x)k_Q(x, y)$ and, similarly, $k_{QP}(x, y) = k_{Q}(x, y) P(y)$.
\end{remark}

\begin{lemma}\label{lemma.comp.maS}
  The space $\Psi_\maS^{-\infty}(\manif)$ is a complex algebra stable under adjoints,
  acting continuously on $L^2(\manif)$ and with\
  $\eta \Psi_\maS^{-\infty}(\manif) \subset \Psi_\maS^{-\infty}(\manif)$.
\end{lemma}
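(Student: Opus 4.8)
The plan is to verify the four assertions — stability under adjoints, boundedness on $L^2(\manif)$, closure under composition, and stability under left multiplication by $\eta$ — directly at the level of distribution kernels, using Remark~\ref{rem.SchwartzSobolev} to describe $\maS(\manif \times \manif)$ via weighted $W^{\infty,p}$-norms and, crucially, the fact recorded in Remark~\ref{rem.proof}(iii) that $\rho^{-1}$ is square integrable on $\manif$ (this is where the compactness of the ends is used). Recall that, having trivialized the density bundle by the metric, $P \in \Psi^{-\infty}(\manif)$ if and only if $k_P$ is smooth, and that $k_{P^*}(x,y) = \overline{k_P(y,x)}$. For the adjoint: if $k_P \in \maS(\manif \times \manif)$, then $k_{P^*}$ is obtained from $k_P$ by exchanging the two variables and conjugating, operations that preserve smoothness and the seminorms of Definition~\ref{def.Schwartz} (these are symmetric in $(x,y)$ up to renaming $i\leftrightarrow j$ and $k\leftrightarrow l$), so $P^* \in \Psi_\maS^{-\infty}(\manif)$. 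For $L^2$-boundedness: $k_P \in \maS(\manif\times\manif) \subset L^2(\manif \times \manif)$, so $P$ is Hilbert--Schmidt, hence bounded on $L^2(\manif)$ with $\|P\|_{\maB(L^2(\manif))} \le \|k_P\|_{L^2(\manif \times \manif)}$.

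The heart of the lemma is closure under products. For $P, Q \in \Psi_\maS^{-\infty}(\manif)$ the composition is given on kernels by the absolutely convergent integral
\begin{equation*}
  k_{PQ}(x,z) \seq \int_{\manif} k_P(x,y)\, k_Q(y,z)\, \dvol(y)\,.
\end{equation*}
The estimate to exploit is that, for every $N \in \ZZ_+$, the Schwartz bounds on $k_P$ and $k_Q$ give $|k_P(x,y)| \le C_N |\rho(x)|^{-N}|\rho(y)|^{-1}$ and $|k_Q(y,z)| \le C_N |\rho(y)|^{-1}|\rho(z)|^{-N}$, whence
\begin{equation*}
  |k_{PQ}(x,z)| \ \le \ C_N^2\, |\rho(x)|^{-N}|\rho(z)|^{-N} \int_{\manif} |\rho(y)|^{-2}\, \dvol(y) \ = \ C_N'\, |\rho(x)|^{-N}|\rho(z)|^{-N}\,,
\end{equation*}
the last integral being finite by Remark~\ref{rem.proof}(iii). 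Differentiation under the integral sign (justified by the same decay) transfers $\nabla_x$ onto $k_P$ and $\nabla_z$ onto $k_Q$, and since $\nabla_x^k k_P$ and $\nabla_z^l k_Q$ are again Schwartz in the sense of Definition~\ref{def.Schwartz}, the identical argument bounds every weighted derivative $\rho(x)^i\rho(z)^j\nabla_x^k\nabla_z^l k_{PQ}$. In particular $k_{PQ}$ is smooth, so $PQ \in \Psi^{-\infty}(\manif)$ and $k_{PQ} \in \maS(\manif \times \manif)$, i.e. $PQ \in \Psi_\maS^{-\infty}(\manif)$; bilinearity is clear, so $\Psi_\maS^{-\infty}(\manif)$ is a complex algebra.

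Finally, for stability under $\eta$: the operator $\eta P$ has distribution kernel $\eta(x)\, k_P(x,y)$, and since $\eta$ is a fixed smooth cut-off, all of whose derivatives are bounded, the Leibniz rule shows $\eta(x)\, k_P(x,y) \in \maS(\manif \times \manif)$, hence $\eta P \in \Psi_\maS^{-\infty}(\manif)$. I expect the only step requiring genuine attention to be the product estimate above, and even there the single nontrivial input is the integrability of $\rho^{-2}$ on $\manif$ (equivalently $\rho^{-1} \in L^2(\manif)$), which holds precisely because the ends are compact; everything else is routine bookkeeping with the seminorms of Definition~\ref{def.Schwartz}, in the spirit of Remark~\ref{rem.proof}.
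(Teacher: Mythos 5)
Your proof is correct and follows the same route the paper sketches: the paper's proof of this lemma is a one-line appeal to the ``standard arguments'' of Remark~\ref{rem.proof} (kernel composition, Fubini, square-integrability of $\rho^{-1}$, and the $L^\infty$ reformulation of Definition~\ref{def.Schwartz}), with $L^2$-boundedness obtained from the kernel via the Riesz lemma, which is exactly the Hilbert--Schmidt estimate you use. You have simply carried out the computation the paper omits, and the key input you isolate (integrability of $\rho^{-2}$ on $\manif$, valid because the ends are compact) is precisely item (iii) of Remark~\ref{rem.proof}.
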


\begin{proof}
  The proof is a simple calculation based on the standard arguments outlined in
  Remark \ref{rem.proof}. The continuous action on $L^2(\manif)$ follows from the
  Riesz Lemma applied to the kernel of the operator.
\end{proof}

We next establish some properties of
$$\ePSsus{-\infty} {\link} \simeq \maS(\link \times \link \times \RR)
\subset \Psi^{-\infty}(\link \times \RR)^{\RR}\,,$$
with the isomorphism defined by the convolution kernel, see Definition \ref{def.ess.zero}.

\begin{lemma}\label{lemma.comp.inv}
  Let $\link$ is a closed manifold, as usual, and let $\ePSsus{-\infty} {\link}$ be as above.
  \begin{enumerate}
    \item $\ePSsus{-\infty} {\link}$ is a complex algebra stable
    under adjoints acting continuously on $L^2(\link \times \RR)$.

    \item $\maS(\link \times \RR) \ePSsus{-\infty} {\link}
    \subset \Psi_\maS^{-\infty}(\link \times \RR)$

    \item $[\eta^k, \ePSsus{-\infty} {\link}]
    \subset \Psi_\maS^{-\infty}(\link \times \RR)$, $k \in \NN$.

    \item Let $P, Q \in \ePSsus{-\infty} {\link}$.
    Then $s_0(P)s_0(Q) - s_0(PQ) \in \Psi_\maS^{-\infty}(\link \times \RR)$.
  \end{enumerate}
\end{lemma}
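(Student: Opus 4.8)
The strategy is to reduce everything to explicit manipulations of convolution kernels, using the isomorphism $\ePSsus{-\infty}{\link} \simeq \maS(\link \times \link \times \RR)$ of Definition \ref{def.ess.zero} together with the formula \eqref{eq.conv.op} and the standard facts recalled in Remark \ref{rem.proof}. For part (1), given $P, Q \in \ePSsus{-\infty}{\link}$ with convolution kernels $C_P, C_Q \in \maS(\link \times \link \times \RR)$, I would compute that $PQ$ has convolution kernel
\begin{equation*}
  C_{PQ}(x, y, t) \seq \int_{\link \times \RR} C_P(x, z, t - \tau) C_Q(z, y, \tau)
  \dvol_{\link}(z) d\tau\,,
\end{equation*}
which is again translation invariant (so $PQ \in \Psi^{-\infty}(\link \times \RR)^{\RR}$), and then check that this convolution in the $\RR$-variable of two Schwartz functions is again Schwartz: this is the usual fact that $\maS(\RR) * \maS(\RR) \subset \maS(\RR)$, with the $\link$-variables and their covariant derivatives carried along harmlessly since $\link$ is compact (one may differentiate under the integral sign and use that $t^i = \sum_{j} \binom{i}{j}(t-\tau)^j \tau^{i-j}$ to distribute weights). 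Stability under adjoints follows from $k_{P^*}(x,y) = k_P(y,x)^*$, hence $C_{P^*}(x, y, t) = C_P(y, x, -t)^*$, which is again Schwartz. Continuity on $L^2(\link \times \RR)$ follows from the Schur/Riesz test applied to the distribution kernel, exactly as in Lemma \ref{lemma.comp.maS}.

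For part (2), if $\phi \in \maS(\link \times \RR)$ and $P \in \ePSsus{-\infty}{\link}$, then $\phi P$ has distribution kernel $\phi(x, t) C_P(x, y, t - s)$, and I must show this lies in $\maS(\link \times \link \times \RR \times \RR) = \maS((\link \times \RR) \times (\link \times \RR))$, i.e.\ defines an element of $\Psi_\maS^{-\infty}(\link \times \RR)$. Here the weight $\rho(x, t)^i \rho(y, s)^j$ acting on $\phi(x,t) C_P(x, y, t-s)$ is controlled because $\phi$ is Schwartz in $(x,t)$ (this absorbs the $\rho(x,t)^i$ factor \emph{and} gives decay), and $\rho(y, s)^j = \rho(y,s)^j$ is comparable to $(1 + |s|)^j$ which is $\lesssim (1 + |t - s|)^j (1 + |t|)^j$, the first factor being absorbed by the rapid decay of $C_P$ in its last variable and the second by the decay of $\phi$; covariant derivatives are handled the same way. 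For part (3), $[\eta^k, P]$ has distribution kernel $(\eta(x,t)^k - \eta(y,s)^k) C_P(x, y, t - s)$; writing $\eta^k(x,t) - \eta^k(y,s)$ as (a smooth bounded function) times $(t - s)$ times (a smooth bounded function) — possible since $\eta^k$ is translation-invariant-at-infinity up to compact support and its "difference quotient" in $t$ is bounded — the extra factor $(t-s)$ is absorbed by the Schwartz decay of $C_P$, yielding a kernel in $\maS((\link \times \RR)^2)$. (One should take a little care near the compact region where $\eta^k$ is not invariant, but there the kernel is compactly supported in that variable, which is even better.)

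Part (4) is the crux and the one requiring the most care: $s_0(P) s_0(Q) = \eta P \eta^2 Q \eta$ while $s_0(PQ) = \eta (PQ) \eta$, so
\begin{equation*}
  s_0(P)s_0(Q) - s_0(PQ) \seq \eta P (\eta^2 - 1) Q \eta \,.
\end{equation*}
Now $1 - \eta^2$ is a smooth function supported in $\pa \manif_0 \times (-\infty, 0]$ (in fact bounded, and equal to $0$ for $t \le -2$, equal to $1$ for $t \ge -1$), hence it is the restriction of a function that is "constant $=1$ near the compact end and $0$ near infinity"; in particular $1 - \eta^2 \in \CI(\manif)$ is bounded and its derivatives along the cylinder are compactly supported. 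The operator $P(1 - \eta^2)Q$ has distribution kernel $\int C_P(x, z, t - \tau)(1 - \eta(z,\tau)^2) C_Q(z, y, \tau - s)\dvol_\link(z) d\tau$, and I claim this is in $\maS((\link \times \RR)^2)$: the point is that $(1-\eta^2)(z,\tau)$ is bounded and, where it is nonzero, $\tau$ is bounded below (by $-2$), so it effectively restricts the $z$-integration to a "half-bounded" region, but more importantly, for the resulting kernel to be Schwartz in $(x,t)$ and $(y,s)$ jointly one uses that $C_P$ is Schwartz in its last variable to get decay in $t - \tau$ and $C_Q$ is Schwartz to get decay in $\tau - s$, and since $\tau$ ranges over $\RR$ one gets joint decay in $t$ and $s$ by the same convolution-of-Schwartz estimate as in part (1), with the harmless bounded factor $1 - \eta^2$ inserted. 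Multiplying on the left and right by the bounded function $\eta$ preserves $\maS$. Thus $s_0(P)s_0(Q) - s_0(PQ) = \eta P(1-\eta^2) Q \eta \in \Psi_\maS^{-\infty}(\manif)$, using part (2) (or rather its proof) to see the multiplication by bounded-but-not-decaying functions like $\eta$ does no harm. The main obstacle throughout is bookkeeping the various weights $\rho$ and the cut-offs $\eta, 1-\eta^2$ correctly so that "Schwartz decay of one kernel in one variable" gets converted into "joint Schwartz decay" after the convolution integral — but this is exactly the standard $\maS(\RR) * \maS(\RR) \subset \maS(\RR)$ mechanism, uniform in the compact factor $\link$, so no genuinely new estimate is needed.
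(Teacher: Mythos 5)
Parts (1) and (2) are correct and follow the same kernel computations the paper has in mind. The problems are in (3) and, more seriously, in (4).

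In (4) you assert that $P(1-\eta^2)Q$ already has kernel in $\maS((\link\times\RR)^2)$ and that the outer factors $\eta$ merely ``preserve $\maS$.'' That intermediate claim is false. The kernel is
\begin{equation*}
  K(x,t,y,s) \seq \int_{\link\times\RR} C_P(x,z,t-\tau)\,(1-\eta^2)(z,\tau)\,C_Q(z,y,\tau-s)\dvol_{\link}(z)\,d\tau\,,
\end{equation*}
and since $1-\eta^2\equiv 1$ for $\tau\ge -1$, taking $t=s\to+\infty$ gives $K(x,t,y,t)\to C_{PQ}(x,y,0)$, which is generically nonzero, so $K$ has no joint decay in $(t,s)$. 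The flawed step is the sentence ``one gets joint decay in $t$ and $s$ by the same convolution-of-Schwartz estimate as in part (1)'': that mechanism only produces a function that is Schwartz in the \emph{difference} $t-s$, never joint decay. Joint decay comes from the interaction of the cut-offs, which you discard as harmless: the outer $\eta$'s force $t,s\le -1$ while $1-\eta^2$ forces $\tau\ge -2$, and only on that region do $|t-\tau|\gtrsim |t|$ and $|\tau-s|\gtrsim |s|$, so that the rapid decay of $C_P$ and $C_Q$ in their last arguments converts into joint rapid decay of $\eta K\eta$. The paper sidesteps this computation altogether: it first upgrades (2) to the ideal property $\Psi_\maS^{-\infty}(\link\times\RR)\,\ePSsus{-\infty}{\link}\subset\Psi_\maS^{-\infty}(\link\times\RR)$ and then writes $\eta P\eta^2 Q\eta-\eta PQ\eta=\eta P[\eta^2,Q]\eta+\eta PQ(\eta^2-1)\eta$, handling the first term by (3) and the second by (2), using that $(\eta^2-1)\eta$ is compactly supported.

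A milder version of the same confusion appears in (3). The factor $(t-s)$ you extract by Hadamard's lemma does no work: $(t-s)$ times a Schwartz function of $t-s$ is still only a function of $t-s$ and is not jointly Schwartz on $(\link\times\RR)^2$. What makes $[\eta^k,P]$ regularizing in the $\maS$ sense is the \emph{support} of $\eta^k(t)-\eta^k(s)$: it is nonzero only when the pair $(t,s)$ straddles the transition zone $[-2,-1]$, so (say) $s\le -1$ and $t\ge -2$, whence $|t|+|s|\le 2|t-s|+C$ and the weights $(1+|t|)^i(1+|s|)^j$ are absorbed by the rapid decay of $C_P$ in $t-s$. This is exactly the paper's argument; your parenthetical about the ``compact region'' gestures at it, but the mechanism you actually state would not suffice.
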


\begin{proof}
  (1) is again a simple calculation based on standard arguments (see Remark
  \ref{rem.proof}. The continuity follows from the Riesz lemma applied to the
  convolution kernel of the operator.
  For (2), we use the explicit form of the kernel
  of the composition (see Remark \ref{rem.proof}).
  For (3), it is enough
  to prove the statement for $k = 1$ by the properties of the commutator.
  Then we notice that the kernel of the commutator $[\eta, P]$ is
  $$k_{[\eta, P]}(z, z') \seq (\eta(z)- \eta(z')) k_P(z,z')\,$$
  so it is enough to look at $z = (x, t)$ and $z' = (x', t')$
  with $t \le -1$ and $t' \ge -2$ and at its symmetric $t \ge -2$ and $t' \le -1$
  (with the points of $\manif_0$ being included in the part with $t \ge -2$).
  On these regions, we bound $|\rho(x)^i\rho(y)^j(\pa_t^k\eta(x) - \pa_t^l\eta(y))|$ and
  with $(1 + |t-t'|)^{i+j}$, which, in turn, is absorbed in $k_{P}$, which is
  a function of rapid decay in $|t-t'|$ (since it is a convolution kernel
  with kernel in $\maS$). For point (4), we first notice that (2) implies also that
  \begin{equation}\label{eq.useful.ideal}
    \Psi_\maS^{-\infty}(\link \times \RR) \ePSsus{-\infty} {\link}
    \subset \Psi_\maS^{-\infty}(\link \times \RR)\,.
  \end{equation}
  The rest follows from the other points.
\end{proof}

We now specialize to $\link = \pa \manif_0$ to obtain some properties of
$\ePS{-\infty}(\manif)$.

\begin{corollary}\label{cor.comp.alg}
  We have
  \begin{enumerate}
    \item If $P \in \ePSsus{-\infty} {\pa \manif_0}$ and $R \in \Psi_\maS^{-\infty}(\manif)$,
    then $$s_0(P)R\,,\  Rs_0(P) \in \Psi_\maS^{-\infty}(\manif)\,.$$

    \item $\ePS{-\infty}(\manif) := s_0(\ePS{-\infty}(\pa \manif_0
    \times \RR)^{\RR}) + \Psi_\maS^{-\infty}(\manif)$ is a an algebra stable under adjoints
    (a $*$-algebra).
  \end{enumerate}
\end{corollary}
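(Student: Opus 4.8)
The plan is to bootstrap everything from the two structural lemmas already proved, Lemma~\ref{lemma.comp.maS} on $\Psi_\maS^{-\infty}(\manif)$ and Lemma~\ref{lemma.comp.inv} on $\ePSsus{-\infty}{\pa\manif_0}$, using only the kernel calculus recorded in Remark~\ref{rem.proof}. For part (1), write $s_0(P)=\eta P\eta$; since $P\in\ePSsus{-\infty}{\pa\manif_0}$ and $R\in\Psi_\maS^{-\infty}(\manif)$ are both regularizing, the composition $s_0(P)R$ is regularizing and its distribution kernel is
\[
  k_{s_0(P)R}(x,y)\seq \eta(x)\,\bigl[P_x\bigl(\eta(\cdot)\,k_R(\cdot,y)\bigr)\bigr](x)\,,
\]
where $P_x$ denotes $P$ acting in the first ($x$-)variable. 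For fixed $y$, the function $x\mapsto\eta(x)k_R(x,y)$ is supported in $\pa\manif_0\times(-\infty,-1)$ and, because $k_R\in\maS(\manif\times\manif)$, it belongs to $\maS(\pa\manif_0\times\RR)$. Applying $P$ — which, by Proposition~\ref{prop.kernel.descr3} and Definition~\ref{def.ess.zero}, is convolution in $t$ against the Schwartz kernel $C_P$ — and then multiplying again by $\eta$ returns a function supported in $\pa\manif_0\times(-\infty,-1)$ and of Schwartz class in $x$. The only genuine point is \emph{joint} decay in $(x,y)$: the weight $\rho(y)^j$ and the $y$-derivatives commute with $P_x$ and land directly on $k_R$, while a weight $\rho(x)^i$ is distributed across the $t$-convolution by $|t|^i\lesssim|t-s|^i+|s|^i$, one term being absorbed into the rapid decay of $C_P$ and the other into the Schwartz decay of $k_R$ in its first slot; $x$-derivatives are handled by differentiating under the integral sign. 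This is exactly the sort of standard computation flagged in Remark~\ref{rem.proof}, and it yields $k_{s_0(P)R}\in\maS(\manif\times\manif)$, i.e. $s_0(P)R\in\Psi_\maS^{-\infty}(\manif)$. For $Rs_0(P)$ one uses adjoints: $(Rs_0(P))^{*}=s_0(P)^{*}R^{*}=s_0(P^{*})R^{*}$ with $P^{*}\in\ePSsus{-\infty}{\pa\manif_0}$ by Lemma~\ref{lemma.comp.inv}(1) and $R^{*}\in\Psi_\maS^{-\infty}(\manif)$ by Lemma~\ref{lemma.comp.maS}; the case already proved gives $s_0(P^{*})R^{*}\in\Psi_\maS^{-\infty}(\manif)$, and $\Psi_\maS^{-\infty}(\manif)$ is closed under adjoints, so $Rs_0(P)\in\Psi_\maS^{-\infty}(\manif)$.

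For part (2), $\ePS{-\infty}(\manif)=s_0(\ePSsus{-\infty}{\pa\manif_0})+\Psi_\maS^{-\infty}(\manif)$ is manifestly a complex vector space, and it is closed under adjoints because $s_0(P)^{*}=\eta P^{*}\eta=s_0(P^{*})$, together with Lemma~\ref{lemma.comp.inv}(1) and Lemma~\ref{lemma.comp.maS}. For closure under products, expand $(s_0(P_1)+R_1)(s_0(P_2)+R_2)$ into four terms. The two cross terms $s_0(P_1)R_2$ and $R_1s_0(P_2)$ lie in $\Psi_\maS^{-\infty}(\manif)$ by part (1); the term $R_1R_2$ lies there too, since $\Psi_\maS^{-\infty}(\manif)$ is an algebra (Lemma~\ref{lemma.comp.maS}); and
\[
  s_0(P_1)s_0(P_2)\seq s_0(P_1P_2)+\bigl(s_0(P_1)s_0(P_2)-s_0(P_1P_2)\bigr)\,,
\]
where $P_1P_2\in\ePSsus{-\infty}{\pa\manif_0}$ by Lemma~\ref{lemma.comp.inv}(1) and the parenthesized difference lies in $\Psi_\maS^{-\infty}(\manif)$ by Lemma~\ref{lemma.comp.inv}(4). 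Adding up, the product lies in $s_0(\ePSsus{-\infty}{\pa\manif_0})+\Psi_\maS^{-\infty}(\manif)=\ePS{-\infty}(\manif)$, so $\ePS{-\infty}(\manif)$ is a $*$-algebra.

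The main obstacle is the joint-Schwartz verification in part (1); everything after it is formal bookkeeping on top of Lemmas~\ref{lemma.comp.maS}--\ref{lemma.comp.inv}. What makes that verification go through is the convolution-in-$t$ structure of $\ePSsus{-\infty}{\pa\manif_0}$ combined with $\eta$ being a bounded multiplier compactly supported in the end: this essentially reduces matters to the one-variable fact that convolution by a Schwartz function preserves the Schwartz class, and to the ideal property $\Psi_\maS^{-\infty}(\link\times\RR)\,\ePSsus{-\infty}{\link}\subset\Psi_\maS^{-\infty}(\link\times\RR)$ (Equation~\eqref{eq.useful.ideal}).
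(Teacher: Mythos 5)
Your proposal is correct and follows essentially the same route as the paper: part (1) is the explicit kernel computation that the paper relegates to the ``standard arguments'' of Remark \ref{rem.proof} (and to Lemma \ref{lemma.comp.inv}(2)/Equation \eqref{eq.useful.ideal}), and part (2) is the same four-term expansion resting on part (1), Lemma \ref{lemma.comp.maS}, and Lemma \ref{lemma.comp.inv}(1),(4). You simply spell out the Peetre-inequality/convolution details and the adjoint trick for $Rs_0(P)$ that the paper leaves implicit.
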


\begin{proof}
  (1) is yet again a simple calculation based on standard arguments and on the
  explicit form of the distribution kernels, see Remark \ref{rem.proof}.
  (It also follows from Lemma \ref{lemma.comp.inv}(2) or
  from Equation \eqref{eq.useful.ideal}.)
  For (2), we use the definition of $\ePS{-\infty}(\manif)$, Lemma \ref{lemma.comp.inv}(4)
  and the point (1) just proved.
\end{proof}

We need one more lemma before stating the main properties of the
essentially invariant calculus $\ePS{m}(\manif)$ of Definition \ref{def.ess.one}.

\begin{lemma}\label{lemma.comp.alg}
  Let $m \in \RR$. We have
  \begin{enumerate}
    \item If $R \in \ePSsus{-\infty} {\pa \manif_0}$ and
    $P\in \iPSsus{m} {\pa \manif_0} \simeq \iPS{m}(\pa \manif_0 \times \RR)^{\RR}$, then
    $$PR\,,\ RP \in \ePSsus{-\infty} {\pa \manif_0}\,.$$

    \item If $R \in \Psi_\maS^{-\infty}(\manif)$ and
    $P \in \iPS{m}(\manif)$,
    then $$PR\,,\ RP \in \Psi_\maS^{-\infty}(\manif)\,.$$

    \item If $R \in \ePS{-\infty}(\manif)$ and $P \in \iPS{m}(\manif)$,
    then $$PR\,,\ RP \in \ePS{-\infty}(\manif) \,.$$

    \item Any operator in $\ePS{-\infty}(\manif)$ maps $H^{s}(\manif)$ to
    $H^{s'}(\manif)$ continuously for all $s, s' \in \RR$.
  \end{enumerate}
\end{lemma}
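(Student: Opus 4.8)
The plan is to prove (1) and (2) first, as the two basic composition lemmas, then to deduce (3) from them by splitting $P$ and $R$ into their $s_0$-parts and their compactly/Schwartz-supported remainders, and finally to deduce (4) from (2)--(3) together with the order-reduction operators of Corollary~\ref{cor.order.red}. The only analytic input beyond the definitions is the pair of kernel identities recalled in Remark~\ref{rem.proof}, namely $k_{PQ}(x,y)=P_xk_Q(x,y)$ when $P$ is properly supported and $Q$ regularizing, and $k_{Q^*}(x,y)=k_Q(y,x)^*$, together with the mapping properties of the $\inv$-calculus already established (Corollaries~\ref{cor.weighted.con} and~\ref{cor.cont1}, Proposition~\ref{prop.comm.rho}, Theorems~\ref{thm.prop.inv-calc} and~\ref{thm.prop.inv-calc.in}).

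For~(2): $P\in\iPS{m}(\manif)$ is properly supported, so $k_{PR}(x,y)=P_xk_R(x,y)$. Since $P_x$ commutes with multiplication by functions of $y$ and with $\nabla_y$, and since by Corollary~\ref{cor.weighted.con} $P$ maps $\rho^kH^\sigma(\manif)$ continuously into $\rho^kH^{\sigma-m}(\manif)$ for all $k\in\ZZ_+$ and $\sigma\in\RR$, a routine estimate — obtained by applying these mapping bounds to $k_R(\,\cdot\,,y)$ for fixed $y$ and then integrating over $y$ — shows $P_xk_R\in\maS(\manif\times\manif)$, i.e.\ $PR\in\Psi_\maS^{-\infty}(\manif)$. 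The case $RP$ follows by applying this to $(RP)^*=P^*R^*$, using $P^*\in\iPS{m}(\manif)$ (Theorem~\ref{thm.prop.inv-calc}) and $R^*\in\Psi_\maS^{-\infty}(\manif)$ (Lemma~\ref{lemma.comp.maS}). Part~(1) is the same argument carried out for \emph{convolution} kernels: for $R\in\ePSsus{-\infty}{\pa\manif_0}$ with $C_R\in\maS(\pa\manif_0\times\pa\manif_0\times\RR)$ and $P\in\iPSsus{m}{\pa\manif_0}$, the key point is that a translation-invariant pseudodifferential operator of finite order on $\pa\manif_0\times\RR$ maps $\maS(\pa\manif_0\times\RR)$ continuously into itself — the suspended analogue of Corollary~\ref{cor.weighted.con}, obtained by iterating the commutator bound $[t,P]\in\iPSsus{m-1}{\pa\manif_0}$ of Proposition~\ref{prop.comm.rho}(1) against the boundedness of Corollary~\ref{cor.cont1} — which gives $C_{PR}\in\maS(\pa\manif_0\times\pa\manif_0\times\RR)$; $RP$ again follows by adjoints, since $\iPSsus{m}{\pa\manif_0}$ and $\ePSsus{-\infty}{\pa\manif_0}$ are $*$-stable (Lemma~\ref{lemma.comp.inv}(1)).

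For~(3), write $P=s_0(\In(P))+P_c$ with $P_c\in\iPS{m}(\manif)$ having compactly supported distribution kernel (Theorem~\ref{thm.prop.inv-calc.in}(2) and Lemma~\ref{lemma.onto}(2)), and $R=s_0(R_i)+R_s$ with $R_i\in\ePSsus{-\infty}{\pa\manif_0}$, $R_s\in\Psi_\maS^{-\infty}(\manif)$ (Definition~\ref{def.ess.one}); then expand $PR$ into four terms. Since $s_0(\In(P))$ and $P_c$ both lie in $\iPS{m}(\manif)$, part~(2) gives $s_0(\In(P))R_s,\ P_cR_s\in\Psi_\maS^{-\infty}(\manif)$; and $P_cs_0(R_i)$ is regularizing with kernel $(P_c)_x\!\big(\eta(x)C_{R_i}(x,y)\eta(y)\big)$, which has compact support in $x$ (because $k_{P_c}$ is compactly supported) and rapid decay in $y$ (because $C_{R_i}\in\maS$), hence lies in $\maS(\manif\times\manif)$. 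For the last term, $s_0(\In(P))s_0(R_i)=\eta\,\In(P)\,\eta^2R_i\,\eta=s_0\big(\In(P)R_i\big)-\eta\,\In(P)(1-\eta^2)R_i\,\eta$; here $\In(P)R_i\in\ePSsus{-\infty}{\pa\manif_0}$ by part~(1), so $s_0(\In(P)R_i)\in\ePS{-\infty}(\manif)$, while $\eta\,\In(P)(1-\eta^2)$ has \emph{compactly supported} distribution kernel — the $\varepsilon$-small support of $k_{\In(P)}$ about the diagonal, together with the disjoint half-line supports of $\eta$ and $1-\eta^2$, confines both variables to a bounded range in $t$ — so composing it with the regularizing $R_i\eta$ again lands in $\Psi_\maS^{-\infty}(\manif)$. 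Hence $PR\in\ePS{-\infty}(\manif)$, and $RP$ follows by adjoints, using $P^*\in\iPS{m}(\manif)$ and that $\ePS{-\infty}(\manif)$ is $*$-stable (Corollary~\ref{cor.comp.alg}(2)). This term-by-term bookkeeping — tracking which cutoff-times-operator products collapse into $\Psi_{comp}^{-\infty}(\manif)$, which into $\Psi_\maS^{-\infty}(\manif)$, and which only into $s_0(\ePSsus{-\infty}{\pa\manif_0})$ — is the step I expect to require the most care.

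Finally, for~(4): given $s,s'\in\RR$, choose nonnegative integers $a\ge s'$ and $b\ge -s$ and order-reduction operators $\Lambda_a\in\iPS{a}(\manif)$, $\Lambda_b\in\iPS{b}(\manif)$ as in Corollary~\ref{cor.order.red}. Applying~(3) twice gives $\Lambda_aR\Lambda_b^*\in\ePS{-\infty}(\manif)$, and every element of $\ePS{-\infty}(\manif)=s_0(\ePSsus{-\infty}{\pa\manif_0})+\Psi_\maS^{-\infty}(\manif)$ is bounded on $L^2(\manif)$ (Lemma~\ref{lemma.comp.maS}, Lemma~\ref{lemma.comp.inv}(1), and the $L^2$-boundedness of multiplication by $\eta$). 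Since $R=\Lambda_a^{-1}\big(\Lambda_aR\Lambda_b^*\big)(\Lambda_b^*)^{-1}$ on $\CIc(\manif)$, and $\Lambda_a^{-1}\colon L^2(\manif)\to H^a(\manif)$ and $(\Lambda_b^*)^{-1}\colon H^{-b}(\manif)\to L^2(\manif)$ are isomorphisms while the inclusions $H^s(\manif)\hookrightarrow H^{-b}(\manif)$ and $H^a(\manif)\hookrightarrow H^{s'}(\manif)$ are continuous, the composite is a bounded map $H^s(\manif)\to H^{s'}(\manif)$; density of $\CIc(\manif)$ in $H^s(\manif)$ identifies it with $R$, which proves~(4).
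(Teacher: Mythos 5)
Your proof is correct and follows essentially the same route as the paper's (which compresses (1)--(3) into citations of the "standard arguments" of Remark \ref{rem.proof}, Corollaries \ref{cor.weighted.con} and \ref{cor.contProd}, and Lemma \ref{lemma.comp.inv}(4), and proves (4) by reducing to the $L^2$ case via order reductions exactly as you do). Your four-term expansion in (3) — in particular the identity $s_0(\In(P))s_0(R_i)=s_0(\In(P)R_i)-\eta\,\In(P)(1-\eta^2)R_i\,\eta$, which extends Lemma \ref{lemma.comp.inv}(4) to a finite-order first factor — correctly supplies the detail behind the step the paper calls "immediate."
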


\begin{proof}
  (1) and (2) are similar and are again simple calculations based on standard arguments
  (see Remark \ref{rem.proof}), but using also the explicit form of the convolution
  kernels and Corollaries \ref{cor.weighted.con} and \ref{cor.contProd}
  and Remark \ref{rem.SchwartzSobolev}. The point (3) is an immediate consequence of
  the definition of $\ePS{-\infty}(\manif)$, of Lemma \ref{lemma.comp.inv}(4),
  and of points (1) and (2) just proved.

  For $s = s' = 0$, the statement (4) follows from Definition \ref{def.ess.one}
  and the continuous actions in \ref{lemma.comp.maS}(1) and \ref{lemma.comp.inv}(1).
  For the other values of $s$, we combine the case $s=s'= 0$ with the statement
  (3) and the fact that $T \in \ePS{-\infty}(\manif)$ maps $H^{s}(\manif)$ to
  $H^{s'}(\manif)$ continuously for some $s, s' \in \RR$ if, and only if,
  $Q_{s'} T Q_{-s}$ is continuous on $L^2(\manif)$ for all $Q_{-s} \in \iPS{-s}(\manif)$
  and $Q_{s'} \in \iPS{s'}(\manif)$, by Corollary \ref{cor.order.red}.
\end{proof}

We are ready now to prove the main properties of the calculus
\begin{equation*}
  \ePS{m}(\manif) \ede \iPS{m}(\manif)) + \ePS{-\infty}(\manif)
\end{equation*}
of Definition \ref{def.ess.one}.

\begin{theorem}\label{thm.prop.ePS}
  Let $\manif = \manif_0 \cup \pa \manif_0 \times (-\infty, 0]$ be a manifold with
  straight cylindrical ends, as before.
  Let $m, m', s \in \RR$.
  \begin{enumerate}
    \item $\ePS{m}(\manif)\ePS{m'}(\manif) \subset \ePS{m+m'}(\manif)$
    and $\iPS{m}(\manif)^* \subset \iPS{m}(\manif)$ and the principal symbol is
    $*$-multiplicative:
    \begin{equation*}
      \sigma_{m+m'}(PQ) \seq \sigma_{m}(P)\sigma_{m'}(Q)
      \ \mbox{ and }\
      \sigma_{m}(P^*) \seq \sigma_{m}(P)^*\,.
    \end{equation*}

    \item If $P \in \ePS{m}(\manif)$, the $P :H^{s}(\manif) \to
    H^{s-m}(\manif)$ is well defined and continuous and the induced map
    $\ePS{m}(\manif) \to \maB(H^{s}(\manif); H^{s-m}(\manif))$
    is continuous.

    \item $\ePS{-\infty}(\manif) \subset \Psi^{-\infty}(\manif)$ and hence,
    \begin{equation*}
      \sigma_m: \ePS {m}(\manif)/\ePS {m-1}(\manif)\to
      S_{inv}^m(T^*\manif)/S_{inv}^{m-1}(T^*\manif)
    \end{equation*}
    induces an isomorphism.

    \item \label{item.parametrix}
    An operator $P \in \ePS{m}(\manif)$ is elliptic if, and only if, there exists
    $Q \in \iPS{-m}(\manif)$ such that $PQ - 1, QP - 1 \in \ePS{-\infty}(\manif)$.
  \end{enumerate}
\end{theorem}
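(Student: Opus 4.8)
The statements (1)--(3) are essentially bookkeeping built on the lemmas already proved, while (4) is the genuinely new content. I will sketch each part in turn.

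\textbf{Part (1) (products and adjoints).} Write $P = P_i + R$ and $Q = Q_i + R'$ with $P_i \in \iPS{m}(\manif)$, $Q_i \in \iPS{m'}(\manif)$ and $R, R' \in \ePS{-\infty}(\manif)$. Expanding $PQ = P_i Q_i + P_i R' + R Q_i + R R'$, the first term lies in $\iPS{m+m'}(\manif)$ by Theorem \ref{thm.prop.inv-calc}(1); the cross terms lie in $\ePS{-\infty}(\manif)$ by Lemma \ref{lemma.comp.alg}(3) (and $RR' \in \ePS{-\infty}(\manif)$ by Corollary \ref{cor.comp.alg}(2)). Hence $PQ \in \ePS{m+m'}(\manif)$. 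Stability under adjoints follows the same way: $\iPS{m}(\manif)^* = \iPS{m}(\manif)$ by Theorem \ref{thm.prop.inv-calc}(1), and $\ePS{-\infty}(\manif)^* = \ePS{-\infty}(\manif)$ by Corollary \ref{cor.comp.alg}(2). The $*$-multiplicativity of $\sigma$ follows because $\ePS{-\infty}(\manif)$ is in the kernel of $\sigma_m$ (this is part (3)) and $\sigma$ is already $*$-multiplicative on the $\inv$-calculus by Theorem \ref{thm.prop.inv-calc}(2).

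\textbf{Parts (2)--(3) (mapping properties, symbol kernel).} For (2), decompose $P = P_i + R$ as above; then $P_i : H^s(\manif) \to H^{s-m}(\manif)$ continuously with continuous dependence on $P_i$ by Lemma \ref{lemma.bounded}, and $R : H^s(\manif) \to H^{s-m}(\manif)$ continuously by Lemma \ref{lemma.comp.alg}(4); one checks the topology on $\ePS{m}(\manif)$ is the obvious sum topology so the combined map is continuous. For (3), the inclusion $\ePS{-\infty}(\manif) \subset \Psi^{-\infty}(\manif)$ is immediate from the definition: $\Psi_\maS^{-\infty}(\manif) \subset \Psi^{-\infty}(\manif)$ since Schwartz kernels are smooth, and $s_0(\ePSsus{-\infty}{\pa\manif_0}) \subset \Psi^{-\infty}(\manif)$ since the underlying operators on $\pa\manif_0 \times \RR$ already lie in $\Psi^{-\infty}$. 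Consequently $\sigma_m$ annihilates $\ePS{-\infty}(\manif)$, and since $\ePS{m}(\manif) = \iPS{m}(\manif) + \ePS{-\infty}(\manif)$ with $\iPS{m-1}(\manif) \subset \ePS{m-1}(\manif)$, the map $\sigma_m : \ePS{m}(\manif)/\ePS{m-1}(\manif) \to S_{inv}^m/S_{inv}^{m-1}(T^*\manif)$ is well-defined; surjectivity comes from surjectivity of $\sigma_m$ on $\iPS{m}(\manif)$ (Theorem \ref{thm.prop.inv-calc}(2)), and injectivity from the fact that $P_i \in \iPS{m}(\manif)$ with $\sigma_m(P_i) \in S_{inv}^{m-1}$ means $P_i \in \iPS{m-1}(\manif) \subset \ePS{m-1}(\manif)$.

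\textbf{Part (4) (parametrix characterization) --- the main point.} The ``if'' direction is immediate: if $PQ - 1 \in \ePS{-\infty}(\manif) \subset \Psi^{-\infty}(\manif)$ then $\sigma_m(P)\sigma_{-m}(Q) = 1$ modulo $S_{inv}^{-1}$, so $\sigma_m(P)$ is invertible modulo lower order, i.e. $P$ is elliptic. For the converse, suppose $P \in \ePS{m}(\manif)$ is elliptic. By Remark \ref{rem.ell.imp.ue} (applied via $\sigma_m(P) \in S_{inv}^m$, which lies in the $\inv$-symbol space by part (3)), the symbol $\sigma_m(P)^{-1} \in S_{inv}^{-m}(T^*\manif)$ modulo lower order is well-defined; using the quantization map $q$ of Remark \ref{rem.def.quant.map} and asymptotic completeness (Remark \ref{rem.asympt.compl}), I construct $Q_0 \in \iPS{-m}(\manif)$ with $PQ_0 - 1, Q_0 P - 1 \in \iPS{-\infty}(\manif) \subset \ePS{-\infty}(\manif)$ --- this is the standard elliptic parametrix argument, carried out now with $\inv$-symbols. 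The key remaining point is that one can upgrade $Q_0$ to a genuine two-sided inverse modulo $\ePS{-\infty}(\manif)$ \emph{with the error in $\ePS{-\infty}$ and the parametrix in $\iPS{-m}$}; since the parametrix error $E := PQ_0 - 1 \in \iPS{-\infty}(\manif)$ is already regularizing, one simply takes $Q = Q_0$ and notes $PQ - 1 = E \in \iPS{-\infty}(\manif) \subset \ePS{-\infty}(\manif)$, and similarly $QP - 1 \in \iPS{-\infty}(\manif)$. The only subtlety, and the step I expect to require the most care, is verifying that the elliptic parametrix construction genuinely stays inside the \emph{translation-invariant} calculus $\iPS{}$ --- this is exactly where the ``same $R_{a_j}$'' hypothesis in Remark \ref{rem.asympt.compl} and the translation-invariance of $q$ (Proposition \ref{prop.onto.q}) are needed, so that the Neumann-type series $\sum_j (-E)^j$ or the symbolic iteration producing $Q_0$ never leaves $S_{inv}$. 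Once that is in place, combining with parts (1) and (3) closes the argument.
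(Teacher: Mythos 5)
Your proposal is correct and follows essentially the same route as the paper: parts (1)--(3) rest on the same decomposition $P = P_i + R$ and the same lemmas (Theorem \ref{thm.prop.inv-calc}, Corollary \ref{cor.comp.alg}, Lemmas \ref{lemma.comp.alg} and \ref{lemma.bounded}), and part (4) is proved by the same symbolic parametrix/Neumann-series construction in the $\inv$-calculus using the quantization map and asymptotic completeness (Remark \ref{rem.asympt.compl}). The only cosmetic slip is that for $P \in \ePS{m}(\manif)$ the parametrix error $PQ_0 - 1$ lands only in $\ePS{-\infty}(\manif)$, not in $\iPS{-\infty}(\manif)$ (the $\ePS{-\infty}$ component of $P$ contributes $RQ_0 \in \ePS{-\infty}(\manif)$), but that is exactly what the statement requires, so nothing is lost.
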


\begin{proof}
  The first point follows from Theorem \ref{thm.prop.inv-calc}(1),
  Corollary \ref{cor.comp.alg}(2), and Lemma \ref{lemma.comp.alg}(3). The point (2)
  follows from Lemma \ref{lemma.bounded} and Lemma \ref{lemma.comp.alg}(4).
  The point (3) follows from the fact that $\sigma_m(\ePS {-\infty }(\manif))=0$.
  Finally, let $\sigma_m(P)$ be invertible. Then we can choose a parametrix
  $b \in S_{\inv}^{-m}(T^*\manif)$, by Corollary \ref{cor.lemma.ue}. The result follows
  first by using the surjectivity of $\sigma_{-m}$ in Theorem \ref{thm.prop.inv-calc}
  to choose $Q \in \iPS{-m}(\manif)$ with $\sigma_{-m}(Q) = b$.
  This gives $R_1 := 1 - PQ, R_2 := 1 - QP \in \iPS{-1}(\manif)$. Then (4) follows
  from the asymptotic completeness, as in the classical case, applied to the
  Neumann series of $1 - \sigma_{-1}R_j$, $j = 1, 2$ (that is, using Remark
  \ref{rem.asympt.compl}).
\end{proof}

\begin{remark}\label{rem.not.subset}
The algebra $\ePS{\infty}(\manif)$ is \emph{not} a subset of the {\em $b$-calculus} introduced by
Melrose \cite{MelroseActa, MelroseAPS} and  Schulze \cite{SchulzeBook91, Schulze}, contrary to what
was stated in \cite{Mitrea-Nistor}. This is because the decay at infinity for the kernels of the
regularizing operators $R \in \ePS{-\infty}(\manif)$ with zero limit operator (i.e. $\In (R) = 0$)
is only faster than any polynomial, whereas the decay is exponential for the
analogous operators in the $b$-calculus. The algebra $\ePS{\infty}(\manif)$ is, rather,
a subset of the $c$-calculus of Melrose, as seen from Lemma 2.6 of \cite{Mitrea-Nistor}.
\end{remark}

\subsection{Translation invariance and the Fourier transform}
The translation invariance of the limit operators allows us
to  consider the one-dimensional Fourier transform as in the case of the
$b$-calculus. In this section, we recall this construction.
We do not include proofs, since they are the same as those for the $b$-calculus.
Let $\link$ be a smooth, compact, boundary less manifold (i.e. a closed manifold),
as before. We denote as usual
\begin{equation} \label{Fourier}
    \maF :L^2\big(\link \times \RR; E\big)\to L^2\big(\link \times \RR; E\big),\quad
    \maF(f)(y,\tau) \ede  \int _{\RR} e^{-\imath\tau x} f(y,x)dx\,,
\end{equation}
with $\imath ^2=-1$.

\begin{remark}\label{rem.Fourier}
Let $Q \in \ePSsus{m} {\link} = \ePS {m}(\link \times \RR)^{\RR}$. Since $Q$ is
invariant with respect to the action of $\RR$ by translations, the transformed operator
$\maF Q \maF^{-1}$ commutes with the multiplication operators in $\tau \in \RR$.
Therefore, $\maF Q \maF^{-1}$ is a {\it decomposable operator} in the sense that there
exist operators
\begin{equation*} \label{operator-hat-P}
  \widehat{Q}(\tau) : \CIc (\link) \to L^2(\link)
\end{equation*}
such that
\[
  \maF \widetilde{Q} \maF^{-1}(f)(x,\tau) \seq \big[ \widehat{Q}(\tau) f_\tau \big ](x)\,,
\]
for all $f \in \CIc(\link \times \RR)$ and where $f_\tau(x) := f(x, \tau)$.
Working in local coordinates, we see that $$\widehat{Q}(\tau) \in \Psi^m(\link)\,,$$
in particular, $\widehat{Q}(\tau)$ are classical pseudodifferential
operators \cite{MelroseAPS, SchulzeBook91}.
\end{remark}

This construction is useful for checking the invertibility \cite{MazzeoEdge,
MazzeoMelroseAsian, Schulze}.

\begin{remark}\label{rem.invert}
  An operator $Q:H^s(\link \times \RR)\to H^{s-m}(\link \times \RR)$ with
  $Q \in \ePSsus{m} {\link}$
  is invertible if, and only if, the operators $\widehat{Q}(\tau):
  H^s(\link)\to H^{s-m}(\link)$ are invertible for all $\tau \in {\RR}$.
\end{remark}

We let $t$ denote the second coordinate function on
the half-infinite cylinder $\partial \manif_0 \times (-\infty, 0]$ and we extend it to
a smooth function (which we also denote by $t$) on $\manif = \manif_0 \cup (\partial \manif_0 \times (-\infty, 0])$
such that $t \ge 0$ on $\manif_0$. Then, $\partial \manif_0 \times (-\infty, 0) = \{t < 0\}$.

\begin{lemma}\label{lemma.def.indicial2}
  If $P\in \ePS {m}(\manif)$ and $u \in \CIc(\pa \manif_0 \times \RR)$, then, for $s$
  large enough, $P\Phi_s(u)$ is defined and we have the following norm convergence
  in $L^2(\pa \manif_0 \times \RR)$,
  \begin{equation*} 
    \lim_{s \to \infty} \Phi _{-s}\big [ P\Phi_s(u)\vert_{\{t < 0\}} \big]
    \ = :\ \widetilde{P}(u) \in L^2(\pa \manif_0 \times \RR)\,,
  \end{equation*}
  and the resulting operator satisfies
  $\In (P) := \widetilde{P} \in \ePSsus{m} {\pa \manif_0}$.
\end{lemma}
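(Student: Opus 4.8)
The plan is to decompose $P \in \ePS{m}(\manif)$ according to Definition \ref{def.ess.one}, writing $P = P_{\inv} + s_0(R_{\pa}) + R_{\maS}$ with $P_{\inv} \in \iPS{m}(\manif)$, $R_{\pa} \in \ePSsus{-\infty}{\pa \manif_0}$, and $R_{\maS} \in \Psi_{\maS}^{-\infty}(\manif)$, and then treat each of the three summands separately. For the first summand, $\In(P_{\inv})$ already exists in $\iPSsus{m}{\pa \manif_0} \subset \ePSsus{m}{\pa \manif_0}$ by Lemma \ref{lemma.def.indicial}, and there the convergence $\Phi_{-s} P_{\inv} \Phi_s(u)$ is in fact \emph{eventually constant} in $s$ (once $\Phi_s(u)$ is supported far enough into the cylindrical end), so it certainly converges in $L^2(\pa \manif_0 \times \RR)$. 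Thus the real content is to show that $s_0(R_{\pa})$ contributes $R_{\pa} \in \ePSsus{-\infty}{\pa \manif_0}$ to the limit, and that $R_{\maS}$ contributes $0$.

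First I would handle the $\Psi_{\maS}^{-\infty}(\manif)$ term. Since $R_{\maS}$ has distribution kernel $k_{R_{\maS}} \in \maS(\manif \times \manif)$, I would estimate $\|\Phi_{-s}[R_{\maS}\Phi_s(u)|_{\{t<0\}}]\|_{L^2(\pa \manif_0 \times \RR)}$ directly: translating $u$ deep into the end forces the relevant part of $k_{R_{\maS}}$ to live where \emph{both} arguments have $\rho$ large, and the rapid ($\rho^{-k}$-type) decay in both variables of a Schwartz kernel makes this norm $\to 0$ as $s \to \infty$. This is exactly the kind of computation flagged as ``standard'' in Remark \ref{rem.proof} — integrability of $\rho^{-1}$ on $\manif$ and the ability to use $L^\infty$ bounds on the kernel. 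Next, for $s_0(R_{\pa}) = \eta R_{\pa} \eta$: for $s$ large, $\Phi_s(u)$ is supported where $\eta \equiv 1$, so $s_0(R_{\pa})\Phi_s(u) = \eta R_{\pa}\Phi_s(u)$, and since $R_{\pa}$ is genuinely translation invariant, $\Phi_{-s} R_{\pa} \Phi_s(u) = R_{\pa}(u)$ exactly. The only error is $(\eta - 1)R_{\pa}\Phi_s(u)$, restricted to $\{t < 0\}$ and translated back; its $L^2$-norm equals the norm of $(\eta(\,\cdot\, + \text{shift}) - 1) R_{\pa}(u)$ evaluated on a region receding to $-\infty$, which $\to 0$ because $R_{\pa}(u) \in \maS(\pa \manif_0 \times \RR)$ (by Lemma \ref{lemma.comp.inv}, or directly since $C_{R_{\pa}} \in \maS$ convolved with compactly supported $u$ lies in $\maS$) and Schwartz functions have vanishing $L^2$-mass at infinity. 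Combining the three pieces, the limit exists in $L^2(\pa \manif_0 \times \RR)$ and equals $\In(P_{\inv}) + R_{\pa}$.

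It remains to identify the limit as an element of $\ePSsus{m}{\pa \manif_0} = \iPSsus{m}{\pa \manif_0} + \ePSsus{-\infty}{\pa \manif_0}$: the computation above shows $\In(P) = \In(P_{\inv}) + R_{\pa}$ with $\In(P_{\inv}) \in \iPSsus{m}{\pa \manif_0}$ and $R_{\pa} \in \ePSsus{-\infty}{\pa \manif_0}$, so $\In(P) \in \ePSsus{m}{\pa \manif_0}$ as required. One should also remark that this is well-defined, i.e. independent of the chosen decomposition of $P$; this follows because two decompositions differ by an element of $\iPS{m}(\manif) \cap \ePS{-\infty}(\manif)$ on which both constructions ($\In$ on $\iPS{}$ and the Schwartz/convolution-kernel bookkeeping) agree, or more simply because the $L^2$-limit defining $\widetilde{P}$ is manifestly intrinsic to $P$. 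The main obstacle I anticipate is the first estimate — showing the $\Psi_{\maS}^{-\infty}(\manif)$ contribution vanishes in the limit — since one must be careful that translating $u$ into the end does push the kernel into the regime of joint rapid decay, rather than merely into a region where one variable is large; but the product weight $\rho(x)^{-k}\rho(y)^{-k}$ built into the definition of $\maS(\manif \times \manif)$ is precisely what makes this work, and it is the same argument used implicitly in Lemma \ref{lemma.comp.alg}(4).
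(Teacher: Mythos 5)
Your proposal is correct and follows essentially the same route as the paper's proof: decompose $P$ into its $\iPS{m}$ part (where Lemma \ref{lemma.def.indicial} gives an eventually constant limit), a $\Psi_{\maS}^{-\infty}(\manif)$ part (which tends to $0$ by the Schwartz decay of its kernel), and an $s_0(R_{\pa})$ part (where translation invariance gives $R_{\pa}(u)$ exactly, up to an error $\Phi_{-s}(1-\eta)\Phi_s R_{\pa}u \to 0$ because $R_{\pa}u$ is Schwartz). The only addition beyond the paper's argument is your remark on independence of the decomposition, which is a harmless and correct observation since the $L^2$-limit is intrinsic to $P$.
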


\begin{proof}
  As in Lemma \ref{lemma.def.indicial}, for $s$ large, $\Phi_s(u)$ has support in
  $\pa \manif_0 \times (-\infty,-2) \subset \manif$, and hence $\eta \Phi_s(u) = \Phi_s(u)$
  and $P \Phi_s(u)$ is defined
  for the same reasons as   in the proof of Lemma \ref{lemma.def.indicial}. We may assume
  $E = F$. The result is known for   $P\in \iPS {m}(\manif)$ from the same lemma. We can thus assume
  that $P \in \ePS {-\infty}(\manif)$. Let us assume first that $P \in \Psi_\maS^{-\infty}(\manif)$.
  Then $\lim_{s \to \infty} P\Phi_s(u) = 0$ in $L^2(\manif)$ from the form
  of the distribution kernel of $P$. Let us assume next that $P = s_0(P_0) = \eta P_0 \eta$
  with $P_0 \in \ePS {-\infty}(\pa \manif_0 \times \RR)^\RR$. Then
  $\Phi_{-s} P_0 \eta \Phi_s(u) = P_0 u$ is independent of $s$ for $s$ large.
  We have (still for $s$ large)
  \begin{equation*}
    \Phi_{-s} P_0 \eta \Phi_s(u) - \Phi_{-s} \eta P_0 \eta \Phi_s(u)
    \seq \Phi_{-s} (1 - \eta) \Phi_s P_0 u \to 0 \ \mbox{ for }
    \ s \to \infty\,,
  \end{equation*}
  again from the form of the convolution kernel. This completes the proof.
\end{proof}

Using the Fourier transform for $Q = \In(P)$ we obtain Melrose's indicial operators.

\begin{definition}\label{def.indicial}
If $P \in \ePS{m}(\manif)$, then the family
$$\{\widehat{P}(\tau)\}_{\tau \in \RR} \ede \{\widehat{\In(P)}(\tau)\}_{\tau \in \RR}
\ede \{\maF \In(P) \maF^{-1}(\tau)\}_{\tau \in \RR}$$
is called the {\it indicial family} of $P$. The individual operators
$\widehat{P}(\tau)$ are called \emph{indicial operators} of $P$.
\end{definition}

As with the limit operators, the indicial operators turn out to be multiplicative.

\begin{theorem}\label{thm.complet.indicial}
  Let $\manif = \manif_0 \cup (\pa \manif_0 \times (-\infty, 0])$ be a manifold with 
  cylindrical ends. Let $m , m' \in \RR$.
  \begin{enumerate}
    \item Let $P \in \ePS m (\manif)$ and $Q \in \ePS {m'} (\manif)$. Then
    $$\In (PQ) \seq \In (P)\In(Q) \ \mbox{ and } \ \In(P^*) = \In(P)^*\,.$$

    \item For $P$ and $Q$ as in $(1)$, their indicial familiess (Definition 
    \ref{def.indicial}) satisfy
    \begin{equation*}
      \widehat{QP}(\tau) \seq \widehat{Q}(\tau) \widehat{P}(\tau)\,, \quad \tau \in \RR\,.
    \end{equation*}

    \item  We have $\In \circ s_0 = id$ on $\ePSsus{m} {\pa \manif_0}$,
    and hence the map $\In : \ePS{m}(\manif) \to \ePSsus{m} {\pa \manif_0}$ is surjective
    with kernel $\Psi_\maS^{-\infty}(\manif) + \Psi_{comp}^{m}(\manif)$.

    \item For any $T \in \ePS{m}(\manif)$, we have
    $$\sigma_m^\RR(\In(T)) \seq \maR_{\infty}(\sigma_m(T))\,.$$
  \end{enumerate}
\end{theorem}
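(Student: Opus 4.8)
The plan is to deduce all four statements from part (3) together with one composition lemma for the section $s_0$, after which (1), (2) and (4) are essentially formal. Throughout I would use that $\In\colon\ePS{m}(\manif)\to\ePSsus{m}{\pa\manif_0}$ is a well-defined \emph{linear} map (Lemma \ref{lemma.def.indicial2}) and the decomposition $\ePS{m}(\manif)\seq s_0(\ePSsus{m}{\pa\manif_0})+\Psi_{comp}^m(\manif)+\Psi_\maS^{-\infty}(\manif)$ coming from Definition \ref{def.ess.one}, Theorem \ref{thm.prop.inv-calc.in}(2) and Remark \ref{rem.incl.tow}. \emph{Part (3) first.} That $\In\circ s_0=\mathrm{id}$ on $\ePSsus{m}{\pa\manif_0}$ is checked on the building blocks and extended by linearity of $\In$: for $T\in\iPSsus{m}{\pa\manif_0}$ it is Lemma \ref{lemma.onto}(3), and for $T\in\ePSsus{-\infty}{\pa\manif_0}$ it is the convergence $\Phi_{-s}s_0(T)\Phi_s u\to Tu$ in $L^2$ already established in the proof of Lemma \ref{lemma.def.indicial2}. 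Since $\In$ moreover vanishes on $\Psi_{comp}^m(\manif)$ (Theorem \ref{thm.prop.inv-calc.in}(1)) and on $\Psi_\maS^{-\infty}(\manif)$ (again from the proof of Lemma \ref{lemma.def.indicial2}), applying $\In$ to the displayed decomposition shows that $\In$ is onto with section $s_0$ and with kernel exactly $\Psi_{comp}^m(\manif)+\Psi_\maS^{-\infty}(\manif)$; in particular each $P\in\ePS{m}(\manif)$ can be written $P=s_0(\In P)+R_P$ with $R_P\in\ker\In$.

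\emph{Part (1).} I would expand $PQ=s_0(\In P)s_0(\In Q)+s_0(\In P)R_Q+R_Ps_0(\In Q)+R_PR_Q$ using the above decomposition. The three cross terms lie in $\Psi_{comp}^{m+m'}(\manif)+\Psi_\maS^{-\infty}(\manif)=\ker\In$ by the standard kernel arguments of Remark \ref{rem.proof}, using Lemma \ref{lemma.comp.maS}, Lemma \ref{lemma.comp.alg}(2), Corollary \ref{cor.comp.alg}(1) and the stability of $\Psi_{comp}$ under left/right multiplication by properly supported operators. It then remains to prove $s_0(A)s_0(B)-s_0(AB)\in\ker\In$ for $A=\In P\in\ePSsus{m}{\pa\manif_0}$ and $B=\In Q\in\ePSsus{m'}{\pa\manif_0}$, where $AB\in\ePSsus{m+m'}{\pa\manif_0}$ by Theorem \ref{thm.prop.inv-calc}(1), Lemma \ref{lemma.comp.inv}(1) and Lemma \ref{lemma.comp.alg}(1). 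Here the identity $s_0(A)s_0(B)=\eta A\eta^2 B\eta=\eta A(\eta^2-1)B\eta+s_0(AB)$ reduces the claim to $\eta A(\eta^2-1)B\eta\in\Psi_{comp}^{m+m'}(\manif)+\Psi_\maS^{-\infty}(\manif)$, which one checks by splitting $A,B$ into their $\iPSsus{}{}$- and $\ePSsus{-\infty}{}{}$-parts: $\eta^2-1$ is supported in $\{t\ge-2\}$, a set with \emph{compact closure in $\manif$ since $\manif_0$ is compact}, so the finite propagation of the properly supported parts and the rapid decay of the convolution kernels of the $\ePSsus{-\infty}{}$-parts confine the composed kernel to a compact set, respectively make it Schwartz. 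Hence $\In(PQ)=\In(s_0(AB))=AB=\In(P)\In(Q)$. The adjoint identity is the same argument: $P^*=s_0(A)^*+R_P^*=s_0(A^*)+R_P^*$ with $A^*\in\ePSsus{m}{\pa\manif_0}$ (the $\ePSsus{}{}$-calculus is $*$-closed) and $R_P^*\in\ker\In$, so $\In(P^*)=A^*=\In(P)^*$.

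\emph{Parts (2) and (4).} Statement (2) is then immediate: by (1), $\widehat{QP}(\tau)=\widehat{\In(QP)}(\tau)=\widehat{\In(Q)\In(P)}(\tau)$, and since the $\RR$-Fourier transform turns composition of translation-invariant operators into fibrewise composition of the decomposable operators of Remark \ref{rem.Fourier}, and the operators $\widehat{(\,\cdot\,)}(\tau)$ depend continuously on $\tau$, this equals $\widehat{Q}(\tau)\widehat{P}(\tau)$ for every $\tau\in\RR$. For (4) I would write $T=T_i+R$ with $T_i\in\iPS{m}(\manif)$ and $R\in\ePS{-\infty}(\manif)\subset\Psi^{-\infty}(\manif)$ (Theorem \ref{thm.prop.ePS}(3)); then $\sigma_m(T)=\sigma_m(T_i)$, while $\In(T)=\In(T_i)+\In(R)$ with $\In(R)\in\ePSsus{-\infty}{\pa\manif_0}\subset\Psi^{-\infty}$, so $\sigma_m^\RR(\In(T))=\sigma_m^\RR(\In(T_i))=\maR_{\infty}(\sigma_m(T_i))=\maR_{\infty}(\sigma_m(T))$, the middle equality being Proposition \ref{prop.onto.inv.symb}(1). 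The main obstacle is the composition lemma $s_0(A)s_0(B)-s_0(AB)\in\ker\In$, together with the parallel verification that the cross terms land in $\ker\In$: although these are advertised as ``standard'' in Remark \ref{rem.proof}, they carry the real work, requiring careful tracking of supports, the finite propagation speed of properly supported pseudodifferential operators, and the interplay between compactly supported and Schwartz-type kernels; everything else is formal bookkeeping on results already established.
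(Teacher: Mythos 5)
Your proof is correct, and for part (4) it coincides exactly with the paper's argument (decompose $T = T_i + R$, use that both $\sigma_m$ and $\sigma_m^{\RR}\circ\In$ kill the smoothing part, and invoke Proposition \ref{prop.onto.inv.symb}(1)). For parts (1)--(3) the paper offers no argument at all beyond ``follow directly from the definitions,'' so what you have done is supply the missing content: you route everything through the three-term decomposition $\ePS{m}(\manif) = s_0(\ePSsus{m}{\pa \manif_0}) + \Psi_{comp}^m(\manif) + \Psi_\maS^{-\infty}(\manif)$ and the composition lemma $s_0(A)s_0(B) - s_0(AB) = \eta A(\eta^2-1)B\eta \in \ker\In$, rather than manipulating the defining limit $\lim_s \Phi_{-s}PQ\Phi_s u$ directly (which would require justifying an interchange of limits and products). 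This is a sound and arguably cleaner way to make the claim precise, and it correctly identifies where the real work sits; note also that the order $-\infty$ case of your composition lemma is already Lemma \ref{lemma.comp.inv}(4) of the paper, so you are extending a stated result rather than inventing one. One small imprecision: the compactness you actually need in the term $\eta A(\eta^2-1)B\eta$ is not that of $\supp(\eta^2-1)$ in $\manif$ (on the cylinder $\pa\manif_0\times\RR$, where $A$ and $B$ live, that support is the non-compact set $\pa\manif_0\times[-2,\infty)$), but rather the compactness of $\supp(\eta^2-1)\cap\{t\le -1+\varepsilon\}$ forced by the outer cutoffs together with the near-diagonal support, respectively the rapid off-diagonal decay, of $A$ and $B$ --- which is exactly the mechanism you invoke in the same sentence, so this is a matter of phrasing, not a gap.
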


\begin{proof}
  The points (1) and (2) and the relation $\In \circ s_0 = id$ 
  follow directly from the definitions.
  The relation $\In \circ s_0 = 1$ yields the surjectivity of $\In$. The relation (4)
  is true if $T \in \ePS{-\infty}(\manif)$ (both maps vanish on this space). It is also 
  true for $T \in \iPS{m}(\manif)$, by Proposition \ref{prop.onto.inv.symb}(1). The 
  result then follows from $\ePS{m}(\manif) := \iPS{m}(\manif) + \ePS{-\infty}(\manif).$ 
\end{proof}

\subsection{Including vector bundles}

As in the previous section, it is easy to include vector bundles.

\begin{remark} \label{rem.vector.bundles}
  Let $E, F \to \manif$ be
  vector bundles with embedding in a trivial bundle $\CC^N \to \manif$
  and which correspond to projections with respect to self-adjoint
  matrix projections $e, f \in M_N(\CI_{\inv}(\manif))$. Then
  \begin{equation*}
    \begin{gathered}
      H^s(\manif; E) \ede e (H^s(\manif))^N\\
      \iPS{m}(\manif; E, F) \ede e M_N(\iPS{m}(\manif)) f \ \mbox{ and }\\
      \ePS{m}(\manif; E, F) \ede e M_N(\ePS{m}(\manif)) f
    \end{gathered}
  \end{equation*}
  and similarly for other definitions.
\end{remark}

The results above remain valid for operators acting on vector bundles, after
suitable modifications, some of which are explained in the following remarks.
For simplicity, we shall assume for the rest of this section $E = F$. The general result is obtained
by replacing $E$ with $E \oplus F$ and taking a corner of the resulting algebra.

\begin{remark}\label{rem.prop.kernel.descr}
  Proposition \ref{prop.kernel.descr}
  remains true and yields an operator in $\iPS{m}(\manif; E)$, provided that one
  modifies the statement as follows (see Remark \ref{rem.boxtimes}): $a_j \in \CI_{inv}(S\manif; \End(E))$, we take
  $k_P\in \CI( \manif \times \manif \smallsetminus \delta_M; E \boxtimes E')$,
  where $p_1, p_2 : \manif \times \manif \to \manif$ are the two projections,
  and we include the parallel transport $\exp^E$ on $E$ in the estimate \eqref{item.cond.three}
  by replacing $k_P(p(\xi), \exp(t \xi))$ by $k_P(p(\xi), \exp(t \xi)) \circ \exp^E(-t \xi)$.
  If $-m \in \NN$, one has to also include terms of the form $b_j(\xi) \ln t$, where $b_j$
  is the restriction of a polynomial function of degree $-n-m+j$ translation invariant
  at infinity, see \cite{Taylor2}[Proposition 2.8].
\end{remark}

Also, we notice that, for $R$ large enough, we can extend the maps $\Phi_s$
to sections of the vector bundle $E$
and of other bundles that are compatible with the straight cylindrical ends
structure on $\manif$ (for instance, the tangent, cotangent, and tensor bundles).

\section{\ADN-elliptic operators}
\label{sec.six}

In view of our planned applications to the Stokes operator, we will work
with \ADN-elliptic operators, which are recalled in the first subsection
of this section.
The second subsection contains some first applications of the results of
the previous subsections to regularity and self-adjointness of our operators.

\subsection{Definition of Douglis-Nirenberg ellipticity}
We recall now the definition of \ADN-elliptic operators in a
more general setting than that of manifolds with cylindrical ends.

\begin{definition}
  \label{def.ADN}
  Let $M$ be a smooth manifold, $E_i \to M$, $F_j \to M$
  be vector bundles, $E:=E_1\oplus \cdots \oplus E_k$, $F:=F_1\oplus \cdots \oplus F_k$,
  and $s_i, t_j\in \RR$, $i, j = 1,\ldots, k$. Some of the bundles $E_i$ or $F_j$
  may be zero. We let
  \begin{equation*}
    \Psi^{[\mathbf{s+t}]}(M; E, F) \ede
    \{ T=[T_{ij}] \mid\, T_{ij} \in \Psi^{s_i + t_j}(M; E_j, F_i),\ i, j = 1, \ldots, k \}\,.
  \end{equation*}
  An operator $T = [T_{ij}] \in \Psi^{[\mathbf{s+t}]}(M; E, F)$ is said to be of
  \emph{\ADN-order $\le [\mathbf{s+t}]$}.
\end{definition}

Many of the usual definitions extend to the \ADN-framework in a straightforward way.

\begin{definition}\label{def.ADN2}
  We use the notation of Definition \ref{def.ADN}. Let $T = [T_{ij}] \in \Psi^{[\mathbf{s+t}]}(M; E, F)$.
  \begin{enumerate}
    \item The \emph{principal symbol} of $T$ is the matrix
    $$\Symb(T) \ede [\sigma _{s_i+t_j}(T_{ij})]\,.$$

    \item The operator $T$ is said to be {\it \ADN-elliptic} if its principal symbol matrix
    $\Symb(T)$ is invertible on $S^*M$.

    \item If $M$ is Riemannian and $E_i$ are Hermitian, we let
    \begin{equation*}
      H^{m+[\mathbf{t}]}(M; E) \ede \displaystyle \bigoplus_{j=1}^k
      H^{m+t_j}(M; E_j) \,.
    \end{equation*}
    Analogously, we let $H^{m-[\mathbf{s}]}(M; F) \ede \displaystyle \oplus_{i=1}^k
    H^{m-s_i}(M; F_i)$.
  \end{enumerate}
\end{definition}

The most prominent example of an \ADN-elliptic operator is the Stokes operator.
See also \cite[p. 334]{Wl-Ro-La} in the case of compact Riemannian manifolds.

\begin{definition}\label{def.PsiADN}
  Let us assume in the above definition that $M = \manif$ is a manifold with
  straight cylindrical ends and that $E_i, F_j \to \manif$ are compatible (with the
  straight cylindrical ends structure) Hermitian vector bundles. We let
  \begin{multline*}
    \ePS{[\mathbf{s+t}]}(\manif; E) \ede \ePS{\infty}(\manif; E, F) \cap \Psi^{[\mathbf{s+t}]}(\manif; E, F)\\
    \seq \{ T=[T_{ij}] \mid\, T_{ij} \in \ePS{s_i + t_j}(\manif; E_j, F_i),\ i, j = 1, \ldots, k \}\,.
  \end{multline*}
\end{definition}

We record the following simple facts for further use.

\begin{remark}\label{rem.bounded.ADN}
  We keep the notation of Definiton \ref{def.PsiADN}.
  \begin{enumerate}
    \item The mapping properties of operators in $\ePS{\infty}$ (Theorem \ref{thm.prop.ePS}(3))
    show that any $T = [T_{ij}] \in \ePS{[\mathbf{s+t}]}(\manif; E, F)$ induces a continuous map
    \begin{multline*}
      T \seq [T_{ij}] : H^{m+[\mathbf{t}]}(\manif; E) \ede \displaystyle \bigoplus_{j=1}^k
      H^{m+t_j}(\manif; E_j) \\ \to \displaystyle \bigoplus_{i=1}^k
      H^{m-s_i}(\manif; F_i) \,=:\, H^{m-[\mathbf{s}]}(\manif; F)\,.
    \end{multline*}

    \item Let us assume $P = [P_{ij}]$ with $P_{ij} \in \ePS{s_i + t_j}(\manif; E_j, F_i)$ and
    $Q = [Q_{\ell i}]$ with $Q_{\ell i} \in \ePS{r_\ell - s_i}(\manif; F_i, G_\ell)$,
    then $QP \in \ePS{[\textbf{r + t}]}(\manif; E, G)$, that is $QP = [R_{\ell j}]$ with
    $R_{\ell j} \in \ePS{r_\ell + t_j}(\manif; E_j, G_\ell)$

    \item If $P$ and $Q$ are as in (2), then
    the principal symbols map $\Symb$ is $*$-multiplicative (in the usual sense that
    $\Symb(QP) = \Symb(Q)\Symb(P)$ and $\Symb(P^*) = \Symb(P)^*$).

    \item The principal symbol $\Symb(P) = [\sigma_{s_i + t_j}(P_{ij})]$ has components that are
    homogeneous of different degrees: $\sigma(P_{ij})(\lambda\xi) = \lambda^{s_i + t_j}\sigma(P_{ij})(\xi)$.
    Let $\lambda^{[\mathbf{s}]}$ be the diagonal matrix with entries $\lambda^{s_i}$ and
    $\lambda^{[\mathbf{t}]}$ be the diagonal matrix with entries $\lambda^{t_j}$. Then, for $\lambda > 0$,
    we have
    \begin{equation*}
      \Symb(P)(\lambda \xi) \seq [\sigma(P_{ij})(\lambda \xi)]
      \seq \lambda^{[\mathbf{s}]} [\sigma(P_{ij})(\xi)] \lambda^{[\mathbf{t}]}\,.
    \end{equation*}
    It follows that $\Symb(P)(\lambda\xi)$ is invertible if, and only if,
    $\Symb(P)(\xi)$ is invertible (for $\lambda > 0$). Moreover, if $P$ is \ADN-elliptic
    (i.e. $\Symb(P)(\xi)$ is invertible for all $\|\xi\|=1$, Definition \ref{def.ADN}),
    then $\Symb(P)^{-1}$ is the symbol of an \ADN-elliptic operator of order $[\textbf{-t - s}]$.

    \item Similarly, the normal (or limit) operator map $\In$ yields a $*$-multiplicative map
    \begin{equation*}
      \In : \ePS{[\mathbf{s+t}]}(\manif; E, F) \to \ePSsus{[\mathbf{s+t}]} {\pa \manif_0; E, F}
      \ede \ePS{[\mathbf{s+t}]}(\pa \manif_0 \times \RR; E, F)^\RR
    \end{equation*}
    that is compatible with the involutions (i.e. adjoints).

    \item Moreover, the Fourier transform yields $*$-multiplicative maps
    \begin{equation*}
      \ePS{[\mathbf{s+t}]}(\manif; E, F) \ni [T_{ij}] \to [\widehat T_{ij}(\tau)]
      \in \Psi^{[\mathbf{s+t}]}(\pa \manif_0; E, F)\,,
    \end{equation*}
    by Theorem \ref{thm.complet.indicial} (see also Definition \ref{def.indicial}).
  \end{enumerate}
\end{remark}

This remark will be useful when proving some of our main general results in
Section \ref{sec.SPInvFredholm}. As a rule, we will try to formulate our subsequent
results in the framework of \ADN-elliptic operators.

\subsection{Consequences of regularity}
We now formulate some regularity results and some of their consequences. These results
are very well known in the classical case of closed (i.e. boundaryless, compact) manifolds.
In this section, we formulate them for manifolds with straight cylindrical ends
and essentially translation invariant operators.
These results, while classical in the spirit, are valid also in more general settings
(see, for instance, \cite{Grosse-Kohr-Nistor-23}). However, a novel feature here is that we formulate
our results for \ADN-elliptic operators. We thus keep the notation of the previous subsection,
in particular, that of the Definition \ref{def.ADN}, so $s_i, t_j \in \RR$, $i, j = 1, \ldots, k$,
$E := E_1 \oplus E_2 \oplus \ldots E_k$, and $F := F_1 \oplus E_2 \oplus \ldots F_k$ are given
and fixed throughout this subsection.

\begin{proposition} \label{prop.regularity}
  Let $m, m' \in \RR$ and $P \in \ePS{[\mathbf{s+t}]}(\manif; E, F)$ be \ADN-elliptic.
  Let $u \in H^{m'-[\mathbf{s}]}(\manif; E)$ be such that $P u \in H^{m-[\mathbf{s}]}(\manif; F)$.
  Then $u \in H^{m+[\mathbf{t}]}(\manif; E)$.
\end{proposition}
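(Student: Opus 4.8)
The plan is to run the standard elliptic regularity argument, adapted to the \ADN-framework and to the essentially translation invariant calculus. First, since $P \in \ePS{[\mathbf{s+t}]}(\manif; E, F)$ is \ADN-elliptic, its principal symbol matrix $\Symb(P)$ is invertible on $S^*\manif$; by Remark \ref{rem.bounded.ADN}(4), $\Symb(P)^{-1}$ is the principal symbol of an \ADN-elliptic operator of order $[\mathbf{-t-s}]$. Using the surjectivity of the principal symbol in the essentially invariant calculus (Theorem \ref{thm.prop.ePS}(3), which gives the isomorphism $\ePS{m}/\ePS{m-1} \simeq S_{inv}^m/S_{inv}^{m-1}$ componentwise, together with Corollary \ref{cor.lemma.ue} to invert $\Symb(P)$ at the symbol level), I would construct a parametrix $Q = [Q_{j i}] \in \ePS{[\mathbf{-t-s}]}(\manif; F, E)$, i.e. $Q_{ji} \in \ePS{-t_j - s_i}(\manif; F_i, E_j)$, such that
\begin{equation*}
  QP - 1 \, \in \, \ePS{[\mathbf{-t+t-1}]}(\manif; E, E) \, \subset \, \Psi^{-\infty}(\manif; E)\,,
\end{equation*}
that is, $QP = 1 + R$ with $R$ of \ADN-order $[\mathbf{-t+t}] - 1$, whose $(j,\ell)$-entry $R_{j\ell} \in \ePS{t_\ell - t_j - 1}(\manif; E_\ell, E_j)$. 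This is the same Neumann-series construction as in the proof of Theorem \ref{thm.prop.ePS}(4), carried out entry by entry on the block matrix; asymptotic completeness (Remark \ref{rem.asympt.compl}) applied to $1 - \sigma_{-1}(R^{(1)})$ lets us improve the remainder to any negative \ADN-order, so in fact $QP - 1 \in \ePS{-\infty}(\manif; E)$.

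Granting the parametrix, the regularity statement is immediate. Let $u \in H^{m'-[\mathbf{s}]}(\manif; E)$ with $Pu \in H^{m-[\mathbf{s}]}(\manif; F)$. Apply $Q$: since $Q \in \ePS{[\mathbf{-t-s}]}(\manif; F, E)$, by Remark \ref{rem.bounded.ADN}(1) it maps $H^{m-[\mathbf{s}]}(\manif; F) \to H^{m+[\mathbf{t}]}(\manif; E)$, so $QPu \in H^{m+[\mathbf{t}]}(\manif; E)$. On the other hand $QPu = u + Ru$, and $R \in \ePS{-\infty}(\manif; E)$ maps $H^{m'-[\mathbf{s}]}(\manif; E)$ into $H^{M}(\manif; E)$ for every multi-order $M$ by Lemma \ref{lemma.comp.alg}(4) (applied componentwise), in particular into $H^{m+[\mathbf{t}]}(\manif; E)$. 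Hence $u = QPu - Ru \in H^{m+[\mathbf{t}]}(\manif; E)$, as claimed.

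The only genuinely non-routine point is the construction of the \ADN-parametrix $Q$ in the essentially invariant calculus itself, i.e. checking that the matrix Neumann-series construction stays inside $\ePS{\infty}(\manif; E, F)$ at every step. This reduces to: (i) the symbol $\Symb(P)^{-1}$ lies in the right symbol class, which is Corollary \ref{cor.lemma.ue} together with the fact (Remark \ref{rem.bounded.ADN}(4)) that the homogeneity degrees bookkeep correctly; (ii) the quantization/surjectivity of $\sigma_m$ on $\iPS{m} \subset \ePS{m}$ from Theorem \ref{thm.prop.inv-calc}(2), applied blockwise; and (iii) stability of $\ePS{\infty}$ under products, Theorem \ref{thm.prop.ePS}(1), so that the correction terms and their sums remain in the calculus — with asymptotic completeness, Remark \ref{rem.asympt.compl}, used to sum the series. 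All three are already available, so the main obstacle is really just organizing the block-matrix bookkeeping of the \ADN-orders; once that is set up, the argument is the classical one.
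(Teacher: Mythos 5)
Your proposal is correct and follows essentially the same route as the paper: build an \ADN-parametrix $Q \in \ePS{[\mathbf{-t-s}]}(\manif; F, E)$ from $\Symb(P)^{-1}$, improve the remainder $R$ by a Neumann series, and then use the mapping properties of $Q$ and $R$ to write $u = QPu - Ru \in H^{m+[\mathbf{t}]}(\manif; E)$; the paper does exactly this, except that it stops the Neumann series at a finite order $N \ge \max\{m + s_i + t_j - m', 1\}$ (enough for $Ru \in H^{m'+N-[\mathbf{s}]} \subset H^{m+[\mathbf{t}]}$) rather than summing to a remainder in $\ePS{-\infty}$, a purely cosmetic difference. One slip: your displayed inclusion $\ePS{[\mathbf{-t+t-1}]}(\manif; E, E) \subset \Psi^{-\infty}(\manif; E)$ is false as written (an order $-1$ remainder is not smoothing), though your subsequent Neumann-series/asymptotic-completeness step supplies the correct justification, so this is a typo rather than a gap.
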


\begin{proof}
  The ellipticity hypothesis ensures the existence of
  $Q \in \ePS{[-t-s]}(\manif; F, E)$ such that $\Symb(Q) = \Symb(P)^{-1}$ on
  $T^*M \smallsetminus 0$. Since $\Symb$ is multiplicative, we have
  $R := 1 - Q P \in \ePS{-1}(\manif; E)$. Let $N \ge \max \{m + s_i + t_j - m', 1\}$.
  By replacing $Q$ with $(1 + R + R^2 + \ldots + R^{N-1})Q$,
  we may assume $R \in \ePS{-N}(\manif; E)$. Then the hypothesis $u \in H^{m'-[\mathbf{s}]}(\manif; E)$ implies
  $R u \in H^{m' + N - [\mathbf{s}]}(\manif; E) \subset H^{m+[\mathbf{t}]}(\manif; E)$, by Theorem \ref{thm.prop.ePS}(2)
  applied to $R$ (see also Remark \ref{rem.bounded.ADN}(1)). The same theorem applied to
  $Q$ (which has \ADN-order $[\textbf{-t-s}]$) gives $QP u \in H^{m+[\mathbf{t}]}(\manif; E)$. We obtain
  \begin{equation*}
    u \seq QP u + Ru \in H^{m+[\mathbf{t}]}(\manif; E)\,.
  \end{equation*}
  This completes the proof.
\end{proof}

The regularity property (Proposition \ref{prop.regularity}) also gives the following result.

\begin{corollary} \label{cor.closed}
  Let us assume that $s_i, t_j \ge 0$, $i,j=1, \ldots, k$,
  and that $P \in \ePS{[\mathbf{s+t}]}(\manif; E, F)$ is \ADN-elliptic. We regard $P$
  as an \emph{unbounded} operator on $H^{m-[\mathbf{s}]}(\manif; F)$ with domain
  $H^{m+[\mathbf{t}]}(\manif; E)$. Then $P$ is closed.
\end{corollary}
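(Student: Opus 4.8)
The plan is to show that $P$ is closed as an unbounded operator on $H^{m-[\mathbf{s}]}(\manif; F)$ with domain $D(P) := H^{m+[\mathbf{t}]}(\manif; E)$, i.e.\ that whenever $u_n \in D(P)$ with $u_n \to u$ in $H^{m-[\mathbf{s}]}(\manif; F)$ (wait—$u_n$ lives in $E$; the convergence $u_n \to u$ should be in $H^{m-[\mathbf{s}]}$-type norm appropriate to the domain's image) and $P u_n \to v$ in $H^{m-[\mathbf{s}]}(\manif; F)$, then $u \in D(P)$ and $Pu = v$. First I would recall that, since $s_i, t_j \ge 0$, we have continuous inclusions $H^{m+[\mathbf{t}]}(\manif; E) \hookrightarrow H^{m-[\mathbf{s}]}(\manif; E)$ (because $m + t_j \ge m \ge m - s_i$ componentwise after matching indices — more precisely one uses that each summand $H^{m+t_j}$ embeds continuously into $H^{m-s_j}$ when $t_j + s_j \ge 0$), so the graph norm question makes sense. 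The key analytic input is the elliptic regularity statement, Proposition \ref{prop.regularity}: an \ADN-elliptic $P \in \ePS{[\mathbf{s+t}]}(\manif; E, F)$ has a parametrix $Q \in \ePS{[-t-s]}(\manif; F, E)$ with $QP = 1 + R$, $R \in \ePS{-\infty}(\manif; E)$ after iterating the Neumann series (Theorem \ref{thm.prop.ePS}(4)).

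The main step is then a standard parametrix argument. Using $u_n = QP u_n - R u_n$, I would write $u_n = Q(Pu_n) - R u_n$. Since $Pu_n \to v$ in $H^{m-[\mathbf{s}]}(\manif; F)$ and $Q$ maps $H^{m-[\mathbf{s}]}(\manif; F)$ continuously to $H^{m+[\mathbf{t}]}(\manif; E)$ (by Remark \ref{rem.bounded.ADN}(1), since $Q$ has \ADN-order $[-\mathbf{t}-\mathbf{s}]$), the first term $Q(Pu_n)$ converges in $H^{m+[\mathbf{t}]}(\manif; E)$ to $Qv$. For the second term, $R \in \ePS{-\infty}(\manif; E)$ is smoothing, so by Lemma \ref{lemma.comp.alg}(4) it maps $H^{m-[\mathbf{s}]}(\manif; E) \to H^{m+[\mathbf{t}]}(\manif; E)$ continuously; since $u_n \to u$ in $H^{m-[\mathbf{s}]}(\manif; E)$ (using the embedding of the domain into this space and convergence there — which follows because a graph-norm Cauchy sequence is in particular Cauchy in the ambient space $H^{m-[\mathbf{s}]}(\manif; F)$, and one needs the hypothesis configured so that convergence in that space also gives convergence of $u_n$ in $H^{m-[\mathbf{s}]}$-type norm on $E$; here the hypothesis of the corollary, that $s_i = $ the indices match, makes $E$ and $F$ norms comparable via $P$'s mapping), we get $R u_n \to R u$ in $H^{m+[\mathbf{t}]}(\manif; E)$. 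Hence $u_n$ converges in $H^{m+[\mathbf{t}]}(\manif; E)$ to $Qv - Ru$; by uniqueness of limits in the weaker topology this limit equals $u$, so $u \in H^{m+[\mathbf{t}]}(\manif; E) = D(P)$. Finally, $P$ is continuous from $D(P)$ with its graph (equivalently Sobolev) norm into $H^{m-[\mathbf{s}]}(\manif; F)$, so $Pu_n \to Pu$, giving $Pu = v$, which is exactly closedness.

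The step I expect to require the most care is not the parametrix manipulation itself — which is routine given the machinery of Sections \ref{ess-trans-inv} and this section — but rather pinning down precisely in which space the sequence $(u_n)$ converges and checking that the hypothesis $s_i, t_j \ge 0$ is used correctly. Specifically, the assertion ``$P$ closed on $H^{m-[\mathbf{s}]}(\manif; F)$ with domain $H^{m+[\mathbf{t}]}(\manif; E)$'' presupposes $H^{m+[\mathbf{t}]}(\manif; E) \hookrightarrow H^{m-[\mathbf{s}]}(\manif; F)$ as spaces (or at least a comparison allowing the graph closure to be tested), which, for the matrix/\ADN\ setup, is where the sign condition on the $s_i, t_j$ enters: it guarantees $H^{m+t_j} \subseteq H^{m-s_j}$ so that the domain embeds into the target. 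Once that bookkeeping is made explicit, the remaining argument is the verbatim ``elliptic operators with parametrices are closed'' lemma, and the only facts invoked are Proposition \ref{prop.regularity}, Theorem \ref{thm.prop.ePS}(2)–(4), Remark \ref{rem.bounded.ADN}(1), and Lemma \ref{lemma.comp.alg}(4).
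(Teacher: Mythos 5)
Your proof is correct and uses essentially the same machinery as the paper: the paper's own argument first notes that $P\xi_n\to P\xi$ in the distributional sense (so $P\xi=\zeta$) and then invokes Proposition \ref{prop.regularity} to conclude $\xi\in H^{m+[\mathbf{t}]}(\manif;E)$, whereas you unfold that regularity proposition into its parametrix form $u_n=Q(Pu_n)-Ru_n$ to obtain convergence of $u_n$ directly in the graph norm. Your bookkeeping about where the sequence converges (and that $s_i,t_j\ge 0$ gives the embedding of the domain into the ambient space) is the right clarification of the statement, and the rest matches the paper's proof in substance.
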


\begin{proof}
  Let $\xi_n \in H^{m+[\mathbf{t}]}(\manif; E)$ and $\xi, \zeta \in H^{m-[\mathbf{s}]}(\manif; E)$ be such that
  $\xi_n \to \xi$ and $P \xi_n \to \zeta$, both convergences being in $H^{m-[\mathbf{s}]}(\manif; E)$.
  Then, since $P$ is continuous in distribution sense, we have $P \xi_n \to P \xi$ in distribution
  sense, and hence $P\xi = \zeta$ in distribution sense. Proposition \ref{prop.regularity}
  for $u = \xi$ and $m' = m$ gives that
  $\xi \in H^{m+[\mathbf{t}]}(\manif; E)$. Hence the graph of $P$ is closed.
\end{proof}

Regularity is also used in the proof of the next result, which states that if
$T\in \ePS{m}(\manif;E)$, $m>0$, is an elliptic symmetric operator on $L^2(\manif;E)$,
then it is essentially self-adjoint, that is, its closure is a self-adjoint operator on $L^2(\manif;E)$.
The proof is standard, see \cite[Corollary 2.2]{Mitrea-Nistor} or
\cite[Lemma 5.1]{Grosse-Kohr-Nistor-23} for instance (it is crucial here that a
manifold with straight cylindrical ends is complete). Here we formulate this result
in slightly greater generality.

\begin{lemma} \label{elliptic-self-adj}
  Let $\manif$ be a manifold with straight cylindrical ends and $E_i \to \manif$ be
  compatible Hermitian vector bundles, $i = 1, \ldots, k$. Assume all $t_j = 0$ and $s_i \ge 0$.
  Let $T = [T_{ij}]\in \ePS{[\textbf{s+0}]}(\manif;E)$, $m \ge 0$, be an \ADN-elliptic operator.
  If $T:\CIc (\manif;E)\to L^2(\manif;E)$ is
  symmetric $($i.e. $(Tf,g)=(f,Tg)$, for all $f,g\in \CIc (\manif;E)${$)$}, then the closure
  of $T$ is self-adjoint on $L^2(\manif; E)$.
\end{lemma}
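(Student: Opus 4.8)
The plan is to follow the standard criterion for essential self-adjointness of a symmetric operator: it suffices to show that the range of $T \pm \imath$ is dense in $L^2(\manif; E)$, or, equivalently (since $T$ has closed range once we know regularity), that $\ker(T^* \mp \imath) = 0$. First I would reduce to the scalar-order situation: by Remark \ref{rem.bounded.ADN}(4), since all $t_j = 0$, the \ADN-elliptic operator $T$ has invertible principal symbol matrix, and $\Symb(T)^{-1}$ is the symbol of an \ADN-elliptic operator of \ADN-order $[\mathbf{-s}]$. By Theorem \ref{thm.prop.ePS}\eqref{item.parametrix} (extended to the \ADN-setting as in Remark \ref{rem.bounded.ADN}), there is a parametrix $Q \in \ePS{[\mathbf{-s}]}(\manif; E)$ with $QT - 1, TQ - 1 \in \ePS{-\infty}(\manif; E)$.

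Next I would carry out the usual elliptic-regularity argument. Suppose $u \in L^2(\manif; E)$ satisfies $(T^* - \imath)u = 0$ in the distributional sense, i.e. $Tu = \imath u$ as distributions (here $T$ acts in distribution sense, extending the symmetric operator on $\CIc$). Since $u \in L^2 = H^{0}$, Proposition \ref{prop.regularity} applied with $m' = 0$ and, say, $m = \min_i s_i$ (iterating if necessary) gives $u \in H^{[\mathbf{s}]}(\manif; E) = \bigoplus_j H^{s_j}(\manif; E_j)$, which is precisely the domain $D(\overline{T})$ of the closure, by Corollary \ref{cor.closed} (which also tells us $\overline T$ is closed, so $D(\overline T) = H^{[\mathbf{s}]}$). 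Hence $u$ lies in the domain of the closure and $\overline{T}u = \imath u$. But then, for $v \in \CIc(\manif; E)$, symmetry gives $(\overline T u, v) = (u, Tv)$; approximating an arbitrary element of $D(\overline T)$ by such $v$ and using that $\overline T u = \imath u$ with $\imath$ purely imaginary, we get $\imath \|u\|^2 = (\overline T u, u) = (u, \overline T u) = \overline{\imath}\,\|u\|^2 = -\imath\|u\|^2$, forcing $u = 0$. The same argument with $\imath$ replaced by $-\imath$ shows $\ker(T^* + \imath) = 0$.

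Since $T$ is symmetric, $T^*$ is closed and $\overline T = T^{**} \subset T^*$; the computation above shows the deficiency indices $\dim\ker(T^* \mp \imath)$ both vanish, which is exactly the condition for $\overline T = T^*$, i.e. $\overline T$ is self-adjoint. The main obstacle — or rather the point that requires the machinery developed earlier — is justifying the \emph{elliptic regularity bootstrap in the noncompact setting}: that a distributional solution of an \ADN-elliptic equation with $L^2$ right-hand side actually lies in the full Sobolev domain. This is exactly Proposition \ref{prop.regularity}, whose proof in turn rests on the existence of the parametrix $Q \in \ePS{[\mathbf{-t - s}]}(\manif; E, F)$ and the mapping properties in Theorem \ref{thm.prop.ePS}(2)–(3); completeness of $\manif$ enters implicitly through the identification of $H^{s}(\manif)$ with the domain of $(1+\Delta)^{s/2}$ and through the well-posedness of these Sobolev-space mapping statements. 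Everything else is the textbook von Neumann criterion.
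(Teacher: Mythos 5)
Your overall strategy (von Neumann's criterion plus elliptic regularity) is the right one, and it is essentially what the references cited in the paper carry out; the paper itself only points to \cite{Mitrea-Nistor} and \cite{Grosse-Kohr-Nistor-23} rather than writing the argument. However, there is a genuine gap at the decisive step. You pass from ``$u \in L^2$, $T^*u = \imath u$, and $u$ has Sobolev regularity'' to ``$u$ lies in the domain of the closure $\overline{T}$,'' justifying this by Corollary \ref{cor.closed}. That corollary only asserts that $T$ equipped with the domain $H^{m+[\mathbf{t}]}(\manif;E)$ is a \emph{closed} operator; it does not assert that this closed extension coincides with the closure of $T|_{\CIc(\manif;E)}$. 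The closure of $T|_{\CIc}$ is the \emph{minimal} closed extension, and the whole content of essential self-adjointness is precisely that the maximal domain (equivalently, the deficiency spaces $\ker(T^*\mp\imath)$, which live in the maximal domain) is contained in it. Your identity $(\overline{T}u,u)=(u,\overline{T}u)$ therefore presupposes exactly what has to be proved. This is also where completeness of $\manif$ enters in an essential way — not merely ``implicitly through the Sobolev theory,'' as you write, but explicitly: one must approximate $u$ in the graph norm by compactly supported smooth sections, e.g.\ by multiplying by cutoffs $\chi_R$ adapted to the cylindrical coordinate (so that $\nabla^k\chi_R \to 0$ uniformly as $R\to\infty$), controlling the commutators $[T,\chi_R]$, which are of lower \ADN-order and have small norms on the Sobolev space in which regularity places $u$, and then mollifying. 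Without this approximation step the deficiency-index computation does not close.

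A secondary, smaller point: with all $t_j=0$, Proposition \ref{prop.regularity} yields $u \in \bigoplus_j H^{m}(\manif;E_j)$ with $m \le \min_i s_i$ (all components gain the \emph{same} amount of regularity), not $u \in \bigoplus_j H^{s_j}(\manif;E_j)$ as you state; if some $s_i=0$ the bootstrap stalls at $H^{0}$ in the corresponding rows. This does not derail the argument — the regularity obtained is still enough to run the cutoff approximation — but the identification of the maximal domain with $H^{[\mathbf{s}]}$ as written is not what the cited proposition gives.
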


\section{Restriction, Fredholm, and spectral invariance properties}
\label{sec.SPInvFredholm}

The following three theorems will be crucial for us. They are general results of independent
interest. They are extensions to our setting (cylindrical ends, \ADN-elliptic operators) of 
some very well known and useful results from pseudodifferential operators on closed manifolds.
We shall use the notation of Definition \ref{def.ADN}, in particular, $s_i \in \RR$,
$i = 1, \ldots, k$, and $E = E_1 \oplus \ldots \oplus E_k$ and $F = F_1 \oplus \ldots \oplus E_F$
are compatible hermitian vector bundles on~$\manif$.

Recall that $\manif$ is a \emph{manifold
with compact, straight cylindrical ends.} For symplicity, from now on we consider only
\emph{classical} pseudodifferential
operators.

\subsection{Restriction to submanifolds and layer potentials} 
Let us assume that $N \subset \manif = \manif_0 \cup (\pa \manif_0 \times (0, \infty])$
is a smooth submanifold such that there exists $R > 0$ and a submanifold
$N_\infty \subset \pa \manif_0$ such that
\begin{equation}\label{eq.def.N}
  N \cap (\pa \manif_0 \times (0, R]) \seq N_\infty \times (-\infty, R]\,.
\end{equation}
Let us assume that $N$ has codimension $q$ (and hence its dimension is $n-q$).
If $P$ is a pseudodifferential operator on $N$, we shall let
\begin{equation}\label{eq.def.restriction}
  k_{P}\vert\vert_N \ede k_{P}\vert_{N \times N}\,.
\end{equation}

The following theorem will be crucial in our applications to the method of layer
potentials.

\begin{theorem} \label{thm.kernel.restriction}
  Let $N \subset \manif$ be as in Equation \eqref{eq.def.N}. We assume that
  $s_i + t_j < -q$ for all $i$ and $j$, where, we recall $q$ is the
  codimension of $N$ in $\manif$. Let $P \in \ePS{[\mathbf{s+t}]}(\manif; E, F)$ be a \emph{classical}
  pseudodifferential operator. Then the
  restriction $k_{P}\vert\vert_{N}$ of $k_P$ to $N \times N \subset \manif \times \manif$
  defines a distribution on $N \times N$, which, in turn, defines a classical
  pseudodifferential operator $P\vert_{N} \in \ePS{[\mathbf{s+t}+q]}(\manif; E, F).$
  The limit operators satisfy $$\widetilde{P\vert_{N}} = \widetilde{P}\vert_{N_\infty \times \RR}\,.$$
\end{theorem}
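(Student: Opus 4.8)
The plan is to reduce the statement to two local model computations: one near the compact core $\manif_0$, where the situation is the classical theory of restriction of pseudodifferential operators to a submanifold, and one on the cylindrical end, where the translation invariance at infinity must be preserved under restriction. The splitting is provided by the direct-sum decomposition $P = s_0(\In(P)) + P_c$ of Theorem \ref{thm.prop.inv-calc.in}(2) (extended to $\ePS{}$ via Theorem \ref{thm.complet.indicial}(3) and Definition \ref{def.ess.one}): writing $P = s_0(\In(P)) + R + P_c$, with $R \in \ePS{-\infty}(\manif)$ of the two types appearing in Definition \ref{def.ess.one}, reduces everything to handling each summand. For $P_c$, which is a classical pseudodifferential operator with compactly supported distribution kernel on the ordinary manifold $\manif$, the fact that $k_{P_c}\vert\vert_N$ defines a classical pseudodifferential operator $P_c\vert_N$ of order $[\mathbf{s+t}+q]$ is the standard (local, coordinate-chart) restriction theorem for classical pseudodifferential operators; I would cite Hörmander \cite{Hormander3} (or the local computation in \cite{Taylor2}) for this, noting that the condition $s_i+t_j<-q$ guarantees that $k_P$ is a locally integrable function in a neighborhood of the diagonal so that the pointwise restriction to $N\times N$ makes sense as a distribution. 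For the Schwartz-type regularizing piece $R\in\Psi_\maS^{-\infty}(\manif)$, restriction of its kernel to $N\times N$ lands in $\maS(N\times N)$ by Definition \ref{def.Schwartz} (the weight $\rho$ restricts to the corresponding weight on $N$ because of \eqref{eq.def.N}), so this summand restricts to an operator in $\Psi_\maS^{-\infty}(N)$; similarly the $s_0$-of-Schwartz-convolution-kernel piece restricts, using that $N\cap(\pa\manif_0\times(-\infty,R])=N_\infty\times(-\infty,R]$, to an operator of the form $s_0^N(\,\cdot\,)$ with Schwartz convolution kernel on $N_\infty\times N_\infty\times\RR$, hence in $\ePS{-\infty}(N)$.

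The main work is the term $s_0(\In(P)) = \eta\, \In(P)\, \eta$. On the cylindrical end, $\In(P)$ is a translation-invariant (classical) pseudodifferential operator on $\pa\manif_0\times\RR$, and we must restrict its kernel to $(N_\infty\times\RR)\times(N_\infty\times\RR)$. First I would observe that $N_\infty\times\RR$ is a translation-invariant submanifold of $\pa\manif_0\times\RR$ of codimension $q$, so the relevant claim is: \emph{the restriction of a classical operator $T\in\iPSsus{[\mathbf{s+t}]}{\pa\manif_0}$ (with $s_i+t_j<-q$) to the submanifold $N_\infty\times\RR$ is a classical operator $T\vert_{N_\infty\times\RR}\in\iPSsus{[\mathbf{s+t}+q]}{\pa\manif_0}$, again translation invariant at infinity}. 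This follows from the classical local restriction theorem applied uniformly along $\RR$: in a translation-invariant tubular neighborhood of $N_\infty\times\RR$ one runs the standard integration-out-of-normal-variables computation (Fubini plus the oscillatory-integral manipulations of \cite{Hormander3, Taylor2}), and because $T$ is literally invariant under $t$-translation, so is the restricted operator — the constant $R_T$ controlling translation invariance is preserved. Conjugating back by $\eta$ (which restricts to the analogous cut-off on $N$ since $\eta$ depends only on the $t$-variable) then produces an element of $\iPS{[\mathbf{s+t}+q]}(\manif)\subset\ePS{[\mathbf{s+t}+q]}(\manif)$. Assembling the three pieces gives $P\vert_N = P_c\vert_N + R\vert_N + s_0^N(\,\cdot\,) \in \ePS{[\mathbf{s+t}+q]}(\manif)$ (here $\manif$ should of course be the submanifold-with-cylindrical-ends $N$; I follow the paper's abuse of notation).

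For the final identity $\widetilde{P\vert_N} = \widetilde P\vert_{N_\infty\times\RR}$, the cleanest route is via the kernel description of the limit operator in Proposition \ref{prop.desc.kInP}: for $m<0$ (which is forced here since $s_i+t_j<-q<0$, so all entries have negative order and the operators are of Schwartz class away from the core), the limit operator is \emph{determined} by the translated-kernel formula $k_{\widetilde Q}(x,t,y,s)=k_Q(x,t-\lambda,y,s-\lambda)$ for $\lambda$ large and $(x,t)\ne(y,s)$ on the end. Since $k_{P\vert_N}$ is by construction the restriction of $k_P$ to $N\times N$, and since on the end $N$ looks like $N_\infty\times(-\infty,R]$, the translated kernel of $P\vert_N$ is exactly the restriction to $N_\infty\times N_\infty$ of the translated kernel of $P$, which is $k_{\widetilde P}$ restricted to $N_\infty\times N_\infty$; that is $k_{\widetilde P\vert_{N_\infty\times\RR}}$. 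By the uniqueness of the limit operator (Peetre's theorem, as the operators have negative order), the two operators coincide. Alternatively, one can derive this from Theorem \ref{thm.complet.indicial} combined with naturality of $s_0$ and $\In$ under restriction, but the kernel argument is more transparent. \textbf{The main obstacle} I anticipate is making the local restriction theorem for classical pseudodifferential operators work \emph{uniformly} and \emph{translation-invariantly} on the noncompact cylindrical end $\pa\manif_0\times\RR$: one must choose the tubular neighborhood of $N_\infty\times\RR$, the partition of unity, and the normal coordinates all invariant under the $\RR$-action, and then check that the standard symbol manipulations produce estimates with constants independent of the $t$-translation — which is true precisely because everything in sight is literally $t$-invariant, but it requires care to phrase correctly, particularly for the $\ln t$ terms arising when $s_i+t_j\in\ZZ$.
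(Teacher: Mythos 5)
Your proposal is correct, but the key step is handled by a genuinely different mechanism than the paper's. The paper's proof leans entirely on the intrinsic kernel characterization of classical negative-order operators in the calculus (Proposition \ref{prop.kernel.descr} and its variants): membership in $\iPS{m}(\manif)$ is characterized by (i) support near the diagonal and smoothness off it, (ii) translation invariance of the kernel at infinity, and (iii) an asymptotic expansion $k_P \sim \sum a_j(\xi)\, t^{-n-m+j}$ near the diagonal. All three conditions are manifestly stable under restriction of the kernel to $N\times N$, and the single identity $t^{-n-m+j}=t^{-(n-q)-(m+q)+j}$ shows the restricted kernel satisfies the order-$(m+q)$ expansion on the $(n-q)$-dimensional manifold $N$; the regularizing pieces and the commutation of $\In$ with restriction are then handled exactly as you do. (Indeed, the introduction states that this restriction theorem is the very reason the paper develops the calculi via distribution kernels.) You instead reprove the core step by the classical local symbolic route --- integrating out the normal variables in translation-invariant tubular coordinates on the cylinder --- which works, but forces you to confront the uniformity issues you flag at the end (invariant charts, invariant partitions of unity, $t$-independent constants, the $\ln t$ terms), all of which simply do not arise in the paper's argument because conditions (i)--(iii) restrict verbatim. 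Your approach buys independence from the kernel-expansion proposition at the cost of redoing a chart-level computation on a noncompact manifold; the paper's buys a three-line proof at the cost of having set up Proposition \ref{prop.kernel.descr} in advance. Your decomposition $P=s_0(\In(P))+R+P_c$, the treatment of the Schwartz pieces, and the kernel-level argument for $\widetilde{P\vert_N}=\widetilde{P}\vert_{N_\infty\times\RR}$ (via Proposition \ref{prop.desc.kInP} and the fact that negative-order operators are determined by their kernels off the diagonal) all match the paper's logic, only in more detail.
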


The resulting operator $P\vert_N$ will be called \emph{the restriction of $P$ to $N$.}

\begin{proof}
  First of all, the result is true trivally if $P \in \ePS{[\mathbf{s+t}]}(\manif; E, F)$
  by Proposition \ref{prop.kernel.descr}, since the asymptotic is the same and
  $t^{-n-m+j} = t^{-(n-q) - (m+q) +j}$. Then, if $P \in \Psi_{\maS}^{-\infty}(M; E, F)$,
  the result follows again trivially  from the definition. Similarly, the result is true
  if $\manif = \pa \manif_0 \times \RR$,
  $N = N_{\infty} \times \RR$, and $P \in \ePSsus{-\infty} {\pa \manif_0}$ again by
  the definitions (see, for instance, Definition \ref{def.ess.zero}). For $P \in
  \ePS{m}(\manif; E, F)$, the result follows by linearity from the
  the definition of $\ePS{m}(\manif; E, F)$. Finally, the limit operator
  is also obtained by restriction, so the result follows by the fact that restrictions
  commute.
\end{proof}


This theorem together with the spectral invariance result of the following section
will be used to define our pseudodifferential operators appearing
in the method of layer potentials which, we recall,
are defined on the boundary $\pa \Omega$ of our domain with straight cylindrical ends
on which our boundary value problem is posed. This works directly for the single layer
potential operator $S$. These results will be included in Subsection \ref{ssec.layer.pot}.

Before applying the restriction theorem, we will need however to show that our Partial Differential
Operator $P$ is invertible on the whole manifold (without boundary). This will be done
using Fredholm conditions and the ``Mitrea-Taylor trick'' \cite{KohrNistor-Stokes}.
Once $P$ was shown to be invertible, we will conclude that $P^{-1}$ is also a pseudodifferential
operator using spectral invariance. Then we will apply the restriction theorem to $P^{-1}$
as explained.

\subsection{Spectral invariance}
We begin with an extension of a result in \cite{Mitrea-Nistor} which states that
$\ePS{\infty}$ is spectral invariant (i.e. it is stable under inversion, in a suitable sense).
See \cite{HAbelsSpi, BealsSpInv, CoriascoToft, DasgupaWongSpinv, ToftSpi, Schrohe19,
SchroheFrechet} for further results on spectral invariance. Most importantly for us, see the
paper by Mazzeo and Melrose \cite{MazzeoMelroseAsian} for the spectral invariance of the
$c$-calculus. We begin with some preliminary results and definitions.

Recall that $\maB(X)$ denotes the algebra of bounded, linear operators on a some normed space
$X$. Also, recall that a $C^*$-algebra is a Banach algebra with an involution $*$ that is isometrically
$*$-isomorphic to a closed, self-adjoint subalgebra of $\maB(\maH)$, for some Hilbert space $\maH$.
For any algebra $A$, we let $A + \CC = A$ if $A$ has a unit
and $A + \CC = A \oplus \CC 1$ (algebra with an adjoint unit) if $A$ does not have a unit.

\begin{definition}
  Let $A$ be a $C^*$-algebra and $j : B \to A$ a $*$-morphism.
  We say that \emph{$j$ is spectrum preserving} if it has the following property:
  \begin{center}
  ``If $b \in B$ and $\lambda \in \CC$ are such that $j(b) - \lambda$ is
  invertible in $A + \CC$, then $b - \lambda$ is invertible in  $B + \CC$.''
  \end{center}
  If this is the case, we shall also say that $B$ is \emph{spectrally invariant
  in $A$} (or, simply, that it \emph{has the spectral invariance property}).
\end{definition}

In applications, given $B$, the $C^*$-algebra $A$ is usually clear from
the context and, in any case, we can always enlarge $A$. For this reason, we shall usually
say that $B$ ``has the spectral invariance property'' instead of saying that ``it is
spectrally invariant in (some algebra) $A$.'' Also, in our applications,
$j$ will be usually an inclusion (i.e. injective). We shall use the following result from
\cite{LMNsi}.

\begin{proposition} \label{prop.sp.inv}
  Let us assume $J \subset B$ is a self-adjoint ideal. Let $j : B \to A$ be
  a $*$-morphism to a $C^*$-algebra $A$. If
  $j : J \to \overline{j(J)}$ and $j : B/J \to A/\overline{j(J)}$
  are spectrum preserving, then $j : B \to A$ is spectrum preserving.
\end{proposition}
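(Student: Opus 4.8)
The plan is to prove Proposition \ref{prop.sp.inv} by a standard diagram-chasing argument, reducing the invertibility question in $B+\CC$ to the two hypothesized spectrum-preserving properties, one for the ideal $J$ and one for the quotient $B/J$. I would work throughout with the unitized algebras, so let me write $\tilde A := A + \CC$, $\tilde B := B + \CC$, and observe that $\overline{j(J)}$ is a closed two-sided self-adjoint ideal of $A$ (hence of $\tilde A$), so the quotient $C^*$-algebra $A/\overline{j(J)}$ makes sense and $j$ descends to a $*$-morphism $\bar\jmath : B/J \to A/\overline{j(J)}$, which is the map assumed to be spectrum preserving. The key elementary fact I will use repeatedly is: an element of a unital Banach algebra is invertible if and only if its image in a quotient by a closed ideal is invertible \emph{and} it is invertible ``relative to the ideal'' — more precisely, if $x$ maps to an invertible element mod a closed ideal $I$, then $x$ is invertible iff $x(\text{its quasi-inverse lift})$, which differs from $1$ by an element of $I$, is invertible in the unitization of $I$.

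First I would take $b \in B$ and $\lambda \in \CC$ with $j(b) - \lambda$ invertible in $\tilde A$; the goal is $b - \lambda$ invertible in $\tilde B$. Passing to the quotient, $\bar\jmath(\bar b) - \lambda$ is invertible in $A/\overline{j(J)} \subset (A/\overline{j(J)}) + \CC$, where $\bar b \in B/J$; since $\bar\jmath : B/J \to A/\overline{j(J)}$ is spectrum preserving, $\bar b - \lambda$ is invertible in $(B/J)+\CC$. Lift a quasi-inverse: choose $c \in B$ and $\mu \in \CC$ with $(c-\mu)$ projecting to $(\bar b - \lambda)^{-1}$, so that $(b-\lambda)(c-\mu) = 1 - k$ and $(c-\mu)(b-\lambda) = 1 - k'$ with $k, k' \in J$ (after adjusting $\mu$ appropriately — one wants the leftover to land in the ideal, which it does because the quotient relation is an equality). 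Applying $j$, we get $j(b-\lambda)\,j(c-\mu) = 1 - j(k)$, and since $j(b)-\lambda$ is invertible in $\tilde A$ while $j(c-\mu)$ is its approximate inverse, $1 - j(k)$ is invertible in $\tilde A$; because $j(k) \in \overline{j(J)}$, this says $1 - j(k)$ is invertible in $\overline{j(J)} + \CC$. Now invoke the hypothesis that $j : J \to \overline{j(J)}$ is spectrum preserving (with $\lambda = 1$): $1 - k$ is invertible in $J + \CC \subset \tilde B$. Symmetrically $1 - k'$ is invertible in $\tilde B$. Then $(b-\lambda)(c-\mu)$ and $(c-\mu)(b-\lambda)$ are both invertible in $\tilde B$, which forces $b - \lambda$ to be invertible in $\tilde B$ (left-invertible and right-invertible in a ring implies invertible). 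This is exactly the spectrum-preserving property for $j : B \to A$.

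The main obstacle, and the step requiring genuine care rather than routine manipulation, is the bookkeeping of the \emph{unitizations and the scalars} $\lambda, \mu$: one must make sure that lifting the inverse of $\bar b - \lambda$ from $(B/J)+\CC$ produces an element of $\tilde B$ whose product with $b - \lambda$ differs from $1$ by an element lying in $J$ itself (not merely in $J + \CC$), so that the ideal hypothesis applies with the correct scalar. Concretely, the surjection $\tilde B \to (B/J)+\CC$ has kernel exactly $J$ (the scalar part is not quotiented), so any lift of the inverse is well-defined modulo $J$, and the relation $(b-\lambda)(c-\mu) \equiv 1 \pmod{J}$ holds in $\tilde B$ with the leftover genuinely in $J$; one then checks that the image under $j$ of this leftover lies in $\overline{j(J)}$, so the spectrum-preserving hypothesis for $J$ is applicable at the scalar $\lambda = 1$. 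The only other point worth stating explicitly is that $\overline{j(J)}$, being the closure of a $*$-ideal under a $*$-morphism into a $C^*$-algebra, is a closed two-sided $*$-ideal of $A$, which is what makes $A/\overline{j(J)}$ a $C^*$-algebra and legitimizes forming $\bar\jmath$; this is where the $C^*$-hypothesis on $A$ is used. With these points nailed down, the argument is complete.
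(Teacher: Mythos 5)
The paper itself does not prove this proposition --- it is quoted from \cite{LMNsi} --- so your argument has to stand on its own. It follows the right overall strategy (pass to the quotient, lift a quasi-inverse, reduce to invertibility of $1-k$ in $J+\CC$), but there is a genuine gap at the decisive step, namely the claim that ``since $j(b)-\lambda$ is invertible in $A+\CC$ while $j(c-\mu)$ is its approximate inverse, $1-j(k)$ is invertible in $A+\CC$.'' Since $j(b)-\lambda$ is invertible, $1-j(k)=j(b-\lambda)\,j(c-\mu)$ is invertible in $A+\CC$ if and only if $j(c-\mu)$ is, and being an inverse only \emph{modulo} $\overline{j(J)}$ does not give that. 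Concretely, take $A=B=\maB(\maH)$, $J=\overline{j(J)}=\maK(\maH)$, $j=\operatorname{id}$, and $b-\lambda=1$; a perfectly legitimate lift of the inverse of its image in the Calkin algebra is $c-\mu=1-P$ with $P$ a rank-one projection, and then $1-k=1-P$ is \emph{not} invertible in $\maB(\maH)$. So the step fails for an arbitrary lift, and the difficulty you single out (bookkeeping of scalars and unitizations) is not the real one.

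The missing idea is that the lift must be \emph{corrected}. Writing $d:=\bigl(j(b)-\lambda\bigr)^{-1}\in A+\CC$, one computes $j(c-\mu)-d=-d\,j(k)\in\overline{j(J)}$ (this uses that $\overline{j(J)}$ is an ideal of $A$, which is implicit in the statement since $A/\overline{j(J)}$ must be a $C^*$-algebra). Replace $c-\mu$ by $y:=c-\mu+k_1$ with $k_1\in J$ chosen, by density of $j(J)$ in $\overline{j(J)}$, so that $\|j(k_1)-d\,j(k)\|$ is small; then $y$ is still a lift of $(\bar b-\lambda)^{-1}$, and $\|j(y)-d\|$ is small enough that $j(y)$ is invertible in $A+\CC$ (Neumann series around the invertible element $d$). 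For this corrected lift, $(b-\lambda)y=1-k_{\mathrm{new}}$ and $y(b-\lambda)=1-k'_{\mathrm{new}}$ have images $j(b-\lambda)j(y)$ and $j(y)j(b-\lambda)$ that are products of invertibles, hence genuinely invertible in $A+\CC$, hence in the unital $C^*$-subalgebra $\overline{j(J)}+\CC$ by spectral permanence (a step you do use, correctly), and only now does the hypothesis on $j:J\to\overline{j(J)}$ apply. The remainder of your argument (left plus right invertibility of $b-\lambda$ in $B+\CC$) then goes through as written.
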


We can now prove the second of the aforementioned crucial results.

\begin{theorem}\label{thm.spectral.inv}
  We use the notation of Definition \ref{def.ADN},
  in particular, $\manif$ is a manifold with compact, straight cylindrical ends and
  $E_i, F_i \to \manif$, $i = 1, \ldots, k$ are compatible vector bundles.
  Let $T\in \ePS {[\mathbf{s+t}]}(\manif;E,F)$, $s_i, t_j \geq 0$, and $m \in \RR$ be such that
  $T$ is invertible as a $($possibly unbounded$)$ operator on $H^{m-[\mathbf{s}]}(\manif;E)$
  with domain
  \begin{equation*}
    \maD(T) \ede \maD(T_{max}) \ede
    \{ \xi \in H^{m-[\mathbf{s}]}(\manif;E) \mid T\xi \in H^{m-[\mathbf{s}]}(\manif;F)\}\,.
  \end{equation*}
  If one of the $s_i$ or $t_j$ is $>0$, we assume also that $T$ is \ADN-elliptic.
  Then $T^{-1}\in \ePS {[-s-t]}(\manif; F, E)$.
\end{theorem}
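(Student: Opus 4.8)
The plan is to first pass to the order-zero, scalar case and then run an ideal-tower argument based on Proposition \ref{prop.sp.inv}. Using the order-reduction operators of Corollary \ref{cor.order.red} blockwise (and adapted to the \ADN-grading), I would pick classical elliptic isomorphisms $\mu_i\in\iPS{m-s_i}(\manif;E_i)$, $\mu_i:H^{m-s_i}(E_i)\xrightarrow{\ \sim\ }L^2(E_i)$, and $\nu_j\in\iPS{-(m+t_j)}(\manif;E_j)$, $\nu_j:L^2(E_j)\xrightarrow{\ \sim\ }H^{m+t_j}(E_j)$ (imposing the finitely many invertibility conditions from the proof of Corollary \ref{cor.order.red} simultaneously), and set $\widetilde T:=\operatorname{diag}(\mu_i)\,T\,\operatorname{diag}(\nu_j)\in\ePS{0}(\manif;E)$ (we may take $E=F$, the general case being handled by passing to $E\oplus F$ as elsewhere in the paper). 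By Proposition \ref{prop.regularity} (using \ADN-ellipticity when some $s_i$ or $t_j$ is positive) the maximal domain of $T$ is $H^{m+[\mathbf t]}(\manif;E)$, and $T$ is closed by Corollary \ref{cor.closed}; hence $T$ is an isomorphism $H^{m+[\mathbf t]}(\manif;E)\to H^{m-[\mathbf s]}(\manif;E)$ and $\widetilde T$ is an isomorphism of $L^2(\manif;E)$. One does \emph{not} need the inverses of $\mu_i,\nu_j$ in the calculus: once $\widetilde T^{-1}\in\ePS{0}(\manif;E)$ is known, the identity $T^{-1}=\operatorname{diag}(\nu_j)\,\widetilde T^{-1}\,\operatorname{diag}(\mu_i)$ places $T^{-1}$ in $\ePS{[-s-t]}(\manif;F,E)$ by the order-counting of Remark \ref{rem.bounded.ADN}. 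So it suffices to prove that the inclusion $j:\ePS{0}(\manif;E)\hookrightarrow\maB(L^2(\manif;E))$ (Theorem \ref{thm.prop.ePS}(2)) is spectrum preserving; when all $s_i=t_j=0$ this step is vacuous and no ellipticity is used.

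\textbf{The ideal tower.} I would apply Proposition \ref{prop.sp.inv} twice along the chain
\[
 J_1:=\Psi_\maS^{-\infty}(\manif;E)\ \triangleleft\ J:=\ePS{-\infty}(\manif;E)\ \triangleleft\ B:=\ePS{0}(\manif;E),
\]
where the ideal properties follow from $\ePS{0}=\iPS{0}+\ePS{-\infty}$ together with Lemma \ref{lemma.comp.maS}, Lemma \ref{lemma.comp.inv}, Corollary \ref{cor.comp.alg} and Lemma \ref{lemma.comp.alg}(3). For the \emph{bottom level} I would show $j:J_1\to\overline{j(J_1)}$ is spectrum preserving. Operators in $\Psi_\maS^{-\infty}(\manif;E)$ are Hilbert--Schmidt and their finite-rank members (kernels $u\boxtimes\bar v$ with $u,v\in\maS$) are norm-dense in $\maK$, so $\overline{j(J_1)}=\maK(L^2(\manif;E))$. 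If $\lambda+R$ with $R\in J_1$ is invertible in $\maK^{+}$, then $\lambda\neq0$; writing $(1+\lambda^{-1}R)^{-1}=1-X$ and combining the two resolvent identities $X=\lambda^{-1}R-\lambda^{-1}XR=\lambda^{-1}R-\lambda^{-1}RX$ gives $X=\lambda^{-1}R-\lambda^{-2}R^2+\lambda^{-2}RXR$. Since the Schwartz kernel of $R$ makes $R$ continuous $\maS'(\manif;E)\to\maS(\manif;E)$, the composition $RXR$ is continuous $\maS'(\manif;E)\to\maS(\manif;E)$, hence has Schwartz kernel by the (Schwartz-space version of the) kernel theorem, Theorem \ref{thm.Schwartz.k}; therefore $X\in\Psi_\maS^{-\infty}(\manif;E)=J_1$ and $\lambda+R$ is invertible in $J_1^{+}$.

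\textbf{Middle and top levels (the hard part).} For the \emph{middle level}, Theorem \ref{thm.complet.indicial}(3) identifies $J/J_1\cong\ePSsus{-\infty}{\pa\manif_0;E}$ via the limit-operator map, and the Fourier transform of Remark \ref{rem.Fourier} identifies this with the algebra of \emph{Schwartz} families $\tau\mapsto\widehat Q(\tau)\in\Psi^{-\infty}(\pa\manif_0;E)$ of smoothing operators on the \emph{closed} manifold $\pa\manif_0$; correspondingly $\overline{j(J)}/\overline{j(J_1)}$ is a $C^*$-algebra of decomposable operators between $C_0(\RR;\maK(L^2(\pa\manif_0;E)))$ and its multiplier algebra. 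If $\lambda+\{\widehat Q(\tau)\}$ is invertible there, then $\lambda\neq0$ (let $\tau\to\pm\infty$, where $\widehat Q(\tau)\to0$) and $\lambda+\widehat Q(\tau)\in 1+\Psi^{-\infty}(\pa\manif_0;E)$ is an invertible elliptic order-zero operator on a compact manifold; Beals' spectral invariance, Theorem \ref{thm.mp.compact}(2), gives $(\lambda+\widehat Q(\tau))^{-1}=\lambda^{-1}+h(\tau)$ with $h(\tau)=-\lambda^{-1}(\lambda+\widehat Q(\tau))^{-1}\widehat Q(\tau)\in\Psi^{-\infty}(\pa\manif_0;E)$, and differentiating this in $\tau$ (using $\partial_\tau^k\widehat Q\in\maS$ and uniform $\Psi^{0}$-bounds for $(\lambda+\widehat Q(\tau))^{-1}$, which hold because $\widehat Q(\tau)\to0$) shows $\{h(\tau)\}\in\ePSsus{-\infty}{\pa\manif_0;E}$. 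For the \emph{top level}, since we restrict to classical operators, $B/J\cong\iPS{0}(\manif;E)/\iPS{-\infty}(\manif;E)$ is the classical essentially-invariant symbol algebra; if $j(P)$ is invertible modulo $\overline{j(J)}$ (hence modulo $\maK\subseteq\overline{j(J)}$) then $P$ has invertible interior symbol (Theorem \ref{thm.Fredholm.compact}(2) or Proposition \ref{prop.compact0}), so by Theorem \ref{thm.prop.ePS}(4) and Corollary \ref{cor.lemma.ue} there is a parametrix $Q\in\iPS{-m}(\manif;E)$ with $PQ-1,QP-1\in\ePS{-\infty}(\manif;E)$, and the invertibility modulo $\overline{j(J)}$ forces the indicial family $\widehat P(\tau)$ to be invertible for all $\tau$ (Remark \ref{rem.invert}), which corrects the parametrix to a two-sided inverse of $[P]$ in $B/J$. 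I expect this pair of steps to be the main obstacle: keeping precise track of how inversion interacts with the cylindrical end — in the middle level, that the inverse of $\lambda$ plus a Schwartz family of smoothing operators is again exactly of that type, \emph{uniformly and with Schwartz decay in the Fourier parameter}; in the top level, extracting from mere invertibility modulo $\overline{j(J)}$ both interior ellipticity and pointwise invertibility of the indicial family (essentially the Fredholm analysis).

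\textbf{Conclusion.} Proposition \ref{prop.sp.inv} applied to $J_1\triangleleft J$ shows $j:J\to\overline{j(J)}$ is spectrum preserving, and a second application to $J\triangleleft B$ shows $j:B\to\maB(L^2(\manif;E))$ is spectrum preserving. Since $\widetilde T$ is invertible in $\maB(L^2(\manif;E))$, this yields $\widetilde T^{-1}\in\ePS{0}(\manif;E)$, and undoing the order reduction gives $T^{-1}\in\ePS{[-s-t]}(\manif;F,E)$, again as a classical operator. The remaining, purely routine, bookkeeping is the identification of the $C^*$-completions ($\overline{j(J_1)}=\maK$, and the precise description of $\overline{j(J)}/\overline{j(J_1)}$ as the asserted decomposable-operator algebra), which I would verify using the operator-norm continuity of $\In$ and the Fourier picture of Remark \ref{rem.Fourier}.
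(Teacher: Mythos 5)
Your proposal follows the same architecture as the paper's proof: reduce to an order-zero operator $\widetilde T\in\ePS{0}(\manif;E)$ invertible on $L^2$, then apply Proposition \ref{prop.sp.inv} twice along the tower $\Psi_\maS^{-\infty}(\manif;E)\triangleleft\ePS{-\infty}(\manif;E)\triangleleft\ePS{0}(\manif;E)$, handling the bottom level by a kernel argument, the middle level via the Fourier transform and Beals' theorem on the closed manifold $\pa\manif_0$, and the top level symbolically. Your bottom and middle levels are in fact more detailed than the paper's (which defers to \cite{LMNsi} and to a ``direct calculation''), and your order-reduction step contains a genuine small improvement: writing $T^{-1}=\operatorname{diag}(\nu_j)\,\widetilde T^{-1}\,\operatorname{diag}(\mu_i)$ avoids needing the inverses of the order reductions to lie in the calculus, whereas the paper first establishes Corollary \ref{cor.order.red2} (itself an application of the order-zero case) before conjugating.

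There is, however, one step at the top level that fails as written. You claim that invertibility of $j(P)$ modulo $\overline{j(J)}$ implies invertibility modulo $\maK$ ``since $\maK\subseteq\overline{j(J)}$.'' The implication goes the wrong way: because $\maK\subseteq\overline{j(J)}$, the algebra $\maB(L^2)/\overline{j(J)}$ is a \emph{quotient} of the Calkin algebra, so invertibility modulo the larger ideal is the \emph{weaker} statement and does not give Fredholmness; you therefore cannot invoke Theorem \ref{thm.Fredholm.compact}(2) to obtain ellipticity this way. The correct route (implicit in the paper's terse ``trivially spectrum preserving'') is that the principal symbol satisfies $\|\sigma_0(P)\|_{L^\infty(S^*\manif)}\le\|P+K\|_{\maB(L^2)}$ for all $K\in J$ (this is the standard norm estimate for classical order-zero operators, combined with $\sigma_0(J)=0$ and Corollary \ref{cor.lemma.ue}), so $\sigma_0$ extends to a unital $*$-morphism on $\overline{j(B)}/\overline{j(J)}$; since this is a unital $C^*$-subalgebra of $\maB(L^2)/\overline{j(J)}$ and $C^*$-subalgebras are inverse closed, invertibility of $j([P])$ there forces invertibility of $\sigma_0(P)$, after which Theorem \ref{thm.prop.ePS}(\ref{item.parametrix}) produces the parametrix modulo $\ePS{-\infty}$. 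Relatedly, your appeal to invertibility of the indicial family at the top level is both unnecessary (invertibility in $B/J=\ePS{0}/\ePS{-\infty}$ only requires a parametrix modulo $\ePS{-\infty}$, i.e.\ ellipticity) and unjustified: $\In$ does not vanish on $J=\ePS{-\infty}$, so one cannot read off invertibility of $\widehat P(\tau)$ from $PY-1\in\overline{j(J)}$. With the symbol-continuity argument substituted for the erroneous implication, the rest of your proof goes through.
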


We split the proof in two parts, for clarity. We begin with the case
$m = s_i = t_j = 0$, which was proved directly in \cite{Mitrea-Nistor} (that is,
without making appeal to Proposition \ref{prop.sp.inv}, but essentially reproving it).
Recall that, in this case, we need not assume $T$ to be elliptic.

\begin{proof}[Proof for $m = s_i = t_j = 0$]\
  Let us assume first that $E = F$.
  We need to prove that if $T \in \ePS{0}(\manif; E)$ is invertible on $L^2(\manif; E)$
  (as a bounded operator on that space), then $T^{-1} \in \ePS{0}(\manif; E)$.
  Let us consider in $\ePS{0}(\manif; E)$ the ideals
  \begin{equation}\label{eq.def.ideals}
    J_0 \ede \Psi_\maS^{-\infty}(\manif; E)\ \
    \mbox{ and }\ \ J_1 \ede \ePS{-\infty}(\manif; E)\,.
  \end{equation}

  We have that $J_0$ is spectrally invariant in $\maB(L^2(\manif; E))$
  since we can  characterize $J_0$ as
  \begin{equation}
    J_0 \seq \{T  \mid \, \rho^{k}P T Q \rho^{k} \in \maB(L^2(\manif; E)) \mbox{ for all }
    k \in \NN\,,\ P, Q \in \iPS{\infty}(\manif; E) \}\,.
  \end{equation}
  The verification of the spectral invariance property of $J_0$ is
  then a simple exercise (done, for instance, in
  \cite{LMNsi}, where more references can be found).

  The quotient $J_1/J_0 \simeq \ePSsus{-\infty} {\pa \manif_0; E}$ is isomorphic, via
  the Fourier transform to
  the algebra $$\maS(\RR) \hat{\otimes} \CI((\pa \manif_0)^2; E \boxtimes E') \simeq
  \maS(\RR) \hat{\otimes} \Psi^{-\infty}(\pa \manif_0; E)$$ (projective tensor
  product), which again can be proved to have the spectral invariance property directly.
  (The proof is either as for $\Psi_\maS^{-\infty}(\manif; E)$ or by a
  direct calculation. Also, notice that the product on $\maS(\RR)$ is the
  convolution product \emph{before} the Fourier transform and is the pointwise
  product \emph{after} the Fourier transform.) Let us compactify $\manif$
  as explained earlier to $\overline{\manif}$.
  Finally, we also have that the map
  $\sigma_0 : \ePS{0}(\manif; E)/ J_0 \to \maC(S^*\overline{\manif}; \End(E))$
  is trivially spectrum preserving. Two applications of Proposition
  \ref{prop.sp.inv} give that $\ePS{0}(\manif; E)$ has the spectral invariance
  property.

  This proves the result for $E = F$. For the general case, we use the result
  just proved for $T^*T$ and $TT^*$.
\end{proof}

We shall use the result of Proposition \ref{prop.sp.inv} to prove that the order
reduction operators of Corollary \ref{cor.order.red} are essentially translation
invariant operators.

\begin{corollary}\label{cor.order.red2}
  Let $s \in \RR$. There exists $\Lambda_s \in \iPS{s}(\manif; E)$ such that the induced
  map $\Lambda_s : H^s(\manif; E) \to L^2(\manif; E)$ is an isomorphism and $\Lambda_s^{-1} \in
  \ePS{-s}(\manif; E)$.
\end{corollary}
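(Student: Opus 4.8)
The plan is to take the operator $\Lambda_s = q(a_{s,t}) \in \iPS{s}(\manif; E)$ constructed in Corollary \ref{cor.order.red} (extended trivially to vector bundles as in Remark \ref{rem.vector.bundles}), and to upgrade the conclusion ``$\Lambda_s$ is an isomorphism $H^s(\manif; E) \to L^2(\manif; E)$'' to the stronger ``$\Lambda_s^{-1} \in \ePS{-s}(\manif; E)$'' by appealing to the spectral invariance of the essentially translation invariant calculus. The key point is that spectral invariance (Theorem \ref{thm.spectral.inv}) is stated precisely for operators $T \in \ePS{[\mathbf{s+t}]}(\manif; E, F)$ with $s_i, t_j \ge 0$ that are \ADN-elliptic, so the main task is to massage the situation into that hypothesis.

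First I would recall that $\iPS{s}(\manif; E) \subset \ePS{s}(\manif; E)$ (Remark \ref{rem.incl.tow} and Definition \ref{def.ess.one}), so $\Lambda_s \in \ePS{s}(\manif; E)$. It is elliptic: its principal symbol is (the class of) $a_{s,t}(\xi) = (t^2 + \|\xi\|^2)^{s/2}$, which is elliptic of order $s$ and even invertible as a symbol by Corollary \ref{cor.lemma.ue} / Remark \ref{rem.ell.imp.ue}. To place $\Lambda_s$ in the \ADN-framework with nonnegative orders, I would write $s = s_i + t_j$ in the trivial one-block case $k = 1$, $s_i = \max(s,0)$, $t_j = \max(-s,0) = (s)_- $ — wait, rather: if $s \ge 0$ take $s_1 = s$, $t_1 = 0$; if $s < 0$ take $s_1 = 0$, $t_1 = -s$. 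In either case $s_1, t_1 \ge 0$ and $\Lambda_s \in \ePS{[\mathbf{s+t}]}(\manif; E)$ is \ADN-elliptic, and the \ADN-Sobolev space $H^{m - [\mathbf{s}]}(\manif; E)$ with $m = s_1$ is just $L^2(\manif; E)$, while the domain $\maD(\Lambda_s)_{\max}$ is $H^s(\manif; E)$ by the regularity result Proposition \ref{prop.regularity} (applied to $\Lambda_s$: if $u \in H^{-N}$ and $\Lambda_s u \in L^2$ then $u \in H^{[\mathbf{t}]} = H^s$). Since $\Lambda_s : H^s(\manif; E) \to L^2(\manif; E)$ is a bijection by Corollary \ref{cor.order.red}, it is invertible as an unbounded operator on $L^2(\manif; E)$ with domain $\maD(\Lambda_s)_{\max}$, so Theorem \ref{thm.spectral.inv} applies and yields $\Lambda_s^{-1} \in \ePS{[-s-t]}(\manif; E) = \ePS{-s}(\manif; E)$.

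The main obstacle is purely bookkeeping: one must check that the domain $\maD(\Lambda_s)$ appearing in the bijectivity statement of Corollary \ref{cor.order.red} really coincides with the maximal domain $\maD(\Lambda_s)_{\max} = \{u \in L^2 \mid \Lambda_s u \in L^2\}$ used in Theorem \ref{thm.spectral.inv}; this is exactly the content of elliptic regularity, Proposition \ref{prop.regularity}, so there is nothing deep here, only the need to state it carefully. A secondary point is the reference to Proposition \ref{prop.sp.inv} mentioned in the lemma statement: in fact one does not need to re-run that argument, since Theorem \ref{thm.spectral.inv} already packages it; but if a self-contained proof is preferred, one notes that $\Lambda_s$ differs from $1 + \Delta_E$ (or its powers) by a lower-order error inside $\ePS{\infty}(\manif; E)$, reducing to the $m = s_i = t_j = 0$ case of Theorem \ref{thm.spectral.inv} together with the order-reduction isomorphisms, and invokes Proposition \ref{prop.sp.inv} only to handle the ideal filtration $J_0 \subset J_1 \subset \ePS{0}$ — but I expect citing Theorem \ref{thm.spectral.inv} directly to be the cleanest route and would present that as the proof.
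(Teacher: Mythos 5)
There is a genuine gap: your main route is circular relative to the paper's logical structure. You invoke Theorem \ref{thm.spectral.inv} in its general \ADN{} form (with $s_1 = s$, $t_1 = 0$, or $s_1=0$, $t_1=-s$), but the paper proves that general case \emph{using} Corollary \ref{cor.order.red2}: the order-reduction matrices $S_{t_1,\ldots,t_k}$ of Equation \eqref{D-si}, and hence the conjugating isomorphisms $Q_0$ and $Q_1$ in that proof, are built precisely from the operators $\Lambda_s$ with $\Lambda_s^{-1}\in\ePS{-s}(\manif;E)$ that this corollary supplies. Only the $m=s_i=t_j=0$ case of Theorem \ref{thm.spectral.inv} is available at this point, and for $s\neq 0$ your reduction does not land in that case. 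Your fallback sketch has the same problem, since it again appeals to ``the order-reduction isomorphisms,'' which is exactly what is being established. (The bookkeeping you do carry out --- identifying the maximal domain with $H^s(\manif;E)$ via Proposition \ref{prop.regularity}, and checking ellipticity --- is fine but does not remove the circularity.)

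The paper's proof avoids this by using only the order-zero spectral invariance already proved. With $P:=\Lambda_s=q(a_{s,t})$ and $Q:=q(a_{-s,t})$ as in the proof of Corollary \ref{cor.order.red}, the compositions $QP$ and $PQ$ are \emph{order-zero} operators in $\iPS{0}(\manif;E)$ that are invertible on $L^2(\manif;E)$ and on $H^s(\manif;E)$ for $t$ large; the $m=s_i=t_j=0$ case of Theorem \ref{thm.spectral.inv} then gives $(QP)^{-1},(PQ)^{-1}\in\ePS{0}(\manif;E)$, and one concludes directly that
\begin{equation*}
  \Lambda_s^{-1} \seq (QP)^{-1}Q \seq Q(PQ)^{-1} \in \ePS{-s}(\manif;E)\,,
\end{equation*}
since the two expressions are a left and a right inverse of $P$, respectively. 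If you want to keep your structure, you must replace the appeal to the general Theorem \ref{thm.spectral.inv} by this explicit construction (or by some other argument that does not presuppose the order-reduction operators).
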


\begin{proof}
  Let $Q$ and $P:=\Lambda _s$ be as in the proof of Corollary \ref{cor.order.red}.
  Then $QP$ and $PQ$ are invertible in $\iPS{0}(\manif; E)$. Therefore $(QP)^{-1}, (PQ)^{-1} \in
  \ePS{0}(\manif; E)$. We thus have that $(QP)^{-1} Q \in \ePS{-s}(\manif; E)$
  is a left inverse of $P$ and that $Q (PQ)^{-1} \in \ePS{-s}(\manif; E)$ is
  a right inverse of $P$. Therefore $P$ is invertible and its inverse is
  $$P^{-1} \seq (QP)^{-1} Q \seq Q (PQ)^{-1} \in \ePS{-s}(\manif; E)\,.$$
  This completes the proof.
\end{proof}

We shall choose, for each $s \in \RR$, an operator $\Lambda_s \in \iPS{s}(\manif; E)$ as in
Corollary \ref{cor.order.red2} (thus, an invertible isomorphism of Sobolev spaces).
We can now complete the proof of Theorem \ref{thm.spectral.inv}.

\begin{proof}[Proof of Theorem \ref{thm.spectral.inv} for $s_i$ and $t_j$ general]
  We shall assume $m, s_i, t_j \in \ZZ$, for the simplicity of the notation
  (we replace $\ePS{\infty}(\manif; E, F)$ with $\sum_{s \in \RR} \ePS{s}(\manif; E, F)$ 
  in general). Let $T$ be the given operator assumed to be invertible.
  If $s_i = t_j = 0$ for all $i, j = 1, \ldots, k$, then $T \in \ePS{0}(\manif; E, F)$
  and hence $T$   is bounded on $H^m(\manif; E)$ and we consider the
  bounded operator $T : H^m(\manif; E) \to H^m(\manif; F)$, whose inverse we want
  to prove to be in $\ePS{0}(\manif; F, E)$. In case at least one of the $s_i$ or $t_j$ 
  are $>0$, we use elliptic regularity (Proposition \ref{prop.regularity})
  to conclude that the domain of $T$ is $H^{m+[\mathbf{t}]}(\manif; E)$. Thus 
  the assumption is
  that
  \begin{multline}  \label{eq.alt.assumpt}
    T \seq [T_{ij}] : H^{m+[\mathbf{t}]}(\manif; E) \ede \displaystyle\bigoplus _{j=1}^kH^{m+t_j}(\manif ;E_j)
    \\ \to
    \displaystyle\bigoplus _{i=1}^kH^{m-s_i}(\manif ;F_i) \, =: \, H^{m-[\mathbf{s}]}(\manif; F)
  \end{multline}
  is invertible and we want to prove that its inverse is in $\ePS{[\textbf{-t-s}]}(\manif; F, E)$.

  We shall use the
  order reduction operators $\Lambda_{t_i} \in \ePS{t_i}(\manif; E_i)$, $i=1,\ldots ,k$, constructed as in Corollary
  \ref{cor.order.red2}. Let
  \begin{align}
    \label{D-si}
    S_{t_1,\ldots ,t_k} \ede \left(\begin{array}{cccc}
    \Lambda_{t_1} & 0 & \cdots & 0\\
    0 & \Lambda_{t_2} & \cdots & 0 \\
    \vdots & \vdots & \ddots & \vdots \\
    0 & 0 & \cdots & \Lambda_{t_k}\\
    \end{array}
    \right)\,.
  \end{align}
  Then $S_{t_1,\ldots ,t_k} \in \ePS{\infty}(\manif; E)$ and is invertible
  there. The operator
  $$Q_0 \ede S_{m + t_1, m + t_2, \ldots, m + t_k}^{-1} : L^2(\manif; E) \to
  H^{m+[\mathbf{t}]}(\manif; E)$$
  is an isomorphism. Similarly (but with $E_i$ replaced with $F_i$),
  $$Q_1 \ede S_{m - s_1, m - s_2, \ldots, m - s_k} : H^{m-[\mathbf{s}]}(\manif; F) \to
  L^2(\manif; F)$$
  is an isomorphism, where $Q_1$ is defined similarly, but using the vector
  bundles $F_j$. Therefore,
  \begin{equation*}
    T \seq [T_{ij}] : H^{m+[\mathbf{t}]}(\manif; E) \to H^{m-[\mathbf{s}]}(\manif; F)
  \end{equation*}
  is invertible if, and only, if the operator
  \begin{equation*}
    Q_1 T Q_0 : L^2(\manif ;E)\to L^2(\manif ;F)
  \end{equation*}
  is invertible. We have $Q_1 T Q_0 \in \ePS{0}(\manif; E, F)$, and hence we can use
  the already proved case of our theorem to conclude that $(Q_1 T Q_0)^{-1} \in \ePS{0}(\manif; F, E)$.
  This gives $T^{-1} = Q_0 (Q_1 T Q_0)^{-1}Q_1 \in \ePS{[\textbf{-t-s}]}(\manif; F, E)$, as
  desired.
\end{proof}

We shall need also the following simple remark:

\begin{remark}\label{rem.also}
  In Theorem \ref{thm.spectral.inv} for the case one of the $s_i$ or $t_j$ is $>0$,
  instead of ellipticity and invertibility as an unbounded operator
  (with a domain implicitly given), we can specify explicitly the domain. More
  precisely, we assume that $T \in \ePS{[\mathbf{s+t}]}(\manif; E, F)$ is such that
  $T \seq [T_{ij}] : H^{m+[\mathbf{t}]}(\manif; E)
  \to H^{m-[\mathbf{s}]}(\manif; F)$ is invertible (see the discussion
  surrounding Equation \eqref{eq.alt.assumpt}).
  Then we still obtain that $T^{-1}\in \ePS {[\textbf{-s-t}]}(\manif; F, E)$.
  This follows by examining the proof (which is unchanged).
\end{remark}

\subsection{Definition of layer potential operators}
\label{ssec.layer.pot}

\begin{definition} \label{def.layerPot}
  Let $\manif$ be a manifold with straight cylindrical ends and $N \subset \manif$
  be such that $N \cap \pa \manif_0 \times (-\infty, -R) = N_{\infty} \times (-\infty, -R)$
  for some $R > 0$ and some smooth submanifold $N_{\infty}$ of the closed manifold 
  $\pa \manif_0$. Let $P \in \ePS{q}(\manif; E, F)$ be a classical pseudodifferential 
  operator such that $P : H^q(\manif; E) \to L^2(\manif; E)$ is invertible. Let us 
  assume that $q$ is larger than the codimension of $N_{\infty}$ in $\pa \manif_0$.
  Then \emph{the single layer potential operator 
  $S_P$ associated to $P$} is the restriction of $P^{-1}$ to $N$. 
\end{definition}

The results of the last section show that $S_P$ is a pseudodifferential operator
of order $\le \dim M - \dim N - q = n - \dim N - q < 0$. The double layer potential operator 
is defined when $N$ is of codimension one with oriented normal bundle. Let us choose a normal 
unit vector to $N$, which exists by the orientability hypothesis.

\begin{definition} \label{def.layerPot2}
  We use the notation of Definition \ref{def.layerPot}, but assume that $N$ is of 
  codimension one and that it has a normal unit vector $\nu$, which we extend to a 
  smooth vector field defined everywhere. 
  Also, we assume that $P$ is an order $2$ pseudodifferential operator such that 
  $P : H^2(\manif; E) \to L^2(\manif; E)$ is invertible. Then \emph{the double
  layer potential operator $K_P$ associated to $P$} is the operator with kernel 
  $(\pa_{\nu_y} k_{P^{-1}}(x, y))_{N \times N}$ (the restriction of the normal derivative 
  with respect to the second variable kernel to $N$), defined 
  in a principal value sense (when it converges).
\end{definition}

To deal with the double layer potential operator $K$, we will
use the following remark.

\begin{remark}\label{rem.for.K}
  In theorem \ref{thm.kernel.restriction}, we can replace the condition $s_i + t_j < -q$
  with the condition $s_i + t_j < -q+1$ if the restriction to $N \times N$ of the highest
  order singularity $a_{0}$ vanishes on $SN$ (we use the notation of Proposition
  \ref{prop.kernel.descr}). It is know that this applies to the double layer potential 
  (associated to the Laplacian) if the
  boundary is smooth. In that case, the resulting operator will be of lower order.
\end{remark}

Layer potential operators on manifolds with conical points were recently studied 
by Carvalho, C\^ome, and Qiao in \cite{CCQ_proc, CarvalhoQiao}.

\subsection{Fredholm conditions}

The next theorem (the third of aforementioned three crucial results) extends the Fredholm conditions 
on closed manifolds to our setting. It is due to Kondratiev \cite{Kondratiev} (in the setting of 
differential operators) and to Melrose and Mendoza \cite{Melrose-Mendoza} and Schulze \cite{Schulze} 
in general for pseudodifferential operators in the $b$-calculus. Most importantly for us, the case of 
the $c$-calculus was treated by Mazzeo and Melrose in \cite{MazzeoMelroseAsian}. See also 
\cite{CNQ, DegeratuMazzeo, Kapanadze-Schulze, SchroheSpInvFred, SchroheFrechet} and others, who have 
proved various extensions of this result. 

Recall the definition of the principal symbol $\sigma_m^{\RR}$ for translation invariant 
operators in $\ePSsus{m} {\pa \manif_0; E}$ in Remark \ref{rem.boundary.restriction}. For 
the following result, we shall need  the algebra
\begin{multline}\label{eq.def.fibered.prod}
  \mathfrak{A} \ede \{ (P,  a) \in \ePSsus{m} {\pa \manif_0; E} \oplus
  \CI_{\inv}(S^*\manif; \End(E)) \mid\\
  \sigma_m^\RR (P) = \In(a) \in \CI(\pa S^*\manif_0; \End(E))\,,\ P \ 
  \mbox{ classical } \}\,.
\end{multline}

\begin{proposition}\label{prop.rem.deFacut}
  The joint map 
  \begin{equation*}
    (\In, \sigma_m) : \ePS{m}(\manif; E) \cap \Psi_{cl}^{m}(\manif; E)\to \mathfrak{A}
  \end{equation*}
  is surjective with kernel $\Psi_\maS^{-\infty}(\manif; E) + \big (\Psi_{comp}^{m-1}(\manif; E)
  \cap \Psi_{cl}^{m-1}(\manif; E) \big )$. The analogous result holds for 
  $\iPS{m}(\manif; E)$ with the obvious modifications: $P \in \iPSsus{m}{\manif; E}$
  and the kernel is $\Psi_{comp}^{m-1}(\manif; E)
  \cap \Psi_{cl}^{m-1}(\manif; E)$.
\end{proposition}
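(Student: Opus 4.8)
The plan is to obtain surjectivity by an explicit two-term construction --- a ``lift from the cylinder'' $P_0 := s_0(T)$ followed by a compactly supported correction fixing the principal symbol --- and then to read off the kernel from the already established descriptions of $\ker\In$ and $\ker\sigma_m$.

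For surjectivity, fix $(T,a)\in\mathfrak A$ and set $P_0 := s_0(T) = \eta T\eta$. Since $T$ is a classical operator in $\ePSsus{m}{\pa\manif_0;E}$, Lemma \ref{lemma.onto} together with Definition \ref{def.ess.one} gives $P_0\in\ePS{m}(\manif;E)\cap\Psi_{cl}^{m}(\manif;E)$, and Theorem \ref{thm.complet.indicial}(3) gives $\In(P_0)=T$. By Theorem \ref{thm.complet.indicial}(4), $\maR_{\infty}(\sigma_m(P_0)) = \sigma_m^{\RR}(\In(P_0)) = \sigma_m^{\RR}(T)$, which by the defining relation of $\mathfrak A$ equals $\maR_{\infty}(a)$. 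Let $\tilde a\in S_{inv}^{m}(T^*\manif;\End E)$ be the degree-$m$ homogeneous extension of $a$ and $\tilde a_0\in S_{inv}^{m}(T^*\manif;\End E)$ a classical representative of $\sigma_m(P_0)$; then $c := \maR_{\infty}(\tilde a)-\maR_{\infty}(\tilde a_0)$ is a classical symbol of order $m-1$ on $\pa T^*\manif_0$ (its degree-$m$ part vanishes by the identity just displayed), and extending $c$ constantly in $t$ down the cylindrical end and cutting off with $\eta$ produces a classical $\hat c\in S_{inv}^{m-1}(T^*\manif;\End E)$ with $\maR_{\infty}(\hat c)=c$. Replacing $\tilde a_0$ by $\tilde a_0+\hat c$, still a representative of $\sigma_m(P_0)$, we may assume $\maR_{\infty}(\tilde a-\tilde a_0)=0$, so $b := \tilde a-\tilde a_0$ is a classical symbol in $S_{inv}^{m}(T^*\manif;\End E)$ vanishing over $\pa\manif_0\times(-\infty,-R]$ for some $R>0$. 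Then $P_1 := q(b)\in\iPS{m}(\manif;E)\cap\Psi_{cl}^{m}(\manif;E)$ (the vector-bundle quantization of Remark \ref{rem.def.quant.map}, cf.\ Remark \ref{rem.vector.bundles}) has distribution kernel supported in the compact set $\manif_{R+\varepsilon}\times\manif_{R+\varepsilon}$, so $\In(P_1)=0$ by Theorem \ref{thm.prop.inv-calc.in}(1), while $\sigma_m(P_1)=a-\sigma_m(P_0)$ by Proposition \ref{prop.onto.q}. Hence $P := P_0+P_1\in\ePS{m}(\manif;E)\cap\Psi_{cl}^{m}(\manif;E)$ satisfies $\In(P)=T$ and $\sigma_m(P)=a$.

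For the kernel, let $P\in\ePS{m}(\manif;E)\cap\Psi_{cl}^{m}(\manif;E)$ with $\In(P)=0$ and $\sigma_m(P)=0$. By Theorem \ref{thm.prop.ePS}(3), vanishing of the principal symbol gives $P\in\ePS{m-1}(\manif;E)$, and since a classical order-$m$ operator with vanishing top homogeneous part is classical of order $m-1$, in fact $P\in\ePS{m-1}(\manif;E)\cap\Psi_{cl}^{m-1}(\manif;E)$. Applying Theorem \ref{thm.complet.indicial}(3) in order $m-1$, the relation $\In(P)=0$ gives $P=R+Q$ with $R\in\Psi_{\maS}^{-\infty}(\manif;E)$ and $Q\in\Psi_{comp}^{m-1}(\manif;E)$; as $R$ is smoothing (hence classical), $Q=P-R\in\Psi_{comp}^{m-1}(\manif;E)\cap\Psi_{cl}^{m-1}(\manif;E)$. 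Conversely, every operator in $\Psi_{\maS}^{-\infty}(\manif;E)$ and in $\Psi_{comp}^{m-1}(\manif;E)\cap\Psi_{cl}^{m-1}(\manif;E)$ is annihilated by $\sigma_m$ (order $<m$) and by $\In$ (Theorem \ref{thm.complet.indicial}(3), respectively Theorem \ref{thm.prop.inv-calc.in}(1)), so $\ker(\In,\sigma_m)=\Psi_{\maS}^{-\infty}(\manif;E)+\bigl(\Psi_{comp}^{m-1}(\manif;E)\cap\Psi_{cl}^{m-1}(\manif;E)\bigr)$. The $\iPS{m}$ statement follows from the identical argument with $\iPS{}$ in place of $\ePS{}$ throughout: $s_0$ lands in $\iPS{m}(\manif;E)$ by Lemma \ref{lemma.onto}, and $\In:\iPS{m}(\manif;E)\to\iPSsus{m}{\pa\manif_0;E}$ is surjective with kernel $\Psi_{comp}^{m}(\manif;E)$ by Theorem \ref{thm.prop.inv-calc.in}(1), so the $\Psi_{\maS}^{-\infty}$ summand does not occur.

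The step I expect to be the main obstacle is the symbol bookkeeping in the middle of the surjectivity argument: producing a classical, translation-invariant-at-infinity representative of $a-\sigma_m(s_0(T))$ that literally vanishes near infinity. This is exactly where the fibered-product (compatibility) condition built into $\mathfrak A$ is consumed; the remaining steps are routine applications of the previously established behaviour of $\In$, $\sigma_m$, $s_0$, and the quantization map $q$.
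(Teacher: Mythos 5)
Your proof is correct, but it runs the construction in the opposite order from the paper's. The paper first lifts the \emph{symbol}: it picks $Q\in\ePS{m}(\manif;E)$ with $\sigma_m(Q)=a$ (surjectivity of $\sigma_m$, Theorem \ref{thm.prop.inv-calc}), sets $R:=\In(Q)-P$, observes via Theorem \ref{thm.complet.indicial}(4) and the compatibility condition in $\mathfrak A$ that $\sigma_m^{\RR}(R)=0$, so that $R\in\ePSsus{m-1}{\pa\manif_0;E}$, and then corrects by $s_0(R)$ --- an order-$(m-1)$ operator, which automatically leaves $\sigma_m$ untouched while fixing $\In$. You instead lift the \emph{limit operator} first, via $P_0=s_0(T)$, and then must repair the principal symbol by an operator with vanishing limit operator; this forces you into the explicit symbol bookkeeping (choosing translation-invariant representatives, subtracting their limits, cutting off so that the correction symbol vanishes near infinity and hence quantizes to something in $\Psi_{comp}^{m}$). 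That bookkeeping is exactly the step you flag as delicate, and it does go through --- the key facts you need ($\ker\maR_\infty$ consists of sections vanishing near infinity, $q(S_{\inv}^m)\subset\iPS{m}$, $\ker\In=\Psi_{comp}^m$ in the $\inv$-case) are all available --- but the paper's ordering makes this entire step unnecessary, since its correction term is of lower order by construction rather than by hand. Your kernel computation and the $\iPS{m}$ variant agree with the paper's (which only sketches them); your extra care in noting that the $\Psi_{comp}^{m-1}$ summand can be taken classical because the smoothing part is classical is a worthwhile detail.
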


\begin{proof}
  All the pseudodifferential operators in this proof are classical. 
  Let $(P, a) \in \mathfrak{A}$. Using the surjectivity of the principal symbol, 
  Theorem \ref{thm.prop.inv-calc}, we know that there exists $Q \in \ePS{m}(\manif; E)$ such 
  that $\sigma_m(Q) = a$. Let $R := \In(Q) - P$. We have 
  \begin{align*}
    \sigma_m^{\RR}(R) & = \sigma_m^{\RR}(\In(Q) - P)\\
    & = \In (\sigma_m(Q)) - \sigma_m(P)\\
    & = \In (a) - \sigma_m(P) = 0\,,
  \end{align*}
  where the second equality is by Theorem \ref{thm.complet.indicial}(4).
  Therefore $R \in \ePSsus{m-1}{\manif; E}$, and hence $s_0(R) \in \ePS{m-1}(\manif; E)$.
  The operator $Q_1 := Q - s_0(R)$ then satisfies
  \begin{equation*}
    \begin{gathered}
      \sigma_m(Q_1) \seq \sigma_m(Q - s_0(R)) \seq \sigma_m(Q) \seq a\,.\\
      \In(Q_1) \seq \In(Q) - R \seq P\,.
    \end{gathered}
  \end{equation*}
  This proves the desired surjectivity for essentially translation invariant 
  operators. The form of the kernel follows from the Theorems \ref{thm.prop.ePS}(3)
  and \ref{thm.complet.indicial}.
  The proof for operators that are translation invariant in a neighborhood 
  of infinity is the same.
\end{proof}

We can now characterize the compact operators in our 
essentially translation invariant calculus.

\begin{proposition} \label{prop.compact}
  Let $\manif$ be a manifold with compact, straight cylindrical ends and $E, F \to \manif$
  be compatible vector bundles. Let $T \in \ePS{0}(\manif; E, F)$ be a \emph{classical} 
  pseudodifferential operator. The following three conditions are equivalent:
  \begin{enumerate}
    \item The operator $T$ is compact;

    \item $\sigma_0(T) = 0$ and $\In(T) = 0$;

    \item $T \in \Psi_\maS^{-\infty}(\manif; E, F) + \Psi_{comp}^{-1}(\manif; E, F)$.
  \end{enumerate}
\end{proposition}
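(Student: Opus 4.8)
The plan is to prove the three equivalences in the order $(3)\Rightarrow(1)\Rightarrow(2)\Rightarrow(3)$, using the structural results already established for the essentially translation invariant calculus.

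First I would prove $(3)\Rightarrow(1)$. Suppose $T \in \Psi_\maS^{-\infty}(\manif; E, F) + \Psi_{comp}^{-1}(\manif; E, F)$. Write $T = R + S$ with $R \in \Psi_\maS^{-\infty}(\manif; E, F)$ and $S \in \Psi_{comp}^{-1}(\manif; E, F)$. For $S$: it is a classical pseudodifferential operator of order $-1$ with \emph{compactly supported} distribution kernel, so it factors through a compact piece of $\manif$; by the compact-manifold theory (Theorem \ref{thm.mp.compact}(3), Rellich's lemma) it induces a compact operator $H^s \to H^s$, in particular on $L^2$. For $R$: since $R$ maps $H^s(\manif; E) \to H^{s'}(\manif; F)$ continuously for \emph{all} $s, s'$ (Lemma \ref{lemma.comp.alg}(4), extended to vector bundles as in Remark \ref{rem.vector.bundles}), we can factor $R : L^2 \to H^1 \hookrightarrow L^2$; the compactness then follows from the compactness of the inclusion $H^1(\manif_R) \hookrightarrow L^2(\manif_R)$ together with the rapid decay of the kernel at infinity (the Schwartz decay of $\rho^k R$ for all $k$ lets one truncate at a large $|t|$ with an error small in operator norm, and the truncated piece is compact by Rellich). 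Hence $T = R + S$ is compact.

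Next, $(1)\Rightarrow(2)$. Assume $T$ is compact on $L^2$. That $\sigma_0(T) = 0$ is exactly Proposition \ref{prop.compact0} (applied on the complete manifold $\manif$, with the obvious vector-bundle version). To see $\In(T) = 0$: the limit operator $\In(T) \in \ePSsus{0}{\pa \manif_0; E, F}$ is obtained as the strong limit of the translates $\Phi_{-s} T \Phi_s$ on $L^2(\pa \manif_0 \times \RR)$ as $s \to \infty$ (Lemma \ref{lemma.def.indicial2}); more precisely, for any $u \in \CIc(\pa \manif_0 \times \RR)$, the sequence $\Phi_s(u)$ tends weakly to $0$ in $L^2(\manif)$ (its mass escapes to infinity), so $T\Phi_s(u) \to 0$ strongly in $L^2(\manif)$ by compactness of $T$, and hence $\Phi_{-s} T \Phi_s(u) \to 0$ as well. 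Therefore $\In(T) = 0$ as an operator. The main obstacle here is making the limit-operator argument clean in the vector-bundle setting and matching the various modes of convergence (one should combine Lemma \ref{lemma.def.indicial2}, which asserts $L^2$-norm convergence of $\Phi_{-s}[P\Phi_s(u)|_{\{t<0\}}]$, with the weak-to-strong compactness argument); this requires care but no new ideas.

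Finally, $(2)\Rightarrow(3)$. Assume $\sigma_0(T) = 0$ and $\In(T) = 0$. By the surjectivity-with-kernel statement in Proposition \ref{prop.rem.deFacut}, the joint map $(\In, \sigma_0) : \ePS{0}(\manif; E, F) \cap \Psi_{cl}^{0} \to \mathfrak{A}$ has kernel exactly $\Psi_\maS^{-\infty}(\manif; E, F) + \big(\Psi_{comp}^{-1}(\manif; E, F) \cap \Psi_{cl}^{-1}\big)$ (the vector-bundle, $E \neq F$ version being obtained, as usual in the paper, by replacing $E$ with $E \oplus F$ and passing to a corner). Since $T$ is classical of order $0$ with $\In(T) = 0$ and $\sigma_0(T) = 0$, it lies in this kernel, which is precisely condition (3). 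This closes the cycle of implications and proves the proposition. The only genuinely substantive step is $(1)\Rightarrow(2)$; the other two are bookkeeping on top of Propositions \ref{prop.compact0} and \ref{prop.rem.deFacut} and the mapping properties already proved.
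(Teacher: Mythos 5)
Your proposal is correct and follows essentially the same route as the paper: the equivalence of (2) and (3) via Proposition \ref{prop.rem.deFacut}, compactness of the two summands in (3) for $(3)\Rightarrow(1)$, and Proposition \ref{prop.compact0} plus the weak-convergence-of-translates argument for $(1)\Rightarrow(2)$. The only cosmetic difference is in showing $\Psi_\maS^{-\infty}$ consists of compact operators, where you truncate the operator at large $|t|$ while the paper approximates its Schwartz kernel by compactly supported kernels in operator norm; both are the same approximation-by-compacts idea.
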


\begin{proof}
  The equivalence of (2) and (3) follows from Proposition \ref{prop.rem.deFacut}.

  (3) $\Rightarrow$ (1):\
  Let us prove that each of the spaces $\Psi_\maS^{-\infty}(\manif; E, F)$ and
  $\Psi_{comp}^{-1}(\manif; E, F)$ act as compact operators from $L^2(\manif; E)$
  to $L^2(\manif; F)$. 
  Let $T \in \Psi_{comp}^{-1}(\manif; E, F)$. Then $T$ has a compactly supported distribution
  kernel, by the very definition of $\Psi_{comp}^{-1}(\manif; E, F)$ (Definition \ref{def.c.supp}).
  Therefore $T : L^2(\manif; E) \to L^2(\manif; F)$ is compact by Proposition \ref{prop.compact0}. Since
  $\Psi_{comp}^{-\infty}(\manif; E, F) \simeq \CIc(\manif^2; F \boxtimes E')$
  is dense in $\Psi_\maS^{-\infty}(\manif; E, F) \simeq \maS(\manif^2; F \boxtimes E')$ in the natural
  (Schwartz) topology on the second space and hence also in the norm of operators, it follows that
  $\Psi_\maS^{-\infty}(\manif; E, F)$ also consists of compact operators.
  So (3) implies (1).

  (1) $\Rightarrow$ (2):\ Let us assume that $T \in \ePS{0}(\manif; E, F)$ is compact.
  Let $\phi \in \CIc(\manif)$. Then $\phi T \phi \in \Psi_{comp}^0(\manif; E, F)$ is
  also compact. Proposition \ref{prop.compact0} then gives $\phi \sigma_0(T)\phi =
  \sigma_0(\phi T \phi) = 0$. Hence $\sigma_0(T) = 0$, since $\phi \in \CIc(\manif)$ was arbitrary.
  Next, $$\In(T) u \seq \lim_{s \to \infty} \Phi _{-s}\big [ T\Phi_s(u)\vert_{\{t < 0\}} \big] \seq 0$$
  since $\Phi_s(u) \to 0$ weakly in $L^2(\manif; E)$ and $T$ is compact
  (see Lemma \ref{lemma.def.indicial2}). Thus (1) implies (2) and the proof is complete.
\end{proof}

We now obtain the following characterization of the Fredholm operators in the essentially translation
invariant calculus.

\begin{theorem}\label{thm.ADN.Fredholm}
  We keep the assumptions and the notations of Theorem \ref{thm.spectral.inv}
  and of Definition \ref{def.ADN}. In particular, $\manif$ is a manifold with compact, 
  straight cylindrical ends, $E_i, F_i \to \manif$, $i = 1, \ldots, k$ are compatible vector bundles,
  and $E = \oplus_{i=1}^k E_i$ and $F = \oplus_{j=1}^k F_j$.
  Let $T=[T_{ij}] \in \ePS{[\mathbf{s+t}]}(\manif; E, F)$ be a \emph{classical operator}
  and $m \in \RR$. The map $T : H^{m+[\mathbf{t}]}(\manif; E) \to H^{m-[\mathbf{s}]}(\manif; F)$
  is Fredholm if, and only, if
  \begin{enumerate}
    \item $T$ is \ADN-elliptic and
    \item the limit operator
    \begin{equation*}
      \In(T) \ede [\In(T_{ij})] : H^{m+[\mathbf{t}]}(\pa \manif_0 \times \RR; E) \to
      H^{m-[\mathbf{s}]}(\pa \manif_0 \times \RR; F)
    \end{equation*}
    is invertible.
  \end{enumerate}
\end{theorem}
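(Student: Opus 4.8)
The plan is to reduce the Fredholm property for the \ADN-elliptic system to the scalar, order-zero case already understood, and then to use the standard Atkinson-type argument together with the spectral invariance result (Theorem \ref{thm.spectral.inv}) and the characterization of compact operators (Proposition \ref{prop.compact}). First I would use the order reduction operators $\Lambda_s \in \iPS{s}(\manif; E)$ of Corollary \ref{cor.order.red2} (whose inverses lie in $\ePS{-s}$) to conjugate $T$ into an operator $T_0 := Q_1 T Q_0 \in \ePS{0}(\manif; E, F)$ acting between the $L^2$-spaces, exactly as in the proof of Theorem \ref{thm.spectral.inv}; here $Q_0 = S_{m+t_1,\ldots,m+t_k}^{-1}$ and $Q_1 = S_{m-s_1,\ldots,m-s_k}$ are the block-diagonal isomorphisms built from the $\Lambda_s$. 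Since the $\Lambda_s$ are invertible, $T$ is Fredholm between the given Sobolev spaces if and only if $T_0 : L^2(\manif; E) \to L^2(\manif; F)$ is Fredholm; moreover $T$ is \ADN-elliptic iff $T_0$ is elliptic (the principal symbols differ by invertible factors by Remark \ref{rem.bounded.ADN}(4)), and $\In(T)$ is invertible between the corresponding Sobolev spaces iff $\In(T_0) = Q_1' \In(T) Q_0'$ is invertible on $L^2$ (using that $\In$ is multiplicative, Theorem \ref{thm.complet.indicial}, and that $\In(\Lambda_s)$ is again an order reduction operator on $\pa\manif_0\times\RR$, invertible by Remark \ref{rem.invert} applied to its indicial family). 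So it suffices to prove the theorem for $T_0 \in \ePS{0}(\manif; E, F)$ with $L^2 \to L^2$.

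For the scalar, order-zero case the plan is as follows. Assume $T_0$ is elliptic and $\In(T_0)$ is invertible on $L^2(\pa\manif_0\times\RR)$. Using surjectivity of the joint map $(\In, \sigma_0)$ of Proposition \ref{prop.rem.deFacut}, choose $S \in \ePS{0}(\manif; F, E)$ with $\sigma_0(S) = \sigma_0(T_0)^{-1}$ (available since $\sigma_0(T_0)$ is invertible and, by Corollary \ref{cor.lemma.ue}, its inverse is a genuine symbol) and $\In(S) = \In(T_0)^{-1}$ (available since $\In(T_0)^{-1} \in \ePSsus{0}{\pa\manif_0}$ by the spectral invariance of that algebra, which is part of the proof of Theorem \ref{thm.spectral.inv}, and the compatibility condition $\sigma_0^\RR(\In(T_0)^{-1}) = \In(\sigma_0(T_0)^{-1})$ holds by multiplicativity). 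Then $ST_0 - 1$ and $T_0 S - 1$ lie in $\ePS{0}$, have vanishing principal symbol, and have vanishing limit operator; by Proposition \ref{prop.compact} they are compact on $L^2$. Atkinson's theorem (Theorem \ref{thm.Atkinson}) gives that $T_0$ is Fredholm. Conversely, suppose $T_0 : L^2 \to L^2$ is Fredholm, so there is $S \in \maB(L^2(\manif; F), L^2(\manif; E))$ with $ST_0 - 1$ and $T_0 S - 1$ compact. To extract ellipticity, for $\phi \in \CIc(\manif)$ the operator $\phi T_0 \phi$ localizes to a compact manifold and inherits a parametrix, so $\phi$ is elliptic; since $\CIc(\manif)$ functions are arbitrary, $T_0$ is elliptic. (Alternatively, invoke Theorem \ref{thm.Fredholm.compact}(2) localized, or the coercive-estimate route via Lemmas \ref{lemma.SF2CE}, \ref{lemma.CE2IE}.) To extract invertibility of $\In(T_0)$: because $\Phi_s(u) \to 0$ weakly in $L^2$ for fixed $u$, applying the compact operator $ST_0 - 1$ along the translation family and using Lemma \ref{lemma.def.indicial2} shows $\In(S)\In(T_0) = \In(ST_0) = 1$, where $\In(S)$ is defined as the strong limit $\Phi_{-s} S \Phi_s$ — here one must first check this limit exists and defines a bounded operator commuting with translations, which follows from the uniform bounds and the fact that $\In(T_0)$ already has such a limit form. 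Doing the same with $T_0 S - 1$ gives $\In(T_0)\In(S) = 1$, so $\In(T_0)$ is invertible on $L^2(\pa\manif_0\times\RR)$.

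The main obstacle is the converse direction's extraction of invertibility of the limit operator, specifically showing that a Fredholm inverse (or even just a parametrix modulo compacts) $S$ has a well-defined limit operator $\In(S)$. The subtlety is that $S$ is a priori only a bounded operator on $L^2$, not an element of our calculus, so Lemma \ref{lemma.def.indicial2} does not apply directly; one needs an independent argument that $\Phi_{-s} S \Phi_s$ converges strongly. The clean way around this is to not work with an arbitrary Fredholm inverse but to instead argue: Fredholmness plus the already-established ellipticity gives, via Proposition \ref{prop.rem.deFacut} and the elliptic parametrix construction (as in the proof of Theorem \ref{thm.prop.ePS}(4)), a genuine parametrix $Q \in \ePS{0}(\manif; F, E)$ with $QT_0 - 1, T_0 Q - 1 \in \ePS{-1}(\manif)$; these need not be compact, but modulo $\Psi_\maS^{-\infty} + \Psi_{comp}^{-1}$ the obstruction to compactness is exactly $\In(QT_0 - 1)$, a translation-invariant operator on the cylinder. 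A now-standard argument (the "limit operator" Fredholm dichotomy) shows that $T_0$ Fredholm forces $\In(T_0)$ to have a bounded inverse, completing the argument; this is precisely where one mirrors the Melrose–Mendoza / Mazzeo–Melrose reasoning, and it is the step that most needs care in our setting. Finally, I would remark that the invertibility condition in (2) is independent of $m$, because $\In(T_0)$ is \ADN-elliptic (its symbol equals $\maR_\infty \Symb(T_0)$, which is invertible), so by elliptic regularity on $\pa\manif_0\times\RR$ its invertibility on one Sobolev scale implies invertibility on all.
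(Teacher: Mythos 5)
Your reduction to the order-zero $L^2$ case via the order-reduction operators, and your proof of sufficiency (\ADN-ellipticity plus invertible limit operator implies Fredholm, via Proposition \ref{prop.rem.deFacut}, spectral invariance of $\ePSsus{0}{\pa \manif_0}$, Proposition \ref{prop.compact}, and Atkinson's Theorem \ref{thm.Atkinson}) match the paper's argument. The problem is the necessity direction. You correctly diagnose the obstacle --- an abstract Fredholm inverse $S$ is merely a bounded operator, so $\In(S)$ is not defined and Lemma \ref{lemma.def.indicial2} does not apply --- but your proposed resolution ends by invoking ``a now-standard argument (the limit operator Fredholm dichotomy)'' without supplying it. That deferred step \emph{is} the content of the necessity direction, so as written the proof has a genuine gap at its crucial point. (Your weak-convergence computation $\In(ST_0)=\In(S)\In(T_0)$ likewise presupposes the existence and multiplicativity of $\In(S)$, which is exactly what is in question.)

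The paper closes this gap by manufacturing a Fredholm regulator that lives \emph{inside} the calculus, so that $\In$ and $\sigma_0$ can legitimately be applied to it. Since $T$ is Fredholm, so is $T^*T \ge 0$, and one can choose $R \in \Psi_\maS^{-\infty}(\manif; E, F)$ such that $T^*T + R^*R$ is invertible; by the spectral invariance theorem (Theorem \ref{thm.spectral.inv}) its inverse lies in $\ePS{0}(\manif; E)$, hence $P := (T^*T+R^*R)^{-1}T^* \in \ePS{0}(\manif; F, E)$ and
\begin{equation*}
  PT - 1 \seq -(T^*T+R^*R)^{-1}R^*R \in \Psi_\maS^{-\infty}(\manif; E)\,.
\end{equation*}
Applying Proposition \ref{prop.compact} to this compact error gives $\sigma_0(P)\sigma_0(T) = 1$ and $\In(P)\In(T) = 1$ in one stroke; the symmetric construction with $TT^*$ gives right inverses. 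This yields both \ADN-ellipticity and invertibility of $\In(T)$ simultaneously, without any localization argument and without ever taking the limit operator of anything outside the calculus. If you replace your deferred ``dichotomy'' step by this construction, the rest of your outline goes through.
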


This result is, of course, well-known in the setting of the $b$-calculus
\cite{MelroseAPS, Melrose-Mendoza}.

\begin{proof}
  Let us assume first that $m = s_i = t_i = 0$, $i = 1, \ldots, k$, as in the
  proof of Theorem \ref{thm.spectral.inv}. We shall use Theorem \ref{thm.prop.ePS} repeatedly.

  Let us assume next that $T : L^2(\manif; E) \to L^2(\manif; F)$ is Fredholm.
  We know that $T$ is elliptic by Theorem \ref{thm.Fredholm.compact}(2)
  (but we shall obtain this fact directly here from Proposition \ref{prop.compact}).
  Then $T^*$ is Fredholm and hence $0 \le T^*T \in \ePS{0}(\manif; E)$ is also Fredholm.
  We can then find $R \in \Psi_\maS^{-\infty}(\manif; E,F)$ such that $T^*T + R^*R
  \in \ePS{0}(\manif; E)$ is invertible as an unbounded operator on $L^2(M; E)$. Let
  $$P \ede (T^*T + R^*R)^{-1} T^* \in \ePS{0}(\manif; F, E) \,.$$
  A direct calculation based on the symbolic properties of the $\ePS{\infty}$ calculus gives that
  $$P T - 1 \seq -(T^*T + R^*R)^{-1} R^*R \in \Psi_\maS^{-\infty}(\manif; E)\,$$
  see Corollary \ref{cor.comp.alg}(1), Lemma \ref{lemma.comp.alg}(2), and
  Theorem \ref{thm.prop.ePS}(1). Since $R$ is compact, by Proposition \ref{prop.compact},
  we obtain that $P T - 1$ is also compact. Proposition \ref{prop.compact}
  implies
  \begin{equation*}
    \begin{gathered}
      \sigma_0(P) \sigma_0(T)  -1 \seq \sigma_0(PT-1 ) \seq 0 \quad \mbox{and}\\
      \In(P)\In(T) - 1 \seq \In(PT- 1) \seq 0\,.
    \end{gathered}
  \end{equation*}
  Therefore, both $\sigma_0(T)$ and $\In(T)$ are left invertible. Similarly, they are
  right invertible, and hence invertible. This proves one implication of the desired equivalence.

  To prove the other implication, let us assume that $\sigma_0(T) \in \CI_{\inv}(S^*\manif; \Hom(E, F))$ 
  and $\In(T)\in \ePSsus{0}{\manif; E, F}$ are invertible. Then $\In(T)^{-1} \in \ePSsus{0}{\manif; F, E}$ 
  by the spectral invariance theorem, Theorem \ref{thm.spectral.inv}. Similarly, Corollary 
  \ref{cor.lemma.ue} gives that $\sigma_0(T)^{-1} \in \CI_{\inv}(\manif; \Hom(F, E))$. 
  Theorem \ref{thm.complet.indicial}(4) then gives
  \begin{multline*}
    \sigma_{0}^{\RR}(\In(T)^{-1}) \seq \sigma_{0}^{\RR}(\In(T))^{-1} \seq \In(\sigma_0(T))^{-1}\\
    \seq \In(\sigma_0(T)^{-1}) \in \CI(\pa S^*\manif_0; \Hom(F, E))\,.
  \end{multline*} 
  Proposition \ref{prop.rem.deFacut} then implies that there 
  exists $Q \in \ePS{0}(\manif; E, F)$ such that $\sigma_0(Q) = \sigma_0(T)^{-1}$
  and $\In(Q) = \In(T)^{-1}$. Consequently,
  \begin{equation*}
    \sigma_0(PQ) \seq 1 \,,\ \sigma_0(QP) \seq 1\,,\ \In(PQ) \seq 1\,
    \ \mbox{ and } \ \In(QP) \seq 1\,.
  \end{equation*}
  Therefore $QP -1 \in \Psi_{\maS}^{-\infty}(\manif; E) + \Psi_{comp}^{-1}(\manif; E)$, by 
  Proposition \ref{prop.rem.deFacut}, and hence $QP - 1$ is compact, by Proposition 
  \ref{prop.compact}. Similarly, $PQ - 1$ is also compact. Atkinson's theorem 
  \ref{thm.Atkinson} then allows us to conclude that $P$ is Fredholm.
  This proves the result if $m = s_i = t_i = 0$ for all $i = 1, \ldots, k$.

  The proof of the result in general is the same as the proof of the general case of
  Theorem \ref{thm.spectral.inv}, that is, using the order reduction operators $Q_0$ and $Q_1$
  obtained from the operators $S_{s_1,\ldots ,s_k}$ of Equation \eqref{D-si}
  as in that proof (we use the same formulas for $Q_0$ and $Q_1$).
  We first observe that the multiplicativity of $\Symb$ and $\In$, the invertibility of
  $Q_0$ and $Q_1$, Theorem
  \ref{thm.complet.indicial}, and Remark \ref{rem.bounded.ADN}(3), imply that
  all of $\Symb(Q_0)$, $\Symb(Q_1)$, $\In(Q_0)$, and $\In(Q_1)$ are invertible
  in their respective spaces (the first two on $S^*\manif$ and the last two as
  bounded operators on $L^2(\manif; E)$ and, respectively, on $L^2(\manif; F)$).
  We also use the relations
  $$\sigma_0(Q_1 T Q_0) = \Symb(Q_1)\Symb(T)\Symb(Q_0)$$
  and $$\In(Q_1 T Q_0) = \In(Q_1)\In(T)\In(Q_0)$$ (again from Theorem
  \ref{thm.complet.indicial} and Remark \ref{rem.bounded.ADN}(3))
  and the bounded case already proved (in the second equivalence)
  to successively obtain
  \begin{align*}
  T : H^{m+[\mathbf{t}]}(\manif; E) \to  H^{m-[\mathbf{s}]}(\manif; F) & \mbox{ is Fredholm }\\
    & \Leftrightarrow Q_1 T Q_0 :L^2(\manif; E) \to L^2(\manif; F) \ \mbox{ is Fredholm }\\
    & \Leftrightarrow \sigma_0(Q_1 T Q_0) \mbox{ and } \ \In(Q_1 T Q_0)\ \mbox{ are invertible}\\
    & \Leftrightarrow \Symb(T) \mbox{ and } \ \In(T)\ \mbox{ are invertible\,.}
  \end{align*}
  This completes the proof because $\Symb(Q_0)$, $\Symb(Q_1)$, $\In(Q_0)$,
  and $\In(Q_1)$ are invertible (because $Q_0$ and $Q_1$ are invertible).
\end{proof}

\def\cprime{$'$}

\end{document}